\documentclass[10pt]{amsart}

\usepackage[utf8]{inputenc}
\usepackage[english]{babel}

\usepackage[charter]{mathdesign}
\usepackage[T1]{fontenc}

\usepackage{bbm}
\usepackage{esint}

\usepackage{mathtools}
\usepackage{etoolbox}
\usepackage[unicode,plainpages=false,hypertexnames=true]{hyperref}

\newcommand{\m}[1]{
\ifdefequal{#1}{1}
{\mathbbm{#1}}
{\mathbb{#1}}
}

\newcommand{\q}[1]{\mathcal{#1}}

\newcommand{\mr}[1]{\mathrm{#1}}

\newcommand{\ds}{\displaystyle}
\newcommand{\be}{\begin{gather}}
\newcommand{\ee}{\end{gather}}

\newcommand{\e}{\varepsilon}

\DeclareMathOperator{\Span}{\text{Span}}

\DeclareMathOperator{\Supp}{\text{Supp}}
\DeclareMathOperator{\diam}{\text{diam}}
\DeclareMathOperator{\Card}{\text{Card}}

\DeclareMathOperator{\Id}{\text{Id}}
\DeclareMathOperator{\Div}{\text{div}}
\DeclareMathOperator{\Tr}{\text{Tr}}

\DeclareMathOperator{\loc}{loc}

\renewcommand{\le}{\leqslant}
\renewcommand{\ge}{\geqslant}

\theoremstyle{plain}
\newtheorem*{thm*}{Theorem}
\newtheorem{thm}{Theorem}[section]

\newtheorem{prop}[thm]{Proposition}
\newtheorem{cor}[thm]{Corollary}
\newtheorem{lem}[thm]{Lemma}

\theoremstyle{definition}
\newtheorem{defi}[thm]{Definition}

\theoremstyle{remark}
\newtheorem{nb}{Remark}
\newtheorem{claim}[thm]{Claim}

\parindent=0pt

\numberwithin{equation}{section}

\addtolength{\textwidth}{2cm}
\addtolength{\hoffset}{-1cm}
\addtolength{\textheight}{1cm}
\addtolength{\voffset}{-0.6cm}

\begin{document}

\title{Limiting motion for the parabolic Ginzburg-Landau equation with infinite energy data}

\date{\today}

\author{Delphine Côte}
\author{Raphaël Côte}

\subjclass[2010]{35K45, 35C15, 35Q56}

\keywords{Ginzburg-Landau, vortex, mean curvature flow, infinite energy}

\thanks{D.C. would like to thank Fabrice Bethuel for introducing her to this problem, and his constant support and encouragement. R.C. gratefully acknowledges support from the Agence Nationale de la Recherche under the contract MAToS ANR-14-CE25-0009-0.}

\maketitle

\begin{abstract}
We study a class of solutions to the parabolic Ginzburg-Landau equation in dimension 2 or higher, with ill-prepared infinite energy initial data. We show that, asymptotically, vorticity evolves according to motion by mean curvature in Brakke's weak formulation. Then, we prove that in the plane, point vortices do not move in the original time scale. These results extend the work of Bethuel, Orlandi and Smets \cite{Be1,Be2} for infinite energy data; they allow to consider the point vortices on a lattice (in dimension 2), or filament vortices of infinite length (in dimension 3).
\end{abstract}

\section{Introduction}

We consider the parabolic Ginzburg-Landau equation for complex functions $u_\e:\m{R}^d \times [0,+\infty) \rightarrow \m{C}$ 

\begin{equation}
\label{pgl} \tag{PGL$_\e$}
\begin{cases}
\displaystyle \frac{\partial u_\e}{\partial t} - \Delta u_\e =  \frac{1}{\e^2} u_\e(1-|u_\e|^2) \qquad  & \text{on } \m{R}^d \times (0,+\infty), \\
u_\e(x,0)  =  u_\e^0(x) & \text{for } x \in \m R^d,
\end{cases}
\end{equation}

and its associated energy 
\[ E_\e(w)=\int_{\m{R}^d} e_\e(w)(x) dx =\int_{\m{R}^d} \left( \frac{|\nabla w(x)|^2}{2} + V_\e(w(x)) \right) dx \quad \text{for} \quad w:\m{R}^d \rightarrow \m{C}, \]
where $V_\e$ denotes the non-convex double well potential and $e_\e$ is the energy density:
\[V_\e(w)=\frac{(1-|w|^2)^2}{4\e^2},  \quad \text{and} \quad e_\e(w) =  \frac{|\nabla w|^2}{2} + V_\e(w) .\]
It is a classical result that an initial data $u_0 \in L^\infty \cap \dot{H}^1$ yields a global in time solution $u(t) \in \q C([0,\infty), L^\infty \cap \dot{H}^1)$.

The Ginzburg-Landau equation (PGL$_1$) in the plane ($d=2$) admits vortex solutions of the form
\[\Psi(x,t)=\Psi(x)=U_\ell(r) \exp(i\ell \theta), \ \ell \in \m Z^*, \ U_\ell(0)=0,\ U_\ell(+\infty)=1\]
where $(r,\theta)$ corresponds to the polar coordinates in $\m{R}^2$ (by scaling, \eqref{pgl} admits stationary vortex solutions as well). Such functions $\Psi$ define complex planar vector fields whose zeros are called vortices (of order $\ell$, also called $\ell$-vortices). Vortices solutions arise naturally in Physics applications, and it is an important question to study the asymptotic analysis, as the parameter $\e$ goes to zero, of solutions to \eqref{pgl}.

We must stress out the fact that a single vortex does not belong to $\dot{H}^1(\m R^d)$ (for $d =2$, $|\nabla \Psi|(r,\theta) \sim d/r$, $\Psi \notin L^2(\m R^2)$ either). To overcome this problem, an easy way out is to consider configurations of multiple vortices
 where the sum of degrees of the vortices is equal to zero. In that case, the initial data belongs to the space of energy $ L^\infty \cap \dot{H}^1$, and we talk about well-prepared data, see for example Jerrard and Soner \cite{Je}, Lin \cite{Lin}, Sandier and Serfaty \cite{San}, and Spirn \cite{Spirn}.

One way to relax this condition was done in the seminal works by Bethuel, Orlandi and Smets \cite{Be1,Be2,Be4,Be5}: they consider \eqref{pgl} with $u_\e:\m{R}^d \times [0,+\infty) \rightarrow \m{C}$  and assume that the initial data $u_\e^0$ is in the energy space and verifies the bound
\begin{gather} 
\label{eq:energy_bound}
E_\e(u_\e^0) \le M_0 |\ln \e|
\end{gather}
where $M_0$ is a fixed positive constant. Observe that this condition encompasses large data, and almost gets rid of any well preparedness assumption. This only limitation can be seen as follows in dimension $d=2$ (where vortices are points): \eqref{eq:energy_bound} allows general sum of vortices, which are balanced by adding ``vortices at infinity'' (where the center of the vortices goes to spatial infinity as $\e \to 0$): in that case, for each $\e>0$, the initial data is of finite energy, but the limiting configuration can be any configuration of finitely many vortices.

\medskip

The main emphasis of \cite{Be1}, valid in any dimension, is placed on the asymptotic limits of the Radon measures $\mu_\e$ defined on $\m{R}^d \times [0,+\infty)$, and their time slices $\mu_\e^t$ defined on $\m{R}^d \times \{t\}$, by
\begin{gather}
\mu_\e(x,t)=\frac{e_\e(u_\e)(x,t)}{|\ln \e|}\ dxdt, \quad \text{and} \quad \mu_\e^t(x)=\frac{e_\e(u_\e)(x,t)}{|\ln \e|}\ dx, 
\end{gather}
so that $\mu_\e= \mu_\e^t dt$. 

The bound on the energy gives that, up to a subsequence $\e_m \rightarrow 0$, there exists a Radon measure $\mu_* = \mu_*^{t} dt $ defined on $\m{R}^d \times [0,+\infty)$ such that
\[ \mu_\e \rightharpoonup \mu_{*}, \quad \text{and} \quad \mu_\e^t \rightharpoonup \mu_{*}^{t} \]
as measures on $\m{R}^d \times [0,+\infty)$ and $\m{R}^d \times \{t\}$ for all $t \ge 0$, respectively (see \cite[Lemma 1]{Be1} and \cite{Ilm94}).  The purpose of \cite{Be1} is to describe the properties of the measures $\mu_{*}^t$: the main result is that asymptotically, the vorticity $\mu_*^t$ evolves according to motion by mean curvature in Brakke's weak formulation.



 


 

In dimension $d=2$, though, the vorticity $\mu^t_*$ is supported on a finite set of points (the vortices). One can actually compute that the energy of a $\ell$-vortex is roughly $\pi \ell^2 |\ln \e|$: the above bound \eqref{eq:energy_bound} implies that only a finite number of vortices can be created (at most $M_0/\pi$).
However the mean curvature flow for discrete points is trivial, they do not move. Therefore, in order to see the vortices evolve, one needs to consider a different regime, where time is adequately rescaled by a factor $|\ln \e|$. This is done by Bethuel, Orlandi and Smets in \cite{Be2,Be4,Be5}: they describe completely the asymptotics, and analyze precisely the dissipation times where collision or splitting of vortices occur. Again, the only assumption is the bound \eqref{eq:energy_bound} on the initial data $u_\e^0$ (and thus  $u_\e^0$ is in the energy space).

\bigskip

Our goal in this paper is to extend the results in \cite{Be1}, by relaxing the global energy bound \eqref{eq:energy_bound} to a local one. More precisely, we study families of solutions to \eqref{pgl} whose initial data $u_\e^0$ satisfy the following assumptions, for some constant $M_0 >0$: 
\begin{equation}
\label{in}  \tag{$H_1(M_0)$}
\begin{cases}
\forall \e>0, \quad u_\e^0 \in L^\infty(\m R^d), \\
\ds \forall \e>0, \ \forall x \in \m{R}^d, \quad \int_{B(x,1)} e_{\e}(u_{\e}^0)(y) dy \le M_0|\ln \e|.
\end{cases}
\end{equation}
Observe that \cite[Theorem 2]{DCot15} shows the existence of a unique solution $u_\e(t)$ to \eqref{pgl} with initial data $u_\e^0$ which is globally well defined for positive times. The crucial point is that \eqref{in} is a property which propagates along time, if one allows $M_0$ to depend on time. 
We also refer to \cite{BaOrWe09} where the Ginzburg-Landau functional is studied under a local energy bound in $|\ln \e|$ (and a $\Gamma$-convergence result is obtained).

\bigskip

Let us emphasize that the analysis here is done in the original time scale and not in the accelerated time scale (which is relevant for dimension $d=2$ only).

Our main results are the following. We define limiting energy $\mu_*^t$ and construct the vorticity set $\Sigma_\mu^t$ (and prove some regularity properties). Then we consider the concentrated energy $\nu_*^t$ on $\Sigma_\mu^t$ and show that it evolves under the mean curvature flow in a weak formulation. Finally we focus in dimension $d=2$, and show that in this case $\Sigma_\mu^t$ is made of a finite set of points which do not move.

We start with the description of the vorticity set and the decomposition of the asymptotic energy density.

\begin{thm}
\label{thA}
Let $(u_\e)_{\e \in (0,1)}$ be a family of solutions of \eqref{pgl} such that their initial conditions $u_\e^0$ satisfy \eqref{in}.

Then there exist a subset $\Sigma_\mu$ in $\m{R}^d \times (0,+\infty)$, and a smooth real-valued function $\Phi_{*}$ defined on $\m{R}^d \times (0,+\infty)$ such that the following properties hold.

\begin{enumerate}
\label{tteo1}
\item  $\Sigma_\mu$ is closed in $\m{R}^d \times (0,+\infty)$ and for any compact subset $K \in \m{R}^d \times (0,+\infty) \setminus \Sigma_\mu$,
\[|u_\e(x,t)| \rightarrow 1 \text{ uniformly on } K \text{ as } \e \rightarrow 0. \]

\item For any $t>0$ and $x \in \m R^d$, $\Sigma_\mu^t = \Sigma_\mu \cap \m{R}^d \times \{t\}$ verifies
\[  \mathcal{H}^{d-2} (\Sigma_\mu^t \cap B(x,1)) \le C M_0. \]
 
\item The function $\Phi_{*}$ verifies the heat equation on $\m{R}^d \times (0,+\infty)$.

\item For each $t>0$, the measure $\mu_{*}^t$ can be exactly decomposed as 
\begin{equation}
\label{x60}
\mu_{*}^t = |\nabla \Phi_{*}|^2(.,t) \mathcal{H}^{d}\ +\ \Theta_{*}(x,t)\mathcal{H}^{d-2}\llcorner \Sigma_\mu^t 
\end{equation}
where $\ds \Theta_{*}(.,t) : =\lim_{r \rightarrow 0} \frac{\mu_{*}^t(B(x,r))}{\omega_{d-2}r^{d-2}}  $ is a bounded function.

\item There exists a positive function $\eta$ defined on $(0,+\infty)$ such that, for almost every $t>0$, the set $\Sigma_\mu^t$ is $(d-2)$-rectifiable and
\[ \Theta_{*}(x,t)
 \ge \eta(t), \quad \text{for } \mathcal{H}^{d-2} \text{ a.e. } x \in \Sigma_\mu^t. \]
\end{enumerate}
\end{thm}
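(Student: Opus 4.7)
The plan is to adapt the strategy of Bethuel--Orlandi--Smets \cite{Be1} to our local energy framework, replacing the global monotonicity formulas and clearing-out lemmas by their localized counterparts. The first input, guaranteed by \cite{DCot15} together with \eqref{in}, is the propagation in time of the local energy bound: for every $t \ge 0$ and every $x \in \m R^d$,
\[ \int_{B(x,1)} e_\e(u_\e)(y,t)\, dy \le M(t) |\ln \e|, \]
with $M$ locally bounded on $[0,+\infty)$. Combined with cutoff tests and a diagonal extraction, this yields a subsequence $\e_m \to 0$, a Radon measure $\mu_*$ on $\m R^d \times (0,+\infty)$ with $\mu_{\e_m} \weak \mu_*$, together with time-slices $\mu_*^t$ for every $t > 0$.

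Next I would set up the vorticity set via a localized clearing-out lemma: there is a universal threshold $\eta_0 > 0$ such that if the parabolic-rescaled energy on a cylinder $B(x_0, r) \times (t_0 - r^2, t_0)$ is below $\eta_0$, then $|u_\e| \to 1$ uniformly on a smaller sub-cylinder. Accordingly, set
\[ \Sigma_\mu := \left\{ (x,t) \in \m R^d \times (0,+\infty) : \liminf_{r \to 0^+} \frac{\mu_*^t(B(x,r))}{\omega_{d-2}\, r^{d-2}} \ge \eta_0 \right\}. \]
Upper semicontinuity of the density furnished by the local monotonicity formula gives closedness of $\Sigma_\mu$, and (1) follows from the clearing-out applied outside $\Sigma_\mu$. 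Point (2) is then a Besicovitch covering: balls of radius $r$ centered on $\Sigma_\mu^t$ carry at least $\eta_0\, \omega_{d-2}\, r^{d-2}$ of $\mu_*^t$-mass, while the local bound caps the total mass in $B(x,1)$ by $C M_0$.

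The smooth $\Phi_*$ is constructed from the phase of $u_\e$ away from $\Sigma_\mu$: on compact subsets disjoint from $\Sigma_\mu$, $|u_\e| \to 1$, so one writes locally $u_\e = \rho_\e \exp(i \phi_\e)$. Plugging into \eqref{pgl} and separating real and imaginary parts shows that $\phi_\e$ satisfies the heat equation up to terms that vanish with $\rho_\e - 1$ and $\nabla \rho_\e$. Gluing these local phases into a single scalar — by a Hodge decomposition of the current $j(u_\e) = \langle i u_\e, \nabla u_\e \rangle$, whose curl is concentrated on $\Sigma_\mu$ in the limit — produces a global $\Phi_*$ which, upon passing to the limit, solves $\partial_t \Phi_* = \Delta \Phi_*$; parabolic regularity then gives smoothness, proving (3). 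The decomposition (4) reflects the split of $\mu_*^t$ into its diffuse part, identified with $|\nabla \Phi_*|^2 \mathcal H^d$ using the strong convergence of the regular part of the energy density away from $\Sigma_\mu$, and its singular part carried by $\Sigma_\mu^t$; the monotonicity formula provides the density $\Theta_*$ and its boundedness.

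For (5), rectifiability follows from Preiss' theorem: the local parabolic monotonicity shows that tangent measures to $\mu_*^t$ at $\mathcal H^{d-2}$-a.e. point of $\Sigma_\mu^t$ are cones, and a standard blow-up/stratification argument shows that these cones are $(d-2)$-flat. The lower bound $\Theta_*(x,t) \ge \eta(t)$ is obtained by controlling the dissipation of energy over small time intervals and transferring it to the concentrated mass via lower semicontinuity. The hard part will be the localization itself: in \cite{Be1} the finite total energy supplies global dissipation identities that are unavailable here, so one must revisit the covering, monotonicity and Pohozaev-type arguments relying only on the uniform local bound, and control cutoff error terms carefully --- especially when gluing $\Phi_*$ into a globally defined heat-equation solution, and when establishing the density lower bound in (5), where the absence of global dissipation forces genuinely local estimates.
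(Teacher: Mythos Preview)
Your outline follows the right architecture, but there is a genuine gap in the definition of $\Sigma_\mu$ and the arguments hanging off it. You set
\[
\Sigma_\mu=\Bigl\{(x,t):\ \liminf_{r\to 0}\frac{\mu_*^t(B(x,r))}{\omega_{d-2}r^{d-2}}\ge \eta_0\Bigr\},
\]
i.e.\ via the \emph{spatial} lower density at the fixed time $t$. But the monotonicity formula is intrinsically parabolic: what is nondecreasing in $r$ is
\[
\mathcal E_\mu((x,t),r)=\frac{1}{r^{d-2}}\int_{\m R^d}\exp\!\Bigl(-\frac{|x-y|^2}{4r^2}\Bigr)\,d\mu_*^{\,t-r^2}(y),
\]
which samples the slice at time $t-r^2$, not $t$. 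For this reason the paper defines $\Sigma_\mu$ through the \emph{parabolic} density $\Theta_{d-2}^P(\mu_*,(x,t))=\lim_{r\to 0}\mathcal E_\mu((x,t),r)$, which exists everywhere by monotonicity; its upper semicontinuity (Lemma~\ref{lem6.2}) then gives closedness. With your definition you have neither monotonicity nor upper semicontinuity available. The same issue bites in your Besicovitch covering for (2): the clearing-out only yields $\mu_*^{\,t-r^2}\bigl(B(x,\lambda(t-r^2)r)\bigr)\ge \eta_2 r^{d-2}$ (Lemma~\ref{lem6.1}), i.e.\ mass at an \emph{earlier} time on a ball of $t$-dependent radius, not $\mu_*^t$-mass on $B(x,r)$. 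The paper notes explicitly that the scaling argument of \cite{Be1} fails here; the covering must instead exploit the slow-variation property of the function $\lambda$ coming from Proposition~\ref{prop2}.

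Getting back from parabolic to spatial density---which you need for (4) and for the lower bound in (5)---is a separate, nontrivial step that your proposal skips. The paper introduces the dissipation measure $\omega_*=\lim |\partial_t u_\e|^2/|\ln\e|$, shows that its parabolic $(d-\tfrac32)$-upper density is finite off an $\mathcal H^{d-1}$-null set, and proves that away from this exceptional set the spatial and parabolic densities agree (Corollary~\ref{cor6.2}, Proposition~\ref{prop7}); this is precisely where the local $L^2$ bound on $\partial_t u_\e$ replaces the global energy dissipation identity you correctly flag as unavailable. Finally, rectifiability is not obtained via flat tangent cones: the paper shows that the $(d-2)$-density of $\mu_*^t$ \emph{exists} (upper equals lower) by approximating $\m 1_{[0,1]}$ in a Hermite basis starting from the Gaussian test functions supplied by monotonicity (Proposition~\ref{prop8}), and only then invokes Preiss' theorem.
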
 

In view of the decomposition (\ref{x60}), $\mu_*^t$ can be split into two parts: a diffuse part $|\nabla \Phi_{*}|^2$, and a concentrated part 
\begin{gather}
\label{def:nu}
\nu_*^t := \Theta_{*}(x,t)\mathcal{H}^{d-2} \llcorner \Sigma_\mu^t.
\end{gather}
By (3), the diffuse part is governed by the heat equation. Our next theorem focuses on the evolution of the concentrated part $\nu_*^t$.

\begin{thm}
\label{thB}
The family $(\nu_*^t)_{t>0}$ is a mean curvature flow in the sense of Brakke (see Section \ref{sec:mean_curvature} for definitions).
\end{thm}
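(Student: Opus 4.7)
I would adapt the Brakke-flow argument of Bethuel, Orlandi and Smets \cite{Be1} to the present local-energy setting. The saving grace is that Brakke's formulation is intrinsically local: for every nonnegative $\phi \in C^2_c(\m{R}^d)$ and a.e.\ $0 < s < t$, the required inequality
\[
\int \phi\, d\nu_*^t - \int \phi\, d\nu_*^s \le \int_s^t \!\int \left( \nabla \phi \cdot \vec H - \phi |\vec H|^2 \right) d\nu_*^\tau\, d\tau
\]
depends only on the measures inside the fixed compact set $\Supp \phi$. Consequently the global energy bound \eqref{eq:energy_bound} of \cite{Be1} can be replaced by the local bound \eqref{in} throughout, provided each intermediate estimate is carried out locally on a neighborhood of $\Supp \phi$.

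The starting point is the exact stress-energy identity at the $\e$-level: multiplying \eqref{pgl} by $\phi\, \partial_t u_\e$ and integrating by parts yields
\[
\frac{d}{dt} \int \phi\, d\mu_\e^t = -\int \phi\, \frac{|\partial_t u_\e|^2}{|\ln\e|}\, dx - \int \frac{\partial_t u_\e \cdot (\nabla \phi \cdot \nabla u_\e)}{|\ln\e|}\, dx,
\]
which displays the dissipation and the stress-energy flux. From there, the analysis mirrors \cite{Be1}: one combines this identity with the PGL monotonicity formula (valid locally under \eqref{in}), the clearing-out lemma, and Theorem~\ref{thA}(5), which provides rectifiability and a density lower bound on $\Sigma_\mu^t$. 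In the limit $\e \to 0$, the renormalized dissipation concentrates on $\Sigma_\mu^t$ and can be identified with $|\vec H|^2\, d\nu_*^t$, while the flux term, projected onto the tangent planes of $\Sigma_\mu^t$, produces the expected $\nabla \phi \cdot \vec H$ contribution. This identification uses the varifold framework of Ambrosio-Soner, applied locally.

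The main obstacle is the clean decoupling of the diffuse part $|\nabla \Phi_*|^2 \q{H}^d$ and the concentrated part $\nu_*^t$ in the above identity. The diffuse contribution must cancel exactly thanks to the heat equation satisfied by $\Phi_*$, leaving no spurious term in the Brakke inequality for $\nu_*^t$ alone. This requires quantitative $L^2_{\loc}$ control on the cross terms $\nabla \Phi_* \cdot (\nabla u_\e - \nabla \Phi_*)$ and the fact that the remainder $\nabla u_\e - \nabla \Phi_*$ concentrates its energy precisely on $\Sigma_\mu^t$ as $\e \to 0$. Localization brings a further subtlety: the Hodge-type decomposition extracting $\Phi_*$ must behave correctly under cutoffs around $\Supp \phi$, so that truncation errors remain negligible uniformly in $\e$; this should follow from standard elliptic estimates on slightly enlarged balls, combined with the ball-growth machinery of \cite{Be1}. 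Once this decoupling is in place, the limiting stress-energy and dissipation terms combine, exactly as in \cite{Be1}, into Brakke's inequality for $\nu_*^t$, completing the proof.
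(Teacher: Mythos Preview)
Your outline is essentially the paper's approach: start from the $\e$-level energy identity, pass to the limit, peel off the diffuse part using the heat equation for $\Phi_*$, and invoke the Ambrosio--Soner varifold machinery to close Brakke's inequality for $\nu_*^t$. Two points deserve sharpening.

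First, the dissipation term is \emph{not} identified with $|\vec H|^2\, d\nu_*^t$; one only obtains the lower semicontinuity inequality
\[
\liminf_{\e\to 0}\int \chi\,\frac{|\partial_t u_\e|^2}{|\ln\e|}\,dx \;\ge\; \int \chi\,|\partial_t\Phi_*|^2\,dx \;+\; \int \chi\,|\vec h|^2\, d\nu_*^t,
\]
and this is exactly what Brakke's \emph{inequality} needs. The paper obtains this via a Young-measure argument and then calls on Ambrosio--Soner for the concentrated part.

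Second, you are missing one genuinely substantive step: before you can call $\vec h$ a mean curvature you must show that the limit $A$ of the rescaled stress-energy tensor $\alpha_\e^t=(e_\e\Id-\nabla u_\e\otimes\nabla u_\e)/|\ln\e|$, restricted to $\Sigma_\mu^t$, equals the orthogonal projection $P$ onto $T_x\Sigma_\mu^t$. This is what turns the limiting identity $\int A^{ij}\partial_jX_i\,d\nu_*^t=-\int \vec X\cdot\vec h\,d\nu_*^t$ into the first-variation formula $\delta\nu_*^t=\vec h\,\nu_*^t$. The paper proves $A=P$ from the trace identity $\Tr A=d-2$ (since the potential part vanishes in the limit) together with a blow-up argument showing $(T_x\Sigma_\mu^t)^\perp\subset\ker A$.

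Finally, your concern about Hodge decompositions and cutoff errors for $\Phi_*$ is misplaced: $\Phi_*$ is already provided \emph{globally} by Theorem~\ref{thA}, and the diffuse cancellation is the exact identity obtained by multiplying $\partial_t\Phi_*-\Delta\Phi_*=0$ by $\chi\,\partial_t\Phi_*$; no further localization subtlety arises there.
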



For our last result, we focus on dimension $d=2$,  where vortices are points. We show that these vortex points do not move in the original time scale.

\begin{thm}
\label{theo3.1}
Let $d=2$. Then $\Sigma_\mu^t$ is a (countable) discrete set of $\m R^2$, which we can enumerate $\Sigma_\mu^t = \{ b_i(t) \mid i \in \m N \}$. Also, for all $x \in \m R^2$,
\[ \Card ( \Sigma_\mu^t \cap B (x,1)) \le C M_0, \]
and the points $b_i$ do not move, i.e.
\[ \forall t>0, \quad b_i(t)=b_i, \]
and $\ds\nu_*(t) = \sum_{i=1}^{+\infty} \sigma_i(t) \delta_{b_i}$, where the functions $\sigma_i(t)$ are non-increasing.
\end{thm}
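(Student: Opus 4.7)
My plan is to combine the discreteness of $\Sigma_\mu^t$ given by Theorem~\ref{thA}, the fact that a Brakke flow of $0$-dimensional rectifiable measures is essentially stationary (the approximate tangent plane at each atom is the trivial subspace $\{0\}$, and hence the generalised mean curvature vanishes), and a localisation argument with non-negative bump test functions.

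\emph{Step 1 (discrete structure).} In dimension $d=2$, $\mathcal{H}^{d-2}=\mathcal{H}^0$ is the counting measure, so Theorem~\ref{thA}(2) immediately yields $\Card(\Sigma_\mu^t \cap B(x,1)) \le C M_0$; combined with the closedness of $\Sigma_\mu$ in spacetime (item (1)), this shows that $\Sigma_\mu^t$ is closed and locally finite, hence countable and discrete. Enumerate its points; by \eqref{def:nu} the concentrated measure is atomic,
\[ \nu_*^t = \sum_i \Theta_*(b_i(t),t)\, \delta_{b_i(t)}. \]

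\emph{Step 2 (mass monotonicity).} By Theorem~\ref{thB}, $(\nu_*^t)$ is a Brakke flow of $0$-rectifiable measures in $\m R^2$. Since the approximate tangent space at every atom is $\{0\}$, for every $X\in C_c^\infty(\m R^2,\m R^2)$ the first variation
\[ \delta\nu_*^t(X) \,=\, \int\Div_{T_x \nu_*^t}X\, d\nu_*^t \,=\, 0 \]
vanishes (the trace is empty), and hence the generalised mean curvature vector $H$ is identically zero. Brakke's inequality then collapses to
\[ \overline{D}_t \int \phi\, d\nu_*^t \,\le\, \int\! \bigl(\nabla\phi\cdot H - \phi |H|^2\bigr)\, d\nu_*^t \,=\, 0 \]
for every $\phi\in C_c^\infty(\m R^2,\m R_+)$; in particular $t\mapsto \int\phi\, d\nu_*^t$ is non-increasing.

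\emph{Step 3 (rigidity of atoms and monotone weights).} Fix $t_0<t_1$ in the full-measure set of times where Theorem~\ref{thA}(5) holds. Suppose some $b^\star \in \Sigma_\mu^{t_1}$ (so $\Theta_*(b^\star,t_1)\ge\eta(t_1)>0$) is not in $\Sigma_\mu^{t_0}$. The closedness of $\Sigma_\mu$ in spacetime together with the discreteness of $\Sigma_\mu^{t_0}$ furnishes a radius $r>0$ and a bump $\phi\in C_c^\infty(B(b^\star,r))$ with $\phi(b^\star)>0$ and $\Supp(\phi)\cap\Sigma_\mu^{t_0}=\emptyset$; then $\int\phi\, d\nu_*^{t_0}=0$, so Step~2 forces $\int\phi\, d\nu_*^{t_1}=0$, contradicting $\int\phi\, d\nu_*^{t_1}\ge\Theta_*(b^\star,t_1)\phi(b^\star)>0$. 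Thus $\Sigma_\mu^{t_1}\subseteq\Sigma_\mu^{t_0}$ along good times. Choosing $\{b_i\}$ as an enumeration of $\bigcup_{t>0}\Sigma_\mu^t$ (locally finite, with at most $CM_0$ points per unit ball, by monotonicity and Theorem~\ref{thA}(2)) makes the positions time-independent. Applying Step~2 with $\phi$ a bump around a fixed $b_i$ small enough to exclude all other $b_j$ gives that $\sigma_i(t)=\Theta_*(b_i,t)$ is non-increasing.

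The main obstacle is, first, justifying the vanishing of the first variation rigorously within the precise formulation of Brakke flow employed in Section~\ref{sec:mean_curvature}, and second, promoting the conclusions from the a.e.-in-$t$ good times to every $t>0$; the latter is handled by approximating an arbitrary $t$ by a sequence of good times $t_n\to t$ and invoking the closedness of $\Sigma_\mu$ together with the lower semi-continuity of $t\mapsto \nu_*^t$ built into Brakke's formulation.
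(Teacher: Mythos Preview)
Your proof is correct and reaches the same key monotonicity $\nu_*^{t_1}(\chi)\le\nu_*^{t_0}(\chi)$ that the paper derives, but by a genuinely shorter route: you invoke Theorem~\ref{thB} as a black box and observe that a $0$-rectifiable Brakke flow has trivial approximate tangent spaces, hence identically vanishing first variation and generalised mean curvature, so Brakke's inequality collapses to pure mass monotonicity. The paper, by contrast, does \emph{not} cite Theorem~\ref{thB} in Section~5; it returns to the \eqref{pgl} level and, for $d=2$ specifically, re-derives the convergence of $|\partial_t u_\e|^2/|\ln\e|$ and $\partial_t u_\e\cdot\nabla u_\e/|\ln\e|$ outside $\Sigma_\mu$ (Proposition~\ref{pp3.1}, using the refined estimates of Theorem~\ref{theo2.1}), the structure of the limits $\sigma_*$ and $A_*$ (Lemmas~\ref{lemm3.1}--\ref{lemm3.2}), and the vanishing of $h$ and $B$ directly from discreteness of $\Supp\nu_*^t$ via cut-off vector fields (Proposition~\ref{pp3.2}); it then obtains \eqref{equat3.15} by passing to the limit in \eqref{equa2.1} and cancelling the $\Phi_*$ contributions explicitly. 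Your approach buys concision and makes transparent that Theorem~\ref{theo3.1} is really a corollary of Theorem~\ref{thB}; the paper's approach buys self-containedness (Section~5 can be read independently of Section~4) and incidentally produces the sharper pointwise bounds of Theorem~\ref{theo2.1}, which are of independent interest for the accelerated time scale. Your concern about promoting conclusions from a.e.~$t$ to every $t$ is legitimate but can be bypassed: the \emph{integral} version of Brakke's inequality established at the end of the proof of Theorem~\ref{thB} gives $\nu_*^{T_1}(\chi)\le\nu_*^{T_0}(\chi)$ for \emph{all} $0<T_0<T_1$ directly once $\vec h=0$, so no approximation-in-time argument is needed.
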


Local bounds on the energy, that is, assumption \eqref{in}, make the set of admissible initial data more natural. We can consider general vortex configurations in dimension 2, without adding a vortex at infinity to balance it (so as make the total sum of the vortices' degrees equal to 0 for each $\e>0$); important physically relevant examples encompass  point vortices on a infinite lattice (dimension 2) or general vortex filament families (with possibly infinite length) in dimension 3.
 
Another  striking difference betwen the global bound \eqref{eq:energy_bound} and \eqref{in} is the following. In dimension $d \ge 3$, \eqref{eq:energy_bound} implies that after some finite time, the vorticity vanishes, that is $\Sigma_\mu^t \subset \m R^d \times [0,T]$ for some $T$ depending on $M_0$ (see \cite[Proposition 3]{Be1}). This is now longer the case under \eqref{in}, which we believe is a more physically accurate phenomenon.

\bigskip

The assumption that $u_\e^0 \in L^\infty(\m R^d)$  seems technical (because it comes without bounds in term of $\e$), but uneasy to get rid of: the main reason being the lack of a suitable local well posedness in the space of functions with uniformly locally finite energy. Indeed, the closest result in this respect (besides \cite{DCot15} in the $L^\infty$ setting), is the work by Ginibre and Velo \cite{GiVe97}, whose results do not apply to the Ginzburg-Landau nonlinearity, and and even then, their control of the solution at time 0 seems too weak.

\bigskip

These results are an extension of the works of Bethuel, Orlandi and Smets \cite{Be1,Be2}, and the proofs are strongly inspired by these: Theorems \ref{thA} and \ref{thB} by Theorems A and B in \cite{Be1} and Theorem \ref{theo3.1} by Theorem 3.1 in \cite{Be2}. Our main contribution will be to systematically improve their estimates, in order to solve the new problems raised by our dealing with infinite energy solutions of \eqref{pgl}; especially to make sense of a monotonicity property, which is at the heart of the proofs in \cite{Be1,Be2}. We will also need to derive pointwise estimates on $u_\e$ and $L^2$ space time estimates on $\partial_t u_\e$: in the finite energy setting, it appears as the flux of the energy, but this is no longer the case in our context. A leitmotiv of this paper is that, although many of the bounds in \cite{Be1,Be2} are global in time and/or space, their arguments are in fact local in nature, and so can be adapted under the hypothesis \eqref{in}.

In the proofs, we will focus on the differences brought by our change of context, and only sketch the arguments when they are similar to that of \cite{Be1,Be2}.

\bigskip

A natural question is now to focus on dimension $d=2$ and to study the dynamics of vortices in the accelerated time frame, as it is done in \cite{Be2,Be4,Be5}. We believe that the arguments in these works could be extended under the hypothesis \eqref{in}.
However one has to make a meaningful sense of the limiting equation, (a pseudo gradient flow of the Kirchoff-Onsager functional involved), as it is not obviously well posed for a countable infinite number of points.
We leave these perspectives to subsequent research.

\bigskip

This paper is organized as follows. In Section 2, we study \eqref{pgl} and prove our main PDE tool, namely the clearing-out (stated in Theorem \ref{th1}). In Section 3, we define the limiting measure and the vorticity set $\Sigma_\mu$: we prove in particular regularity properties of $\Sigma_\mu^t$ and complete the proof of Theorem \ref{thA}. In Section 4, we show Theorem \ref{thB}, that is, the singular part $\nu_*^t$ follows the mean curvature flow in Brakke's weak formulation. Finally, in Section 5, we focus on dimension $d=2$ and prove Theorem \ref{theo3.1}.

\section[PDE Analysis of $($PGL$_\e)$]{PDE Analysis of \texorpdfstring{\eqref{pgl}}{PGL epsilon}}
\label{sec:cl-out}

\subsection{Statement of the main results on \texorpdfstring{\eqref{pgl}}{PGL epsilon}}

In this section, we work on \eqref{pgl}, that is with smooth solutions $u_\e$, where the parameter $\e$, although small, is \emph{positive}. We derive a number of properties on $u_\e$, which enter directly in the proof of the clearing- out Theorem \ref{th6} at the limit $\e \to 0$. Heuristically, the clearing-out means that if there is not enough energy in some region of space, then \emph{at a later time}, vortices can not be created in that region.

Let us first state the main results which will be proved in this section.

\subsubsection{Clearing-out and annihilation for vorticity}

The two main ingredients in the proof of the clearing-out are a clearing-out theorem for vorticity, as well as some precise pointwise energy bounds. Throughout this section, we suppose that $0<\e<1$. 
We define the vorticity set $\mathcal{V}_\e$ as
\[\mathcal{V}_\e=\left\{(x,t) \in \m{R}^d \times (0,+\infty) : \ |u_\e(x,t)|\le \frac{1}{2}\right\}.\]
Here is the precise statement.

\begin{thm}
\label{th1}
Let $0<\e<1$, $u_\e^0 \in L^{\infty}(\m{R}^d)$ and $u_\e$ be the associated solution of \eqref{pgl}. Let $\sigma>0$ be given. There exists $\eta_1=\eta_1(\sigma)>0$ depending only on the dimension $n$ and on $\sigma$ such that if
\begin{gather} \label{eq:en_conc}
\int_{\m{R}^d} e_\e(u_\e^0) \exp \left( -\frac{|x|^2}{4} \right) \le \eta_1 |\ln \e|,
\end{gather}
then \[ |u_\e(0,1)| \ge 1-\sigma. \]
\end{thm}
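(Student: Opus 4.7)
The plan is to combine an almost-monotonicity formula for \eqref{pgl} against a backward heat kernel (Struwe's trick) with a small-energy $\e$-regularity criterion at a scale $r$ intermediate between $\e$ and $1$. The weight $\exp(-|x|^2/4)$ in the hypothesis is, up to a dimensional constant, the value at $t=0$ of the backward heat kernel with pole at $(0,1)$:
\[ G(x,t) = \frac{1}{(4\pi(1-t))^{d/2}} \exp\!\l(-\frac{|x|^2}{4(1-t)}\r), \quad t < 1, \]
so if one sets $\ds I(t) := \int_{\m{R}^d} e_\e(u_\e)(x,t)\, G(x,t)\, dx$, the hypothesis reads $I(0) \le C_d\, \eta_1 |\ln \e|$.

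Differentiating $I$ using \eqref{pgl} and the backward heat identity $\partial_t G + \Delta G = 0$, I expect to obtain, after the classical Struwe-type integration by parts, an inequality of the form
\[ \frac{d}{dt} \l[(1-t) I(t)\r] + \int \l|(1-t)\partial_t u_\e + \tfrac{1}{2} x \cdot \nabla u_\e\r|^2 \frac{G}{1-t}\, dx \le C \int V_\e(u_\e)\, G\, dx, \]
the right-hand side correction reflecting the lack of scale invariance of \eqref{pgl}. Crucially, the computation is local in nature: the Gaussian decay of $G$ together with the pointwise bound $\|u_\e(t)\|_{L^\infty} \le \|u_\e^0\|_{L^\infty}$ (propagated by the maximum principle) makes every integral convergent without reference to a global energy bound. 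A Gronwall argument then yields $I(t) \le C \eta_1 |\ln\e|$ for all $t \in [0, 1-r^2]$ with $r > \e$. Since $G(x, 1-r^2) \ge c_d\, r^{-d}$ on $B(0,r)$, this translates into the localized estimate
\[ r^{2-d} \int_{B(0,r)} e_\e(u_\e)(x, 1-r^2)\, dx \le C\, \eta_1 |\ln \e|. \]

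Finally, I would conclude via a parabolic small-energy $\e$-regularity statement, itself proved using the pointwise bounds on $u_\e$ and $\nabla u_\e$ derived elsewhere in this section: there exists $\eta_0(\sigma) > 0$ such that the above localized smallness at a scale $r \in [\e, 1]$ forces $|u_\e(0,1)| \ge 1 - \sigma$. Choosing $r = \sqrt{\e}$ (or any convenient intermediate scale) and $\eta_1$ small enough compared to $\eta_0(\sigma)$ then closes the argument. The main obstacle is the monotonicity step in our infinite-energy setting: unlike in \cite{Be1}, no global energy bound is available to absorb tail contributions to the potential correction $\int V_\e(u_\e) G$. I would handle this by splitting $\m{R}^d$ into a large ball $B(0,R)$ and its complement, using the Gaussian decay of $G$ against the $L^\infty$ bound on $u_\e$ to make the far-field contribution arbitrarily small, and closing the near-field estimate by Gronwall on $I$ itself.
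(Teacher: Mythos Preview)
Your proposal has a fundamental gap: the final step is circular. The ``parabolic small-energy $\e$-regularity statement'' you invoke at the end---that $r^{2-d}\int_{B(0,r)} e_\e(u_\e)(\cdot, 1-r^2) \le C\eta_1 |\ln \e|$ forces $|u_\e(0,1)| \ge 1-\sigma$---is precisely the content of Theorem~\ref{th1} (after scaling, it is Proposition~\ref{prop1}). You have reduced the theorem to itself.

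The real difficulty is the $|\ln \e|$ factor. The monotonicity formula (which, incidentally, is \emph{exact} here, see \eqref{MF}: the potential term enters with a favourable sign, so no Gronwall correction is needed) only propagates the bound $E_{w,\e}((0,1),r) \le \eta_1 |\ln \e|$ for all $r \le 1$. This bound \emph{diverges} as $\e \to 0$, so at scale $r \sim \e$ it says nothing: a single vortex already carries energy $\sim |\ln \e|$, and the hypothesis allows exactly that. To conclude $|u_\e(0,1)| \approx 1$ via the chain \eqref{eq:th1_3}--\eqref{eq:th1_5}, one needs an \emph{absolute} bound $E_{w,\e}((0,1),r_\e) \le C\sqrt{\eta_1}$, with no $|\ln \e|$. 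Winning back this full factor of $|\ln \e|$ is the entire substance of the proof: it requires a pigeonhole argument over $\sim |\ln \e|$ dyadic scales to locate a good one (see \eqref{eq:th1_1}), followed by a Hodge--de~Rham decomposition of $u_\e \wedge \nabla u_\e$, the refined Jacobian estimate \eqref{eq:JS_Jac} of Jerrard--Soner (this is where the crucial $1/|\ln \e|$ gain originates), weighted elliptic estimates, and a comparison with an auxiliary heat equation, all assembled into an iteration inequality $E_w((0,1),\delta) \le \tfrac{1}{2} E_w((0,1),1) + C\sqrt{\eta}$. None of this machinery appears in your outline; the monotonicity step you describe is the easy preliminary, not the argument.
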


Notice that we only assume that $u^0_\e \in L^{\infty}$, whereas in \cite[Theorem 1]{Be1} the assumption was $E_\e(u_\e) <+\infty$; of course this latter bound will not be available for Theorem \ref{thA}. Observe that $L^\infty$ prevents us to use a density argument, and even more so as we are interested in non zero degree initial data. 
Also, the asumption \eqref{eq:en_conc} is not enough by itself to ensure existence and uniqueness of the solution to \eqref{pgl}, but $L^\infty$ is suitable (see \cite{DCot15}). 

Nonetheless the proof follows closely that of \cite[Part I]{Be1}, and we will only emphasizes the differences.

\bigskip

The proof of Theorem \ref{th1} requires a number of tools, in particular 
\begin{itemize}
\item the monotonicity formula, first derived by Struwe \cite{Str} in the case of the heat-flow for harmonic maps
\item a localizing property for the energy inspired by Lin and Rivière \cite{Lin2}
\item refined Jacobian estimates due to Jerrard and Soner \cite{Je2} 
\item techniques first developed for the stationary equation (for example \cite{BeBrHe94,BeBrOr01,BeBrOr05}).
\end{itemize}

Equation \eqref{pgl} has standard scaling properties. If $u_\e$ is a solution to \eqref{pgl}, then for $R>0$ the function $(x,t) \mapsto u_\e(Rx,R^2t)$ is a solution to $(PGL)_{R^{-1} \e}$, to which we may then apply Theorem \ref{th1}.
%
As an immediate consequence of Theorem \ref{th1} and scaling, we have the following result.

\begin{prop}
\label{prop1}
Let $T>0,\ x_T \in \m{R}^d$, and set $z_T=(x_T,T)$. Let $u_\e^0 \in L^{\infty}(\m{R}^d)$ and $u_\e$ be the associated solution of \eqref{pgl}. Let $R> \sqrt{2\e}$. Assume moreover
\[ \frac{1}{R^{d-2}} \int_{\m{R}^d} e_\e(u_\e)(x,T) \exp \left(-\frac{|x-x_T|^2}{4R^2} \right) dx \le \eta_1(\sigma) |\ln \e|, \]
then
\[ |u_\e(x_T,T+R^2)| \ge 1-\sigma. \]
\end{prop}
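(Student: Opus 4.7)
The strategy is a direct parabolic rescaling reducing Proposition \ref{prop1} to Theorem \ref{th1}. Given $z_T = (x_T, T)$ and $R > \sqrt{2\e}$, I define the rescaled function
\[ v(y, s) := u_\e(x_T + Ry,\ T + R^2 s), \qquad y \in \m R^d,\ s \ge 0. \]
A direct computation shows that $v$ solves \eqref{pgl} with parameter $\tilde\e := \e/R$ in place of $\e$; since $\e < 1$ and $R > \sqrt{2\e} \ge \e$, we have $\tilde\e \in (0,1)$, so Theorem \ref{th1} applies to $v$ with initial datum $v^0(y) = u_\e(x_T + Ry, T)$.

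The next step is to transfer the energy hypothesis through the change of variables. A straightforward chain rule gives $e_{\tilde\e}(v^0)(y) = R^2 e_\e(u_\e)(x_T + Ry, T)$, and the substitution $x = x_T + Ry$ yields
\[ \int_{\m R^d} e_{\tilde\e}(v^0)(y) e^{-|y|^2/4}\, dy = \frac{1}{R^{d-2}} \int_{\m R^d} e_\e(u_\e)(x, T) e^{-|x-x_T|^2/(4R^2)}\, dx, \]
which is exactly the quantity controlled by $\eta_1(\sigma)|\ln \e|$ in the hypothesis of Proposition \ref{prop1}.

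The only point that requires care is to compare $|\ln \e|$ with $|\ln \tilde\e|$, which is the scale that appears on the right-hand side of Theorem \ref{th1}. Since $\tilde\e < 1$, one has $|\ln \tilde\e| = |\ln \e| + \ln R$, and combined with $R > \sqrt{2\e}$ this yields
\[ |\ln \tilde\e| \ge \tfrac12 |\ln \e| + \tfrac12 \ln 2 \ge \tfrac12 |\ln \e|. \]
Hence, up to replacing $\eta_1(\sigma)$ by $\eta_1(\sigma)/2$ (still depending only on $\sigma$ and the dimension), the hypothesis of Theorem \ref{th1} holds for $v$ with parameter $\tilde\e$. The conclusion $|v(0,1)| \ge 1 - \sigma$ then reads $|u_\e(x_T, T + R^2)| \ge 1 - \sigma$, as desired. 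The main (minor) obstacle is this constant bookkeeping; the role of the hypothesis $R > \sqrt{2\e}$ is precisely to prevent $\tilde\e$ from being too close to $1$, so that $|\ln \tilde\e|$ remains comparable to $|\ln \e|$.
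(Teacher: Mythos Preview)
Your proof is correct and is exactly the scaling argument the paper has in mind; the paper states Proposition \ref{prop1} as ``an immediate consequence of Theorem \ref{th1} and scaling'' without spelling out a proof, and you have filled in precisely those details, including the constant comparison between $|\ln \e|$ and $|\ln \tilde\e|$ that justifies the condition $R > \sqrt{2\e}$.
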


The condition in Proposition \ref{prop1} involves an integral on the whole of $\m{R}^d$. In some situations, it will be convenient to integrate on finite domains. Here is how one should localize in space the conditions.

\begin{prop}
\label{prop2}
Let $u_\e$ be a solution of \eqref{pgl} satisfying the initial data \eqref{in}.
Let $\sigma >0$ be given. Let $ T>0, \ x_T \in \m{R}^d$, and $R \in [\sqrt{2\e},1]$.
There exists a positive continuous function $\lambda$ defined on $(0,+\infty)$ such that if
\[ \tilde{\eta}(x_T,T,R) \equiv \frac{1}{R^{d-2}|\ln \e|}  \int_{B(x_T,\lambda(T) R)} e_\e(u_\e)(x,T) dx \le \frac{\eta_1(\sigma)}{2}, \]
then \[ |u_\e(x,t)| \ge 1-\sigma \quad \text{ for } t\in [T+T_0,T+T_1] \text{ and } x \in B(x_T,\frac{R}{2}),\]
where $T_0=\max \left( 2\e,(\frac{2\tilde{\eta}}{\eta_1(\sigma)})^{\frac{2}{d-2}}R^2 \right)$ ($T_0=2\e$ in dimension $d=2$), and $T_1=R^2$.

Furthermore, $\lambda$ is non increasing on $(0,1]$, and non decreasing on $[1,+\infty)$, and there exists an absolute constant $C$ (not depending on $T$) such that
\[ \forall T >0, \forall \tau \in (T/2, 2T), \quad \lambda(\tau) \le C \lambda(T). \]
\end{prop}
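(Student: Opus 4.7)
The plan is to reduce Proposition \ref{prop2} to Proposition \ref{prop1} by verifying its Gaussian hypothesis at appropriately chosen base points. Precisely, for any $(x,t) \in B(x_T, R/2) \times [T+T_0, T+T_1]$ I set $R' := \sqrt{t-T} \in [\sqrt{T_0}, R]$ and apply Proposition \ref{prop1} (in its rescaled version) at the base point $(x, T)$ with scale $R'$; this produces $|u_\e(x,t)| \ge 1 - \sigma$ as soon as one can show
\[
\mathcal{I} := \frac{1}{R'^{d-2}} \int_{\m R^d} e_\e(u_\e)(y, T) \exp\l(-\frac{|y-x|^2}{4R'^2}\r) dy \le \eta_1(\sigma) |\ln \e|.
\]
The available information is the localized hypothesis $\tilde\eta \le \eta_1(\sigma)/2$ on $B(x_T, \lambda R)$, together with the in-time propagation of the local bound \eqref{in} (to be proved earlier in Section \ref{sec:cl-out}) yielding a continuous positive function $M(T)$ with $\sup_{y_0 \in \m R^d} \int_{B(y_0, 1)} e_\e(u_\e)(y, T) dy \le M(T) |\ln \e|$. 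The function $\lambda(T)$ will be chosen a posteriori as a function of $M(T)$, $\sigma$ and $d$ alone.

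I would split $\mathcal{I} = \mathcal{I}_{\text{in}} + \mathcal{I}_{\text{out}}$ according to whether $y \in B(x_T, \lambda R)$. For $\mathcal{I}_{\text{in}}$, bounding the Gaussian by $1$ and using the hypothesis gives
\[
\mathcal{I}_{\text{in}} \le \tilde \eta \l(\frac{R}{R'}\r)^{d-2} |\ln\e|,
\]
and the choice $T_0 = \max(2\e, (2\tilde\eta/\eta_1)^{2/(d-2)} R^2)$ is tailored precisely so that $R'^2 \ge T_0$ forces $\mathcal{I}_{\text{in}} \le \frac{\eta_1(\sigma)}{2} |\ln \e|$. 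In dimension $d=2$ the factor $(R/R')^{d-2}$ is identically $1$ and the hypothesis suffices on its own, which is why only the well-posedness constraint $T_0 \ge 2\e$ of Proposition \ref{prop1} survives.

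For $\mathcal{I}_{\text{out}}$, taking $\lambda \ge 1$ one has $|y - x| \ge |y - x_T|/2$ whenever $|y - x_T| \ge \lambda R$, so the Gaussian factor is dominated by $\exp(-|y - x_T|^2/(16 R'^2))$. A co-area or dyadic decomposition in the distance to $x_T$, combined with the unit-ball bound $M(T)|\ln\e|$, yields an estimate of the shape
\[
\mathcal{I}_{\text{out}} \le C M(T) \frac{(\lambda R)^d}{R'^{d-2}} \exp\l(-\frac{\lambda^2 R^2}{16 R'^2}\r) |\ln\e|.
\]
Writing $r = R'/R \in (0, 1]$, the map $r \mapsto r^{-(d-2)} \exp(-\lambda^2/(16 r^2))$ is maximized at the endpoint $r = 1$ once $\lambda \ge \sqrt{8(d-2)}$, so using $R \le 1$ I obtain $\mathcal{I}_{\text{out}} \le C M(T) \lambda^d \exp(-\lambda^2/16) |\ln\e|$. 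It then suffices to define $\lambda(T)$ as the smallest $\lambda \ge \sqrt{8(d-2)}$ for which $C M(T) \lambda^d \exp(-\lambda^2/16) \le \eta_1(\sigma)/2$. The two estimates together give $\mathcal{I} \le \eta_1(\sigma) |\ln\e|$, and Proposition \ref{prop1} closes the argument.

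The principal obstacle is the control of $\mathcal{I}_{\text{out}}$: with infinite energy data the naive estimate $\int_{\m R^d} e_\e < \infty$ is unavailable, and the factor $1/R'^{d-2}$ can be as bad as $\e^{-(d-2)/2}$ when $R'\sim \sqrt{2\e}$. The decisive saving is the Gaussian factor $\exp(-\lambda^2 R^2/(16 R'^2))$, whose tension with the negative powers of $R'$ is resolved in our favor precisely because $R'/R \le 1$; this is what allows $\lambda(T)$ to depend only on $M(T)$, and not on $R$ or $\e$. The continuity, monotonicity on $(0,1]$ and $[1, +\infty)$, and doubling property $\lambda(\tau) \le C \lambda(T)$ are then inherited from the corresponding properties of $M(T)$ produced by the propagation step.
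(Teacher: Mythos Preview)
Your overall reduction to Proposition \ref{prop1} and the treatment of $\mathcal{I}_{\text{in}}$ are exactly what the paper does. The gap is in the tail $\mathcal{I}_{\text{out}}$: the intermediate estimate
\[
\mathcal{I}_{\text{out}} \le C\,M(T)\,\frac{(\lambda R)^d}{R'^{\,d-2}}\,\exp\l(-\frac{\lambda^2 R^2}{16 R'^{\,2}}\r)|\ln\e|
\]
is not what the unit-ball bound delivers. A dyadic decomposition into shells $\{2^k\lambda R\le|y-x_T|<2^{k+1}\lambda R\}$, each covered by unit balls, gives an energy bound $C\max(1,2^{k+1}\lambda R)^d\,M(T)|\ln\e|$ per shell, \emph{not} $C(2^{k+1}\lambda R)^d\,M(T)|\ln\e|$. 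When $\lambda R<1$ --- which is the generic situation, since $R$ ranges down to $\sqrt{2\e}$ while $\lambda$ must stay independent of $\e$ --- the small-$k$ shells all sit inside a single unit ball and each carries energy up to $M(T)|\ln\e|$; the sum is then only $\le C\,M(T)\,e^{-\lambda^2R^2/(16R'^{2})}|\ln\e|$, with no $(\lambda R)^d$ prefactor. After the normalization and your optimization in $r=R'/R\in(0,1]$ you are left with $C\,M(T)\,R^{-(d-2)}e^{-\lambda^2/16}|\ln\e|$, and for $R=\sqrt{2\e}$ this forces $\lambda\gtrsim\sqrt{|\ln\e|}$, so $\lambda$ can no longer be chosen independent of $\e$.

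The paper avoids this by bounding the tail through the \emph{monotonicity formula} (Proposition \ref{prop2.3}, display \eqref{energy_local}) rather than through the propagated unit-ball bound. Running the scale from $\sqrt2\,r$ up to $\sqrt{T+2r^2}$ and invoking \eqref{in} at time $0$ upgrades the crude estimate $\int_{\m R^d}e^{-|y-x_0|^2/(8r^2)}e_\e(u_\e)(y,T)\,dy\le C\,M(T)|\ln\e|$ to one carrying an extra factor $r^{d-2}$, which exactly cancels the $1/r^{d-2}$ normalization. The resulting tail bound $C(d)\,(1+T)^d T^{-(d-2)/2}\,e^{-\lambda^2/8}M_0|\ln\e|$ is uniform in $r\in(0,1]$; this is what allows $\lambda(T)$ to depend only on $T,M_0,d,\sigma$, and the stated monotonicity and doubling properties of $\lambda$ are then read off from this explicit expression rather than inherited from $M(T)$.
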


\begin{nb} \label{nb:no_vanishing}
Recall that in dimension $d \ge 3$, a bound on the initial energy \emph{on the whole space} \eqref{eq:energy_bound} implies that in finite time, the vorticity vanishes (i.e $\Sigma_\mu^t \subset \m R^d \times [0,T]$). It is an easy consequence of the monotonicity formula combined with Theorem \ref{th1} ($\q E_{w,\e}(x,t,\sqrt{t}) \to 0$ uniformly in $x$).

In the case of uniform \emph{local} bound on the energy $H_1(M_0)$, this result does not persist, because  the monotonity formula does not imply the vanishing of $\q E_{w,\e}$ for large times. This is one striking difference with the finite energy case.
\end{nb}

\subsubsection{Improved pointwise energy bounds}

The following result reminds of a result of Chen and Struwe \cite{Che} developped in the context of the heat flow for harmonic maps.

\begin{thm}
\label{th2}
Let $u_\e$ be a solution of \eqref{pgl} whose initial data satisfies \eqref{in}.
Let $B(x_0,R)$ be a ball in $\m{R}^d$ and $T>0, \Delta T>0$ be given. Consider the cylinder
\[ \Lambda= B(x_0,R) \times [T, T+\Delta T]. \]
There exist two constants $0<\sigma \le \frac{1}{2}$ and $\beta>0$ depending only on $d$ such that the following holds. Assume that
\[ |u_\e| \ge 1-\sigma \text{ on } \Lambda. \]
Then 
\begin{equation}
\label{eq11}
e_\e(u_\e)(x,t) \le C(\Lambda) \int_{\Lambda} e_\e(u_\e) , 
\end{equation}
for any $\ds (x,t) \in \Lambda_{\frac{1}{2}} :=B(x_0,\frac{R}{2}) \times [T+\frac{\Delta T}{4}, T+\Delta T]$. Moreover,
\[ e_\e(u_\e)= |\nabla \Phi_\e|^2 +\kappa_\e \quad\text{ in } \Lambda_{\frac{1}{2}}, \]
where the functions $\Phi_\e$ and $\kappa_\e$ are defined on $\Lambda_{\frac{1}{2}}$, and verify
\begin{gather} 
\frac{\partial \Phi_\e}{\partial t} -\Delta \Phi_\e = 0 \quad \text{ in } \Lambda_{\frac{1}{2}}, \nonumber \\
\label{eq14}
\|\kappa_\e\|_{L^{\infty}(\Lambda_{\frac{1}{2}})} \le
C(\Lambda) M_0 \e^{\beta},\quad  \| \nabla \Phi_\e\|_{L^{\infty}(\Lambda_{\frac{1}{2}})} \le C(\Lambda) M_0 | \ln \e |. 
\end{gather}
On $\Lambda$ one can write $u_\e = \rho_\e e^{i \varphi_\e}$ where $\varphi_\e$ is smooth (and $\rho_\e = |u_\e|$), and we have the bound
\begin{gather}
\label{eq:th2:3}
\| \nabla \varphi_\e - \nabla \Phi_\e \|_{L^\infty(\Lambda_{\frac{1}{2}})} \le C(\Lambda) \e^\beta.
\end{gather}
\end{thm}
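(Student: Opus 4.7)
Since $|u_\e| \geq 1-\sigma > 0$ on the simply connected set $\Lambda$, one may write $u_\e = \rho_\e e^{i\varphi_\e}$ with $\rho_\e = |u_\e|$ and $\varphi_\e \in C^\infty(\Lambda)$, unique up to an additive constant in $2\pi\m Z$. Separating real and imaginary parts of \eqref{pgl} yields the system
\begin{gather*}
\partial_t \rho_\e - \Delta \rho_\e + \rho_\e |\nabla \varphi_\e|^2 = \frac{\rho_\e(1-\rho_\e^2)}{\e^2}, \\
\rho_\e^2 (\partial_t \varphi_\e - \Delta \varphi_\e) = 2\rho_\e \nabla \rho_\e \cdot \nabla \varphi_\e,
\end{gather*}
which will be the workhorse. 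I will first prove \eqref{eq11}, then control the modulus, and finally construct $\Phi_\e$.

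For \eqref{eq11}: combining the local energy identity $\partial_t e_\e(u_\e) + |\partial_t u_\e|^2 = \Div(\nabla u_\e \cdot \partial_t u_\e)$ with a Bochner computation for $|\nabla u_\e|^2$, one obtains a parabolic differential inequality for $e_\e$. The potentially bad term $(1-|u_\e|^2)|\nabla u_\e|^2/\e^2$ arising from the Hessian of $V_\e$ is absorbed thanks to $1-|u_\e|^2 \le 2\sigma$, at the cost of a factor that depends only on $\sigma$ (and not on $\e$); combined with a parabolic Caccioppoli estimate, a standard Moser-type mean-value iteration, in the spirit of Chen-Struwe, yields \eqref{eq11} on $\Lambda_{1/2}$. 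Propagating \eqref{in} in time along $\Lambda$ (Gronwall in the local energy on $B(x_0,R)$) provides $\int_\Lambda e_\e \le C(\Lambda) M_0 |\ln\e|$, hence $e_\e(u_\e) \le C(\Lambda) M_0 |\ln\e|$ pointwise on $\Lambda_{1/2}$.

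Setting $w_\e := 1-\rho_\e^2$ and multiplying the $\rho_\e$-equation by $2\rho_\e$ gives the massive heat equation
\[
\partial_t w_\e - \Delta w_\e + \frac{2\rho_\e^2}{\e^2} w_\e = 2|\nabla u_\e|^2 \le C(\Lambda) M_0 |\ln\e|.
\]
Since $\rho_\e \ge 1-\sigma$, the zeroth-order coefficient is $\ge c/\e^2$; the maximum principle, with boundary contribution exponentially damped on the scale $\e$, yields $w_\e \le C(\Lambda) \e^2 M_0 |\ln\e|$ on $\Lambda_{3/4}$. Rescaling by length $\e$ and applying interior parabolic Schauder estimates propagates this to $|\nabla w_\e| \le C(\Lambda) M_0 \e^\beta$ on $\Lambda_{1/2}$ for some $\beta>0$ depending only on $d$, whence $|\nabla \rho_\e| = O(\e^\beta)$ and $V_\e(u_\e) = w_\e^2/(4\e^2) = O(\e^{2\beta})$ there. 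Now define $\Phi_\e$ on $\Lambda_{1/2}$ as the solution of the heat equation with parabolic boundary data equal to $\varphi_\e$ on $\partial_p \Lambda_{1/2}$. Then $\psi_\e := \varphi_\e - \Phi_\e$ vanishes on $\partial_p \Lambda_{1/2}$ and satisfies
\[
\partial_t \psi_\e - \Delta \psi_\e = \frac{2 \nabla \rho_\e \cdot \nabla \varphi_\e}{\rho_\e},
\]
whose right-hand side is pointwise $O(\e^\beta \sqrt{M_0|\ln\e|})$; classical parabolic regularity then gives $\|\nabla \psi_\e\|_\infty \le C(\Lambda)\e^{\beta'}$ on $\Lambda_{1/2}$, which is \eqref{eq:th2:3} after a final reduction of $\beta$. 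Writing $e_\e(u_\e) = |\nabla \rho_\e|^2/2 + \rho_\e^2|\nabla \varphi_\e|^2/2 + V_\e(u_\e)$ and subtracting $|\nabla \Phi_\e|^2$ gives $\kappa_\e$ as a sum of $|\nabla \rho_\e|^2$, $V_\e(u_\e)$, $(\rho_\e^2-1)|\nabla \varphi_\e|^2$, and mixed terms in $\nabla \psi_\e$, each of which is $O(M_0 \e^\beta)$, proving \eqref{eq14}. The bound $\|\nabla \Phi_\e\|_\infty \le C(\Lambda) M_0 |\ln\e|$ follows by the triangle inequality $|\nabla \Phi_\e| \le |\nabla \varphi_\e| + \|\nabla \psi_\e\|_\infty \le \sqrt{2 e_\e/\rho_\e^2} + O(\e^\beta)$ combined with the pointwise bound on $e_\e$.

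\textbf{Main obstacle.} The principal difficulty is the pointwise bound \eqref{eq11}: the Bochner formula for $e_\e(u_\e)$ produces terms of order $(1-|u_\e|^2)|\nabla u_\e|^2/\e^2$ which are not naively controlled, and one must carefully exploit the sign structure together with $|u_\e| \ge 1-\sigma$ to extract a sub-heat inequality whose constants are uniform in $\e$. A secondary issue, proper to the local framework \eqref{in}, is that the integrated energy bound $\int_\Lambda e_\e \le C(\Lambda) M_0 |\ln\e|$ used throughout relies on a time-propagation argument for the local energy on $B(x_0,R)$, so the constants $C(\Lambda)$ pick up an exponential dependence on $\Delta T$; all boundary-layer arguments on $w_\e$ must then be carried out with this $\e$-independent length scale in mind.
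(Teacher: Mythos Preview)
Your plan has the right overall architecture (polar coordinates, modulus via a massive heat equation, phase via subtracting a caloric part), and once \eqref{eq11} is in hand your steps for $\rho_\e$ and $\Phi_\e$ are essentially the paper's. The difficulty is that your derivation of \eqref{eq11} does not close.

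You are right that the Bochner computation gives, after the Young-type splitting, a differential inequality of the form
\[
(\partial_t - \Delta) e_\e(u_\e) \le C(\sigma)\, e_\e(u_\e)^2
\]
with $C(\sigma)$ independent of $\e$; the bad Hessian term $\frac{2(1-\rho_\e^2)}{\e^2}|\nabla u_\e|^2$ is indeed absorbed by $-\frac{4\rho_\e^2}{\e^2}|\nabla\rho_\e|^2$ and $-\frac{(1-\rho_\e^2)^2\rho_\e^2}{\e^4}$, leaving a remainder $\lesssim e_\e^2$. But the Chen--Struwe/Moser iteration from a \emph{quadratic} inequality of this type only yields \eqref{eq11} under a smallness assumption on the scaled energy $r^{2-d}\!\int e_\e$. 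Here the available bound is $\int_\Lambda e_\e \le C(\Lambda) M_0 |\ln\e|$, which is large, so the iteration does not close with $\e$-independent constants; bootstrapping via the crude pointwise bound $e_\e \le C/\e^2$ only gives $(\partial_t-\Delta)e_\e \le (C/\e^2)e_\e$, hence an exponentially bad factor $e^{C\Delta T/\e^2}$.

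The paper circumvents this by \emph{reversing the order}: it first analyses the phase equation (uniformly parabolic since $|u_\e|\ge 1-\sigma$), splits $\varphi_\e = \varphi_{\e,0} + \varphi_{\e,1}$ with $\varphi_{\e,0}$ caloric, and obtains improved integrability $\nabla\varphi_\e \in L^2_tL^q_x$ for some $q>2$. This yields a preliminary decomposition
\[
e_\e(u_\e) \le \tfrac12|\nabla\varphi_{\e,0}|^2 + \kappa_\e,\qquad \int_{\Lambda_{3/4}}\kappa_\e \le C(\Lambda)M_0\,\e^\alpha,
\]
so that at small parabolic scales the energy is essentially carried by a \emph{smooth} function. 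A scaling argument then produces, around any point of $\Lambda_{1/2}$, a small cylinder on which the scaled energy is below the Chen--Struwe threshold, and \eqref{eq11} follows. In short: the phase decomposition comes \emph{before} \eqref{eq11}, not after, and the scaling step is what makes up for the lack of global smallness. Your subsequent arguments for $w_\e$, $\psi_\e$ and $\kappa_\e$ are then correct and match the paper's (they are the content of Lemma~\ref{lem1.1} and the final bootstrap).
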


Combining Proposition \ref{prop2} and Theorem \ref{th2}, we obtain the following immediate consequence.

\begin{prop}
\label{prop4}
Let $u_\e$ be a solution of \eqref{pgl} satisfying whose initial data satisfies \eqref{in}.
There exist an absolute constant $\eta_2>0$ and a positive function $\lambda$ defined on $(0,+\infty)$ such that, if for $x \in \m{R}^d,\ t>0$ and $r \in[\sqrt{2\e},1]$, we have
\[ \int_{B(x,\lambda(t) r)} e_\e(u_\e) \le \eta_2 r^{d-2}|\ln \e|, \]
then
\[ e_\e(u_\e) =|\nabla \Phi_\e|^2 + \kappa_\e \]
in $\Lambda_{\frac{1}{4}}(x,t,r) \equiv B(x,\frac{r}{4}) \times [t+\frac{15}{16}r^2,t+r^2]$, where $\Phi_\e$ and $\kappa_\e$ are as in Theorem \ref{th2}. 

In particular,
\[ \mu_\e=\frac{e_\e(u_\e)}{|\ln \e|} \le C(t,r) \quad \text{ on } \Lambda_{\frac{1}{4}}(x,t,r). \]
\end{prop}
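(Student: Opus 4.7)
The statement is essentially a composition of Proposition \ref{prop2} with Theorem \ref{th2}, so I would obtain it by a careful matching of parameters and cylinder scales, with no substantive new analysis.

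First I would fix $\sigma$ (and $\beta$) as the constants produced by Theorem \ref{th2}, let $\lambda$ be the function from Proposition \ref{prop2} associated to this $\sigma$, and choose $\eta_2 \le \eta_1(\sigma)/2$ small enough (depending only on $d$) that $(2\eta_2/\eta_1(\sigma))^{2/(d-2)} \le 1/2$ when $d \ge 3$ (no such restriction is needed when $d = 2$). With these choices the hypothesis is exactly $\tilde\eta(x,t,r) \le \eta_1(\sigma)/2$, so Proposition \ref{prop2} applies with $T = t$, $x_T = x$, $R = r$ and gives $|u_\e| \ge 1 - \sigma$ on $B(x, r/2) \times [t + T_0, t + r^2]$ with $T_0 \le \max(2\e, r^2/2)$. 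A mild strengthening of the lower bound on $r$, from $\sqrt{2\e}$ to an absolute multiple of $\sqrt{\e}$ (say $r \ge 2\sqrt{\e}$, which one has to impose anyway to make the time window nondegenerate), ensures $T_0 \le r^2/2$, so $|u_\e| \ge 1 - \sigma$ on $\Lambda := B(x, r/2) \times [t + r^2/2, t + r^2]$.

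Next I would apply Theorem \ref{th2} to this $\Lambda$, i.e.\ with radius $r/2$ and $\Delta T = r^2/2$. Its reduced cylinder $\Lambda_{1/2}$ is then $B(x, r/4) \times [t + 5r^2/8, t + r^2]$, which contains $\Lambda_{1/4}(x, t, r) = B(x, r/4) \times [t + 15 r^2/16, t + r^2]$ since $5/8 < 15/16 < 1$. On this set Theorem \ref{th2} produces the decomposition $e_\e(u_\e) = |\nabla \Phi_\e|^2 + \kappa_\e$ with $\Phi_\e$ solving the heat equation and $\kappa_\e, \nabla \Phi_\e$ satisfying \eqref{eq14}. For the pointwise bound on $\mu_\e$ the key ingredient is the estimate \eqref{eq11},
\[ e_\e(u_\e)(y, s) \le C(\Lambda) \int_\Lambda e_\e(u_\e), \]
combined with the propagation of the local energy bound \eqref{in} in time (with $M_0$ allowed to depend on $t$, as noted right after the definition of \eqref{in}). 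Since $r \le 1$, a single unit ball covers $B(x, r/2)$, so $\sup_{s \in [t, t+r^2]} \int_{B(x, r/2)} e_\e(u_\e)(\cdot, s) \le C(t)|\ln\e|$, whence $\int_\Lambda e_\e(u_\e) \le C(t, r)|\ln\e|$. Dividing by $|\ln\e|$ yields $\mu_\e \le C(t, r)$ on $\Lambda_{1/4}(x, t, r)$, completing the argument.

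The plan involves no substantive obstacle; what requires care is the bookkeeping of constants through the chain $\eta_1 \leadsto \eta_2 \leadsto \sigma \leadsto \beta$, the verification of the nested cylinder inclusion $\Lambda_{1/4}(x,t,r) \subset \Lambda_{1/2}$, and invoking the propagation of \eqref{in} to absorb all time-dependence into the constant $C(t, r)$.
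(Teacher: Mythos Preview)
Your proposal is correct and follows exactly the paper's approach: the paper states Proposition \ref{prop4} as an ``immediate consequence'' of combining Proposition \ref{prop2} and Theorem \ref{th2}, with the parenthetical remark that $\eta_2$ is taken to be $\eta_1(\sigma)$ for $\sigma$ the constant of Theorem \ref{th2}. Your bookkeeping (the cylinder inclusion $\Lambda_{1/4}(x,t,r)\subset\Lambda_{1/2}$, the adjustment of $\eta_2$ to force $T_0\le r^2/2$, and the use of \eqref{eq11} together with the propagated local energy bound \eqref{zz25} for the final pointwise estimate on $\mu_\e$) is more explicit than the paper's one-line justification, but the argument is the same.
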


(The constant $\eta_2$ is actually defined as $\eta_2=\eta_1(\sigma)$ where $\sigma$ is the constant in Theorem \ref{th2} and $\eta_1$ is the function defined in Proposition \ref{prop2}).

\subsubsection{Identifying the sources of non compactness}

We identified in the previous arguments a possible source of non compactness, due to oscillations in the phase. But this analysis was carried out on the complement of the vorticity set. Now $u_\e$ is likely to vanish on $\mathcal{V}_\e$, which leads to a new contribution to the energy: however, this new contribution does not correspond to a source of non compactness, as it is stated in the following theorem.

\begin{thm}
\label{th3}
Let $u_\e$ be a solution of \eqref{pgl} whose initial data satisfies \eqref{in}.
Let $K \in \m{R}^d \times (0,+\infty)$ be any compact set. There exist a real-valued function $\Phi_\e$ and a complex-valued function $w_\e$, both defined on a neighborhood of $K$, such that

\begin{enumerate}
\item $u_\e= w_\e \exp(i\Phi_\e) $ on $K$,
\item $\Phi_\e$ verifies the heat equation on $K$,
\item $|\nabla \Phi_\e(x,t)| \le C(\mathcal{K}) \sqrt{M_0|\ln \e|}$ for all $(x,t)\in K$,
\item $\|\nabla w_\e\|_{L^p(\mathcal{K})} \le C(p, \mathcal{K})$, for any $\ds 1 \le p <\frac{d+1}{d}$.
\end{enumerate}
Here, $C(K)$ and $C(p,K)$ are constants depending only on $K$, and  $K$, $p$ respectively.

\end{thm}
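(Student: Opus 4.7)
The plan is to globalize the local decompositions provided by Theorem \ref{th2}/Proposition \ref{prop4} to the whole compact set $K$, then control the ``defect'' $w_\e = u_\e e^{-i\Phi_\e}$ through the refined Jacobian estimates cited in the introduction of this section.

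First I would cover a slight parabolic enlargement $K' \supset K$ (chosen so that $\operatorname{dist}(K,\partial K') > 0$) by finitely many parabolic cylinders $\Lambda_\alpha = \Lambda_{1/4}(x_\alpha,t_\alpha,r_\alpha)$ at a common small scale $r \ll 1$, and split them into \emph{good} cylinders, where the smallness assumption of Proposition \ref{prop4} holds (so that Theorem \ref{th2} applies and yields a local writing $u_\e = \rho_\e e^{i\varphi_\e^\alpha}$ with $\nabla\varphi_\e^\alpha = \nabla \Phi_\e^\alpha + O(\e^\beta)$, $\Phi_\e^\alpha$ caloric), and \emph{bad} cylinders, where local energy concentration may produce genuine vorticity. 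The uniform local bound \eqref{in}, combined with its time propagation recalled in the introduction, ensures that the number of bad cylinders is controlled by a constant depending only on $M_0$ and $K$.

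To build $\Phi_\e$ globally on $K'$, I would pick a time $t_0$ slightly below the time projection of $K$, form a partition of unity subordinate to the good cover at time $t=t_0$, patch the $\Phi_\e^\alpha$ together (the overlaps are consistent up to $O(\e^\beta)$ gradients by \eqref{eq:th2:3}) to obtain an initial datum $\Phi_\e^0$, and define $\Phi_\e$ on $K'$ as the solution of the heat equation with this datum. Property (2) is then built in. The $\sqrt{M_0|\ln\e|}$ bound in (3) follows from convolving with the heat kernel, together with the Gaussian energy estimate furnished by the monotonicity formula and \eqref{in}, which controls $\|\nabla\Phi_\e^0\|_{L^2_{\mathrm{loc}}}$ by $CM_0|\ln\e|$; parabolic smoothing then converts this local $L^2$ control into the pointwise bound $|\nabla \Phi_\e| \le C(K)\sqrt{M_0|\ln\e|}$ on $K$.

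Finally, set $w_\e = u_\e e^{-i\Phi_\e}$. A direct computation gives
\[
\nabla w_\e = \bigl(\nabla u_\e - i u_\e \nabla\Phi_\e\bigr) e^{-i\Phi_\e}.
\]
On the \emph{good} part (intersection with $\{|u_\e|\ge 1-\sigma\}$), writing $u_\e=\rho_\e e^{i\varphi_\e}$ one gets
\[
\nabla u_\e - iu_\e \nabla \Phi_\e = (\nabla\rho_\e)e^{i\varphi_\e} + i\rho_\e(\nabla\varphi_\e - \nabla\Phi_\e)e^{i\varphi_\e},
\]
which is uniformly $L^\infty$ in $\e$ by \eqref{eq14}--\eqref{eq:th2:3}, so contributes $O(1)$ in any $L^p$. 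All the difficulty is concentrated in the \emph{bad} part, i.e.\ near $\mathcal V_\e$. Here I would use the refined Jerrard--Soner Jacobian estimates mentioned in Section 2: the Jacobian $Ju_\e$ is bounded in $W^{-1,1}_{\mathrm{loc}}$ uniformly in $\e$, the space-time vorticity set has parabolic codimension $2$, and $|u_\e|$ is bounded by the $L^\infty$ estimate of Theorem \ref{th2} (applied on the good surroundings) plus the maximum principle for \eqref{pgl}. Combining the monotonicity-type energy bound (which gives $\|\nabla u_\e\|_{L^2} \lesssim \sqrt{|\ln\e|}$ locally) with the codimension of $\mathcal V_\e$ through a Chebyshev/interpolation argument analogous to \cite{Be1} yields $\|\nabla u_\e\|_{L^p(\mathcal V_\e \cap K')} \le C(p,K)$ for every $p < (d+1)/d$; adding the contribution $|u_\e \nabla \Phi_\e| \le C\sqrt{M_0|\ln\e|}$, which is integrable over $\mathcal V_\e$ in $L^p$ for the same range of $p$ because $|\mathcal V_\e \cap K'| \to 0$ polynomially in $\e$, gives (4).

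The main obstacle is precisely the last step: the uniform $L^p$ control of $\nabla w_\e$ on the vorticity set. The threshold $p<(d+1)/d$ is dictated by the codimension $2$ of $\mathcal V_\e$ in space-time $\m R^{d+1}$, and one has to balance carefully the logarithmic blow-up of $\nabla u_\e$ against the smallness of $|\mathcal V_\e|$, using the refined Jacobian estimates rather than crude $L^2$ control. This is the technical heart of the theorem, and the place where \cite{Be1}'s arguments must be adapted to the merely local energy bound \eqref{in}.
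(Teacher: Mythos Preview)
Your approach has a genuine topological obstruction at the patching step. The local phases $\varphi_\e^\alpha$ from Theorem~\ref{th2} exist only on the good cylinders, where $|u_\e|\ge 1-\sigma$. On bad cylinders the map $u_\e$ may have nontrivial degree around vortices, so there is \emph{no} single-valued phase to feed into a partition of unity. You cannot simply ``patch the $\Phi_\e^\alpha$ together'' to get a globally defined $\Phi_\e^0$ at time $t_0$: near a vortex the candidate initial datum is either undefined or discontinuous, and there is no canonical extension (any extension would have to carry a logarithmic singularity in $\nabla\Phi_\e^0$, destroying the $L^2$ control you need for parabolic smoothing). This is not a technical nuisance but the central difficulty the theorem is designed to overcome.

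The paper's route is genuinely different and avoids this obstruction. It never tries to patch local phases. Instead it performs a \emph{space-time} Hodge--de Rham decomposition of the $1$-form $u_\e\wedge\delta u_\e$ on a cylinder $\Lambda$: one writes $u_\e\wedge\delta u_\e=\delta\Phi+\delta^*\Psi+\zeta$, where $\Phi$ is a globally defined scalar (the exact part), and the coexact part $\Psi$ absorbs all the topology. The crucial gain is that $-\Delta\Psi=2Ju_\e$, so the Jerrard--Soner Jacobian estimate \eqref{eq:JS_Jac} gives directly $\|\nabla\Psi\|_{L^p(\Lambda)}\le C(p,\Lambda)$ for $p<(d+1)/d$, uniformly in $\e$ --- this is where the threshold comes from, via elliptic regularity for the Laplacian with right-hand side bounded in a negative H\"older space. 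The caloric $\Phi_\e$ is then extracted from $\Phi$ by a further parabolic splitting. Finally, the bound on $\nabla w_\e$ follows from the algebraic identity $|\nabla w_\e|^2\le 2|\nabla|u_\e|^2|^2+2|w_\e\wedge\nabla w_\e|^2$ together with the $L^p$ control on $\Psi$ (which dominates $w_\e\wedge\nabla w_\e$) and a separate estimate on $\nabla|u_\e|^2$ coming from the equation for the modulus. Your Chebyshev/codimension sketch for the bad set does not produce this mechanism and, as stated, would not yield the sharp exponent.
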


The proof relies on the refined Jacobian estimates of \cite{Je}. 

We stress out the fact that Theorem \ref{th3} provides an exact splitting of the energy in two different modes, that is the topological mode (the energy related to $w_\e$), and the linear mode (the energy of $\Phi_\e$): in some sense, the lack of compactness is completely locked in $\Phi_\e$.

\medskip

The remainder of this section is to provide proofs for the results described above, which will be done in section \ref{sec:2.4}; we need some preliminary considerations before.

\subsection{Pointwise estimates}

In this section, we provide pointwise parabolic estimates for $u_\e$ solution of \eqref{pgl}, which rely ultimately on a supersolution argument, i.e a variant of the maximum principle.

\begin{prop}
\label{prop1.1}
Let $u_\e^0 \in L^{\infty}(\m{R}^d)$ and $u_\e$ be the associated solution of \eqref{pgl}. Then for all $t>0$, $u_\e(t)$, $\nabla u_\e(t)$ and $\partial_t u_\e(t)$ are in $L^\infty(\m{R}^d)$. More precisely, there exists a (universal) constant $K_0>0$ such that for all $t > \e^2$ and $x \in \m R^d$,
\begin{gather}
\label{zz28}
|u_\e(x,t)| \le 2,\quad |\nabla u_\e(x,t)| \le \frac{K_0}{\e},\quad \left| \partial_t u_\e(x,t) \right| \le \frac{K_0}{\e^2}.  
\end{gather}
Also, for all $t >0$ and  $x \in \m R^d$, $|u_\e(x,t)| \le \max(1, \| u_\e^0 \|_{L^\infty})$.
\end{prop}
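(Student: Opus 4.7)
The proof splits naturally into three independent stages, matching the three assertions.

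\emph{Step 1 (global $L^\infty$ bound on $u_\e$).} I would pass to $\rho_\e := |u_\e|^2$. Taking the real inner product of \eqref{pgl} with $u_\e$ yields
\[ \partial_t \rho_\e - \Delta \rho_\e = -2|\nabla u_\e|^2 + \frac{2}{\e^2}\rho_\e(1-\rho_\e). \]
Set $M := \max(1, \|u_\e^0\|_{L^\infty}^2)$. On $\{\rho_\e \ge M\}$ the right-hand side is nonpositive, so $\phi := (\rho_\e - M)_+$ is a subsolution of the heat equation on $\m{R}^d \times (0,+\infty)$ with $\phi(\cdot, 0) \equiv 0$. The local well-posedness result from \cite{DCot15} provides $u_\e \in \q C([0,T], L^\infty)$, so $\phi$ is bounded, and the parabolic maximum principle on $\m{R}^d$ for bounded subsolutions gives $\phi \equiv 0$, i.e.\ $|u_\e| \le \max(1, \|u_\e^0\|_{L^\infty})$.

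\emph{Step 2 (universal bound $|u_\e| \le 2$ for $t \ge \e^2$).} Dropping the $-2|\nabla u_\e|^2$ term, any spatially constant supersolution of the ODE
\[ \dot y = \frac{2}{\e^2}\, y(1 - y), \qquad y(0) = \|u_\e^0\|_{L^\infty}^2, \]
gives a supersolution of the $\rho_\e$-equation. The substitution $z = 1/y$ linearizes this to $\dot z = -\frac{2}{\e^2}(z-1)$, so explicitly $y(t) = [1 - (1 - 1/y(0))e^{-2t/\e^2}]^{-1}$. The bound is monotone in $y(0)$, and letting $y(0) \to \infty$ yields $y(\e^2) \le 1/(1 - e^{-2}) < 4$. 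The same bounded-subsolution maximum principle as in Step 1 then gives $\rho_\e(\cdot,t) \le y(t)$, hence $|u_\e(x,t)| \le 2$ for every $t \ge \e^2$ independently of the initial data.

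\emph{Step 3 (derivative bounds via scaling).} For $t \ge 2\e^2$, I would set $v(y,s) := u_\e(x + \e y, \, t - 2\e^2 + \e^2 s)$, which solves (PGL$_1$) on $\m{R}^d \times [0,2]$. Applying Step 2 to $v$ on the subinterval $[1,2]$ yields $|v| \le 2$ there, so the right-hand side $v(1 - |v|^2)$ is universally bounded. Standard interior parabolic regularity for this linear equation with bounded forcing (local $W^{2,p}$ estimates followed by Sobolev embedding, or Schauder estimates on the interval $[1,2]$) gives universal bounds $|\nabla v(0,2)|, |\partial_s v(0,2)| \le K_0$. Unscaling gives $|\nabla u_\e(x,t)| \le K_0/\e$ and $|\partial_t u_\e(x,t)| \le K_0/\e^2$. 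The narrow window $t \in [\e^2, 2\e^2]$ can be absorbed into the constant $K_0$ by rescaling with a slightly shorter time interval and exploiting that Step 2 already gives a universal pointwise bound on $|u_\e|$ throughout $[\e^2, +\infty)$.

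\emph{Main difficulty.} Steps 2 and 3 are purely ODE comparison and a parabolic scaling argument; the genuine care is needed in Step 1, where one must justify the maximum principle on the unbounded domain $\m{R}^d$ without any a priori spatial decay of $u_\e$. This is where the $L^\infty$-based global well-posedness of \cite{DCot15} is essential: it ensures that $\rho_\e - M$ is globally bounded, so that classical comparison with a perturbed supersolution of the form $\delta e^{ct}$ (for $\delta \to 0$) suffices to rule out an interior positive supremum.
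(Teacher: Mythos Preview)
Your proposal is correct and follows essentially the same route as the paper: an ODE comparison/maximum principle for $|u_\e|^2$ to obtain both the global bound $|u_\e|\le\max(1,\|u_\e^0\|_{L^\infty})$ and the universal bound $|u_\e|\le 2$ past time $\e^2$, followed by a scaling argument for the derivative estimates. The only notable difference is in Step~3: the paper carries out the gradient and second-derivative bounds explicitly via the Duhamel formula and heat-kernel estimates on the rescaled equation (bounding $\nabla v$, then $\nabla^2 v$, then reading off $\partial_t v$ from the equation), whereas you invoke interior parabolic $W^{2,p}$ or Schauder regularity as a black box. Both are valid; the paper's hands-on computation is slightly more self-contained, while your version is shorter. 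Your ``narrow window'' remark is fine once you observe (as the paper does) that the ODE comparison in Step~2 actually gives a universal bound on $|u_\e|$ for all $t\ge \e^2/2$, not just $t\ge\e^2$, which provides the half-interval of uniform control needed to launch the regularity argument at $t=\e^2$.
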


\begin{nb}
We emphasize that, past the time layer $t \ge \e^2$, $\| u_\e(t) \|_{L^\infty}$ is bounded independently of $u_\e^0$.
\end{nb}

\begin{proof}
We make a change of variable, setting
\[ v(x,t)=u_\e(\e x, \e^2 t),\]

so that the function $v$ satisfies
\begin{equation}
\label{aa1}
\partial_t v -\Delta v = v(1-|v|^2) \quad \text{on } \m{R}^d \times [0,+\infty).
\end{equation}
We have to prove that, for $t \ge 1$ and $x \in \m{R}^d$, 
\[ |v(x,t)| \le 2, \quad |\nabla v(x,t)| \le K_0, \quad  \left| \partial_t v(x,t) \right| \le K_0, \]
and that for $t>0$ and $x \in \m R^d$, $| v(x,t) \le \max( \| u_\e^0 \|_{L^\infty},1)$.

Recall that $v \in \q C_b((0,+\infty), L^\infty(\m R^d))$, and that $\limsup_{t \to 0^+} \| v(t) \|_{L^\infty} \le \| u_\e^0 \|_{L^\infty}$ (see \cite{DCot15}). We begin with the $L^{\infty}$ estimates for $v$. Set
\[ \sigma(x,t)=|v(x,t)|^2-1.\]

Multiplying equation \eqref{aa1} by $U$, we are led to the equation for $\sigma$,
\begin{equation}
\label{aa2}
\frac{\partial \sigma}{\partial t}-\Delta \sigma +2|\nabla v|^2 +2\sigma(1+\sigma)=0. 
\end{equation}

Consider next the EDO
\begin{equation}
\label{aa4}
y'(t)+2y(t)(y(t)+1)=0, 
\end{equation}

and notice that (\ref{aa4}) possesses the explicit solution defined for $t>0$ by
\begin{equation}
\label{aa5}
y_{t_0}(t)=\frac{\exp(-(t-t_0)/2)}{1-\exp(-(t-t_0)/2)}, \quad \text{with} \quad t_0 = 2 \ln \left( 1- \frac{1}{\max(1,\| u_\e^0 \|_{L^\infty})^2} \right),
\end{equation}
so that $y_{t_0}(0) = \max(1,\| u_\e^0 \|_{L^\infty})^2-1$ and as consequence
\[ \sup_{x \in \m R^d} \sigma(x,0) \le y_{t_0}(0). \]
We claim that
\begin{equation}
\label{aa6}
\forall t >0, \ \forall x \in \m R^d, \quad \sigma(x,t) \le y_{t_0}(t). 
\end{equation}

Indeed, set $\tilde{\sigma}(x,t)=y_0(t)$. Then
\begin{equation}
\label{aa8}
\partial_t \tilde{\sigma} -\Delta \tilde{\sigma} +2\tilde{\sigma}(1+\tilde{\sigma})=0, 
\end{equation}

and therefore by (\ref{aa2}),
\begin{equation}
\label{aa9}
\partial_t (\tilde{\sigma}-\sigma)-\Delta (\tilde{\sigma}-\sigma) +2(\tilde{\sigma}-\sigma)(1+\tilde{\sigma}+\sigma)\ge 0. 
\end{equation}

Note that $1+\tilde{\sigma}+\sigma=|v|^2+\tilde{\sigma}\ge 0$ and $\tilde \sigma(0) - \sigma(0) >0$. The maximum principle implies that 
\[ \forall t >0, \ \forall x \in \m R^d, \quad \tilde{\sigma}(x,t)-\sigma(x,t) \ge 0, \]
which proves the claim (\ref{aa6}). Then observe that $t_0 < 0$ and that $y_{0}$ is decreasing on $(0,+\infty)$, so that
\[ \forall t >0, \ \forall x \in \m R^d, \quad \sigma(t,x) \le y_{t_0}(t) \le y_0(t). \]
Observe that the first bound give $|v(x,t)| \le \max(1,\| u_\e^0 \|_{L^\infty}$ for all $t>0$ and $x \in \m R^d$.
Al,so for $t \ge 1$ and $x \in \m R^d$, $|v(x,t)| \le \sqrt{1 + y_0(1)} \le 2$.

We next turn to the space and time derivatives. Since $|v(x,t)| \le \sqrt{1 + y_0(1/2)}$ for $t \ge 1/2$, there exists $K_1 \ge 1$ (independent of $\e$) such that
\begin{gather} \label{bd:v_nl}
\forall t \ge \frac{1}{2}, \ \forall x \in \m R^d, \quad |v(x,t)|^3 + |v(x,t)| \le K_1.
\end{gather}
Let $t \ge 1$. Now, differentiating in space the Duhamel formula between times $t-1/2 \ge 1/2$ and $t$ gives
\[ \nabla v(t) = (\nabla G)(1/2) * v(t-1/2) + \int_{t-1/2}^t (\nabla G)(t-s) * (v(s) (1-|v(s)|^2) ds, \]
where $\ds G(x,t) = \frac{1}{(4\pi)^{d/2}} e^{-x^2/4t}$ is the heat kernel. Recall that $\| \nabla_x G(t) \|_{L^1} \le C/\sqrt t$. Also, as $t-1/2 \ge 1/2$, there holds $\| v(t-1/2) \|_{L^\infty} \le 2$ and \eqref{bd:v_nl} for all $s \in [t-1/2,t]$: hence
\begin{align*} 
\| \nabla v(t) \|_{L^\infty} & \le \| \nabla G(1/2) \|_{L^1} \| v(t-1/2) \|_{L^\infty} \\
& \qquad + \int_0^t \|  \nabla G(t-s) \|_{L^1} \| v(s) (1-|v(s)|^2) ds \|_{L^\infty} ds \\
& \le \frac{C}{\sqrt 2} 2  + CK_1 \int_{t-1/2}^t \frac{ds}{\sqrt{t-s}} \le CK_1.
\end{align*}
Similarly, we can differentiate the Duhamel formula twice:
\[ \nabla^2 v(t) = (\nabla^2 G)(1/2) * v(t-1/2) + \int_{t-1/2}^t (\nabla G)(t-s) * \nabla ((v(s) (1-|v(s)|^2)) ds, \]
Using that $|\nabla( v(s,x) (1-|v(s,x)|^2)| \le C K_1 |\nabla v(s,x)|$ and $\| \nabla^2 G(t) \| \le C/t$, we can differentiate once 
\begin{align*}
\| \nabla^2 v(t) \|_{L^\infty} & \le \| \nabla^2 G(1/2) \|_{L^1} \| v(t-1/2) \|_{L^\infty} \\
& \qquad + \int_{t-1/2}^t \|  \nabla G(t-s) \|_{L^1} \| \nabla( v(s) (1-|v(s)|^2) ) ds \|_{L^\infty} ds \\
& \le C \sqrt 2 + \int_{t-1/2}^t \frac{CK_1}{\sqrt{t-s}} ds \le C K_1.
\end{align*}
Finally,
\[ |\partial_t v| = | \Delta v + v(1-|v|^2)| \le |\nabla^2 v | + K_1 \le C K_1. \qedhere \]
\end{proof}


We have the following variant of Proposition \ref{prop1.1}.

\begin{prop}
\label{prop1.2}
Let $u_\e^0 \in L^{\infty}(\m{R}^d)$ and $u_\e$ be the associated solution of \eqref{pgl}. Assume that for some constants $C_0 \ge 1,\ C_1\ge 0$ and $C_2 \ge 0$, 
\[ \forall x \in \m R^d, \quad |u_\e^0(x)| \le C_0,\quad |\nabla u_\e^0(x)| \le \frac{C_1}{\e},\quad |\nabla^2 u_\e^0(x)| \le \frac{C_2}{\e^2}. \] 
Then for any $t>0$ and $x \in \m{R}^d$, we have
\[ |u_\e(x,t)| \le C_0,\quad |\nabla u_\e(x,t)| \le \frac{C}{\e},\quad \left| \partial_t u_\e (x,t) \right| \le \frac{C}{\e^2}, \]
where $C$ depends only on $C_0$, $C_1$ and $C_2$.
\end{prop}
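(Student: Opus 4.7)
The strategy mirrors that of Proposition \ref{prop1.1}, but with additional care for small times where the heat kernel smoothing has not yet produced the $1/\e$ bound on $\nabla u_\e$ by itself. The initial regularity hypotheses on $u_\e^0$ will allow Grönwall-type arguments to close starting from $t=0$. First I would repeat the rescaling $v(x,t) = u_\e(\e x, \e^2 t)$, so that $v$ solves $\partial_t v - \Delta v = v(1-|v|^2)$ with $\|v(\cdot,0)\|_\infty \le C_0$, $\|\nabla v(\cdot,0)\|_\infty \le C_1$ and $\|\nabla^2 v(\cdot,0)\|_\infty \le C_2$. The goal reduces to proving bounds on $v$, $\nabla v$ and $\partial_t v$ that are uniform in $t>0$ and depend only on $C_0, C_1, C_2$; reverting to $u_\e$ costs the announced powers of $\e$.

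For the $L^\infty$ bound $|v(x,t)| \le C_0$, the maximum principle argument of Proposition \ref{prop1.1} applied to $\sigma = |v|^2-1$ against the ODE $y'+2y(1+y)=0$ works verbatim; since $C_0 \ge 1$, $\max(1,C_0)=C_0$ is exactly the claimed bound. For $|\nabla v|$, I split into $t\ge 1$ (where Proposition \ref{prop1.1} already yields $|\nabla v|\le K_0$) and $t\in(0,1]$, where I use the Duhamel formula
\[ v(t) = G(t) * v(0) + \int_0^t G(t-s) * F(v(s))\,ds, \qquad F(w)=w(1-|w|^2), \]
and put the gradient on $v(0)$ and on $F(v(s))$:
\[ \nabla v(t) = G(t) * \nabla v(0) + \int_0^t G(t-s) * \nabla F(v(s))\,ds. \]
Since $\|G(t)\|_{L^1}=1$ and $|\nabla F(v)|\le (1+3C_0^2)|\nabla v|$ thanks to the previous step, Grönwall's inequality yields $\|\nabla v(t)\|_\infty \le C_1 e^{(1+3C_0^2)t}$, hence a constant $C(C_0,C_1)$ on $[0,1]$.

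For the time derivative, I would use the equation itself, $\partial_t v = \Delta v + F(v)$, so it suffices to bound $\nabla^2 v$. For $t\ge 1$ Proposition \ref{prop1.1} applies; for $t\in(0,1]$ I differentiate Duhamel twice with both derivatives placed on $v(0)$ and on $F(v(s))$,
\[ \nabla^2 v(t) = G(t) * \nabla^2 v(0) + \int_0^t G(t-s) * \nabla^2 F(v(s))\,ds, \]
use the pointwise bound $|\nabla^2 F(v)| \le C(C_0)\bigl(|\nabla^2 v| + |\nabla v|^2\bigr)$, and plug in the control of $|\nabla v|$ already obtained. A second Grönwall argument then gives $\|\nabla^2 v(t)\|_\infty \le C(C_0,C_1,C_2)$ on $[0,1]$, and hence the announced bound on $\partial_t v$. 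Reverting through the scaling concludes the proof.

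The main obstacle, relative to Proposition \ref{prop1.1}, is precisely the small-time layer $t\in(0,\e^2]$ for $u_\e$ (equivalently $t\in(0,1]$ for $v$), where one cannot afford to let $\|\nabla G(t)\|_{L^1}\sim t^{-1/2}$ absorb the derivative. The way around is to transfer the derivatives onto $v(0)$ (using the hypothesized regularity) and onto $F(v(s))$ (controlled by the $L^\infty$ bound on $v$), at the price of two Grönwall arguments. Nothing here is delicate past that observation; the $L^\infty$ control of $v$ obtained at Step 2 is the input that makes all the nonlinear terms harmless.
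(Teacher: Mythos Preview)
The paper does not give an explicit proof of this proposition; it is stated as a ``variant of Proposition~\ref{prop1.1}'' and left to the reader. Your argument is correct and is precisely the intended variant: the maximum principle step and the large-time ($t\ge 1$ in the rescaled variable) estimates are inherited verbatim from Proposition~\ref{prop1.1}, and the only new content is the short-time layer, where you rightly put the derivatives on $v(0)$ and on $F(v(s))$ in the Duhamel formula (instead of on the heat kernel) and close via Gr\"onwall.
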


Proposition \ref{prop1.2} provides an upper bound for $|u_\e|$. The next lemma provides a local lower bound on $|u_\e|$, when we know it is away from zero on some region.
Since we have to deal with parabolic problems, it is natural to consider parabolic cylinders of the type
\begin{gather} \label{def:cyl}
\Lambda_\alpha(x_0,T,R,\Delta T) = B(x_0,\alpha R) \times [T+(1-\alpha^2)\Delta T, T+\Delta T].
\end{gather}
Sometimes, it will be convenient to choose $\Delta T=R$ and write $\Lambda_\alpha(x_0,T,R)$. Finally if there is no ambiguity, we will simply write $\Lambda_\alpha$, and even $\Lambda$ if $\alpha=1$.

\begin{lem}[\cite{Be1}]
\label{lem1.1}
Let $u_\e^0 \in L^{\infty}(\m{R}^d)$ satisfying \eqref{in} and $u_\e$ be the associated solution of \eqref{pgl}. 
Let $x_0 \in \m{R}^d,\ R >0,\ T \ge 0$ and $\Delta T>0$ be given. Assume that
\[ |u_\e| \ge \frac{1}{2} \quad \text{ on } \Lambda(x_0,T,R,\Delta T), \]
then \[ 1-|u_\e| \le C(\alpha, \Lambda)\e^2\ (\| \nabla \phi_\e\|_{L^{\infty}(\Lambda)} + |\ln \e|) \quad \text{ on } \Lambda_\alpha, \]
where $\phi_\e$ is defined on $\Lambda$, up to a multiple of $2\pi$, by $u_\e=|u_\e| \exp(i\phi_\e)$. 
\end{lem}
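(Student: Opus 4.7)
Since $|u_\e| \ge 1/2 > 0$ on $\Lambda$, the polar decomposition $u_\e = \rho_\e \exp(i \phi_\e)$ is smooth there (with $\rho_\e = |u_\e|$ and $\phi_\e$ defined modulo $2\pi$). Setting $\xi_\e := 1 - \rho_\e^2$ and taking the real part of equation \eqref{pgl} multiplied by $\bar u_\e$ yields the scalar PDE
\[ \partial_t \xi_\e - \Delta \xi_\e + \frac{2 \rho_\e^2}{\e^2}\, \xi_\e \ =\ 2 |\nabla u_\e|^2 \ =\ 2 |\nabla \rho_\e|^2 + 2 \rho_\e^2 |\nabla \phi_\e|^2, \]
whose zero-order coefficient is large: $2 \rho_\e^2 / \e^2 \ge 1/(2 \e^2)$ on $\Lambda$. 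This strong absorption on the scale $\e$ is the source of the $\e^2$ prefactor in the final bound.

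The core of the argument is a localized parabolic maximum principle. I would introduce a smooth spacetime cutoff $\chi \in C^\infty_c(\Lambda)$ with $\chi \equiv 1$ on $\Lambda_\alpha$ and vanishing near $\partial_p \Lambda$, and examine a positive maximum $(x_*,t_*)$ of $\chi^2 \xi_\e$ on $\Lambda$, which has to be interior since $\chi^2 \xi_\e$ vanishes on $\partial_p \Lambda$. At $(x_*,t_*)$ one has $\nabla(\chi^2 \xi_\e)=0$, $\Delta(\chi^2 \xi_\e) \le 0$, and $\partial_t(\chi^2 \xi_\e) \ge 0$; the first relation eliminates the cross term $\nabla\chi \cdot \nabla \xi_\e$, and substituting the PDE for $\xi_\e$ yields, at $(x_*,t_*)$,
\[ \frac{\rho_\e^2}{\e^2}\, \chi^2 \xi_\e \ \le\ \chi^2 |\nabla u_\e|^2 + C(\chi,\Lambda)\, \xi_\e. \]
Since $\xi_\e \le 3/4$ on $\Lambda$ (because $\rho_\e \ge 1/2$), the last term is $O(1)$ and is absorbed into the LHS as soon as $\e$ is small enough, delivering, after restriction to $\Lambda_\alpha$ (where $\chi \equiv 1$), the pointwise estimate
\[ \xi_\e(x,t) \ \le\ C(\alpha,\Lambda)\, \e^2\, |\nabla u_\e(x_*,t_*)|^2 \quad \text{on } \Lambda_\alpha. \]

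The main obstacle is controlling $|\nabla u_\e|^2$ so that the right-hand side matches the announced structure $\|\nabla \phi_\e\|_\infty + |\ln \e|$. The decomposition $|\nabla u_\e|^2 = |\nabla \rho_\e|^2 + \rho_\e^2 |\nabla \phi_\e|^2$ produces $4 \|\nabla \phi_\e\|_{L^\infty(\Lambda)}^2$ for the tangential part. For the radial part $|\nabla \rho_\e|^2$, the raw pointwise estimate $|\nabla u_\e| \le K_0/\e$ from Proposition~\ref{prop1.1} is insufficient (it delivers only a trivial $O(1)$ bound for $\xi_\e$); one has to invoke parabolic interior regularity for $\xi_\e$ on a slightly larger cylinder inside $\Lambda$, combined with the local energy bound \eqref{in}, which ensures $\int_\Lambda |\nabla u_\e|^2 \le C(\Lambda)\, M_0 |\ln \e|$. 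The $|\ln \e|$ factor in the statement enters through this energy route. Finally, since $\rho_\e \ge 1/2$ on $\Lambda$ gives $1 - \rho_\e \le \frac{2}{3} \xi_\e$, the bound on $\xi_\e$ transfers directly to the claimed bound on $1 - |u_\e|$.
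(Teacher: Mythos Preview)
Your overall strategy --- deriving the scalar equation for $\xi_\e = 1-|u_\e|^2$ and running a localized maximum principle that exploits the strong absorption $2\rho_\e^2/\e^2$ --- is exactly the approach of \cite{Be1}, to which the paper simply defers. The computation up to the inequality at the maximum point $(x_*,t_*)$ is correct.

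The gap is in your treatment of $|\nabla\rho_\e|^2$. The proposed mechanism, ``parabolic interior regularity for $\xi_\e$ combined with the local energy bound $\int_\Lambda |\nabla u_\e|^2 \le C(\Lambda)M_0|\ln\e|$'', does not yield a pointwise estimate for $|\nabla\rho_\e(x_*,t_*)|$: the zero-order coefficient in the $\xi_\e$-equation is of size $\e^{-2}$, so standard interior parabolic estimates blow up in $\e$, and an $L^2$ energy bound cannot be upgraded to an $L^\infty$ gradient bound without further input. This is not a cosmetic issue --- as written, the argument only recovers the trivial bound $\xi_\e = O(1)$ once you plug in $|\nabla\rho_\e|\le K_0/\e$.

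The fix is already contained in your own setup and requires no extra regularity. At $(x_*,t_*)$ the constraint $\nabla(\chi^2\xi_\e)=0$ gives $\chi\nabla\xi_\e = -2(\nabla\chi)\xi_\e$, and since $\nabla\xi_\e = -2\rho_\e\nabla\rho_\e$ this reads
\[
\chi^2|\nabla\rho_\e|^2 \;=\; \frac{\chi^2|\nabla\xi_\e|^2}{4\rho_\e^2} \;=\; \frac{|\nabla\chi|^2\,\xi_\e^2}{\rho_\e^2} \;\le\; 4|\nabla\chi|^2\,\xi_\e^2 \;\le\; C(\chi)\,\xi_\e,
\]
using $\rho_\e\ge\frac12$ and $\xi_\e\le\frac34$. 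This term is therefore absorbed together with the other cutoff errors, and one obtains directly $\chi^2\xi_\e \le C(\alpha,\Lambda)\,\e^2\bigl(\|\nabla\phi_\e\|_{L^\infty(\Lambda)}^2 + 1\bigr)$. An equivalent (and slightly cleaner) route, which \cite{Be1} also uses, is to work with $1-\rho_\e$ rather than $1-\rho_\e^2$: its equation is $(\partial_t-\Delta)(1-\rho_\e) + \tfrac{\rho_\e(1+\rho_\e)}{\e^2}(1-\rho_\e) = \rho_\e|\nabla\phi_\e|^2$, whose right-hand side involves only $|\nabla\phi_\e|^2$, so the radial term never appears. Either way the $|\ln\e|$ does not arise from the energy bound in the way you suggest; the additive lower-order term is produced by the cutoff constants, not by an integral energy input.
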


\begin{proof}
We refer to \cite[Lemma 1.1, p. 52]{Be1}.
\end{proof}

\subsection{The monotonicity formula and some consequences}

In this section, we provide various tools which will be required in the proof of Theorem \ref{th1}.

\subsubsection{The monotonicity formula}

%
%
%
%
%
%


For $(x_{*},t_{*}) \in \m{R}^d \times [0,+\infty)$ we set
\[ z_{*} = (x_{*},t_{*}). \]
For $t_* > 0$ and $0< R \le \sqrt{t_{*}}$ we defined the weighted energy, scaled and time shifted, by
\begin{gather} \label{def:Ew}
E_{w,\e}(u_\e,z_*,R)  = E_w(z_*,R) :=  \frac{1}{R^{d-2}} \int_{\m{R}^d} e_{\e}(u_{\e})(x,t_{*}-R^2) \exp \left( -\frac{|x-x_{*}|^2}{4R^2} \right) dx.
\end{gather}
Also it will be convenient to use the multiplier
\begin{gather} \label{def:Xi}
\Xi(u_\e, z_*)(x,t) = \frac{1}{4|t-t_*|} \left[ (x-x_*).\nabla u_\e(x,t) + 2(t-t_*) \partial_t u_\e(x,t) \right]^2.
\end{gather}
We stress out that in the integral defining  $E_w$, we introduced a time shift $\delta t=-R^2$.
The following monotonicity formula was first derived by Struwe \cite{Str}, and used in his study of the heat flow for harmonic maps. 

\begin{prop}
Let $u_\e^0 \in L^{\infty}(\m{R}^d)$ satisfying \eqref{in} and denote $ u_\e$ the associated solution of \eqref{pgl}. We have, for $0 < r < \sqrt{t_*}$,
\begin{align}
\frac{d E_w}{dR}(z_*,r) & = \frac{1}{r^{d-1}} \int_{\m{R}^d} \frac{1}{2r^2}\ ((x-x_*) \cdot \nabla u_\e(x,t_*-r^2)-2r^2 \partial_t u_\e(x,t_*-r^2))^2 \exp \left( -\frac{|x-x_*|^2}{4r^2} \right) dx  \nonumber \\
& \quad + \frac{1}{r^{d-1}} \int_{\m{R}^d} 2V_\e(u_\e)(x,t_*-r^2) \exp \left(-\frac{|x-x_*|^2}{4r^2} \right) dx  \label{prop2.1} \\
& = \frac{(4\pi)^{d/2}}{r} \int_{\m{R}^{d+1}} 2|t-t_*| \Xi(z_*)(x,t) G(x-x_*,t-t_*) dx \delta_{t_*-r^2}(t) \nonumber \\
& \quad + (4\pi)^{d/2} r \int_{\m{R}^{d+1}} 2V_\e(u_\e)(x,t)G(x-x_*,t-t_*) dx \delta_{t_*-r^2}(t), \nonumber
\end{align} 
where $G(x,t)$ denotes the heat kernel
\begin{equation}
\label{eel4}
G(t,x) = \begin{cases}
\displaystyle \frac{1}{(4\pi t)^{\frac{d}{2}}} \exp \left( -\frac{|x|^2}{4t} \right) & \text{ for } t>0, \\
0 & \text{ for } t\le 0. 
\end{cases}
\end{equation}

In particular, 
\begin{equation}
\label{MF}
 \frac{d E_w}{dR}(z_*,r)\ge 0.
\end{equation}
As a consequence,  $R \mapsto E_w(z_*,R)$ can be extended to a non-decreasing, continuous function of $R$ on $[0,\sqrt{t_*}]$, with $E_w(z^*,0)=0$.
\end{prop}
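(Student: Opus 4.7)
The plan is to reproduce the classical Struwe-type monotonicity calculation, carefully accounting for the fact that under \eqref{in} the solution $u_\e$ has only uniformly locally finite energy. Setting $\tau = R^2$ and writing
\[ E_w(z_*, R) = R^{2-d} \int_{\m R^d} e_\e(u_\e)(x, t_* - R^2) \exp\!\left( -\frac{|x-x_*|^2}{4R^2} \right) dx, \]
I would first differentiate under the integral sign. The derivative splits into three pieces: the prefactor $R^{2-d}$ contributes a factor $(2-d)R^{1-d}$; the time shift $t \mapsto t_* - R^2$ brings down $-2R\,\partial_t e_\e(u_\e)$; and the $R$-dependence of the Gaussian weight contributes $|x-x_*|^2/(2R^3)$ times the weight itself.

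The heart of the argument is then an algebraic identity for $\partial_t e_\e(u_\e)$. Using
\[ \partial_t e_\e(u_\e) = \nabla u_\e \cdot \nabla \partial_t u_\e + V_\e'(u_\e) \cdot \partial_t u_\e, \]
I would integrate by parts in space to move $\nabla$ off $\partial_t u_\e$ and use \eqref{pgl} to express $\Delta u_\e$ in terms of $\partial_t u_\e$ and $V_\e'(u_\e)$. After the $V_\e'$ cross terms cancel, this yields $\int \partial_t e_\e(u_\e)\,g\,dx = -\int |\partial_t u_\e|^2\,g\,dx - \int (\nabla g)\cdot \nabla u_\e\,\partial_t u_\e\,dx$, with $g(x) = \exp(-|x-x_*|^2/(4R^2))$ and $\nabla g = -\frac{x-x_*}{2R^2}\,g$. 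Reinserting this into the decomposition of $dE_w/dR$ and matching powers of $R$, the three contributions reassemble into a perfect square proportional to $((x-x_*) \cdot \nabla u_\e - 2R^2\,\partial_t u_\e)^2$ plus a non-negative $V_\e(u_\e)$ remainder, both integrated against $g$ with the correct prefactors. This is exactly the stated first identity, and in particular gives $dE_w/dR \ge 0$; the second, heat-kernel form follows by rewriting $R^{-d}g = (4\pi)^{d/2} G(x-x_*, R^2)$ and repackaging the square via the multiplier $\Xi$.

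The main obstacle I anticipate is justifying these integrations by parts on all of $\m R^d$ without a global finite-energy hypothesis. For this I would invoke the pointwise bounds of Proposition \ref{prop1.1}: for $t \ge \e^2$ one has $|u_\e| \le 2$, $|\nabla u_\e| \le K_0/\e$ and $|\partial_t u_\e| \le K_0/\e^2$ uniformly in $x \in \m R^d$. These bounds are $\e$-dependent but uniform in $x$, and combined with the super-polynomial decay of the Gaussian weight they make every integrand absolutely integrable and kill all boundary contributions at infinity, so every integration by parts and every interchange of limits is legitimate.

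Finally, to extend $E_w(z_*, \cdot)$ continuously to $R = 0$ with value $0$, I would again use the pointwise bound $e_\e(u_\e) \le C/\e^2$ valid for $t_* - R^2 \ge \e^2$ to estimate
\[ 0 \le E_w(z_*, R) \le \frac{C}{\e^2}\,R^{2-d} \int_{\m R^d} \exp\!\left(-\frac{|y|^2}{4R^2}\right) dy = \frac{C'}{\e^2}\,R^2, \]
which tends to $0$ as $R \to 0^+$. Continuity at any positive $R$ follows from the integral representation by dominated convergence with the same uniform majorant.
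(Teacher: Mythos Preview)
Your derivation of the monotonicity identity and the justification of the integrations by parts via the pointwise bounds of Proposition \ref{prop1.1} are essentially what the paper does (it refers to \cite{Be1} for the algebra and invokes \eqref{zz28} plus the Gaussian weight for the domination). The limit $R\to 0^+$ is also handled the same way.

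The gap is in the continuity at the right endpoint $R=\sqrt{t_*}$. Your claim that ``continuity at any positive $R$ follows from dominated convergence with the same uniform majorant'' fails there: as $R\to\sqrt{t_*}$, the time slice $t_*-R^2$ goes to $0$, and the bounds $|\nabla u_\e|\le K_0/\e$, $|\partial_t u_\e|\le K_0/\e^2$ from Proposition \ref{prop1.1} are only valid for $t\ge\e^2$. At $t=0$ one only knows $u_\e^0\in L^\infty$, so $e_\e(u_\e^0)$ need not be in $L^\infty$, and there is no single integrable majorant of $e_\e(u_\e)(x,t_*-R^2)\exp(-|x-x_*|^2/4R^2)$ valid uniformly as $R\uparrow\sqrt{t_*}$.

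The paper treats this endpoint separately and more carefully. For the potential part it uses that $u_\e(\cdot,t)\to u_\e^0$ in $L^p_{\mathrm{loc}}$ together with the uniform $L^\infty$ bound $|u_\e|\le\max(1,\|u_\e^0\|_{L^\infty})$. For the gradient part it writes $\nabla u_\e(t)=G(t)*\nabla u_\e^0+D(t)$ via Duhamel, bounds the Duhamel remainder $D(t)$ by $C\sqrt{t}$ in $L^\infty$, and then shows $G(t)*\nabla u_\e^0\to\nabla u_\e^0$ in a weighted $L^2$ space $L^2(e^{-\beta|x|^2}dx)$ using a result from \cite{DCot15} (this is where \eqref{in} is really used, since it guarantees $\nabla u_\e^0\in L^2(e^{-\alpha|x|^2}dx)$). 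You need some argument of this type; dominated convergence alone will not close the gap.
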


\begin{proof}
For $0 < R < \sqrt{t_*}$, the map $\ds (R,x) \mapsto e_{\e}(u_{\e})(x,t_{*}-R^2) \exp \left( -\frac{|x-x_*|^2}{4R^2} \right)$ is smooth (due to parabolic regularization), and satisfies domination bounds due to \eqref{zz28} and the gaussian weight. Therefore $R \mapsto E_w(z_*,R)$ is smooth on $(0, \sqrt{t_*})$ and we can perform the same computations as in Proposition 2.1 in \cite{Be1}; integrations by parts are allowed for the same reasons. This proves formula \eqref{prop2.1}, and the monotonicity property follows immediately. It remains to study the continuity at the endpoints.

For the limit $R \to 0$, the bounds \eqref{zz28} show that $|e_\e(u_\e)(x,t_*-R^2)| \le C(t_*)/\e^2$ uniformly for $R \le t_*/2$, so that in that range
\begin{align*}
E_w(z_*,R) & = R^2 \int e_\e(u_\e)(x,t_*-R^2) \frac{1}{R^d} \exp \left( -\frac{|x-x_*|^2}{4R^2} \right) dx \\
& \le R^2 C(t_*)/\e^2 (4\pi)^{d/2} \to 0 \quad \text{as } R \to 0.
\end{align*}
For the limit $R \to \sqrt{t_*}$, let us recall that for any $p <+\infty$, $u_{\e}(\cdot, t) \to u_\e^0$ strongly in $L^p_{\mr{loc}}(\m R^d)$ (see \cite[Theorem 2]{DCot15}), and as $|u_\e(x,t) | \le \max (\| u_\e^0 \|_{L^\infty}, 1)$, we infer that
\[ \int V(u_\e)(x,t_*-R^2) \exp \left( -\frac{|x-x_{*}|^2}{4R^2} \right) dx \to \int V(u_\e^0)(x) \exp \left( -\frac{|x-x_*|^2}{4t_*} \right) dx. \]
For the derivative term, we use the Duhamel formula:
\[ \nabla u_\e(t) = G(t) * \nabla u_\e^0 + \int_0^t \nabla G(t-s) * (u_\e (1-|u_\e|^2))(s) ds = G(t) * \nabla u_\e^0 + D(x,t). \]
The Duhamel term $D(x,t)$ is harmless, indeed
\begin{align*}
\left| D(x,t) \right| & \le \int_0^t \| \nabla G(t-s) \|_{L^1} \| u_\e (1-|u_\e|^2) \|_{L^\infty} ds \\
& \le C \max (1, \| u_\e^0 \|_{L^\infty})^3 \int_0^t \frac{ds}{\sqrt{t-s}} \le  C \max (1, \| u_\e^0 \|_{L^\infty})^3 \sqrt t.
\end{align*}
Therefore
\[ \int |D(x,t_*-R^2)|^2 \exp \left(- \frac{|x-x^*|}{4R^2} \right) dx \le C (t_* -R^2) \to 0 \quad \text{as } R \to \sqrt{t_*}. \]
The linear term requires to recall Claim 13 of \cite{DCot15}. Due to assumption $H_1(M_0)$, for any $\alpha >0$, $\nabla u_\e^0 \in L^2(e^{- \alpha |x|^2} dx)$. From \cite[Claim 13]{DCot15}, we infer that for $\beta >2 \alpha$,
\[ \| \tau_h \nabla u_\e^0 - \nabla u_\e^0 \|_{L^2(e^{- \beta |x|^2} dx)} \to 0 \quad \text{as } h \to 0 \]
(where $\tau_h \phi(x) = \phi(x-h)$), and satisfies
\[  \| \tau_h \nabla u_\e^0 - \nabla u_\e^0 \|_{L^2(e^{- \beta |x|^2} dx)}  \le C e^{C(\beta) |h|^2}. \]
As a consequence, we can apply Lebesgue's dominated convergence theorem and conclude that
\[ \| G(t) * \nabla u_\e^0 - \nabla u_\e^0 \|_{L^2(e^{- \beta |x|^2} dx)} \to 0 \quad \text{as } t \to 0. \]
Choose $\beta = 1/(8t_*)$ and $\alpha = 1/(17t_*)$. Then it follows, using Cauchy-Schwarz inequality, that
\begin{align*}
\MoveEqLeft \left| \int (G(t_* - R^2) * \nabla u_\e^0)(x)|^2 \exp \left( -\frac{|x-x_*|^2}{4R^2} \right) - \int |\nabla u_\e^0(x)|^2 \exp \left( -\frac{|x-x_*|^2}{4t_*} \right) dx \right| \\
& \le \int  |G(t_* - R^2) * \nabla u_\e^0)(x) - \nabla u_\e^0(x)| \\
& \qquad \qquad \times ( |G(t_* - R^2) * \nabla u_\e^0)(x) + \nabla u_\e^0(x)|) \exp \left( -\frac{|x-x_*|^2}{4R^2} \right) dx  \\
& \qquad \qquad + \int |\nabla u_\e^0(x)|^2 \left( \exp \left( -\frac{|x-x_*|^2}{4R^2} \right) -  \exp \left( -\frac{|x-x_*|^2}{4t_*} \right) \right) dx  \\
& \le \| G(t_* - R^2) * \nabla u_\e^0)(x) - \nabla u_\e^0(x) \|_{L^2(e^{- \beta |x|^2} dx)} \\
& \qquad \qquad \times  \| G(t_* - R^2) * \nabla u_\e^0)(x) + \nabla u_\e^0(x) \|_{L^2(e^{- \beta |x|^2} dx)} + o(1)\\
&  \to 0 \quad \text{as } R \to \sqrt{t_*}.
\end{align*}
(The $o(1)$ on the second last line comes from $\nabla u_\e^0 \in L^2(e^{- \beta |x|^2} dx)$ and Lebesgue's dominated convergence theorem.) Hence, summing up, we proved that $R \mapsto E_w(z_*,R)$ is (left-)continuous  at $R=t_*$.
\end{proof}

\subsubsection{Bounds on the energy}

\begin{lem} \label{H1:H0}
There exists a constant $C$ (not depending on the dimension) such that the following holds. Let $(v_\e)_{\e >0}$ be a family of functions satisfying $H_1(M_0)$, then for any $R >0$, 
\[ \forall \e >0, \ \forall y \in \m R^d, \quad \int e_\e(v_\e)(x) \exp \left( - \frac{|x-y|^2}{R^2} \right) dx \le C(d) (1+R)^d M_0 |\ln \e|. \]
Reciprocally, if $(w_\e)_{\e >0}$ is a family of functions such that for some $R >0$,
\[ \forall \e>0, \ \forall y \in \m R^d, \quad \int e_\e(w_\e)(x) \exp \left( - \frac{|x-y|^2}{R^2} \right) dx \le M_0 |\ln \e|, \]
then $(w_\e)_{\e >0}$ satisfies $H_1(C (1 + R^{-1})^d M_0)$.
\end{lem}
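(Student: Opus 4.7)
The statement is a purely analytic calculus fact about translating between uniform local $L^1$ control of $e_\e$ and Gaussian-weighted $L^1$ control; no PDE input is needed.

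For the forward direction, my plan is to partition $\m{R}^d$ into the cubes $Q_k = k + [0,1)^d$, $k \in \m{Z}^d$. Each $Q_k$ is covered by a fixed number $N(d)$ of unit balls, so \eqref{in} yields $\int_{Q_k} e_\e(v_\e) \le N(d) M_0 |\ln \e|$. For $x \in Q_k$, one has $|x - y| \ge \max(0, |k - y| - \sqrt d)$, hence
\[ \int_{\m{R}^d} e_\e(v_\e)(x) \exp\!\left(-\frac{|x - y|^2}{R^2}\right) dx \le N(d) M_0 |\ln \e| \sum_{k \in \m{Z}^d} \exp\!\left(-\frac{\max(0,|k-y|-\sqrt d)^2}{R^2}\right). \]
By translation the series is independent of $y$ up to a constant, and comparison with $\int_{\m{R}^d} \exp(-|z|^2/R^2)\,dz = (\sqrt \pi R)^d$ shows that it is $\le C(d)$ for $R \le 1$ and $\le C(d) R^d$ for $R \ge 1$, hence $\le C(d)(1 + R)^d$ uniformly.

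For the converse, I would split according to the size of $R$. If $R \ge 1$, then on $B(y, 1)$ the Gaussian weight satisfies $\exp(-|x - y|^2/R^2) \ge e^{-1}$, so the assumed bound immediately gives $\int_{B(y,1)} e_\e(w_\e) \le e\, M_0 |\ln \e|$. If $R < 1$, cover $B(y, 1)$ by balls $B(y_j, R/2)$ with $\#\{y_j\} \le C(d) R^{-d}$. Centering the Gaussian at $y_j$ gives $\exp(-|x - y_j|^2/R^2) \ge e^{-1/4}$ on $B(y_j, R/2)$, hence $\int_{B(y_j, R/2)} e_\e(w_\e) \le e^{1/4} M_0 |\ln \e|$. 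Summing over $j$ yields $\int_{B(y,1)} e_\e(w_\e) \le C(d) R^{-d} M_0 |\ln \e|$, and the two regimes combine into the stated $C(1 + R^{-1})^d M_0$ bound, using $\max(1, R^{-d}) \le (1 + R^{-1})^d \le 2^d \max(1, R^{-d})$.

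There is no real obstacle; the proof is purely combinatorial/calculus and the only point requiring care is tracking the $R$-dependence in both the $R \to 0$ and $R \to \infty$ regimes simultaneously. Treating them separately as above is the cleanest way to recover the stated exponents, and the same $\m{Z}^d$-lattice decomposition used for the forward direction could also be used for the converse, since covering $B(y,1)$ by $O(R^{-d})$ balls of radius $R/2$ is essentially the same bookkeeping.
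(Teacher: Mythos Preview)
Your proof is correct and follows essentially the same approach as the paper: a lattice-cube decomposition for the forward direction and a ball-covering argument for the converse. The only cosmetic difference is that the paper uses cubes of side $R$ (handling $R\le 1$ by monotonicity in $R$), whereas you keep unit cubes throughout and absorb the $R$-dependence into the Gaussian sum; the converse arguments are identical up to the choice of ball radius ($R$ versus $R/2$).
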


\begin{proof}
By translation invariance, we can assume $y=0$. We consider the case $R \ge 1$ (the case $R \le 1$ is dealt with $R=1$). For $k$ in $\m Z^d$, denote $Q_k$ the cube in $\m R^d$, of length R and centered at $Rk \in \m R^d$. Then 
\[ \forall x \in Q_k, \quad |x| \ge R|k|-R\sqrt d. \]
Also there exists a constant $C(d)$  such that any cube $Q_k$ is covered by $C(d) R^d$ balls of radius 1.
Therefore,
\begin{align*}
\int e_\e(v_\e)(x) e^{ - \frac{|x|^2}{R^2}} dx & \le \sum_{k \in \m Z^d} \exp \left( - \frac{(R|k| - R\sqrt{d})^2}{R^2} \right) \int_{Q_k} e_\e(v_\e)(x) dx \\
& \le C(d) R^d M_0 |\ln \e| \sum_{k \in \m Z^d} e^{-(|k| - \sqrt{d})^2}.
\end{align*}
The series is clearly convergent, which gives the first result claimed.

For the second, we clearly have
\[ \int e_\e(w_\e)(x) \exp \left( - \frac{|x-y|^2}{R^2} \right) dx \ge \frac{1}{e} \int_{B(y,R)} e_\e(w_\e)(x) dx. \]
This means that the energy on balls of radius $R$ is at most $e M_0 |\ln \e|$. If $R \ge 1$, then this is enough. If $R \le 1$, then any ball of radius 1 can be covered by at most $C(d)/R^d$ balls of radius $R$, so that for all $y \in \m R^d$,
\[ \int_{B(y,R)} e_\e(w_\e)(x) dx \le \frac{C(d)}{R^d} e M_0 |\ln \e|. \qedhere \]
\end{proof}

The first consequence of the monotonicity formula is that $(H_1)$ is a condition which propagates in time in the following way.

\begin{prop}
\label{p1p}
Let  $u_\e$ be a solution of \eqref{pgl} satisfying the initial condition \eqref{in}. Then for any $T>0$, $(x,t) \rightarrow u_\e(x,T+t)$ is still a solution of \eqref{pgl}, whose initial condition satisfies $H_1(C(d)(1+T)M_0)$. More precisely there holds
\begin{gather} \label{zz25}
\forall \e >0, \forall y \in \m R^d, \forall R >0, \quad \int_{B(y,R)} e_\e(u_\e)(x,t) dx \le C(d) (1+R)^d (1+t) M_0 |\ln \e|.
\end{gather}
\end{prop}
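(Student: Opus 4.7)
\textbf{Proof plan for Proposition \ref{p1p}.}

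The key idea is to use the monotonicity formula of $E_w$ to transfer the local energy bound at time $0$ (given by \eqref{in}) to a local energy bound at an arbitrary later time $t>0$. Fix $y\in\m R^d$ and $t>0$, and apply the monotonicity formula to $z_*=(y,t+1)$: since $E_w(z_*,\cdot)$ is nondecreasing on $[0,\sqrt{t+1}]$, we obtain
\[ E_w(z_*,1)\;\le\; E_w(z_*,\sqrt{t+1}).\]
Unpacking the definition \eqref{def:Ew}, this reads
\[ \int_{\m R^d} e_\e(u_\e)(x,t)\exp\!\left(-\tfrac{|x-y|^2}{4}\right)dx \;\le\; \frac{1}{(t+1)^{(d-2)/2}}\int_{\m R^d} e_\e(u_\e^0)(x)\exp\!\left(-\tfrac{|x-y|^2}{4(t+1)}\right)dx. \]

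Next I would apply Lemma \ref{H1:H0} to the initial data $u_\e^0$, which satisfies \eqref{in}, with Gaussian radius $R'=2\sqrt{t+1}$. This yields
\[ \int e_\e(u_\e^0)(x)\exp\!\left(-\tfrac{|x-y|^2}{4(t+1)}\right)dx \;\le\; C(d)(1+2\sqrt{t+1})^d M_0|\ln\e|. \]
Using the trivial bound $(1+2\sqrt{t+1})^d\le C(d)(t+1)^{d/2}$ (valid for $t\ge 0$), the right-hand side of the monotonicity inequality is bounded by $C(d)(1+t)M_0|\ln\e|$. On the other hand, on $B(y,1)$ we have $\exp(-|x-y|^2/4)\ge e^{-1/4}$, so the left-hand side controls $e^{-1/4}\int_{B(y,1)}e_\e(u_\e)(x,t)\,dx$. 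This gives the desired estimate in the case $R=1$:
\[ \int_{B(y,1)}e_\e(u_\e)(x,t)\,dx \;\le\; C(d)(1+t)M_0|\ln\e|, \]
which is exactly the statement that $u_\e(\cdot,T+\cdot)$ has initial data satisfying $H_1(C(d)(1+T)M_0)$.

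For a general radius $R>0$, I would then distinguish two cases. If $R\le 1$, then $B(y,R)\subset B(y,1)$ and the unit-ball estimate above immediately gives the claim (with a constant absorbed in $C(d)$). If $R\ge 1$, one covers $B(y,R)$ by at most $C(d)R^d$ unit balls and sums the unit-ball estimate over this cover, producing the bound $C(d)R^d(1+t)M_0|\ln\e|\le C(d)(1+R)^d(1+t)M_0|\ln\e|$. This establishes \eqref{zz25}.

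The only mildly delicate point is handling the endpoint $R=\sqrt{t+1}$ of the monotonicity interval; this is taken care of by the continuity at the endpoint $R=\sqrt{t_*}$ established in the previous proposition, which is precisely where the initial-data integral (with the appropriate Gaussian weight) appears. No other obstacle is foreseen: everything else is a careful but routine bookkeeping of the Gaussian weight and of the powers of $(1+t)$ and $(1+R)$.
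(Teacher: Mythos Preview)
Your proposal is correct and follows essentially the same approach as the paper: apply the monotonicity formula between an intermediate radius and $\sqrt{t_*}$ to transport the Gaussian-weighted energy bound from time $0$ to time $t$, invoke Lemma~\ref{H1:H0} on the initial data, then pass from the Gaussian bound to the unit-ball bound and finally cover $B(y,R)$ by unit balls when $R\ge 1$. The only cosmetic difference is your choice of base point $(y,t+1)$ with radii $1$ and $\sqrt{t+1}$, whereas the paper uses $(y,t+\tfrac14)$ with radii $\tfrac12$ and $\sqrt{t+\tfrac14}$; this changes nothing of substance.
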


\begin{proof}
Let $R=1$, then applying the monotonicity formula at the point $\ds \left(y,t+\frac{1}{4} \right)$ between $\ds \frac{1}{2}$ and $\ds \sqrt{t+\frac{1}{4}}$, we get
\begin{align*}
2^{d-2} \int_{\m{R}^d} e_{\e}(u_{\e})(x,t) \exp \left( - |x-y|^2 \right) dx  & \le  \frac{2^{d-2}}{(4t +1)^{\frac{d-2}{2}}} \int_{\m{R}^d} e_{\e}(u^0_{\e}) \exp \left( -\frac{|x-y|^2}{4t+1} \right) dx. \\
\end{align*}
Hence,
\begin{align*}
 \int_{\m{R}^d} e_{\e}(u_{\e})(x,t) \exp \left( - |x-y|^2 \right) dx & \le \left( 1 + 4t) \right)^{-\frac{d-2}{2}}  
C(d) \left( 2+ 4t  \right)^{d/2} M_0 |\ln \e| \\
& \le C(d) (1+t) M_0 |\ln \e|.
\end{align*}
Using Lemma \ref{H1:H0} we have the result for $R = 1$ and so for $R \le 1$. Finally, for $R \ge 1$, we cover $B(y,R)$ by balls of radius 1, which can be done with at most $C(d) R^d$ balls.
\end{proof}

As an immediate consequence, we infer an upper bound on the energy on compact sets. The bound  \eqref{zz26} below will be  very useful in order to prove Theorem \ref{th3} in the same way as in \cite{Be1}.

\begin{cor} \label{Prop1}
Let $u_\e$ be a solution of \eqref{pgl} satisfying the initial data \eqref{in}. Then for any compact $K \subset \m{R}^d \times [0,+\infty)$, we have
\begin{equation}
\label{zz26}
\int_{K} e_{\e}(u_{\e})(x,t) dxdt  \le C(K) M_0|\ln \e|.
\end{equation}
\end{cor}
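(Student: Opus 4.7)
The plan is to deduce Corollary \ref{Prop1} as a straightforward time integration of the spatial bound \eqref{zz25} already established in Proposition \ref{p1p}. Since $K$ is compact in $\m{R}^d \times [0,+\infty)$, there exist constants $R_K>0$ and $T_K>0$ such that
\[ K \subset \overline{B(0,R_K)} \times [0,T_K]. \]
I would first choose such $R_K, T_K$ (a finite covering argument or simply bounding the projections gives them).

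Next, for each fixed $t \in (0,T_K]$, Proposition \ref{p1p} (applied with $y=0$, $R=R_K$) yields
\[ \int_{B(0,R_K)} e_\e(u_\e)(x,t)\, dx \le C(d) (1+R_K)^d (1+t) M_0 |\ln \e| \le C(d)(1+R_K)^d (1+T_K) M_0 |\ln \e|. \]
For the $t=0$ slice (if $K$ meets it), the same bound holds up to a constant directly from hypothesis \eqref{in} by covering $B(0,R_K)$ by $\lesssim R_K^d$ balls of radius $1$. Fubini's theorem then gives
\[ \int_K e_\e(u_\e)(x,t)\, dx\, dt \le \int_0^{T_K} \int_{B(0,R_K)} e_\e(u_\e)(x,t)\, dx\, dt \le C(d)(1+R_K)^d T_K (1+T_K)\, M_0 |\ln \e|, \]
which is the desired estimate with $C(K) := C(d)(1+R_K)^d T_K (1+T_K)$.

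The only subtlety — which is essentially not an obstacle here since Proposition \ref{p1p} has already done the work — is the dependence of the constant on $t$: the factor $(1+t)$ arising from the monotonicity formula is harmless on bounded time intervals but forbids a global in time bound, consistent with Remark \ref{nb:no_vanishing}. No new ingredients beyond \eqref{zz25} are needed.
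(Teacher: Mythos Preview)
Your proposal is correct and is essentially the same argument as the paper's: enclose $K$ in a cylinder $B(0,R)\times[0,T]$, apply the time-slice bound \eqref{zz25} from Proposition \ref{p1p}, and integrate in $t$ to obtain $C(K)=C(d)(1+R)^d T(1+T)$. Your extra remark about the $t=0$ slice is harmless (a single slice has Lebesgue measure zero in the time integral anyway), and no further ingredients are needed.
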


\begin{proof}
Being compact, $K$ is bounded: $K \subset B(0,R) \times [0,T]$. Therefore,
\begin{align*}
\MoveEqLeft \int_K e_{\e}(u_{\e})(x,t) dx dt \le \int_{B(0,R) \times [0,T]} e_{\e}(u_{\e})(x,t) dxdt  \\
& \le \int_0^T C(d) (1+R)^d (1+t) M_0 |\ln \e| dt \le C(d) (1+R)^d (1+T) T M_0 |\ln \e|.
\end{align*}
Whence \eqref{zz26}.
\end{proof}

\subsubsection{Space-time estimates}

One crucial lack in relaxing ($H_0$) to ($H_1$) is that the energy no longer provides a bound on $\| \partial_t u_\e \|_{L^2_{x,t}} \le E_\e(u_\e)$. To remedy this, we make use of the $\Xi$ multiplier.

\begin{lem}
Let $u_\e^0 \in L^{\infty}(\m{R}^d)$ and $u \equiv u_\e$ be the associated solution of \eqref{pgl}.

For any $z_*=(x_*,t_*) \in \m{R}^d \times [0,+\infty)$, the following equality holds, for $R_*=\sqrt{t_*}$.
\begin{multline}
\label{lem2.2}
\int_{\m{R}^d \times [0,t_*]} (V_\e(u_\e) + \Xi(u_\e,z_*))(x,t)\  G(x-x_*,t-t_*)\ dx dt \\
= \frac{1}{(4\pi)^{d/2} t_*^{\frac{d-2}{2}}}\int_{\m{R}^d \times \{0\}} e_\e(u)(x,0) \exp \left(-\frac{|x-x_*|^2}{4t_*} \right)\ dx =E_w(z_*,R_*),
\end{multline}
where $\Xi$ is defined in \eqref{def:Xi}.
\end{lem}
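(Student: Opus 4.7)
The identity \eqref{lem2.2} is precisely the integrated form of the monotonicity formula of the preceding proposition. Since $R \mapsto E_w(z_*,R)$ has been extended to a non-decreasing continuous function on $[0,R_*]$ with $E_w(z_*,0)=0$, the fundamental theorem of calculus yields
\[
E_w(z_*,R_*) \;=\; \int_0^{R_*} \frac{dE_w}{dR}(z_*,r)\,dr,
\]
so it suffices to match this integral with the space-time integral of $(V_\e + \Xi)G$. The right-most equality in \eqref{lem2.2} (identification with the initial-data integral) then drops out from the definition \eqref{def:Ew} at $R=R_*$, since $t_*-R_*^2 = 0$.

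For the space-time expression, I would use the second form of $\frac{dE_w}{dR}$ displayed in \eqref{prop2.1} and invoke Fubini to exchange the $r$-integration with the integration against $G(x-x_*,t-t_*)\,dx\,dt$. Absolute integrability on $\m{R}^d \times [0,t_*]$ is guaranteed by Proposition \ref{prop1.1} (which controls $|u_\e|$, $|\nabla u_\e|$ and $|\partial_t u_\e|$ by $C/\e$, hence $e_\e(u_\e)$, $V_\e(u_\e)$, $\Xi(u_\e,z_*)$ by $C/\e^2$) together with the Gaussian weight; the (mild) issue at $r=0$ is absorbed by the already-established continuity $E_w(z_*,0)=0$. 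After exchanging, I rewrite $\delta_{t_*-r^2}(t)$ viewed as a function of $r$ as $\frac{1}{2\sqrt{t_*-t}}\,\delta(r-\sqrt{t_*-t})$ on $(0,+\infty)$ for $t<t_*$, which collapses each term onto the parabolic sphere $r=\sqrt{t_*-t}$.

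Carrying out the collapse: the $\Xi$-contribution produces $\frac{1}{r}\cdot 2|t-t_*|\cdot\frac{1}{2\sqrt{t_*-t}} = 1$ after using $r=\sqrt{t_*-t}$ and $|t-t_*|=t_*-t$, so that term reduces to $\int\!\!\int \Xi(u_\e,z_*)(x,t)\,G(x-x_*,t-t_*)\,dx\,dt$ (up to the same overall normalisation $(4\pi)^{d/2}$ appearing in \eqref{prop2.1}); similarly the $V_\e$-contribution gives $r\cdot 2\cdot\frac{1}{2\sqrt{t_*-t}} = 1$ and reduces to $\int\!\!\int V_\e(u_\e)(x,t)\,G(x-x_*,t-t_*)\,dx\,dt$. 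Summing the two and using the explicit form of $G$ at negative time argument yields the left equality of \eqref{lem2.2}.

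Main obstacle: there is no real difficulty beyond bookkeeping — the entire statement is the integrated monotonicity formula. The only care needed is to justify Fubini, which is exactly the reason the pointwise estimates of Proposition \ref{prop1.1} and the boundary continuity (already settled at the end of the previous proposition) were established beforehand.
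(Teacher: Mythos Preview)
Your approach is correct and essentially identical to the paper's: both integrate the monotonicity identity \eqref{prop2.1} from $0$ to $R_*$ (using $E_w(z_*,0)=0$) and then perform the change of variables $t=t_*-r^2$, $dt=-2r\,dr$; your Dirac-delta manipulation is just a more formal way of phrasing that substitution. The paper does not spell out the Fubini justification you mention, but your remark about the pointwise bounds from Proposition~\ref{prop1.1} is exactly the implicit reason the interchange is valid.
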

\begin{proof}

Integrating equality (\ref{prop2.1}) from $0$ to $R_{*}$ (recall that $E_{w}(z^*,0) =0$), we obtain
\begin{align}
\MoveEqLeft (4\pi)^{d/2} E_w(z_*,R_*) = \int_{0}^{R_*} 2rdr\ \int_{\m{R}^d \times \{t_*-r^2\}}V_\e(u(x,t))G(x-x_*,t-t_*)\ dx \label{equa2.18} \\
& \quad+ \int_{0}^{R_*} 2rdr\ \int_{\m{R}^d \times \{t_*-r^2\}} \frac{1}{4r^2} \left[ (x-x_*) \cdot \nabla u_\e -2r^2 \partial_t u \right]^2 G(x-x_*,t-t_*)\ dx. \nonumber
\end{align}

Expressing the integral on the right-hand side of (\ref{equa2.18}) in the variable $t=t_*-r^2$ (so that $dt=-2rdr$) yields
\begin{align*}
\MoveEqLeft  (4\pi)^{d/2} E_w(z_*,R_*) = -\int_{t_*}^{0} dt\ \int_{\m{R}^d \times \{t\}}V_\e(u(x,t))G(x-x_*,t-t_*)\ dx  \\
&\quad - \int_{t_*}^{0} 2rdr\ \int_{\m{R}^d \times \{t\}} \frac{1}{4|t-t_*|} ((x-x_*).\nabla u -2r^2 \partial_t u)^2 G(x-x_*,t-t_*)\ dx. \qedhere
\end{align*}
\end{proof}

\begin{prop}
For any compact $K \subset \m R^d \times [0,+\infty)$, there exist a constant $C(K)$ such that 
\begin{gather} \label{bd:dtu_L2}
 \int_K  |\partial_t u_\e(x,t)|^2 dxdt \le C(K) M_0 |\ln \e|
 \end{gather}
\end{prop}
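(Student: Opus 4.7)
The strategy is to exploit the energy-dissipation identity for \eqref{pgl}, by which $|\partial_t u_\e|^2$ equals, up to a spatial divergence, the negative time derivative of $e_\e(u_\e)$, and then to localize this identity with a smooth cutoff and invoke Corollary \ref{Prop1}. Multiplying \eqref{pgl} by $\overline{\partial_t u_\e}$ and taking real parts produces the pointwise identity
\begin{gather*}
|\partial_t u_\e|^2 = \mathrm{div}\, \mathrm{Re}\bigl(\overline{\partial_t u_\e}\, \nabla u_\e\bigr) - \partial_t e_\e(u_\e),
\end{gather*}
which follows from $\mathrm{Re}(\overline{\partial_t u_\e}\, u_\e) = \tfrac12 \partial_t |u_\e|^2$ for the nonlinear term (using that $V_\e$ depends only on $|u_\e|^2$), together with the Leibniz rule $\mathrm{Re}(\overline{\partial_t u_\e}\, \Delta u_\e) = \mathrm{div}\,\mathrm{Re}(\overline{\partial_t u_\e}\,\nabla u_\e) - \tfrac12 \partial_t |\nabla u_\e|^2$ for the diffusive term.

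Given a compact $K \subset \m R^d \times (0,+\infty)$, I would choose a smooth cutoff $\chi \in \q C^\infty_c(\m R^d \times (0,+\infty))$ with $0 \le \chi \le 1$ and $\chi \equiv 1$ on $K$, and set $K' = \mathrm{Supp}\,\chi$. Multiplying the identity by $\chi^2$ and integrating by parts (no boundary terms, thanks to the compact support inside the open half-space) yields
\begin{gather*}
\int \chi^2 |\partial_t u_\e|^2\, dx\, dt = -2\int \chi \nabla \chi \cdot \mathrm{Re}\bigl(\overline{\partial_t u_\e}\, \nabla u_\e\bigr)\, dx\, dt + 2\int \chi\, \partial_t \chi\, e_\e(u_\e)\, dx\, dt.
\end{gather*}
Applying Young's inequality to the first right-hand term, namely $2|\chi \nabla \chi \cdot \overline{\partial_t u_\e}\,\nabla u_\e| \le \tfrac12 \chi^2 |\partial_t u_\e|^2 + 2 |\nabla\chi|^2 |\nabla u_\e|^2$, allows absorbing half of the left-hand side. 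Bounding $|\nabla u_\e|^2 \le 2 e_\e(u_\e)$ and invoking Corollary \ref{Prop1} on the compact $K'$ then gives
\begin{gather*}
\int_K |\partial_t u_\e|^2\, dx\, dt \le \int \chi^2 |\partial_t u_\e|^2\, dx\, dt \le C(\chi)\int_{K'} e_\e(u_\e)\, dx\, dt \le C(K) M_0 |\ln \e|,
\end{gather*}
which is the claim.

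The only delicate point is the behaviour of the cutoff near $t=0$: if $K$ touched the initial hyperplane, one could not simultaneously have $\chi \equiv 1$ on $K$ and $\chi$ vanishing at $t=0$, and the integration by parts in time would then produce an uncontrolled boundary contribution of the form $\int \chi^2(x,0)\, e_\e(u_\e^0)(x)\, dx$, which under the local hypothesis \eqref{in} need not be finite. In practice this is not a real obstacle, since all later uses of \eqref{bd:dtu_L2} in the paper involve compacts contained in $\m R^d \times (0,+\infty)$ (compare with Theorem \ref{th3} and Proposition \ref{prop4}). This stands in contrast with the finite-energy setting of \cite{Be1}, where integrating the identity directly over $[0,T] \times \m R^d$ immediately gives $\int |\partial_t u_\e|^2 \le E_\e(u_\e^0)$; here the loss of a global initial energy bound forces the cutoff localization above.
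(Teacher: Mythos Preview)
Your argument is correct and is in fact the approach the paper itself uses later, in the proof of Proposition \ref{x61} (which restates the same bound). The proof given \emph{here} in Section 2 is different: it exploits the integrated monotonicity formula \eqref{lem2.2}, bounding $|\partial_t u_\e|^2$ pointwise by a combination of the multiplier $\Xi(u_\e,z_*)$ and $|\nabla u_\e|^2$, and then controlling $\int \Xi\, G\,dxdt$ by the weighted initial energy $E_w(z_*,\sqrt{t_*})$. Your energy-dissipation argument is more elementary and avoids the $\Xi$ machinery entirely; the paper's $\Xi$-based proof, on the other hand, ties the estimate directly to the monotonicity structure that is being developed in this section.

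One point deserves correction: your caveat about $t=0$ is misplaced. If $K$ meets the initial hyperplane, take a cutoff $\chi$ that is compactly supported in space (say $\mathrm{supp}\,\chi \subset B(0,R)$) but allowed to be nonzero at $t=0$. The time integration by parts then produces the boundary term $\int \chi^2(x,0)\, e_\e(u_\e^0)(x)\,dx$, and this \emph{is} controlled under \eqref{in}: covering $B(0,R)$ by $O(R^d)$ unit balls gives $\int_{B(0,R)} e_\e(u_\e^0) \le C(R) M_0 |\ln \e|$. This is exactly how the paper handles it in the proof of Proposition \ref{x61}. So your method yields the full statement with $K \subset \m R^d \times [0,+\infty)$, not just $K \subset \m R^d \times (0,+\infty)$.
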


\begin{proof}
It suffices to prove the bound on the compacts $K_T = B(0,\sqrt T) \times [0,T]$ for all $T \ge 1$. Let $t_* = 2T$ and $x_* =0$, then for $(x,t) \in B(0,\sqrt T) \times [0,T]$, we have $t-t_* \ge T \ge |x-x_*|^2$  so that
\[  G(x-x_*,t-t_*) \ge \frac{1}{(4\pi (t-t_*))^{d/2}} \exp \left( - \frac{|x-x_*|^2}{4(t-t_*)^2} \right) \ge  \frac{e^{-1/4}}{(4\pi T)^{d/2}} \]
and
\begin{align*} 
|\partial_t u_\e(x,t)|^2 & \le \frac{1}{2T} \frac{1}{4|t-t_*|} (2(t-t_*) \partial_t u_\e(x,t))^2 \\
& \le \frac{1}{T} \Xi(u_\e,z_*)(x,t) + \frac{2}{t-t_*} ((x-x_*) . \nabla u_\e(x,t))^2 \\
&  \le  \frac{1}{T} \Xi(u_\e,z_*)(x,t) + |\nabla u_\e(x,t)|^2.
\end{align*}
Therefore, using $V_\e(u_\e) \ge 0$, \eqref{lem2.2} and \eqref{zz26}, we get
\begin{align*}
\MoveEqLeft
\int_0^T \int_{B(0,\sqrt T)} |\partial_t u_\e(x,t)|^2 dxdt  \le C(T) \int_{\m R^d \times [0,t_*]}  \Xi(u_\e,z_*)(x,t)  G(x-x_*,t-t_*) dxdt \\
& \qquad + \int_{B(0,\sqrt{t_*}) \times [0,t_*]} |\nabla u_\e(x,t)|^2 dxdt  \\
& \le C(T) \int_{\m R^d \times [0,t_*]} (V_\e(u_\e) + \Xi(u_\e,z_*))(x,t)  G(x-x_*,t-t_*) dxdt \\
& \qquad + \int_{B(0,\sqrt{t_*}) \times [0,t_*]} e_\e(u_\e)(x,t) dxdt \\
& \le C(T) E_w(z_*,\sqrt{t_*}) + C(t_*) M_0 |\ln \e| \le C(T) M_0 |\ln \e|. \qedhere
\end{align*}

\end{proof}

\subsubsection{Localizing the energy}

In some of the proofs of the main results, it will be convenient to work on bounded domains for fixed time slices. But since the integral in the definition of $E_{w}$ is computed on the whole space, we will have to use two kinds of localization methods.

The first one results from the monotonicity formula.

\begin{prop}
\label{prop2.3}
Let $u_\e$ be a solution of \eqref{pgl} satisfying the initial bound \eqref{in}.

Let $T>0$ $x_T \in \m{R}^d$ and $r, \lambda >0$. There holds
\begin{multline}
\int_{\m{R}^d} e_\e(u_\e) (x,T) \exp \left( -\frac{|x-x_T|^2}{4r^2} \right) dx
 \le  \int_{B(x_T,\lambda r)} e_\e(u_\e)(x,T) dx  \\
  + C(d) \left( \frac{\sqrt{2}r}{\sqrt{T+2r^2}} \right)^{d-2} \exp \left(-\frac{\lambda^2}{8} \right) (1+T+2r^2)^d M_0|\ln \e|.  
\label{energy_local}
\end{multline}
\end{prop}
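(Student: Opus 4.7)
The plan is to split the integral over $\m R^d$ at the ball $B(x_T,\lambda r)$. Inside the ball I simply bound the Gaussian weight by $1$, which produces the first term on the right-hand side of the claimed inequality.

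Outside the ball, since $|x-x_T|\ge \lambda r$ one has the elementary factorization
\[
\exp\!\left(-\frac{|x-x_T|^2}{4r^2}\right) \;=\; \exp\!\left(-\frac{|x-x_T|^2}{8r^2}\right)\exp\!\left(-\frac{|x-x_T|^2}{8r^2}\right) \;\le\; \exp\!\left(-\frac{\lambda^2}{8}\right)\exp\!\left(-\frac{|x-x_T|^2}{8r^2}\right),
\]
after which I would extend the domain of integration back to all of $\m R^d$. It therefore suffices to estimate
\[
I \;:=\; \int_{\m R^d} e_\e(u_\e)(x,T)\,\exp\!\left(-\frac{|x-x_T|^2}{8r^2}\right) dx.
\]
Setting $z_* = (x_T, T+2r^2)$ so that $t_* := T + 2r^2$, the definition \eqref{def:Ew} identifies $I = (\sqrt 2\,r)^{d-2}\,E_w(z_*, \sqrt 2\,r)$, since $t_* - (\sqrt 2 r)^2 = T$ and $4(\sqrt 2 r)^2 = 8r^2$.

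The key step is now the monotonicity formula \eqref{MF}: since $\sqrt 2\, r \le \sqrt{t_*}$,
\[
E_w(z_*, \sqrt 2\,r) \;\le\; E_w(z_*, \sqrt{t_*}) \;=\; \frac{1}{(T+2r^2)^{(d-2)/2}} \int_{\m R^d} e_\e(u_\e^0)(x)\,\exp\!\left(-\frac{|x-x_T|^2}{4(T+2r^2)}\right) dx.
\]
The remaining integral involves only the initial datum, so Lemma \ref{H1:H0} (applied with weight parameter $2\sqrt{T+2r^2}$) bounds it by $C(d)(1+T+2r^2)^{d/2}M_0|\ln\e|$. Plugging this back, multiplying by $\exp(-\lambda^2/8)$, and recombining with the inside-ball contribution yields the stated bound, after the crude enlargement $(1+T+2r^2)^{d/2} \le (1+T+2r^2)^d$ to absorb the constant in the form stated. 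I do not anticipate any real obstacle: the argument is a standard Gaussian tail split followed by the monotonicity formula to transport the weighted energy back to $t=0$, combined with the local-to-global comparison of Lemma \ref{H1:H0}.
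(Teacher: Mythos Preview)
Your proof is correct and follows essentially the same approach as the paper: the identical Gaussian split at radius $\lambda r$, the same factorization $e^{-|x-x_T|^2/(4r^2)} \le e^{-\lambda^2/8} e^{-|x-x_T|^2/(8r^2)}$ on the complement, the monotonicity formula applied at $(x_T,T+2r^2)$ between $\sqrt{2}r$ and $\sqrt{T+2r^2}$, and finally Lemma~\ref{H1:H0} on the initial data. The only cosmetic difference is that you record the intermediate power $(1+T+2r^2)^{d/2}$ from Lemma~\ref{H1:H0} before enlarging it to $(1+T+2r^2)^d$, whereas the paper writes the latter directly.
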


\begin{proof}
We split the integral between $B(x_T, \lambda r)$ and its complement. Now for $x$ such that $|x-x_T| \ge \lambda r$, we have
\[ \exp \left( -\frac{|x-x_T|^2}{8r^2} \right) \le \exp \left( -\frac{\lambda^2}{8} \right) \exp \left( -\frac{|x-x_T|^2}{8r^2} \right).\]
Therefore
\begin{align*}
\MoveEqLeft \int_{\m{R}^d} e_\e(u_\e) (x,T) \exp \left( -\frac{|x-x_T|^2}{4r^2} \right) dx \\
& \le  \int_{B(x_T,\lambda r)} e_\e(u_\e)(x,T) dx + e^{-\lambda^2/8} \int_{\m{R}^d} e_\e(u_\e) (x,T) \exp \left( -\frac{|x-x_T|^2}{8r^2} \right) dx.
\end{align*}
Now, we apply the monotonicity formula to the second integral term of the right hand side, at the point $(x_T,T+2r^2)$, and between $\sqrt 2 r$ and $\sqrt{T+2r^2}$. We get
\begin{align*}
\MoveEqLeft \frac{1}{(\sqrt{2}r)^{d-2}} \int_{\m{R}^d} e_\e(u_\e)(x,T) \exp \left( -\frac{|x-x_T |^2}{8r^2} \right)dx \\
& \le \frac{1}{(T+2r^2)^{\frac{d-2}{2}}} \int_{\m{R}^d} e_\e(u_\e^0)(x) \exp \left( -\frac{|x-x_T|^2}{4(T+2r^2)} \right) dx \\
& \le C(d)\frac{(1+T +2r^2)^d}{(T+2r^2)^{\frac{d-2}{2}}} M_0|\ln \e|,
\end{align*}
and the conclusion follows.
\end{proof}

The second localization method, inspired by Lin and Rivi\`ere \cite{Lin2} is based on a Pohozaev type inequality.

\begin{prop}
Let $u_\e^0 \in L^{\infty}(\m{R}^d)$, $u_\e$ be the associated solution of \eqref{pgl} satisfying the initial bound \eqref{in}, and $\Xi$ as in \eqref{def:Xi}.

Let $0<t<T$. The following inequality holds, for any $x_T\in \m{R}^d$:
\begin{align*}
\MoveEqLeft \int_{\m{R}^d} e_\e(u_\e)(x,t) \frac{|x-x_T|^2}{4(T-t)} \exp \left(-\frac{|x-x_T|^2}{4(T-t)} \right) dx \\  
& \le \frac{d}{2} \int_{\m{R}^d} e_\e(u_\e)(x,t) \exp \left(-\frac{|x-x_T|^2}{4(t-t)} \right) dx \\
& \qquad + \int_{\m{R}^d} (V_\e(u_\e)+3\Xi(u_\e,z_T))\ \exp \left(-\frac{|x-x_T|^2}{4(T-t)} \right)\ dx.
\end{align*}

As a consequence, 
\begin{multline*}
\int_{\m{R}^d \times \{t\}} e_\e(u_\e) \exp(-\frac{|x-x_T|^2}{4(T-t)})\ dx  \le \int_{B(x_T,r_T) \times \{T\}} e_\e(u_\e) \exp(-\frac{|x-x_T|^2}{4(T-t)})\ dx \\
 + \frac{2}{d} \int_{\m{R}^d \times \{t\}} (V_\e(u)+3\Xi(u,z_T))\ \exp(-\frac{|x-x_T|^2}{4(T-t)})\ dx
\end{multline*}
where $r_T=2\sqrt{d(T-t)}.$
\end{prop}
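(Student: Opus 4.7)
The plan is to adapt the classical Pohozaev identity to the parabolic setting, with a Gaussian weight in space. Let $y := x - x_T$, $s := T - t > 0$, $\phi(x) := \exp(-|y|^2/(4s))$, and write $A := y \cdot \nabla u_\e$, $B := \partial_t u_\e$. First I would multiply \eqref{pgl}, evaluated at time $t$, by $\bar A \phi$, take the real part, and integrate over $\m R^d$; the Gaussian weight kills all boundary contributions at infinity. The Laplacian term, after two integrations by parts (using $\nabla\phi = -y\phi/(2s)$), produces
\[
-\frac{d-2}{2}\int |\nabla u_\e|^2 \phi + \frac{1}{4s}\int |\nabla u_\e|^2 |y|^2 \phi - \frac{1}{2s}\int |A|^2 \phi.
\]
The nonlinearity is treated via the algebraic identity $\frac{1}{\e^2}\re(u_\e(1-|u_\e|^2)\bar A) = -y\cdot\nabla V_\e$, and a single integration by parts against $\phi$ contributes $d\int V_\e\phi - \frac{1}{2s}\int V_\e |y|^2\phi$. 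Collecting everything and using $|\nabla u_\e|^2 + 2V_\e = 2e_\e$ yields the exact Pohozaev identity
\[
\frac{1}{4s}\int e_\e |y|^2 \phi = \frac{d-2}{2}\int e_\e \phi + \int V_\e \phi + \int \left[\frac{|A|^2}{4s} - \frac{1}{2}\re(A\bar B)\right] \phi.
\]

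Next, to extract the stated inequality I would use the key algebraic observation: expanding $\Xi = |A-2sB|^2/(4s) = |A|^2/(4s) - \re(A\bar B) + s|B|^2$, the bracketed residual is equal to $\Xi + \frac{1}{2}\re(A\bar B) - s|B|^2$. An application of Young's inequality with the parameter tuned to annihilate the $s|B|^2$ piece gives the pointwise bound $\frac{1}{2}\re(A\bar B) - s|B|^2 \le |A|^2/(16s)$, and the elementary estimate $|A|^2 \le |y|^2 |\nabla u_\e|^2 \le 2|y|^2 e_\e$ then produces
\[
\int \left[\frac{|A|^2}{4s} - \frac{1}{2}\re(A\bar B)\right]\phi \le \int \Xi\phi + \frac{1}{2}\int\frac{|y|^2}{4s} e_\e\phi.
\]
Substituting this into the Pohozaev identity, the term $\frac{1}{2}\int \frac{|y|^2}{4s} e_\e\phi$ can be moved to the left and absorbed into $\frac{1}{4s}\int e_\e |y|^2 \phi$, producing the claimed inequality; the constants $\frac{d}{2}$ and $3$ come out of fine-tuning the Young parameter and using $V_\e \le e_\e$ to trade a small amount of $V_\e$ for a fraction of $e_\e$ on the right-hand side if needed.

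For the ``As a consequence'' part, I would split $\int_{\m R^d} e_\e\phi = \int_{B(x_T,r_T)} e_\e\phi + \int_{B(x_T,r_T)^c} e_\e\phi$. With $r_T = 2\sqrt{ds}$, one has $|y|^2/(4s) \ge d$ on the complement, so $e_\e\phi \le \frac{1}{d}\cdot\frac{|y|^2}{4s} e_\e\phi$ there. Applying the first inequality to the whole-space integral of $\frac{|y|^2}{4s} e_\e\phi$ yields $\int_{B^c} e_\e\phi \le \frac{1}{2}\int_{\m R^d} e_\e\phi + \frac{1}{d}\int (V_\e + 3\Xi)\phi$, and moving $\frac{1}{2}\int e_\e\phi$ to the left delivers the consequence.

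The hard part will be that the naive pointwise bound $\frac{|A|^2}{4s} - \frac{1}{2}\re(A\bar B) \le 3\Xi$ is \emph{false} in general: $s|\partial_t u_\e|^2$ cannot be controlled by $\Xi$ alone, since $\Xi$ only sees the linear combination $A - 2sB$. The argument must therefore be genuinely integral, relying on the weighted estimate $|A|^2 \le 2|y|^2 e_\e$ together with a self-improvement step that absorbs a fraction of the LHS back into itself. Tracking the numerical constants through this absorption is the delicate bookkeeping.
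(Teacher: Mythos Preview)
Your approach is exactly what the paper intends: the proposition is stated without proof and described only as ``a Pohozaev type inequality'' following Lin and Rivi\`ere, and a weighted Pohozaev computation is precisely that. Your derivation of the exact identity
\[
\frac{1}{4s}\int e_\e\,|y|^2\phi \;=\; \frac{d-2}{2}\int e_\e\,\phi \;+\; \int V_\e\,\phi \;+\; \int\Big[\frac{|A|^2}{4s}-\tfrac12\re(A\bar B)\Big]\phi
\]
is correct, and your observation that the residual bracket cannot be bounded pointwise by a multiple of $\Xi$ alone (forcing the absorption trick via $|A|^2\le 2|y|^2 e_\e$) is the genuine content of the argument. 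The ``consequence'' follows from the first inequality by the splitting you describe.

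One remark on bookkeeping: your absorption with the Young parameter tuned to kill $s|B|^2$ yields the clean inequality
\[
\frac{1}{4s}\int e_\e\,|y|^2\phi \;\le\; (d-2)\int e_\e\,\phi \;+\; 2\int V_\e\,\phi \;+\; 2\int \Xi\,\phi,
\]
with constants $(d-2,\,2,\,2)$ rather than the stated $(d/2,\,1,\,3)$. Further ``fine-tuning'' does not obviously recover the printed constants, but this is immaterial: either set of constants serves the localization purpose equally well, and the statement in the paper already carries visible typos (the denominator $4(t-t)$, and the time label $\{T\}$ on the ball integral in the consequence). Your version is a valid proof of the proposition with harmless changes in the numerical constants.
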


\subsection{End of the proof of clearing-out} \label{sec:2.4}

In this paragraph, we complete the proofs of Theorems \ref{th1}, \ref{th2} and \ref{th3}.

\begin{proof}[Outline of the proof of Theorem \ref{th1}]
The proof follows word for word that of Theorem 1 in \cite[Sections 3, p. 67-99]{Be1}. Indeed, either calculations are made on bounded domains $K \times [T_0,T_1]$ (where $K$ is a compact of $\m R^d$ and $T_1>T_0 >0$), on which we have the same kind of bounds
\begin{itemize}
\item for the energy: for any $t \in [T_0,T_1]$:  $\ds \int_K |e_\e(u_\e)|(x,t) dx \le C(K,T_2) M_0|\ln \e|$ (obtained in \eqref{zz25}),
\item for the kinetic energy, in $L^2(dxdt)$: $\ds \int_{K \times [0,T_1]} |\partial_t u_\e|(x,t) dx \le C(K,T) M_0|\ln \e|$ (obtained in \eqref{bd:dtu_L2})
\item pointwise: $|u_\e| + \e | \nabla u_\e| + \e^2 |\partial_t u_\e|(x,t) \le C$ for $(x,t) \in K \times [\e^2,T_1]$ (obtained in \eqref{zz28} and useful as soon as $\e$ is so small that $T_1 >\e^2$),
\end{itemize}
or we multiply the energy by a weight of the form $\ds \exp \left( -\frac{|x|^2}{R} \right)$, for which we have the monotonicity formula and bounds \eqref{energy_local}.

For the convenience of the reader, we remind the main steps of the argument. 

Let $\ds r_\e =1- \sqrt\frac{\sigma}{2K} \e$ and $t_\e = 1-r_\e^2 \ge 0$. As $|\partial_t u_\e| \le K/\e^2$ (in view of \eqref{zz28}),
\begin{gather} \label{eq:th1_3}
|1-|u_\e(0,1)|| \le  |1-|u_\e(0,t_\e)|| + \frac{K}{\e^2} (1-t_\e) \le |1-|u_\e(0,t_\e)|| + \frac{\sigma}{2}.
\end{gather}
Now, as $|\nabla_x u_\e| \le K/\e$ (again \eqref{zz28}), one easily deduces (see \cite[Lemma 3.3 p.458]{BeBrOr01}) that
\begin{gather} \label{eq:th1_4}
|1-|u_\e(0,t_\e)|| \le C \left( \frac{1}{\e^d} \int_{B(0,\e)} ( 1-|u_\e(x,t_\e)|)^2 dx \right)^{\frac{1}{d+2}}.
\end{gather}
This bound can be related to the weighted energy as follows:
\begin{align} \label{eq:th1_5}
\MoveEqLeft \frac{1}{\e^d} \int\limits_{B(0,\e)} ( 1-|u_\e(x,t_\e)|)^2 dx \le \frac{C}{r_\e^{d-2}} \int\limits_{B(0,\e)} V_\e(u_\e)(x,t_\e) \exp \left( - \frac{|x|^2}{r_\e^2} \right) dx \le E_{w,\e}((0,1),r_\e).
\end{align}
Let $\delta \in (0,1/2)$ to be fixed at the end of the proof. By considering the variations of the weighted energy on time intervals of the form $[1-\delta^{2k}, 1-\delta^{2(k+1)}]$ for $0 \le k \le |\ln \e|/4$ (so that $1-\delta^{2k} \le 1- r_\e^2$ and $E_{w,\e}((0,1),r_\e) \le E_{w,\e}((0,1),\delta^k)$ by monotonicity), at least one of these intervals (say for $k_0$) shows a decay less than $8\eta |\ln \delta|$ (we gained a $|\ln \e|$ factor). One can actually perform a time shift and rescaling (as $k \le  |\ln \e|/4$), so that we can furthermore assume $k_0=0$: 
\begin{gather} \label{eq:th1_1}
E_{w,\e}(u,(0,1),1) - E_{w,\e}(u,(0,1),\delta) \le 8\eta |\ln \delta|, \quad E_{w,\e}((0,1),r_\e) \le E_{w,\e}((0,1),1).
\end{gather}
Gathering \eqref{eq:th1_3}, \eqref{eq:th1_4}, and \eqref{eq:th1_5},
\[ |1-|u_\e(0,1)|| \le \frac{\sigma}{2} + C E_{w,\e}((0,1),1)^{\frac{1}{d+2}}. \]
The crux of the argument is therefore to bound $E_{w,\e}((0,1),1)$ solely in terms of $\eta$ and more precisely, \begin{gather} \label{eq:th1_2}
E_{w,\e}((0,1),1) \le C(\delta) \sqrt{\eta}.
\end{gather}

To prove \eqref{eq:th1_2}, the starting point is the observation that
\[ |u_\e|^2 |\nabla u_\e|^2 = |u_\e \wedge \nabla u_\e|^2 + |u_\e|^2 |\nabla |u_\e||^2, \]
(which can be derived easily writing $u_\e = |u_\e| e^{i \varphi_\e}$). As $|\nabla u_\e| \le C/\e$ from \eqref{zz28},
\[ (1- |u_\e|^2) |\nabla u_\e|^2 \le \frac{1}{2}  |\nabla u_\e|^2 + C V_\e(u_\e), \]
and we get the pointwise bound
\[ e_\e(u_\e) \le 2 |u_\e|^2  |\nabla |u_\e||^2 + C V_\e(u_\e) + |u_\e \wedge \nabla u_\e|^2. \]
It is not so hard to obtain improved bounds on $|u_\e|$ and its derivative. Indeed, write the parabolic equation for $1- |u_\e|^2$: using \eqref{lem2.2} (in particular to treat the $\partial_t u_\e$ terms) and an averaging in time argument, one can infer that the set of times $t$ such that
\[ \int_{B(0,1)} \left( |\nabla |u_\e||^2 +  V_\e(u_\e) \right)(x,t) dx \le C(\delta) \sqrt \eta (E_w((0,1),1) +1) \]
is of large relative measure in $[1-4\delta^2,1-\delta^2]$ (for some explicit $C(\delta)$).

It follows that the crucial term to estimate is $u_\e \wedge \nabla u_\e$. For this term, one has the following Hodge-de Rham decomposition
\begin{gather*}
u_\e \wedge \nabla u_\e  = d\phi(t) + d^* \psi(t) + \xi(t) \quad \text{on } B(0,3/2) \times \{ t \}
\end{gather*}
where $d$ is the exterior derivative on $\m R^d$ ($d^*$ is its adjoint).

It is constructed as follows: define the Jacobian $Ju_\e := d( u_\e \wedge du_\e)$, and let $\psi(t)$ be such that $-\Delta \psi(t) = \chi Ju_\e$ where $\chi$ is a cut-off function on $B(0,2)$. Observe that $d^* d\psi(t)$ is closed on $B(0,3/2) \times \{ t \}$; invoking the Poincaré lemma, there exists $\xi(t)$ such that $d \xi(t) = d^* d\psi (t)$, $d^*\xi(t) =0$. Then $\phi(t)$ is obtained by invoking once again the Poincaré lemma. 

Direct elliptic estimates show that $\| \xi(t) \|_{L^2(B(0,3/2))} \le C \| \nabla \psi(t) \|_{L^2(B(0,2))}$. 

Noticing that $u_\e \wedge \partial_t u_\e = - d^*(u_\e \wedge du_\e)$, we have the elliptic equation for $\phi(t)$: for any small $\delta>0$,
\[ \left( - \Delta + \frac{x}{2\delta^2} \right) \phi(t) = u_\e \wedge \left( \frac{x}{2\delta^2} \cdot \nabla u_\e - \partial_t u_\e \right) - (d^* \psi(t) + \xi(t) ) \cdot \frac{x}{2\delta^2}. \]
One can obtain weighted elliptic estimates for the operator $\ds - \Delta + \frac{x}{2\delta^2} = - \exp \left( \frac{|x|^2}{4 \delta^2} \right) \nabla \cdot \left( \exp \left(- \frac{ |x|^2}{4 \delta^2} \right) \nabla \right)$ and from there obtain the bound
\begin{align*}
\int | \nabla \phi(t)|^2  \exp \left(- \frac{ |x|^2}{4 \delta^2} \right) dx \le C \delta^d E_w((0,1), \delta) +
C(\delta) \left( R(t) + \sqrt{R(t) E_w((0,1), \delta) } \right),
\end{align*}
where $R(t)$ only involves $\xi(t)$  (already bounded), $\Xi(u_\e)$ and $V_\e(u_\e)$ (taken care of by \eqref{lem2.2}) and $\psi(t)$:
\begin{align*}
R(t) & = \int_{\m R^d} ( \Xi(u_\e,(0,1))(x,t) + V_\e(u_\e))\exp \left(- \frac{ |x|^2}{4 \delta^2} \right)  dx \\ 
& \qquad + \int_{B(0,3/2)}  |\nabla \psi(x,t)|^2 + |\xi(x,t)|^2) \exp \left(- \frac{ |x|^2}{4 \delta^2} \right) dx.
\end{align*}
The remaining task is to estimate $\psi(t)$, which is the most involved. A first ingredient is a refined estimate on the Jacobian due to Jerrard and Soner \cite{Je2}: there exist $\beta,C >0$  such that for any smooth $w$ and test function $\varphi$,
\begin{align} 
\left| \int_{\m R^d} \langle Jw, \varphi \rangle dx \right| & \le \frac{C}{|\ln \e|} \| \varphi \|_{L^\infty} \int_{\Supp \varphi} e_\e(w) dx \nonumber \\
& \qquad + C \e^\beta \| \varphi \|_{W^{1,\infty}} \left( 1+ \int_{\Supp \varphi} e_\e(w) dx \right) (1+ \q H^d(\Supp \varphi)^2). \label{eq:JS_Jac}
\end{align}
(Observe the $|\ln \e|$ gain). A second ingredient is to compare $\psi$ with the solution $\tilde \psi$ to the analoguous \emph{heat} equation
\[ \partial_t \tilde \psi - \Delta \tilde \psi(t) = \chi Ju_\e. \]
One can get bounds on $\tilde \psi$ by the use of the monotonicity formula on $u_\e$, and then relate to $\psi$ by treating $\partial_t \tilde \psi$ as a perturbation. After an averaging in time argument, one can choose a right time slice $t \in [1-4\delta^2,1-\delta^2]$ such that, using the bounds from \eqref{lem2.2} and \eqref{bd:dtu_L2},
\[ \frac{1}{(1-t)^{d/2}} \int_{\m R^d} ( \Xi(u_\e,(0,1))(x,t) + V_\e(u_\e))(x,t) \exp \left(- \frac{ |x|^2}{4(1-t)} \right) dx \le C \frac{|\ln (1-t)|}{t^2} \eta \ln \e, \]
(a bound suitable for $\psi(t)$ and $\xi(t)$) and 
\begin{align*}
\int_{B(0,2)} |\nabla \psi(t)|^2 dx & \le C(\delta) \e^{1/6} E_w((0,1), \delta) \\
& \qquad + C(\delta) (E_w((0,1), \delta)+1) \int_{\m R^d} V_\e(u_\e)(t) \exp \left(- \frac{ |x|^2}{4 \delta^2} \right) dx.
\end{align*}
Combining all the above (and the monotonicity formula), one can choose $\delta>0$ small enough (independent of $\eta$ or $u_\e$) so that
\[ E_w((0,1),\delta) \le \frac{1}{2} E_w((0,1),1) + C \sqrt \eta. \]
Then using the first part of \eqref{eq:th1_1}, this proves \eqref{eq:th1_2} and the proof is complete.
%
\end{proof}

From Theorem \ref{th1}, Proposition \ref{prop1} follows immediately.
We now provide a proof of Proposition \ref{prop2} which also is a consequence of Theorem \ref{th1}.

\begin{proof}[Proof of Proposition \ref{prop2}:]

Let $x_0$ be any given point in $\ds B \left( x_T,\frac{R}{2} \right)$.  The crux of the ar\-gu\-ment is the following claim.

\begin{claim}
We can find $\lambda(T)>0$ such that, for every $\sqrt{T_0}<r<\sqrt{T_1}$,
\[ \frac{1}{r^{d-2}} \int_{\m{R}^d} e_\e(u_\e)(x,T) \exp \left( -\frac{|x-x_0|^2}{4r^2} \right) dx \le \eta_1 |\ln \e|, \]
provided that $\ds \tilde{\eta} \le \frac{\eta_1}{2}$ (recall that $\lambda$ enters in the definition of $\tilde{\eta}$).
\end{claim}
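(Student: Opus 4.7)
The plan is to apply Proposition \ref{prop2.3} at the center point $x_0$ (in place of $x_T$) with an auxiliary parameter $\mu = \mu(T) > 0$ to be chosen, and then set $\lambda(T) := \mu(T) + 1/2$. After dividing by $r^{d-2}$, this Pohozaev-type bound yields
\[
\frac{1}{r^{d-2}} \int_{\m R^d} e_\e(u_\e)(x,T) \exp\!\left( -\frac{|x-x_0|^2}{4r^2} \right) dx \le \frac{1}{r^{d-2}} \int_{B(x_0, \mu r)} e_\e(u_\e)(x,T) dx + \frac{C(d) (1+T+2r^2)^d}{(T+2r^2)^{(d-2)/2}} M_0 e^{-\mu^2/8} |\ln \e|,
\]
and the strategy is to bound each of the two terms by $\frac{\eta_1}{2}|\ln\e|$ uniformly in $r \in (\sqrt{T_0},\sqrt{T_1})$.

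For the local term, since $x_0 \in B(x_T, R/2)$ and $r \le \sqrt{T_1} = R$, one has the inclusions $B(x_0, \mu r) \subset B(x_T, \mu r + R/2) \subset B(x_T, \lambda(T) R)$, so the first integral is at most $\tilde \eta R^{d-2} |\ln \e|$ by definition of $\tilde \eta$. The key is now the lower bound $r \ge \sqrt{T_0}$: when $d \ge 3$, the definition $T_0 \ge (2\tilde\eta/\eta_1)^{2/(d-2)} R^2$ gives $r^{d-2}/R^{d-2} \ge 2\tilde\eta/\eta_1$, hence $R^{d-2}/r^{d-2} \le \eta_1/(2\tilde\eta)$, and
\[ \frac{1}{r^{d-2}} \int_{B(x_0, \mu r)} e_\e(u_\e)(x,T) dx \le \frac{\eta_1}{2\tilde\eta} \cdot \tilde\eta |\ln \e| = \frac{\eta_1}{2} |\ln \e|. \]
The case $d=2$ is trivial since $r^{d-2}=1$ and the hypothesis $\tilde\eta \le \eta_1/2$ applies directly.

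For the Gaussian-tail term, since $r \le R \le 1$ gives $T \le T+2r^2 \le T+2$, the prefactor is bounded by $C(d)(3+T)^d/T^{(d-2)/2}$, and choosing
\[ \mu(T) := \sqrt{8 \ln\!\left( 1 + \frac{2 C(d) (3+T)^d M_0}{\eta_1 T^{(d-2)/2}} \right)} \]
makes this tail contribution also $\le \frac{\eta_1}{2}|\ln\e|$, which proves the claim. The function $\mu$ has square-of-log growth, so $\lambda(T) = \mu(T)+1/2$ inherits the slow variation $\lambda(\tau) \le C\lambda(T)$ for $\tau \in (T/2,2T)$; for $d\ge 3$ it is naturally non-increasing on $(0,1]$ through the factor $T^{-(d-2)/2}$, and for $d=2$ one may simply freeze $\lambda$ at its value at $T=1$ on $(0,1]$. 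The only delicate point of the argument is the interplay between the containment $B(x_0,\mu r) \subset B(x_T,\lambda(T)R)$ and the lower bound on $r$: the threshold $T_0 = (2\tilde\eta/\eta_1)^{2/(d-2)} R^2$ is designed exactly so that the factor $R^{d-2}/r^{d-2}$ can absorb the loss due to integrating over a ball larger than $B(x_T,\lambda(T)R)$.
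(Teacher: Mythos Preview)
Your proof of the Claim is correct and follows essentially the same route as the paper: apply Proposition~\ref{prop2.3} at $x_0$, choose $\lambda(T)$ (your $\mu(T)+\tfrac12$; the paper uses an auxiliary $\lambda_0(T)$ and then a running supremum) so that the Gaussian tail contributes at most $\tfrac{\eta_1}{2}|\ln\e|$, and use $r\ge\sqrt{T_0}$ together with $B(x_0,\mu r)\subset B(x_T,\lambda(T)R)$ to bound the local term by $\tfrac{\eta_1}{2}|\ln\e|$. One caveat on your closing remarks (which go beyond the Claim itself): the assertion that $\mu(T)$ is ``naturally non-increasing on $(0,1]$'' for $d\ge 3$ is not correct as stated, since $(3+T)^d/T^{(d-2)/2}$ has its minimum at $T=3(d-2)/(d+2)$, which lies in $(0,1)$ for $d=3,4$; this is precisely why the paper enforces the monotonicity a posteriori by replacing $\lambda_0$ with $\sup_{t\in[T,1]}2\lambda_0(t)$ (resp.\ $\sup_{t\in[1,T]}$).
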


To prove the claim, we use \eqref{energy_local}. Let  $\lambda>0$ and $\sqrt{T_0}<r<\sqrt{T_1}=R$, we have
%
%
\begin{align}
 \MoveEqLeft 
 \frac{1}{r^{d-2}} \int_{\m{R}^d} e_\e(u_\e)(x,T) \exp \left( -\frac{|x-x_0|^2}{4r^2} \right) dx \le  \frac{1}{r^{d-2}} \int\limits_{B(x_0,\lambda r)} e_\e(u_\e) (x,T) dx \nonumber \\
 & \quad + C(d) \left( \frac{\sqrt{2} r}{\sqrt{T+2r^2}} \right)^{d-2} \exp \left(-\frac{\lambda^2}{8} \right) (1+T+2r^2)^d M_0 |\ln \e| \label{zz17}. 
\end{align}
As $r \le1$, it follows that
\[ \left( \frac{\sqrt{2} r}{\sqrt{T+2r^2}} \right)^{d-2}(1+T+2r^2)^d  \le C(d) \frac{(1+T)^d}{T^{\frac{d-2}{2}}}. \]
We first choose a function $\lambda_0$ continuous such that for $T >0$, $\lambda_0(T) \ge 1$ is so large that
\begin{equation}
\label{zz18}
C(d) \frac{(1+T)^d}{T^{\frac{d-2}{2}}} \exp \left(-\frac{\lambda_0^2(T)}{8} \right) M_0 \le \frac{\eta_1(\sigma)}{2}.
\end{equation}
Observe this can be done with furthermore requiring that there exists an absolute constant $c>0$ such that
\[ \forall T >0, \quad c \sqrt{\ln (2+1/T)} \le \lambda_0(T) \le \frac{1}{c} \sqrt{\ln (2+1/T)}. \]
Then it follows that
\[ \forall T >0, \forall \tau \in (T/2,2T), \quad \lambda_0(\tau)  \le \lambda_0(T). \]
Finally define
\[ \lambda(T) = \begin{cases}
\sup_{t \in [T,1]} 2\lambda_0(t) & \text{ if } T \le 1, \\
\sup_{t \in [1,T]} 2 \lambda_0(t)& \text{ if } T \ge 1.
\end{cases} \]
Then the function $\lambda$ is positive, continuous and satisfies the conditions of Proposition \ref{prop2}.

Furthermore, since $x_0$ belongs to $B(x,R/2)$ and $r<R$, it follows that
\[ B(x_0,\lambda_0(T) r) \subset B(x_T,\lambda(T) R). \]

Therefore, 
\begin{align*}
\frac{1}{r^{d-2}} \int_{B(x_0,\lambda_0 R)} e_\e(u_\e)(x,T) dx & \le  \left( \frac{R}{r} \right)^{d-2} \frac{1}{R^{d-2}} \int_{B(x_T,\lambda(T) R) } e_\e(u_\e)(x,T) dx \\
& \le \left( \frac{R}{r} \right)^{d-2} \tilde{\eta} |\ln \e| \le \left( \frac{R}{\sqrt{T_0}} \right)^{d-2} \tilde{\eta} |\ln \e|. 
\end{align*}

Choosing $\ds T_0=\left(\frac{2}{\eta_1} \right)^{\frac{2}{d-2}} \tilde{\eta}^{\frac{2}{d-2}} R^2$, we obtain
\begin{equation}
\label{zz16}
\frac{1}{r^{d-2}} \int_{B(x_0,\lambda_0(T) R)} e_\e(u_\e)(x,T) dx \le \frac{\eta_1}{2} |\ln \e|.
\end{equation}
We finally combine (\ref{zz17}), (\ref{zz18}) and (\ref{zz16}), and the claim is proved. The conclusion then follows from Proposition \ref{prop1}.
\end{proof}

The proofs of both Theorems \ref{th2} and \ref{th3} are now exactly the same as in \cite{Be1} (see sections 5.2 to 5.4) since computations are made on compact domains $\Omega \subset \m R^d \times (0,+\infty)$, where the bounds \eqref{zz25}, \eqref{bd:dtu_L2} and \eqref{zz28} hold.  As in the proof of Theorem \ref{th1} (and explained at its beginning), these are the only bounds which are made use of, along with the monotonicity formula and the identity \eqref{lem2.2}.

For the convenience of the reader, we remind a few elements of the proofs.

\begin{proof}[Outline of the proof of Theorem \ref{th2}]
One writes $u_\e = |u_\e| e^{i \varphi_\e}$, and starts with the equation for the phase $\varphi_\e$. As $|u_\e| \ge 1/2$, it is a uniformly parabolic equation. After performing a spatial cut-off in order to get rid of boundary conditions on $\Lambda$, one splits $\varphi_\e = \varphi_{\e,0} + \varphi_{\e,1}$, where $\varphi_{\e,0}$ is a solution to the \emph{linear} heat equation  (which turns out to be the desired phase $\Phi_\e$) and $\varphi_{\e,1}$ is a solution to a nonlinear equation \emph{with 0 initial data}. 

$\varphi_{\e,0}$ admits improved bounds due to parabolic regularity
\[ \| \nabla \varphi_{\e,0} \|_{L^2 L^{2^*}(\Lambda_{3/4})}^2 + \| \nabla \varphi_{\e,0} \|_{L^{\infty}(\Lambda_{3/4})}^2 \le C(\Lambda) \int_\Lambda e_\e(u_\e). \]
$\varphi_{\e,1}$ is shown to be essentially a perturbation (via a fixed point argument), and so $\varphi_\e$ enjoys integrability bounds in $L^2L^q(\Lambda_{3/4})$ for some $q >2$ (depending on $d$).

Plugging this information in the equation for $|u_\e|$, one infers that
\begin{gather}
\label{eq:th2_1}
e_\e(u_\e) \le \frac{1}{2} |\nabla \varphi_{\e,0}|^2 + \kappa_\e, \quad \text{where} \quad \int_{\Lambda_{3/4}} \kappa_\e \le C(\Lambda) M_0 \e^\alpha,
\end{gather}
for some $\alpha$ depending only on $d$.

An extra ingredient is provided by a result by Chen and Struwe \cite{Che} where the bound \eqref{eq11} is proved under an extra assumption of small energy. Together with \eqref{eq:th2_1} and a scaling argument, one can relax the small energy assumption, and prove \eqref{eq11}.

Finally, inserting \eqref{eq11} in the equation for $|u_\e|$, Lemma \ref{lem1.1} allows to improve the estimates to 
\[ |\nabla |u_\e|| + V_\e(u_\e) \le C(\lambda) \e^\alpha, \]
and from there, obtain $L^\infty$ bounds on $\varphi_{\e,1}$: this yields the bounds \eqref{eq14} and \eqref{eq:th2:3}, and completes the proof of Theorem \ref{th2}.
\end{proof}

\begin{proof}[Outline of the proof of Theorem \ref{th3}]
It suffices to prove the estimates on any cylinder of the form $\Lambda = B \times [T_0,T_1]$ where $B$ is a ball and $T_1 > T_0 > 0$. Up to increasing slightly the cylinder (and due to \eqref{zz26}), one can assume that 
\begin{gather} \label{eq:th3_en}
\int_{\Lambda} e_\e (u_\e) dxdt +\int_{\partial \Lambda} e_\e (u_\e) d\sigma(x,t) \le M_1 \ln \e.
\end{gather}
Denote now $\delta$ and $\delta^*$ the (\emph{space time}) exterior derivatives on $\m R^{d+1}$. The main step is a Hodge-de Rham type decomposition with estimates:
\begin{gather} \label{eq:th3_HdR}
u_\e \wedge \delta u_\e = \delta \Phi + \delta^* \Psi + \zeta,
\end{gather}
where a key feature is to prove that in addition to the expected bound
\[  \| \nabla \Phi \|_{L^2(\Lambda)} +  \| \nabla \Psi \|_{L^2(\Lambda)} \le C(\Lambda) | \ln \e |, \]
one also has for any $\ds p \in \left[1, 1+ \frac{1}{d}\right)$
\begin{gather} \label{eq:th3_psi} \| \nabla \Psi \|_{L^p(\Lambda)} \le C(p,\Lambda) M_2, \quad \text{and} \quad \| \zeta \|_{L^p(\Lambda)} \le C(p,\Lambda) \sqrt{\e}.
\end{gather}
In other words, $\Phi$ completely accounts for the lack of compactness.

The argument for \eqref{eq:th3_HdR} is as follows: one starts with the usual Hodge-de Rham decomposition on $\partial \Lambda$ (which is simply connected: here we use that $(d+1) -1 \ge 2$) so that
\[ u_\e \wedge \delta u_\e = d_{\partial \Lambda} \Phi_{\partial \Lambda} + d_{\partial \Lambda}^* \Psi_{\partial \Lambda}. \]
Then on $\partial \Lambda$, $-\Delta_{\partial \Lambda} \Psi_{\partial \Lambda} = 2 J_{\partial \Lambda} u_\e$: the Jerrard and Soner estimate on the Jacobian \eqref{eq:JS_Jac} gives the $1/\ln \e$ gain 
\[ \| J_{\partial \Lambda} u_\e \|_{C^{0,\alpha}(\partial \Lambda)'} \le C(\Lambda), \]
from where a similar gain is derived on $\Psi_{\partial \Lambda}$.

Let $\Phi_0$ be the harmonic extension of $\Phi_{\partial \Lambda}$ on $\Lambda$: we gauge away by considering on $\Lambda$
\[ v_\e := u_\e e^{- i \Phi_0}. \]
One now considers on $\Lambda$ the Hodge-de Rham decomposition $v_\e \wedge \delta v_\e = \delta \Phi_1 + \delta^* \Psi$, so that
\[ u_\e \wedge \delta u_\e = \delta \Phi_1 + \delta^* \Psi + \delta \Phi_0 + (1-|u_\e|^2) \delta \Phi_0. \]
Define $\zeta := (1-|u_\e|^2) \delta \Phi_0$: it satisfies \eqref{eq:th3_psi} using the bound on the energy \eqref{eq:th3_en}, and $\zeta|_{\partial \Lambda}$ as well.

For $\Psi$, one has again  $-\Delta \Psi  = 2 Jw_\e$, now with boundary conditions involving $\Psi_{\partial \Lambda}$ and $\zeta|_{\partial \Lambda}$. A Jacobian estimate similar to \eqref{eq:JS_Jac} (and proven in \cite[Proposition II.1]{BeOr02}) thus yields the bound \eqref{eq:th3_psi} on $\Psi$. Finally defining $\Phi := \Phi_0 + \Phi_1$ completes the decomposition.

\bigskip

With \eqref{eq:th3_HdR} and \eqref{eq:th3_psi} at hand, we now define $\Phi_\e$ by writing $\Phi = \Phi_\e + \Phi_2$, where 
\[ \partial_t \Phi_2 - \Delta \Phi_2 = \partial_t \Phi - \Delta \Phi \quad \text{on } \Lambda, \]
with the boundary condition $\Phi_2 = 0$ on $B \times \{ T_0 \} \cup \partial B \times (T_0,T_1)$. In particular, $\Phi_\e $ solves the (homogeneous) heat equation on $\Lambda$. 

From parabolic estimates,
\[ \| \nabla \Phi_\e \|_{L^\infty(\Lambda)} \le C \| \nabla_{x,t} \Phi \|_{L^2(B \times \{ T_0 \} \cup \partial B \times (T_0,T_1))} \le C(\Lambda) \ln \e, \]
which is the first estimate (iii). It remains to bound $w_\e := u_\e e^{- i \Phi_\e}$ in $\dot W^{1,p}(\Lambda)$.  
First we separate between $A_\Lambda = \{ ((x,t) \in \Lambda \mid |1-|w_\e(x,t)| | \le \e^{1/4} \}$ and $B_\Lambda = \Lambda \setminus A_\Lambda$. On $B_\Lambda$, the rough estimate $|\nabla w_\e(x,t)| \le C/\e$ inherited from \eqref{zz28} yields
\begin{align*}
\| \nabla w_\e \|_{L^p(B_\Lambda)}^p & \le C(\Lambda) \frac{1}{\e^p} \int_{\Lambda} \frac{(1-|w_\e(x,t)|^2)^2}{\e^{1/2}} dxdt \\
& \le C(\Lambda) \e^{2-p-1/2} \int_{\Lambda} \frac{(1-|u_\e(x,t)|^2)^2}{\e^2} dxdt \le C(\Lambda) \e^{3/2-p} \ln \e \to 0
\end{align*}
(observe that $|u_\e| = |w_\e|$ and $\ds p < \frac{d+1}{d} \le \frac{3}{2}$). We now work on $A_\Lambda$.
Then (with the same computation as in the proof of Theorem \ref{th1}), we have
\begin{align*}
|\nabla w_\e|^2 \le 2 |w_\e|^2 |\nabla w_\e|^2 = 2  |w_\e|^2 |\nabla |w_\e||^2 + 2| w_\e \wedge \nabla w_\e |^2 \le 2 |\nabla (|u_\e|^2)|^2 + 2| w_\e \wedge \nabla w_\e |^2.
\end{align*}
It follows from \eqref{eq:th3_psi} (writing $w_\e$ in terms of $u_\e$) that $\| w_\e \wedge \nabla w_\e \|_{L^p(\Lambda)} \le C(\Lambda)$. Then one actually bounds $\|  \nabla (|u_\e|^2) \|_{L^p(\Lambda)}$: for this, one writes the (uniformly) parabolic equation for $\rho = |u_\e|^2$, from which we get
\[ \int |\nabla \rho|^2 \chi \le \int (1-\rho) | \nabla u_\e|^2 \chi + \int \left( |\nabla \rho| |\nabla \chi|  + \partial_t \rho| \chi  \right) |1-\rho|, \]
(where $\chi$ is a suitable non negative cut-off function, $\chi =1$ on $\Lambda$). To bound the right-hand side, we split again between $A_\Lambda$ and $B_\Lambda$: $B_\Lambda$ has small measure (arguing as before) and on $A_\Lambda$ one uses smallness of $|1-\rho|$. Gathering all gets a final bound
\[ \| |\nabla (|u_\e|^2) \|_{L^p(\Lambda)}^p \le C(\Lambda) \e^{1/4-p/8} \ln \e \to 0. \]
This gives estimate (iv).

\end{proof}


\section[Limiting measure and concentration set]{Description of the limiting measure and concentration set}

Let $u_\e$ be a solution of \eqref{pgl} satisfying the initial data \eqref{in}. 


Our goal in this section is to study the asymptotic limit, as $\e \rightarrow 0$, of the Radon measures $\mu_\e$ defined on $\m{R}^d \times [0,+\infty)$ by
\[ \mu_\e(x,t) = \frac{e_\e(u_\e)(x,t)}{|\ln \e|}\ dxdt. \]
To that purpose, we will study their time slices $\mu_\e^t$ defined on $\m{R}^d \times \{t\}$ by
\[ \mu_\e^t(x) = \frac{e_\e(u_\e)(x,t)}{|\ln \e|}\ dx. \]
When $\e \to 0$, these measures converge to $\mu_*$ and $\mu_*^t$ respectively (up to a subsequence), as it is shown in the next paragraph.

Then we will study the evolution of $\mu_*^t$, and show that it follows Brakke's weak formulation of the mean curvature flow. For this, we will of course heavily rely on the properties of $\mu_\e^{t}$ obtained in the previous section.

\subsection{Absolute continuity with respect to time of the limiting measure}

According to inequality (\ref{zz25}) , we have for all $R>0$ and $T>0$,
\begin{equation}
\label{equaH0}
\int_{B(0,R) \times [0,T]} d\mu_{\e}(x,t) \le C(d) T (1+T) (1+R)^d M_0.
\end{equation}

The bound \eqref{equaH0} yields a limiting measure via a diagonal extraction argument. This can also be done simultaneously for the time sliced measures: more precisely, following the proof in Brakke \cite{Br} word for word, we have the following.

\begin{thm}
\label{te1}
There exist a sequence $\e_m \rightarrow 0$, a Radon measure $\mu_*$ defined on $\m{R}^d \times [0,+\infty)$,  bounded on compact sets and, for each $t \ge 0$, a Radon measure $\mu_{*}^t$ on $\m{R}^d \times \{t\}$ such that:
\begin{gather}
\mu_{\e_m} \rightharpoonup \mu_{*} \quad \text{as } m \rightarrow \infty, \quad \text{and for all } t \ge 0, \quad \mu_{\e_m}^t \rightharpoonup \mu_{*}^t \quad \text{as } m \rightarrow \infty. 
\end{gather}
Moreover, the $(\mu_*^s)_s$ enjoy the bound
\begin{equation}
\label{Eq1}
\forall R>0, \forall t>0,\quad \mu_{*}^t(B(0,R)) \le C(d) (1+t) (1+R)^d M_0.
\end{equation}
and
\[ \mu_* = \mu_*^t dt. \]
\end{thm}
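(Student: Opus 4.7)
The plan is a standard weak-$*$ compactness argument for the space-time measures, combined with the Brakke--Ilmanen almost-monotonicity trick for selecting convergent time slices at every $t \ge 0$.

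\textbf{Step 1: Weak limit in space-time.} The bound \eqref{equaH0} shows that $(\mu_\e)_{\e>0}$ is uniformly bounded on every compact subset of $\m R^d \times [0,+\infty)$. Since $C_c(\m R^d \times [0,+\infty))$ is separable, Banach--Alaoglu furnishes a subsequence $\e_m \to 0$ and a Radon measure $\mu_*$ on $\m R^d \times [0,+\infty)$ with $\mu_{\e_m} \rightharpoonup \mu_*$ and satisfying \eqref{equaH0} in the limit.

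\textbf{Step 2: Almost-monotonicity for slices.} For fixed $t \ge 0$, \eqref{zz25} gives uniform local bounds on $(\mu_\e^t)$, so weak-$*$ compactness slice by slice. The delicate point is a \emph{simultaneous} extraction valid for every $t$. I would use the energy identity obtained by multiplying \eqref{pgl} by $\chi\, \partial_t u_\e$ and integrating: for any non-negative $\chi = \eta^2$ with $\eta \in C_c^\infty(\m R^d)$,
\[
\frac{d}{dt}\int \chi\, e_\e(u_\e)\, dx
= -\int \chi |\partial_t u_\e|^2\, dx - \int (\nabla \chi \cdot \nabla u_\e)\, \partial_t u_\e\, dx
\le 2 \int |\nabla \eta|^2\, e_\e(u_\e)\, dx,
\]
where I used $|\nabla \chi \cdot \nabla u_\e\, \partial_t u_\e| \le \chi|\partial_t u_\e|^2 + |\nabla \eta|^2|\nabla u_\e|^2$. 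Picking an auxiliary $\tilde \chi \in C_c^\infty$ with $|\nabla \eta|^2 \le C_\eta\, \tilde \chi$, dividing by $|\ln \e|$, this shows that
\[
\tilde F^\e_\chi(t) := \int \chi\, d\mu_\e^t - 2C_\eta \int_0^t \int \tilde \chi\, d\mu_\e^{s}\, ds
\]
is non-increasing in $t$ and uniformly bounded on every $[0,T]$ (by \eqref{zz25}).

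\textbf{Step 3: Helly and diagonal extraction.} By Helly's selection theorem, a subsequence of $(\e_m)$ (relabelled) makes $\tilde F^{\e_m}_\chi(t)$ converge for every $t \ge 0$. The correction integral converges as well, since it equals $\langle \mu_{\e_m}, \tilde \chi \otimes \mathbf{1}_{[0,t]}\rangle$, which is controlled by the weak convergence $\mu_{\e_m} \rightharpoonup \mu_*$ already obtained (after approximating $\mathbf{1}_{[0,t]}$ by continuous cutoffs and using the local mass bound to pass to the limit). Thus $t \mapsto \langle \mu_{\e_m}^t, \chi \rangle$ converges for every $t \ge 0$. Applying this diagonally to a countable dense family $(\chi_k) \subset C_c(\m R^d)$ and using \eqref{zz25} for equicontinuity of the resulting functionals, I obtain, for each $t$, a positive linear functional on $C_c(\m R^d)$; Riesz's theorem identifies it with a Radon measure $\mu_*^t$ satisfying \eqref{Eq1}.

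\textbf{Step 4: Disintegration $\mu_* = \mu_*^t\, dt$.} Tested against a product $\psi(x,t) = \chi(x)\zeta(t)$ with $\chi, \zeta \in C_c$, Fubini gives
\[
\int \psi\, d\mu_\e = \int \zeta(t)\, \langle \mu_\e^t, \chi \rangle\, dt ,
\]
and the integrand converges pointwise, dominated by $C(\chi)(1+t)\, \|\zeta\|_\infty\, M_0$ thanks to \eqref{zz25}. Lebesgue's dominated convergence theorem and Stone--Weierstrass density of product functions in $C_c(\m R^d \times [0,+\infty))$ yield $\mu_* = \mu_*^t\, dt$.

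The main obstacle is Step 2/3: upgrading slice-by-slice compactness to a single subsequence that works for \emph{every} $t \ge 0$. The almost-monotonicity derived from the GL energy identity, together with the $L^2_{x,t}$ bound \eqref{bd:dtu_L2} on $\partial_t u_\e$ hidden in the cross-term, is exactly what makes Helly's theorem applicable, which is the content of Brakke's argument \cite{Br} we follow here.
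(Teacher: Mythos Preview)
Your approach is essentially the paper's: extract an almost-monotone quantity from the energy identity, apply a Helly-type selection over a countable dense family of test functions, then recover $\mu_*^t$ for every $t$. The paper streamlines your Step~2 by using the second-order identity (Lemma~\ref{ll2.3}) in place of Lemma~\ref{lll2.1}: writing $\vec X = \nabla\chi$ kills the $\partial_t u_\e$ cross-term and yields directly
\[
\frac{d}{dt}\int \chi\, d\mu_\e^t \le C\|\chi\|_{C^2}\int_{\Supp\chi} d\mu_\e^t \le C(\chi,T)M_0,
\]
i.e.\ a one-sided Lipschitz bound, so no correction term is needed and a ``semi-decreasing'' Helly lemma applies straight away. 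Your route via $\chi=\eta^2$ and Young's inequality works too, but forces you to track $\tilde F^\e_\chi$.

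One genuine imprecision in your Step~3: you claim the correction $\int_0^t\int\tilde\chi\,d\mu_{\e_m}^s\,ds = \langle\mu_{\e_m},\tilde\chi\otimes\mathbf 1_{[0,t]}\rangle$ converges for \emph{every} $t$ by space-time weak convergence and approximating $\mathbf 1_{[0,t]}$. That argument only gives convergence at those $t$ with $\mu_*(\Supp\tilde\chi\times\{t\})=0$, which you have not established. The fix is immediate, though: the corrections $t\mapsto\int_0^t\int\tilde\chi\,d\mu_{\e_m}^s\,ds$ are non-decreasing and uniformly Lipschitz on $[0,T]$ (constant $\le C(T,\tilde\chi)M_0$ by \eqref{zz25}), so Arzel\`a--Ascoli (or Helly again) gives a further subsequence converging for all $t$, and then $\langle\mu_{\e_m}^t,\chi\rangle = \tilde F^{\e_m}_\chi(t) + \text{correction}$ converges for every $t$ as desired.
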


For the proof of Theorem \ref{te1}, we will need a few classical identites for the evolution of $\mu_\e^t$.

Before we dive into the study of the singular measure $\mu_*^t$,  let us state two useful following identities.

\begin{lem}
\label{lll2.1}
Let $u_\e^0 \in L^{\infty}(\m{R}^d)$ and $u_\e$ be the associated solution of \eqref{pgl}. Then, for all $\chi \in \q D(\m{R}^d)$ and for all $t \ge 0$, we have
\begin{equation}
\label{equa2.1}
\frac{d}{dt} \int_{\m{R}^d} \chi(x) d\mu_\e^t = - \int_{\m{R}^d \times \{t\}} \chi(x) \frac{|\partial_t u_\e|^2}{|\ln \e|} \ dx + \int_{\m{R}^d \times \{t\}} \nabla \chi(x) \frac{-\partial_t u_\e.\nabla u_\e}{|\ln \e|} \ dx. 
\end{equation} 
\end{lem}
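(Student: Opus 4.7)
The plan is a direct computation: differentiate under the integral in time, substitute \eqref{pgl} to eliminate the potential derivative, and integrate by parts in space.

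First, I would justify differentiation under the integral sign. Since $\chi$ has compact support in $\m R^d$, say $\Supp \chi \subset B(0,R)$, and since Proposition \ref{prop1.1} gives local pointwise bounds on $u_\e$, $\nabla u_\e$, and $\partial_t u_\e$ (and these propagate smoothly by parabolic regularization), the integrand $\chi(x) e_\e(u_\e)(x,t)$ is smooth in $t$ with $L^1_x$ bounds uniform on compact time intervals. Hence
\[ \frac{d}{dt} \int_{\m R^d} \chi\, d\mu_\e^t = \frac{1}{|\ln \e|} \int_{\m R^d} \chi(x)\, \partial_t e_\e(u_\e)(x,t)\, dx. \]

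Next I would compute $\partial_t e_\e(u_\e)$ pointwise. Writing $\langle \cdot, \cdot \rangle$ for the real inner product on $\m C \simeq \m R^2$, one has
\[ \partial_t e_\e(u_\e) = \langle \nabla u_\e, \nabla \partial_t u_\e \rangle - \frac{1}{\e^2}\langle u_\e(1-|u_\e|^2), \partial_t u_\e \rangle. \]
Using \eqref{pgl} to substitute $\frac{1}{\e^2} u_\e(1-|u_\e|^2) = \partial_t u_\e - \Delta u_\e$, this becomes
\[ \partial_t e_\e(u_\e) = \langle \nabla u_\e, \nabla \partial_t u_\e \rangle - |\partial_t u_\e|^2 + \langle \Delta u_\e, \partial_t u_\e \rangle. \]

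Finally, I would multiply by $\chi$, integrate over $\m R^d$, and integrate the first term by parts (which introduces no boundary terms since $\chi$ is compactly supported):
\[ \int_{\m R^d} \chi\, \langle \nabla u_\e, \nabla \partial_t u_\e \rangle\, dx = -\int_{\m R^d} \nabla \chi \cdot \langle \nabla u_\e, \partial_t u_\e \rangle\, dx - \int_{\m R^d} \chi\, \langle \Delta u_\e, \partial_t u_\e \rangle\, dx. \]
The two Laplacian contributions cancel, and dividing by $|\ln \e|$ yields \eqref{equa2.1}. There is no substantive obstacle here: everything is a smooth, local manipulation, and the only verifications needed (differentiation under the integral and integration by parts) are immediate from the smoothness of $u_\e$ and the compact support of $\chi$.
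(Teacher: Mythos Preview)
Your proof is correct and is exactly the standard derivation of this local energy identity; the paper does not give a proof of this lemma (it is stated as a known identity, with the companion Lemma \ref{ll2.2} referred to \cite{Be2}), so your computation is the expected one.
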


We usually choose $\chi \ge 0$, and this choice makes the first term of the right-hand side non positive. To handle the second term, we provide another identity which involves the stress-energy tensor.

\begin{lem}
\label{ll2.2}
Let $\vec{X} \in \q D(\m{R}^d,\m{R}^d)$. Then for all $t \ge 0$,
\begin{equation}
\label{x30}
\frac{1}{|\ln \e|} \int_{\m{R}^d \times \{t\}} \left( e_\e(u_\e)\delta_{ij} - \partial_i u_\e \partial_j u_\e \right) \partial_j X_i = -\int_{\m{R}^d \times \{t\}}\vec{X}. \frac{-\partial_t u_\e.\nabla u_\e}{|\ln \e|} \ dx. 
\end{equation}
(Here we use Einstein's convention of implicit sommation over repeated indices.)
\end{lem}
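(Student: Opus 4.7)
The plan is to derive \eqref{x30} as the distributional consequence of a pointwise conservation law for the stress–energy tensor
\[ T_{ij} := e_\e(u_\e)\delta_{ij} - \partial_i u_\e \cdot \partial_j u_\e, \]
where the dot denotes the real inner product $a \cdot b = \re(a \bar b)$ on $\m C$. First I would compute $\partial_j T_{ij}$ directly. Expanding $\partial_i e_\e(u_\e) = \partial_j u_\e \cdot \partial_i \partial_j u_\e + \nabla V_\e(u_\e) \cdot \partial_i u_\e$, the quadratic terms in second derivatives cancel against $\partial_j(\partial_i u_\e \cdot \partial_j u_\e)$, and one is left with the identity
\[ \partial_j T_{ij} = \partial_i u_\e \cdot \bigl( \nabla V_\e(u_\e) - \Delta u_\e \bigr). \]

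The second step is to invoke \eqref{pgl}, which rewrites as $\Delta u_\e - \nabla V_\e(u_\e) = \partial_t u_\e$ (recall $\nabla V_\e(u_\e) = -\e^{-2} u_\e(1-|u_\e|^2)$). This turns the above into the conservation law
\[ \partial_j T_{ij} = -\partial_t u_\e \cdot \partial_i u_\e. \]
Since $\vec X \in \q D(\m R^d,\m R^d)$ is compactly supported in space, integration by parts on the time slice $\m R^d \times \{t\}$ produces no boundary terms and gives
\[ \int_{\m R^d \times \{t\}} T_{ij} \, \partial_j X_i \, dx = -\int_{\m R^d \times \{t\}} X_i \, \partial_j T_{ij} \, dx = \int_{\m R^d \times \{t\}} X_i \, \partial_t u_\e \cdot \partial_i u_\e \, dx, \]
which is exactly \eqref{x30} after dividing by $|\ln \e|$ and absorbing the two minus signs on the right-hand side (matching the convention used in \eqref{equa2.1}).

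All manipulations are justified by Proposition \ref{prop1.1}: as soon as $t > \e^2$, the quantities $u_\e(\cdot,t)$, $\nabla u_\e(\cdot,t)$ and $\partial_t u_\e(\cdot,t)$ are pointwise bounded (by constants possibly depending on $\e$, which is harmless here since $\e$ is fixed), so that $T_{ij}$ and $\partial_t u_\e \cdot \partial_i u_\e$ are smooth and the compact support of $\vec X$ guarantees absolute convergence. There is really no obstacle in this proof — it is a routine application of the classical Noether identity associated with translation invariance of the Ginzburg–Landau action; the only mild bookkeeping point is the complex-valued nature of $u_\e$, which is handled simply by working with real and imaginary components $u_\e = (u_\e^1, u_\e^2)$ and summing the componentwise identities.
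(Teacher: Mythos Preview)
Your proof is correct and follows exactly the standard route the paper has in mind: the paper does not write out the argument but refers to \cite{Be2} and explicitly points to the stress--energy matrix $A_\e = e_\e(u_\e)\Id - \nabla u_\e \otimes \nabla u_\e$, whose divergence identity $\partial_j (A_\e)_{ij} = -\partial_t u_\e \cdot \partial_i u_\e$ is precisely what you compute and then test against $\vec X$. One small remark: Proposition~\ref{prop1.1} actually asserts $u_\e(t), \nabla u_\e(t), \partial_t u_\e(t) \in L^\infty$ for \emph{all} $t>0$ (the explicit $\e$-dependent constants are only stated for $t>\e^2$), so your justification covers every $t>0$ without the layer restriction.
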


The proof of Lemma \ref{ll2.2} is given in \cite{Be2}, and involves the stress-energy matrix $A_\e$ given by\begin{gather}
\label{def:stress-energy}
 A_\e = A_\e(u_\e) :=  e_\e(u_\e) \Id - \nabla u_\e \otimes \nabla u_\e = T(u_\e)+V_\e(u_\e) \Id,
 \end{gather}
where the matrix $T(u)$ and the potential $V_\e$ are given by
\begin{equation}
\label{equa2.4}
T(u)= \frac{1}{2} |\nabla u|^2 \Id - \nabla u \otimes \nabla u, \quad V_\e(u)=\frac{(1-|u|^2)^2}{4 \e^2}.
\end{equation}

%

Combining  Lemma \ref{lll2.1} and  Lemma \ref{ll2.2}  with the choice $\vec{X}=\nabla \chi$, we get rid of the time derivative of the right hand side of (\ref{equa2.1}). More precisely

\begin{lem}
\label{ll2.3}
\[ \frac{\partial}{\partial t} \int_{\m{R}^d} \chi(x) d\mu_{\e}^{t} = -\int_{\m{R}^d \times \{t\}} \chi(x) \frac{|\partial_t u_{\e}|^2}{|\ln \e|}\ dx +  \int_{\m{R}^d \times \{t\}} \frac{D^2 \chi \nabla u_\e.\nabla u_\e - \Delta \chi e_{\e}(u_\e)}{|\ln \e|} \ dx. \]
\end{lem}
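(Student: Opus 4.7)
The plan is to simply combine Lemmas \ref{lll2.1} and \ref{ll2.2} as suggested by the statement, with the specific vector field $\vec{X} = \nabla \chi$. There is essentially no obstacle here: the result is an algebraic identity that drops out after substituting one formula into another. The key observation is that $\nabla\chi \in \q D(\m R^d, \m R^d)$ whenever $\chi \in \q D(\m R^d)$, so Lemma \ref{ll2.2} is applicable with this choice.

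Concretely, I would proceed as follows. Start from Lemma \ref{lll2.1}:
\[ \frac{d}{dt} \int_{\m{R}^d} \chi(x) d\mu_\e^t = - \int_{\m{R}^d \times \{t\}} \chi(x) \frac{|\partial_t u_\e|^2}{|\ln \e|} dx + \int_{\m{R}^d \times \{t\}} \nabla \chi(x) \cdot \frac{-\partial_t u_\e \cdot \nabla u_\e}{|\ln \e|} dx. \]
The first term on the right-hand side already matches the desired conclusion. For the second term, apply Lemma \ref{ll2.2} with the choice $X_i = \partial_i \chi$, which gives
\[ -\int_{\m{R}^d \times \{t\}} \nabla \chi \cdot \frac{-\partial_t u_\e \cdot \nabla u_\e}{|\ln \e|} dx = \frac{1}{|\ln \e|} \int_{\m{R}^d \times \{t\}} \left( e_\e(u_\e) \delta_{ij} - \partial_i u_\e \partial_j u_\e \right) \partial_i \partial_j \chi \, dx. \]

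Finally, I would expand the indices using Einstein summation: the first term contributes $e_\e(u_\e)\, \delta_{ij} \partial_i \partial_j \chi = e_\e(u_\e)\, \Delta \chi$, while the second contributes $\partial_i u_\e\, \partial_j u_\e \, \partial_i \partial_j \chi = D^2\chi\, \nabla u_\e \cdot \nabla u_\e$. Substituting back yields exactly the claimed identity:
\[ \frac{d}{dt} \int_{\m{R}^d} \chi\, d\mu_\e^t = -\int_{\m{R}^d \times \{t\}} \chi \frac{|\partial_t u_\e|^2}{|\ln \e|} dx + \int_{\m{R}^d \times \{t\}} \frac{D^2 \chi\, \nabla u_\e \cdot \nabla u_\e - \Delta \chi\, e_\e(u_\e)}{|\ln \e|}\, dx. \]

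The only minor point worth mentioning is regularity/integrability: all integrands involved are locally integrable thanks to the $L^\infty$ bounds from Proposition \ref{prop1.1} on $u_\e$, $\nabla u_\e$ and $\partial_t u_\e$ (for $t \ge \e^2$), and $\chi$ has compact support, so no subtleties arise in applying the two lemmas. The integration by parts hidden inside Lemma \ref{ll2.2} produces no boundary terms for the same reason.
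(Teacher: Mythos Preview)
Your proof is correct and follows exactly the approach indicated in the paper: combine Lemma \ref{lll2.1} with Lemma \ref{ll2.2} applied to $\vec{X}=\nabla\chi$, then expand the indices. The paper itself presents Lemma \ref{ll2.3} as an immediate consequence of this combination and gives no further argument.
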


\begin{proof}[Proof of Theorem \ref{te1}]
It boils down to the following claim.

\begin{claim}
\label{pr}
Let $T>0$. We consider a sequence $\e_m \rightarrow 0$. Then there exists a subsequence $\e_{\sigma(m)}$ such that for every $s \in [0,T]$, 
\[ \mu_{\e_{\sigma(m)}}^s \rightharpoonup \mu_{*}^s \quad \text{as } m \rightarrow \infty, \text{ in the sense of measures.} \]
\end{claim}

Indeed let us proof Theorem \ref{te1} assuming this claim holds. With $T_n =n$, the claim let us dispose of sequences $\e_{n,m} \to 0$ as $m \to +\infty$ such that $\mu^{s}_{\e_{n,m}}$ enjoys the desired convergence on $s \in [0,T_n]$. By an argument of diagonal extraction, we deduce that there exist a sequence $\e_m \rightarrow 0$ and, for each $s \ge 0$, a measure $\mu_{*}^s$ on $\m{R}^d \times \{s\}$ such that for every $s \ge 0$, $\mu_{\e_m}^s \rightharpoonup \mu_{*}^s$ as $m \rightarrow \infty$.

We now prove the claim. We use the following lemma, which is an easy variant of Helly's selection principle.

\begin{lem}
\label{le2}
Let $I$ be an at most countable set, and let $(f_i^m)_{m \in \m{N},i \in I}$ be a collection of real-valued functions defined on some interval $(a,b)$. Assume that for each $i \in I$, the family $(f_i^m)_{m \in \m{N}}$ is equibounded and satisfies the following semi-decreasing property

\begin{gather}
 \text{For all } \delta >0, \text{ there exist } \tau>0 \text{ and } m_i \in \m{N} \text{ such that, if } s_1,s_2 \in (a,b)
\nonumber \\
\label{e1}
\text{ and } s_2-\tau \le s_1 \le s_2, \text{ then for all } m \le m_i, \quad  f_m^i(s_2) \le f_m^i(s_1)+ \delta.
\end{gather}

Then there exist a subsequence $\sigma (m)$ and a family $(f^i)_{i \in I}$ of real-valued functions on $(a,b)$ such that for all $s \in (a,b)$ and $i \in I$,
\[ f_{\sigma(m)}^i(s) \rightarrow f^i(s). \]
\end{lem}

Let $(\chi_i)_{i \in I}$ be a countable family of compactly supported non-negative smooth functions on $\m{R}^d$; assume that for all $i \in I$, $0 \le \chi_i \le 1$, and that $\Span (\chi_i)_{i \in I}$ is dense in $\q C_c^0(\m{R}^d)$.
Let $m_0$ be such that if $m \ge m_0$, then $\ds \e_m \le \frac{1}{2}$.
We define for $m \in \m N$, $i \in I$ the function $f_m^i$ defined on $[0,T]$ by
\[ f_m^i(s)= \int_{\m{R}^d} \chi_i d\mu_{\e_m}^s. \] 

\emph{Step 1.} We first show that $(f_i^m)_{m \ge m_0}$ satisfies (\ref{e1}). 

Let $i \in I$. Recalling  Lemma \ref{ll2.3}, we have  
\begin{multline*}
\frac{d}{ds} \int_{\m{R}^d} \chi_i d\mu_{\e_m}^s \\
 = - \int_{\m{R}^d \times \{s\}} \chi_i(x) \frac{|\partial_t u_{\e_m}|^2}{|\ln \e_m|} \ dx + \frac{1}{|\ln \e_m|} \int_{\m{R}^d \times \{s\}} (D^2 \chi_i \nabla u_{\e_m}.\nabla u_{\e_m}-\Delta \chi_i \ e_{\e_m}(u_{\e_m}))\ dx. 
 \end{multline*}
Therefore 
\[ \frac{d}{ds} \int_{\m{R}^d} \chi_i d\mu_{\e_m}^s  \le \frac{1}{|\ln \e|} \int_{\m{R}^d \times \{s\}} (D^2 \chi_i \nabla u_{\e_m}.\nabla u_{\e_m}-\Delta \chi_i \ e_{\e_m}(u_{\e_m}))\ dx. \]
Let $R>0$ such that $\Supp(\chi_i) \subset B(0,R)$. We have, for $s \in [0,T]$
\begin{align}
\frac{d}{ds} f_m^i(s) & = \frac{d}{ds} \int_{\m{R}^d} \chi_i d\mu_{\e_m}^s \le  3 \|\chi_i\|_{\q C^2} \int_{B(0,R) \times \{s\}} \frac {\ e_{\e_m}(u_{\e_m})}{|\ln \e_m|}\ dx \nonumber  \\
&\le  3 \|\chi_i\|_{\q C^2} M_0 C(T,R), \label{eq:fmis}
\end{align}
by Proposition \ref{Prop1}.

Let $\delta>0$. We set $\ds \tau=\frac{\delta}{3 \|\chi_i\|_{C^2}M_0 C(T,R)}$.
 
If $s_1,s_2 \in [0,T]$ and $s_2-\tau \le s_1 \le s_2$, the inequality \eqref{eq:fmis} leads to
\[ f_m^i(s_2)-f_m^i(s_1) \le 3 \|\chi_i\|_{\q C^2} (s_2-s_1)M_0 C(T,R) \le (s_2-s_1)\frac{\delta}{\tau} \le \delta. \]
Therefore
\[ \forall m \le m_0, \quad f_m^i(s_2) \le f_m^i(s_1)+ \delta. \]
Now, we prove that $\forall i \in I$, the family $(f_m^i)_{m \ge m_0}$ is equibounded.

Let $i \in I$. Let $R>0$ such that $\Supp(\chi_i) \subset B(0,R)$ and $t \in [0,T]$. We have
\[ \int_{\m{R}^d} \chi_i d\mu_{\e_m}^t \le \int_{B(0,R)} d\mu_{\e_m}^t \le M_0 C(T,R). \]
Therefore, 
\[ \forall m \ge m_0, \quad  \|f_m^i\|_{L^\infty} \le C(T,R), \]

i.e. the family $(f_m^i)_{m \ge m_0}$ is equibounded.
According to Lemma \ref{le2}, there exist a subsequence $\sigma(m)$ and for all $i\in I$, a Radon measure $f^i$ such that for all $i \in I$, and $s \in (a,b)$
\[f_{\sigma(m)}^i(s) \rightarrow f^i(s). \]
Hence, for all $s \in [0,T]$, $\mu_{\e_{\sigma(m)}}^s (\chi_i)$ converges as $m \rightarrow +\infty$.

\bigskip

\emph{Step 2.}

Let $s_0 \in [0,T]$ be arbitrary but fixed, and $\chi \in C_c^0(\m{R}^d)$.
Let us show that $(\mu_{\e_{\sigma(m)}}^{s_0} (\chi))_{m \in \m{N}}$ is a Cauchy sequence in $\m{R}$.

Let $\alpha>0$. Since $\Span(\chi_i)$ is dense in $C_c^0(\m{R}^d)$, there exist a finite subset $J \subset I$, and real numbers ${(\lambda_j)}_{j \in J}$ such that 
\[ \left\| \sum_{j \in J} \lambda_j \chi_{j}-\chi \right\|_{L^\infty} \le \frac{\alpha}{3M_0C(T,R)}. \]

Let $m \ge k$ be two integers, we have
\begin{align*}
|\mu_{\e_{\sigma(m)}}^{s_0} (\chi)-\mu_{\e_{\sigma(k)}}^{s_0} (\chi)| 
&\le  \left| \mu_{\e_{\sigma(k)}}^{s_0} (\chi) - \sum_{j \in J} \lambda_j \mu_{\e_{\sigma(k)}}^{s_0} (\chi_j) \right|
+ \left| \sum_{j \in J} \lambda_j (\mu_{\e_{\sigma(m)}}^{s_0} (\chi_j)-\mu_{\e_{\sigma(k)}}^{s_0} (\chi_j)) \right| \\
 &\quad + \left| \sum_{j \in J} \lambda_j \mu_{\e_{\sigma(m)}}^{s_0} (\chi_j)-\mu_{\e_{\sigma(k)}}^{s_0} (\chi) \right|. 
 \end{align*}
 
Recall that \[ \mu_{\e_{\sigma(k)}}^{s_0} (\chi) - \sum_{j \in J} \lambda_j \mu_{\e_{\sigma(k)}}^{s_0} (\chi_j) = \int_{\m{R}^d}(\chi-\sum_{j \in J} \lambda_j \chi_{j}) d\mu_{\e_{\sigma(k)}}^{s_0}.\]

Let $R>0$ such that $\Supp(\chi)\subset B(0,R)$ and for all $j \in J$, $\Supp(\chi_j) \subset B(0,R)$.
We can bound
\begin{align*}
\left| \mu_{\e_{\sigma(k)}}^{s_0} (\chi) - \sum_{j \in J} \lambda_j \mu_{\e_{\sigma(k)}}^{s_0} (\chi_j) \right| &\le 
\left\| \chi-\sum_{j \in J} \lambda_j \chi_{j} \right\|_{L^ \infty} \int_{B(0,R)} d\mu_{\e_{\sigma(k)}}^{s_0} \\
&\le \left\| \chi-\sum_{j \in J} \lambda_j \chi_{j} \right\|_{L^\infty} M_0 C(T,R).
\end{align*}

Therefore we have
\[ \left| \mu_{\e_{\sigma(m)}}^{s_0} (\chi) - \sum_{j \in J} \lambda_j\ \mu_{\e_{\sigma(m)}}^{s_0} (\chi_j) \right| \le \frac{\alpha}{3}\]
and 
\[ \left| \mu_{\e_{\sigma(k)}}^{s_0} (\chi) - \sum_{j \in J} \lambda_j \mu_{\e_{\sigma(k)}}^{s_0} (\chi_j) \right| \le \frac{\alpha}{3}.\]
Since the sequence $\mu_{\e_{\sigma(m)}}^{s_0} (\chi_j)$ converges for all $j \in J$, we have, for $k$ large enough, 
\[ \left| \sum_{j\in J} (\mu_{\e_{\sigma(m)}}^{s_0} (\chi_j)-\mu_{\e_{\sigma(k)}}^{s_0} (\chi_j)) \right| \le \frac{\alpha}{3}.\]
Therefore, for $k$ large enough,
\[ |\mu_{\e_{\sigma(m)}}^{s_0} (\chi)-\mu_{\e_{\sigma(k)}}^{s_0} (\chi)| \le \alpha. \]
This proves that the family $(\mu_{\e_{\sigma(m)}}^{s_0} (\chi))_{n \in \m{N}}$ is a Cauchy sequence in $\m{R}$, thus it converges.
This determines the measure $\mu_{*}^{s_0}$ and establishes the convergence for $s=s_0$. Since $s_0$ was arbitrary, the conclusion of Theorem \ref{te1} follows.
\end{proof}

\subsection{The monotonicity formula on \texorpdfstring{$\mu_*^t$}{mu*t}}

The following result transfers the monotonicity formula on $\mu_\e^t$ to its singular limit $\mu_*^t$. In the case where the initial energy on the whole space is bounded by $M_0 |\ln \e|$, it is just an easy consequence of the monotonicity formula for $u_\e$ (\cite{Be1}). In our case, with our initial condition \eqref{in}, it requires some more elaborate computations.

\begin{prop}
\label{pro1}
For each $t>0,\ x \in \m{R}^d$ and $0<r \le \sqrt{t}$, we have
\[ \int_{\m{R}^d} \exp \left( -\frac{|x-y|^2}{4r^2} \right) d\mu_{\e}^{t-r^2}(y) \rightarrow \int_{\m{R}^d} \exp \left( -\frac{|x-y|^2}{4r^2} \right) d\mu_{*}^{t-r^2}(y) \quad \text{as } \e \rightarrow 0. \]
\end{prop}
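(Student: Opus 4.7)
The plan is to split the integral over $\m R^d$ into a compactly supported part and a tail, handle the first with the weak convergence of Theorem \ref{te1}, and control the second uniformly in $\e$ using the local energy bound \eqref{zz25}.

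Fix $t>0$, $x \in \m{R}^d$, $0<r \le \sqrt{t}$, and set $s = t-r^2 \ge 0$ and $g(y) = \exp(-|x-y|^2/(4r^2))$. Given $\delta > 0$, I aim to find $\e_0 > 0$ so that the difference of the two integrals is less than $\delta$ for all $\e < \e_0$. First I would pick a cutoff $\chi_\Lambda \in \q C_c^\infty(\m R^d)$ satisfying $0 \le \chi_\Lambda \le 1$, $\chi_\Lambda \equiv 1$ on $B(x,\Lambda r)$, and $\Supp \chi_\Lambda \subset B(x,2\Lambda r)$, for some $\Lambda \ge 1$ to be fixed later. Then write
\[
\int_{\m R^d} g \, d\mu_\e^s = \int g \chi_\Lambda \, d\mu_\e^s + \int g(1-\chi_\Lambda) \, d\mu_\e^s,
\]
and similarly for $\mu_*^s$.

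For the tail, decompose $\m R^d \setminus B(x,\Lambda r)$ into annuli $A_k = B(x,(k{+}1)\Lambda r) \setminus B(x, k \Lambda r)$, $k \ge 1$, on each of which $g \le e^{-k^2 \Lambda^2/4}$. Dividing \eqref{zz25} by $|\ln \e|$ gives $\mu_\e^s(B(x, (k{+}1)\Lambda r)) \le C(d) (1+(k{+}1)\Lambda r)^d (1+s) M_0$, so that
\[
\int g(1-\chi_\Lambda) \, d\mu_\e^s \le \sum_{k\ge 1} e^{-k^2 \Lambda^2/4} \, C(d)(1+(k{+}1)\Lambda r)^d (1+s) M_0 =: F(\Lambda,r,s,M_0).
\]
The gaussian decay dominates the polynomial growth so this series converges, and $F(\Lambda,r,s,M_0) \to 0$ as $\Lambda \to \infty$ by dominated convergence on the summation. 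The same computation applies to $\mu_*^s$: either through the analogue of \eqref{Eq1} centered at $x$ (which follows by translation) or, equivalently, by passing the inequality $\mu_\e^s(B(x,R)) \le C(d)(1{+}R)^d(1{+}s)M_0$ to the liminf via weak convergence on open balls. Thus I may fix $\Lambda$ large enough so that both tails are bounded by $\delta/3$, uniformly in $\e$.

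For the compact part, since $g\chi_\Lambda \in \q C_c^0(\m R^d)$, Theorem \ref{te1} yields
\[
\int g \chi_\Lambda \, d\mu_\e^s \xrightarrow[\e \to 0]{} \int g \chi_\Lambda \, d\mu_*^s,
\]
so for all sufficiently small $\e$, this difference is at most $\delta/3$. Combining with the two tail estimates via the triangle inequality gives the desired conclusion.

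The main conceptual obstacle, and the point that distinguishes this from the finite-energy setting of \cite{Be1}, is precisely the non-compactness of the gaussian weight $g$: one cannot directly test weak convergence against $g$. The key enabling ingredient is that the local energy bound \eqref{in} propagates in time as \eqref{zz25}, providing a polynomial-in-$R$ upper bound on $\mu_\e^s(B(x,R))$ that is uniform in $\e$; this polynomial growth is then crushed by the gaussian in the tail estimate. No other delicate analysis is required.
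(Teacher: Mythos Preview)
Your proof is correct. It differs from the paper's argument in the mechanism used to control the non-compact part of the Gaussian weight. The paper factors the Gaussian as $e^{-|x-y|^2/(4r^2)} = e^{-|x-y|^2/(8r^2)} \cdot e^{-|x-y|^2/(8r^2)}$, applies the monotonicity formula directly (between $\sqrt{2}r$ and $\sqrt{t+r^2}$) to show that the measures $e^{-|x-y|^2/(8r^2)}\,d\mu_\e^{t-r^2}$ have uniformly bounded total mass, and then observes that vague convergence of uniformly bounded measures upgrades to convergence against $C_0$ test functions---the remaining factor $e^{-|x-y|^2/(8r^2)}$ being precisely such a function. Your route is more hands-on: you cut off, sum over annuli, and kill the polynomial growth from \eqref{zz25} with the Gaussian. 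Both arguments ultimately rest on the same input (the propagation of the local energy bound, which itself comes from the monotonicity formula and Lemma~\ref{H1:H0}); the paper's version is slightly slicker in that it avoids the annular sum, while yours is more elementary and makes the uniform-in-$\e$ tail smallness completely explicit.
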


\begin{proof}[Proof of Proposition \ref{pro1} :]

Let $t>0,\ x \in \m{R}^d$ and $0<r<\sqrt{t}$ fixed.

Since $\mu_{\e}^{t-r^2} \rightharpoonup \mu_{*}^{t-r^2}$ as measures, we have for all $\phi \in \q C_c^\infty(\m R^d)$
\[ \int_{\m{R}^d} \phi(y) \exp \left( -\frac{|y-x|^2}{8r^2} \right) d\mu_{\e}^{t-r^2}(y) \rightarrow \int_{\m{R}^d} \phi(y) \exp \left( -\frac{|y-x|^2}{8r^2} \right) d\mu_{*}^{t-r^2}(y).\]
For $x \in \m{R}^d$, $t>0$, and $0<r \le \sqrt{t}$ fixed, the measures $\ds \exp \left(-\frac{|y-x|^2}{8r^2} \right) d\mu_{\e}^{t-r^2}(y)$ are bounded independently of $\e$: indeed, we apply the monotonicity formula at the point $(x,t+r^2)$ between $\sqrt{2}r$ and $\sqrt{t+r^2}$.

\begin{align*}
\MoveEqLeft \int_{\m{R}^d} e_{\e}(u_{\e})(y,t-r^2)  \exp \left(-\frac{|y-x|^2}{8r^2} \right)dy \\
& \le  \left(\frac{2r^2}{t+r^2} \right)^{\frac{d-2}{2}}  \int_{\m{R}^d} e_{\e}(u_{\e}^0)(y) \exp \left(-\frac{|y-x|^2}{4(t+r^2)} \right) dy \le  C(t,r) M_0|\ln \e|,
\end{align*}
by Lemma \ref{H1:H0}. Therefore,
\begin{equation}
\label{eq3}
\int_{\m{R}^d} \exp \left( -\frac{|y-x|^2}{8r^2} \right) d\mu_{\e}^{t-r^2}(y) \le C(t,r).
\end{equation}
Hence, the measures $\ds \exp \left( -\frac{|y-x|^2}{8r^2} \right) d\mu_{\e}^{t-r^2}(y)$  are bounded independently of $\e$. On the other hand, they converge in measure to $\ds \exp \left( -\frac{|y-x|^2}{8r^2} \right) d\mu_{*}^{t-r^2}(y)$. 
As a consequence, the measure $\ds \exp \left( -\frac{|y-x|^2}{8r^2} \right) d\mu_{*}^{t-r^2}(y)$ is bounded on $\m{R}^d$ and for all $\phi \in \q C_b^\infty (\m R^d)$, there holds
\[ \int_{\m{R}^d} \phi(y) \exp \left( -\frac{|y-x|^2}{8r^2} \right) d\mu_{\e}^{t-r^2}(y) \rightarrow \int_{\m{R}^d} \phi(y) \exp \left( -\frac{|y-x|^2}{8r^2} \right) d\mu_{*}^{t-r^2}(y). \]
In particular, for $\ds \phi(y) = \exp \left( -\frac{|y-x|^2}{8r^2} \right)$, we have
\[ \int_{\m{R}^d}  \exp \left(-\frac{|y-x|^2}{4r^2} \right) d\mu_{\e}^{t-r^2}(y) \rightarrow \int_{\m{R}^d} \exp \left(-\frac{|y-x|^2}{4r^2} \right) d\mu_{*}^{t-r^2}(y). \qedhere \]
\end{proof}

Then, we can get an analogous of the monotonicity formula on $\mu_*^t$.

\begin{prop}
\label{lem2}
For each $t>0$ and $x \in \m{R}^d$, the function $\mathcal{E}((x,t),.)$ defined on $(0,+\infty)$ by
\[ r \mapsto \mathcal{E}_\mu((x,t),r) = \frac{1}{r^{d-2}} \int_{\m{R}^d} \exp \left(-\frac{|x-y|^2}{4r^2} \right) d\mu_{*}^{t-r^2}(y) \]
is non-decreasing for $0<r \le \sqrt{t}$.
\end{prop}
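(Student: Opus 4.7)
The plan is to realize $\mathcal{E}_\mu((x,t),\cdot)$ as a pointwise limit of the quantities $E_{w,\e_m}(u_{\e_m},(x,t),\cdot)/|\ln \e_m|$, and then transfer the $\e$-level monotonicity formula \eqref{MF} to the limit.

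More concretely, first I would rewrite, for each fixed $(x,t)$ and each $r \in (0,\sqrt{t}]$,
\[ \frac{E_{w,\e_m}(u_{\e_m},(x,t),r)}{|\ln \e_m|} = \frac{1}{r^{d-2}} \int_{\m{R}^d} \exp\!\left(-\frac{|x-y|^2}{4r^2}\right) d\mu_{\e_m}^{t-r^2}(y), \]
which is exactly the formulation handled by Proposition \ref{pro1}. That proposition gives the pointwise convergence
\[ \frac{E_{w,\e_m}(u_{\e_m},(x,t),r)}{|\ln \e_m|} \longrightarrow \mathcal{E}_\mu((x,t),r) \quad \text{as } m \to +\infty, \]
for every admissible $r$.

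Next, the monotonicity formula established earlier in Section 2 (the inequality \eqref{MF}) tells us that for each fixed $\e_m > 0$, the map $r \mapsto E_{w,\e_m}(u_{\e_m},(x,t),r)$ is non-decreasing on $(0,\sqrt{t}]$. Dividing by the positive constant $|\ln \e_m|$ preserves monotonicity, so each function $r \mapsto E_{w,\e_m}(u_{\e_m},(x,t),r)/|\ln \e_m|$ is non-decreasing. The pointwise limit of a sequence of non-decreasing functions is non-decreasing, hence $r \mapsto \mathcal{E}_\mu((x,t),r)$ is non-decreasing on $(0,\sqrt{t}]$, which is the claim.

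There is essentially no obstacle: all the hard work (the $\e$-level monotonicity and, crucially, the passage of the Gaussian-weighted integrals to the limit despite the measures being supported on all of $\m{R}^d$) has already been done in the proof of Proposition \ref{pro1}, where the uniform bound \eqref{eq3} coming from the monotonicity formula applied at $(x,t+r^2)$ between $\sqrt{2}r$ and $\sqrt{t+r^2}$ provided the domination needed to upgrade weak convergence of $\mu_{\e_m}^{t-r^2}$ to convergence tested against the Gaussian. Thus the present proposition is just a clean application of that convergence combined with the pointwise preservation of monotonicity under limits.
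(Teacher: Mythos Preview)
Your proof is correct and takes essentially the same approach as the paper: the paper's own proof consists of a single sentence stating that the result is a direct consequence of Proposition~\ref{pro1} and the monotonicity formula for $u_\e$, which is exactly the two-step argument you spelled out.
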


\begin{proof}
It is a direct consequence of Proposition \ref{pro1} and of the monotonicity formula for $u_\e$.
\end{proof}

From there, we can get the following energy bound. 
\begin{prop}
For all $x \in \m{R}^d$ and $T >0$, we have
\begin{equation}
\label{eq5}
\forall t \in (0,T], \ s \in [0,T], \quad \int_{\m{R}^d} \exp \left( -\frac{|x-y|^2}{4t} \right)  d\mu^{s}_{*} \le C(T) M_0.
\end{equation}
\end{prop}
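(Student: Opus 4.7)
Fix $x \in \m R^d$, $T>0$, $s \in [0,T]$ and $t \in (0,T]$. The strategy is to combine the monotonicity formula from Proposition \ref{lem2} (or rather its pre-limit counterpart for $E_w$) with Lemma \ref{H1:H0} applied to the initial data, and then pass to the limit $\e \to 0$ by means of Proposition \ref{pro1}.

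First, I would apply Proposition \ref{pro1} with base time $t_*=s+t>0$ and radius $r=\sqrt{t}$; since $r\le \sqrt{t_*}$, this is admissible and yields
\[
\int_{\m R^d} \exp\!\left(-\frac{|x-y|^2}{4t}\right) d\mu_*^s(y) \;=\; \lim_{\e \to 0} \int_{\m R^d} \exp\!\left(-\frac{|x-y|^2}{4t}\right) d\mu_\e^s(y).
\]
Next, for fixed $\e$, I would invoke the monotonicity formula \eqref{MF} at the point $z_*=(x,s+t)$ between radii $\sqrt{t}$ and $\sqrt{s+t}$:
\[
\frac{1}{t^{\frac{d-2}{2}}}\int_{\m R^d} e_\e(u_\e)(y,s)\exp\!\left(-\frac{|x-y|^2}{4t}\right)dy \le \frac{1}{(s+t)^{\frac{d-2}{2}}}\int_{\m R^d} e_\e(u_\e^0)(y)\exp\!\left(-\frac{|x-y|^2}{4(s+t)}\right)dy.
\]
Now I would apply Lemma \ref{H1:H0} (with $R=2\sqrt{s+t}$) to the right-hand side, using that $u_\e^0$ satisfies \eqref{in}, to bound it by $C(d)(1+2\sqrt{s+t})^d M_0 |\ln \e|/(s+t)^{\frac{d-2}{2}}$. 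Since $t\le s+t$, the factor $t^{(d-2)/2}$ can be absorbed on the left and (for $d=2$ the prefactors are $1$ directly), so we obtain
\[
\int_{\m R^d} e_\e(u_\e)(y,s) \exp\!\left(-\frac{|x-y|^2}{4t}\right)dy \le C(d)(1+2\sqrt{s+t})^d M_0 |\ln \e| \le C(d,T)M_0 |\ln \e|,
\]
for $s,t\in [0,T]$. Dividing by $|\ln \e|$ gives the same bound for the measure $\mu_\e^s$.

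Finally, passing to the limit $\e\to 0$ thanks to the first step yields the desired estimate. There is no serious obstacle here: the monotonicity formula has already been established and extended continuously up to the endpoint $R=\sqrt{t_*}$, Lemma \ref{H1:H0} converts the hypothesis \eqref{in} into the required Gaussian-weighted bound on the initial data, and Proposition \ref{pro1} provides the passage to the limit. The only point worth checking carefully is the admissibility condition $r=\sqrt{t}\le \sqrt{s+t}$ and $t_*=s+t>0$, both of which hold since $t>0$.
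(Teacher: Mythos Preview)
Your proof is correct and follows essentially the same route as the paper: apply the monotonicity formula at the point $(x,s+t)$ between radii $\sqrt{t}$ and $\sqrt{s+t}$, bound the resulting initial-data integral via Lemma~\ref{H1:H0}, divide by $|\ln\e|$, and pass to the limit using Proposition~\ref{pro1}. Your write-up is in fact slightly cleaner than the paper's own proof, which carries what appear to be typos ($t^2$ in place of $t$) in the intermediate factors.
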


\begin{proof}
Due to the monotonicity formula \eqref{MF} and Lemma \ref{H1:H0}, 
\begin{align*}
 \int_{\m{R}^d}  \exp \left( -\frac{|y-x|^2}{4t} \right) e_{\e}(u_\e)(y,s) dy  & \le \left( \frac{t^2}{s +t^2} \right)^{\frac{d-2}{2}}  \int_{\m{R}^d} \exp \left( -\frac{|y-x|^2}{4(s+t^2)} \right)  e_{\e}(u_{\e}^0)(y) dy \\
 &\le  C(t^2+s) M_0|\ln \e|. 
 \end{align*}
Thus,
 \[\int_{\m{R}^d}\exp \left( -\frac{|x-y|^2}{4t} \right) d\mu_{\e}^{s} \le C(t^2+s)  M_0 \le C(T) M_0. \]
Letting $\e \rightarrow 0$, the conclusion follows by Proposition \ref{pro1}. 
\end{proof}


The remainder of this section is devoted to prove Theorem \ref{thA}.

\subsection{Densities and concentration set}

In order to analyse geometric properties of the measures $\mu_{*}$ and $\mu_{*}^t$, a key point is the concept of densities. For a given Radon measure $\nu$ on $\m{R}^d$, we have the classical definition:

\begin{defi}
\label{def1}
For $m \in \m{N}$, the $m$-dimensional lower density of $\nu$ at the point $x$ is defined by 
\[ \Theta_{*,m}(\nu,x) = \liminf_{r \rightarrow 0} \frac{\nu(B(x,r))}{\omega_m r^m}, \]

where $\omega_m$ denotes the volume of the unit ball $B^m$. Similarly, the $m$- dimensional upper density $\Theta_m^{*}(\nu,x)$ is given by
\[ \Theta_m^{*}(\nu,x) = \limsup_{r \rightarrow 0} \frac{\nu(B(x,r))}{\omega_m r^m}. \]

When both quantities coincide, $\nu$ admits a $m$-dimensional density $\Theta_m(\nu,x)$ at the point $x$, defined as the common value.
\end{defi}

Since the energy measure is expected to concentrate on points, our main efforts will be devoted to the study of the density $\Theta_{*,0}(\mu_{*}^t,\cdot)$. Invoking the monotonicity formula once more, we have

\begin{lem}
\label{lem3}
For all $x \in \m{R}^d$ and for all $t>0$,
\[ \Theta_{*,d-2}(\mu_{*}^t,x) \le \Theta_{d-2}^{*}(\mu_{*}^t,x) \le C(d) M_0 \frac{(1+t)^d}{t^{\frac{d-2}{2}}}<+\infty. \]
\end{lem}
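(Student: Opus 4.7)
The first inequality $\Theta_{*,d-2}(\mu_*^t,x) \le \Theta^*_{d-2}(\mu_*^t,x)$ is immediate from the definitions ($\liminf \le \limsup$). For the genuine content, namely the upper bound on $\Theta^*_{d-2}(\mu_*^t,x)$, I plan to argue at the level of $u_\e$ using the monotonicity formula and then pass to the limit via Proposition \ref{pro1}, rather than invoking Proposition \ref{lem2} directly. The reason is that the monotonicity on $\mu_*^\cdot$ involves the shifted slice $\mu_*^{t-r^2}$, whereas here I need a bound on a fixed slice $\mu_*^t$; Proposition \ref{pro1} provides exactly that convergence once the base point in time is shifted by $r^2$.

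Fix $x \in \m R^d$, $t>0$, and $r \in (0,1]$. Applying the monotonicity formula \eqref{MF} for $E_{w,\e}$ at the base point $z_* = (x,t+r^2)$ between the admissible radii $r$ and $\sqrt{t+r^2}$ gives
\[ \frac{1}{r^{d-2}} \int_{\m R^d} e_\e(u_\e)(y,t) \exp\!\left(-\frac{|x-y|^2}{4r^2}\right) dy \le \frac{1}{(t+r^2)^{(d-2)/2}} \int_{\m R^d} e_\e(u_\e^0)(y) \exp\!\left(-\frac{|x-y|^2}{4(t+r^2)}\right) dy. \]
The right-hand side is controlled by Lemma \ref{H1:H0} applied to $u_\e^0$ (which satisfies \eqref{in}): it is at most $C(d)(1+2\sqrt{t+1})^d (t+r^2)^{-(d-2)/2} M_0 |\ln \e|$, which in turn is absorbed into $C(d) M_0 (1+t)^d\, t^{-(d-2)/2} |\ln \e|$ using $(1+2\sqrt{t+1})^d \le C(d)(1+t)^d$. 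Dividing by $|\ln \e|$ and letting $\e \to 0$, Proposition \ref{pro1} (applied with time $t+r^2$ in the role of $t$ there) yields
\[ \frac{1}{r^{d-2}} \int_{\m R^d} \exp\!\left(-\frac{|x-y|^2}{4r^2}\right) d\mu_*^t(y) \le C(d) M_0\, \frac{(1+t)^d}{t^{(d-2)/2}}. \]

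Since $\exp(-|x-y|^2/(4r^2)) \ge e^{-1/4}$ for every $y \in B(x,r)$, this bound implies $\mu_*^t(B(x,r)) \le e^{1/4} C(d) M_0 (1+t)^d t^{-(d-2)/2}\, r^{d-2}$; dividing by $\omega_{d-2} r^{d-2}$ and taking $\limsup_{r \to 0}$ gives the desired estimate on $\Theta^*_{d-2}(\mu_*^t,x)$. No step is genuinely difficult: all the work has been done in setting up the monotonicity formula, Lemma \ref{H1:H0}, and the passage to the limit Proposition \ref{pro1}. The only subtlety to mind is the time bookkeeping — shifting the monotonicity base point by $r^2$ so that the slice appearing at radius $r$ is $\mu_\e^t$ (not $\mu_\e^{t-r^2}$), which matches the form of the convergence provided by Proposition \ref{pro1}.
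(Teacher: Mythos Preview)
Your proof is correct and follows essentially the same route as the paper: apply the monotonicity formula at the shifted base point $(x,t+r^2)$ between radii $r$ and $\sqrt{t+r^2}$, bound the resulting initial-data integral via Lemma~\ref{H1:H0}, and pass to the limit on the fixed slice $\mu_*^t$ using Proposition~\ref{pro1} before invoking $e^{-1/4}\,\m 1_{B(x,r)} \le \exp(-|x-y|^2/(4r^2))$. The only cosmetic difference is the order of the steps and your harmless restriction to $r\in(0,1]$.
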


\begin{proof}
Only the middle inequality is relevant. It is a consequence the bounds on $\mu_\e^t$ derived from Lemma \ref{H1:H0}, and Proposition \ref{prop1}. Indeed fix $t >0$ and $x \in \m R^d$; for $r >0$, there holds
\begin{align*}
\frac{\mu^t_*(B(x,r))}{r^{d-2}} & = \frac{1}{r^{d-2}} \int_{B(x,r)} d\mu^t_* \le \frac{e^{1/4}}{r^{d-2}} \int \exp \left( - \frac{|x-y|^2}{4r^2} \right) d\mu^t_* \\
& \le \lim_{\e \to 0} \frac{e^{1/4}}{r^{d-2}} \int \exp \left( - \frac{|x-y|^2}{4r^2} \right) d\mu^t_\e.
\end{align*}
Now, for any $\e >0$, the mononicity formula yields
\begin{align*}
\MoveEqLeft \frac{e^{1/4}}{r^{d-2}} \int \exp \left( - \frac{|x-y|^2}{4r^2} \right) d\mu^t_\e \le \frac{e^{1/4}}{(t+r^2)^{\frac{d-2}{2}}} \int \exp \left( - \frac{|x-y|^2}{4(t+r^2} \right) \frac{e_\e(u_\e^0)(y)}{\ln \e} dy \\
& \le \frac{e^{1/4}}{(t+r^2)^{\frac{d-2}{2}}} C(d) (1+ t +r^2)^d M_0.
\end{align*}
As this does not depend on $\e$, we infer that
\[ \frac{\mu^t_*(B(x,r))}{r^{d-2}} \le \frac{e^{1/4}}{(t+r^2)^{\frac{d-2}{2}}} C(d) (1+ t +r^2)^d M_0. \]
Taking the $\limsup$ in $r\to 0$, we get
\[ \Theta_{d-2}^{*}(\mu_{*}^t,x) \le C(d) e^{1/4} \frac{(1+t)^d}{t^{\frac{d-2}{2}}} M_0. \qedhere \]
\end{proof}

The previous lemma provides an upper bound. For regularity properties (of the concentration set) it is well known that lower bounds play a key role. However, it seems difficult to work with $\Theta_{*,d-2}(\mu_{*}^t, \cdot)$ directly (since the equation depends on time); instead, we will first consider parabolic densities (which involve space-time measures), whose definition is recalled below, and which is more natural in view of the monotonicity for $\mu_*$ (Proposition \ref{lem2}).

\begin{defi}
\label{def2}
Let $\nu$ be a Radon measure on $\m{R}^d \times [0,+\infty)$ such that $\nu=\nu^t dt$. For $t>0$ and $m \in \m{N}$, the parabolic $m$-dimensional lower density of $\nu$ at the point $(x,t)$ is defined by
\[ \Theta _{*,m}^P(\nu,(x,t)) = \liminf_{r \rightarrow 0} \frac{1}{r^m} \int_{\m{R}^d} \exp \left( -\frac{|x-y|^2}{4r^2} \right) d\nu^{t-r^2}(y). \]

The parabolic upper density and parabolic density are defined accordingly, and denoted respectively $\Theta_m^{P,*}$ and $\Theta_m^P$.
\end{defi}

It clearly follows from the monotonicity formula that for $\nu = \mu_*$ and $m=d-2$, the limit in definition \ref{def2} is decreasing, so that $\Theta_{d-2}^P(\mu^{*},(x,t))$ exists everywhere in $\m{R}^d \times (0,+\infty)$. Another consequence, is that the parabolic measure dominates the $d-2$-dimensional density for $\mu_*$ (see Proposition \ref{prop:Theta_P} for a precise statement). Motivated by this fact, we define
\begin{gather} \label{def:Sigma_mu}
\Sigma_{\mu} := \left\{ (x,t) \in \m{R}^d \times (0,+\infty) \text{ such that } \Theta_{d-2}^P(\mu_{*},(x,t))>0 \right\},
\end{gather}
and for $t>0$, 
\begin{gather} \label{def:Sigma_mut}
\Sigma_{\mu}^t:= \Sigma_{\mu} \cap (\m{R}^d \times \{t\}).
\end{gather}

We now state the properties we need on $\Sigma_\mu$ to conclude the proof of Theorem \ref{thA}, and postpone their proofs to the end of this subsection.

\subsection{Properties and regularity of $\Sigma_\mu$}

\subsubsection{Diffuse part of $\mu_*^t$ outside of the vorticity}

Let us first state an important consequence of the analysis carried out above in Section \ref{sec:cl-out}.

\begin{thm}
\label{th5}
There exist an absolute constant $\eta_2>0$ and a positive continuous function $\lambda$ defined on $(0,+\infty)$ such that, if for $x \in \m{R}^d,\ t>0$ and $r>0$ we have
\begin{equation}
\label{equat2}
\mu_{*}^t(B(x,\lambda(t) r)) < \eta_2r^{d-2}, 
\end{equation}

then for every $s \in [t+\frac{15}{16}r^2,t+r^2]$, $\mu_{*}^t$ is absolutely continuous with respect to the Lebesgue measure on the ball $B(x,\frac{1}{4}r)$. More precisely
\[ \mu_{*}^s=|\nabla \Phi_{\dagger}|^2\ dx \quad \text{ on } B \left(x,\frac{1}{4}r \right) , \]
where $\Phi_{\dagger}$ satisfies the heat equation in $\Lambda_{\frac{1}{4}}=B(x,\frac{1}{4}r) \times \left[ t+\frac{15}{16}r^2,t+r^2 \right]$.
\end{thm}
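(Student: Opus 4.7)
The plan is to lift Proposition \ref{prop4}, which already provides exactly this decomposition at the $\e$-level, to the limiting measure $\mu_*^t$. Denote by $\lambda_0$ the function of $t$ furnished by Proposition \ref{prop4}, and take the $\lambda$ of Theorem \ref{th5} to be $\lambda(t) := 2\lambda_0(t)$. The first task is to transfer the strict open-ball bound on $\mu_*^t$ into a bound on $\mu_\e^t$ that feeds into Proposition \ref{prop4}. Since there are at most countably many radii $\rho$ for which $\mu_*^t(\partial B(x,\rho))>0$, I can pick $\lambda' \in (\lambda_0(t), 2\lambda_0(t))$ with $\mu_*^t(\partial B(x,\lambda' r))=0$. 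For this $\lambda'$, the Portmanteau theorem upgrades the weak convergence $\mu_\e^t \rightharpoonup \mu_*^t$ to convergence on the closed ball, giving
\[ \mu_\e^t(\overline B(x,\lambda' r)) \longrightarrow \mu_*^t(\overline B(x,\lambda' r)) \le \mu_*^t(B(x,\lambda(t) r)) < \eta_2 r^{d-2}. \]
Since $B(x,\lambda_0(t) r) \subset \overline B(x,\lambda' r)$, this yields, for $\e$ small enough,
\[ \int_{B(x,\lambda_0(t) r)} e_\e(u_\e)(y,t)\,dy \le \eta_2 r^{d-2} |\ln \e|, \]
which is exactly the hypothesis of Proposition \ref{prop4}.

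Proposition \ref{prop4} then furnishes a decomposition $e_\e(u_\e) = |\nabla \Phi_\e|^2 + \kappa_\e$ on $\Lambda_{1/4}(x,t,r)$, with $\Phi_\e$ a smooth solution of the heat equation on a slightly larger cylinder, and with the bounds $\|\kappa_\e\|_{L^\infty} \le C M_0 \e^\beta$ and $\|\nabla \Phi_\e\|_{L^\infty} \le C M_0 |\ln \e|$. Setting $\tilde \Phi_\e := \Phi_\e/\sqrt{|\ln \e|}$, which remains a heat solution, the pointwise identity $|\nabla \tilde \Phi_\e|^2 = (e_\e(u_\e)-\kappa_\e)/|\ln \e|$ combined with the uniform local energy bound \eqref{zz25} gives a uniform $L^2$ bound on $\nabla \tilde \Phi_\e$ on the interior cylinder. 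Since each component of $\nabla \tilde \Phi_\e$ itself solves the heat equation, interior parabolic regularity upgrades this to uniform $C^k$ control of $\nabla \tilde \Phi_\e$ on $\Lambda_{1/4}(x,t,r)$ for every $k$. After extracting a subsequence, $\nabla \tilde \Phi_\e \to G$ in $C^\infty_{\mathrm{loc}}$, with $G$ curl-free (as a limit of gradients) and solving the heat equation componentwise. I then recover a scalar potential $\Phi_\dagger$ on the simply connected $\Lambda_{1/4}(x,t,r)$ with $\nabla \Phi_\dagger = G$, adjusted by an additive function of $t$ to enforce $\partial_t \Phi_\dagger = \Delta \Phi_\dagger$.

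The conclusion is then immediate: the uniform convergence $|\nabla \tilde \Phi_\e|^2 \to |\nabla \Phi_\dagger|^2$ on $\Lambda_{1/4}(x,t,r)$ together with $\kappa_\e/|\ln \e| \to 0$ uniformly provides uniform convergence of the density of $\mu_\e^s$ to $|\nabla \Phi_\dagger(\cdot,s)|^2$ on $B(x,r/4)$, for every $s \in [t + \tfrac{15}{16}r^2, t + r^2]$. Combined with $\mu_\e^s \rightharpoonup \mu_*^s$ from Theorem \ref{te1}, this forces the identification $d\mu_*^s = |\nabla \Phi_\dagger(\cdot,s)|^2\,dx$ on $B(x,r/4)$, as claimed.

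The main obstacle is Step 1: weak convergence $\mu_\e^t \rightharpoonup \mu_*^t$ is only compatible with upper semicontinuity on closed sets, so the strict open-ball bound cannot be transferred naively. The factor-of-two safety margin between $\lambda_0(t)$ and $\lambda(t)$, together with the continuity-of-radii trick to select a $\lambda'$ for which $\partial B(x,\lambda' r)$ carries no $\mu_*^t$-mass, is what makes the reduction to Proposition \ref{prop4} clean. Once this hypothesis is in place at the $\e$-level, the rest of the argument is standard parabolic compactness.
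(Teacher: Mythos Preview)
Your proof is correct and follows essentially the same route as the paper's: reduce to Proposition~\ref{prop4} at the $\e$-level, then pass to the limit using parabolic compactness for the rescaled phase $\Phi_\e/\sqrt{|\ln\e|}$. Your Portmanteau step (with the factor-of-two margin in $\lambda$) is actually more careful than the paper, which simply asserts the $\e$-level bound for small $\e$ without discussing the open/closed ball issue; on the other hand, the paper uses the pointwise conclusion $\mu_\e \le C(t,r)$ of Proposition~\ref{prop4} to get an $L^\infty$ bound on $\nabla\Phi_\e/\sqrt{|\ln\e|}$ directly, making the compactness step slightly shorter than your $L^2$-to-$C^k$ bootstrap.
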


Note that the constant $\eta_2$ and the function $\lambda$ are the same as in Proposition \ref{prop4} of the previous part. Notice also that $\mu_{*}=|\nabla \Phi_{\dagger}|^2dxdt$ on $\Lambda_{\frac{1}{4}}$, and that $|\nabla \Phi_{\dagger}|^2$ is a smooth function.

\begin{proof}
We briefly sketch the proof of Theorem \ref{th5}, which is a direct consequence of Theorems \ref{th1} and \ref{th2} of the previous Section. 

Let $x \in \m{R}^d,\ t>0\ ,r>0$ be fixed such that (\ref{equat2}) is verified, then for $\e$ small enough, we have 
\[ \int_{B(x,\lambda(t) r)} e_\e(u_\e) < \eta_2 r^{d-2} |\ln \e|,\]

so that we may invoke Proposition \ref{prop4}. This yields
\[ e_\e(u_\e) =|\nabla \Phi_\e|^2 + \kappa_\e \quad \text{ in } \Lambda_{\frac{1}{4}}, \]

where $\Phi_\e$ verifies the heat equation in $\Lambda_{\frac{3}{8}}$ and we have the bounds 
\[ |\nabla \Phi_\e|^2 \le C(\Lambda)|\ln \e|, \quad \text{and} \quad |\kappa_\e| \le C(\Lambda) \e^{\beta} \quad \text{ in } \Lambda_{\frac{1}{4}}. \]

Extracting possibly a further subsequence, we may assume that
\[ \frac{\Phi_\e}{\sqrt{|\ln \e|}} \rightarrow \Phi_{\dagger} \quad \text{ uniformly on } \Lambda_{\frac{5}{16}}. \]

Since $\Phi_\e$ verifies the heat equation, it follows that for every $k \in \m{N}$,
\[ \frac{\Phi_\e}{\sqrt{|\ln \e|}} \rightarrow \Phi_{\dagger} \quad \text{ in } \mathcal{C}^k(\Lambda_{\frac{1}{4}}), \]

and $\Phi_{*}$ verifies the heat equation on $\Lambda_{\frac{1}{4}}$. On the other hand, 
\[ \kappa_\e \rightarrow 0 \text{ uniformly on } \Lambda_{\frac{1}{4}}, \]

so that \[ \frac{e_\e(u_\e)}{|\ln \e|} \rightarrow |\nabla \Phi_{\dagger}|^2 \quad \text{ uniformly on } \Lambda_{\frac{5}{16}}. \qedhere\]
\end{proof}

\subsubsection{Clearing-out}

Following Brakke \cite{Br} and Ilmanen \cite{Il}, the main tool in the study of geometric properties of $\Sigma_\mu$ is the following result.

\begin{thm}[Clearing-out]
\label{th6}

There exists a positive continuous function $\eta_3$ defined on $(0,+\infty)$ such that, for any $(x,t) \in \m{R}^d \times (0,+\infty)$ and any $0<r<\sqrt{t}$, if
\[ \mathcal{E}_\mu((x,t),r) = \frac{1}{r^{d-2}} \int_{\m{R}^d} \exp(-\frac{|x-y|^2}{4r^2})\ d\mu_{*}^{t-r^2}(y) \le \eta_3(t-r^2), \]
then
\[ (x,t) \notin \Sigma_\mu. \]
\end{thm}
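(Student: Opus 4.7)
The plan is to reduce Theorem \ref{th6} to Theorem \ref{th5} via a Gaussian-weight to ball-bound comparison, and then to observe that a smooth, bounded density automatically forces the parabolic density at $(x,t)$ to vanish.

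\medskip

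\textbf{Choice of $\eta_3$ and reduction to Theorem \ref{th5}.} I would set
\[ \eta_3(\tau) := \frac{\eta_2}{2}\, \exp\!\left(-\frac{\lambda(\tau)^2}{4}\right), \]
where $\eta_2$ and $\lambda$ are the absolute constant and positive continuous function provided by Theorem \ref{th5}; then $\eta_3$ is positive and continuous on $(0,+\infty)$. The key point is that on $B(x,\lambda(t-r^2)\,r)$ the Gaussian weight in the definition of $\mathcal{E}_\mu$ is at least $\exp(-\lambda(t-r^2)^2/4)$, so the hypothesis yields
\[ \mu_*^{t-r^2}\!\bigl(B(x,\lambda(t-r^2)\, r)\bigr) \le \exp\!\left(\tfrac{\lambda(t-r^2)^2}{4}\right) r^{d-2}\, \mathcal{E}_\mu((x,t),r) \le \tfrac{\eta_2}{2}\, r^{d-2}. \]
This is exactly the hypothesis of Theorem \ref{th5} at time $t_0 := t-r^2$ with radius $r$. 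Therefore $\mu_*^s = |\nabla \Phi_\dagger|^2\, dy$ on $B(x, r/4)$ for every $s \in [t-r^2/16,\,t]$, for some $\Phi_\dagger$ smooth on $\Lambda_{1/4}$; in particular $f := |\nabla \Phi_\dagger|^2$ is bounded by a constant $M$ on the closed neighborhood $\overline{B(x,r/8)} \times [t-r^2/32,\, t]$.

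\medskip

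\textbf{Vanishing of the parabolic density.} For $\rho < r/8$ I split $\mathcal{E}_\mu((x,t),\rho) = I_1(\rho) + I_2(\rho)$, where $I_1$ integrates against $\mu_*^{t-\rho^2}$ over $B(x,r/8)$ and $I_2$ over its complement. For $I_1$, the change of variable $y = x+\rho z$ together with the boundedness of $f$ gives
\[ I_1(\rho) = \rho^{2} \int_{|z|<r/(8\rho)} e^{-|z|^2/4}\, f(x+\rho z, t-\rho^2)\, dz \le C\, M\, \rho^2 \to 0. \]
For $I_2$, the bound $|y-x| \ge r/8$ produces a factor $\exp(-c r^2/\rho^2)$ (with some explicit $c>0$), while the remaining integral $\int \exp(-|x-y|^2/(8\rho^2))\,d\mu_*^{t-\rho^2}$ is controlled uniformly in $\rho$ by \eqref{eq5}, so $I_2(\rho) \le C(t)\, M_0\, \rho^{-(d-2)} \exp(-c r^2/\rho^2) \to 0$. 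Consequently $\mathcal{E}_\mu((x,t),\rho) \to 0$ as $\rho \to 0$; in view of the monotonicity of $\rho \mapsto \mathcal{E}_\mu((x,t),\rho)$ (Proposition \ref{lem2}) this gives $\Theta_{d-2}^P(\mu_*,(x,t)) = 0$, so $(x,t) \notin \Sigma_\mu$.

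\medskip

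\textbf{Main obstacle.} The conceptual step is the first one: translating the Gaussian-weighted bound $\mathcal{E}_\mu((x,t),r) \le \eta_3$ into an $L^\infty$-type bound on a ball of the precise radius $\lambda(t-r^2)\,r$ required by Theorem \ref{th5}. This is what fixes the form of $\eta_3$ in terms of $\lambda$. Once the smooth density $f = |\nabla \Phi_\dagger|^2$ is known on a neighborhood of $(x,t)$, the vanishing of the parabolic density is a routine scaling computation: the parabolic prefactor $\rho^{-(d-2)}$ is dominated by the $\rho^d$ Jacobian coming from the change of variables $y = x + \rho z$ in $\m{R}^d$, leaving an excess factor of $\rho^2$.
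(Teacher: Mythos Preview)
Your proof is correct and follows essentially the same route as the paper: the paper packages your second step (applying Theorem \ref{th5} and then showing the parabolic density vanishes via the near/far splitting of the Gaussian integral) into a separate Lemma \ref{lem6.1}, and then the proof of Theorem \ref{th6} reduces to your first step (the Gaussian--to--ball comparison fixing $\eta_3(s)=\eta_2\exp(-\lambda(s)^2/4)$) together with an appeal to that lemma. Your extra factor $1/2$ in $\eta_3$ is harmless and in fact cleanly avoids the $\le$ versus $<$ issue when invoking the strict inequality in Theorem \ref{th5}.
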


An immediate corollary is

\begin{cor}
\label{cor1}
For any $(x,t) \in \Sigma_{\mu}$, we have
\[ \Theta_{d-2}^P(\mu_{*},(x,t)) \ge \eta_3(t). \]
\end{cor}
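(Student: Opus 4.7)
The plan is to obtain Corollary \ref{cor1} as a direct contrapositive of the clearing-out Theorem \ref{th6}, combined with the monotonicity formula (Proposition \ref{lem2}) and the stated continuity of $\eta_3$.

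First, fix $(x,t) \in \Sigma_\mu$, which by definition \eqref{def:Sigma_mu} means $\Theta_{d-2}^P(\mu_*,(x,t)) > 0$. By the monotonicity formula of Proposition \ref{lem2}, the map $r \mapsto \mathcal{E}_\mu((x,t),r)$ is non-decreasing on $(0,\sqrt{t}]$, so the $\liminf$ in the definition of the parabolic density is in fact a limit from the right, and
\[
\Theta_{d-2}^P(\mu_*,(x,t)) = \lim_{r \to 0^+} \mathcal{E}_\mu((x,t),r).
\]

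Next, I would apply the contrapositive of Theorem \ref{th6}: since $(x,t) \in \Sigma_\mu$, for every $r \in (0,\sqrt{t})$ we must have
\[
\mathcal{E}_\mu((x,t),r) > \eta_3(t-r^2).
\]
(Otherwise the clearing-out would place $(x,t)$ outside of $\Sigma_\mu$, contradiction.) Letting $r \to 0^+$, the left-hand side tends to $\Theta_{d-2}^P(\mu_*,(x,t))$ by the monotonicity remark above, and the right-hand side tends to $\eta_3(t)$ by the continuity of $\eta_3$ on $(0,+\infty)$ asserted in Theorem \ref{th6}. This yields
\[
\Theta_{d-2}^P(\mu_*,(x,t)) \ge \eta_3(t),
\]
which is the desired bound.

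There is no real obstacle here: the statement is almost a tautological rewriting of the clearing-out theorem in the language of parabolic densities. The only points worth making explicit in the write-up are (i) that monotonicity upgrades the $\liminf$ to a genuine limit so that the passage to the limit in $r$ is legitimate, and (ii) that continuity of $\eta_3$ is what allows replacing $\eta_3(t-r^2)$ by $\eta_3(t)$ in the limit.
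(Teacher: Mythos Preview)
Your argument is correct and is exactly the ``immediate'' derivation the paper has in mind: the contrapositive of Theorem \ref{th6} gives $\mathcal{E}_\mu((x,t),r) > \eta_3(t-r^2)$ for all $0<r<\sqrt{t}$, and passing to the limit $r\to 0$ using monotonicity (Proposition \ref{lem2}) and the continuity of $\eta_3$ yields the claim. There is nothing to add.
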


The remainder of this paragraph is devoted to the proof of Theorem \ref{th6}, which is essentially a consequence of Theorem \ref{th5}. We first need two preliminary lemmas.

\begin{lem}
\label{lem6.1}
Let $(x,t) \in \Sigma_\mu$ and $0<r<\sqrt{t}$. Then, we have
\[ r^{2-d} \mu_{*}^{t-r^2}(B(x,\lambda(t-r^2) r)) \ge \eta_2, \]
where $\eta_2$ is the constant in Theorem \ref{th5}.
\end{lem}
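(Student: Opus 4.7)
The plan is to argue by contradiction: assume that $r^{2-d} \mu_*^{t-r^2}(B(x, \lambda(t-r^2)r)) < \eta_2$ and derive that $\Theta_{d-2}^P(\mu_*,(x,t)) = 0$, contradicting $(x,t) \in \Sigma_\mu$. Set $T = t-r^2$. The assumption is exactly the hypothesis \eqref{equat2} of Theorem \ref{th5} at the point $x$, time $T$, and scale $r$. Applying that theorem, we get a smooth function $\Phi_\dagger$ solving the heat equation on $\Lambda_{1/4} = B(x,r/4) \times [t - r^2/16,\, t]$ such that
\[ \mu_*^s = |\nabla \Phi_\dagger|^2(\cdot,s)\, dy \quad \text{on } B(x,r/4), \quad \text{for every } s \in [t - r^2/16,\, t]. \]

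Next, I would evaluate the parabolic density at $(x,t)$ by computing $\mathcal{E}_\mu((x,t),\rho)$ for $\rho \to 0$ (restricting to $\rho < r/8$ so that $t-\rho^2 \in [t-r^2/16,t]$). Split the defining integral into the contribution from $B(x,r/4)$ and from its complement. On $B(x,r/4)$ the measure $\mu_*^{t-\rho^2}$ is the smooth density $|\nabla \Phi_\dagger|^2$, which is uniformly bounded on $\Lambda_{1/4}$ by some constant $C = C(x,t,r)$; hence
\[ \rho^{2-d} \int_{B(x,r/4)} \exp\!\left(-\tfrac{|x-y|^2}{4\rho^2}\right) |\nabla \Phi_\dagger|^2(y,t-\rho^2)\, dy \le C\, \rho^{2-d} (4\pi \rho^2)^{d/2} \le C' \rho^2 \longrightarrow 0. \]

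For the complement, I would exploit the Gaussian decay together with \eqref{eq5} (or equivalently Proposition \ref{lem2} applied to $\mu_*$). For $|x-y| \ge r/4$ we have $\exp(-|x-y|^2/(4\rho^2)) \le \exp(-r^2/(128\rho^2)) \exp(-|x-y|^2/(8\rho^2))$, so the outer contribution is bounded by
\[ \rho^{2-d} \exp\!\left(-\tfrac{r^2}{128\rho^2}\right) \int_{\m R^d} \exp\!\left(-\tfrac{|x-y|^2}{8\rho^2}\right) d\mu_*^{t-\rho^2}(y). \]
By \eqref{eq5} (with $T$ fixed, say $T = 2t$) the integral is controlled by $C(t) M_0\, \rho^{d-2}$ (after recognising it as $\mathcal{E}_\mu((x,t+\rho^2), \sqrt 2 \rho)$ up to a constant and using monotonicity), so the whole expression decays like $\exp(-r^2/(128\rho^2)) \to 0$ as $\rho \to 0$.

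Combining both contributions gives $\Theta_{d-2}^P(\mu_*,(x,t)) = \lim_{\rho \to 0} \mathcal{E}_\mu((x,t),\rho) = 0$, contradicting the definition \eqref{def:Sigma_mu} of $\Sigma_\mu$. The only delicate point is the book-keeping between the various scales $\rho \ll r$, $\lambda(T)$ and the times $t - \rho^2 \in [t-r^2/16,\,t]$; no new tool is needed beyond Theorem \ref{th5}, the monotonicity formula (Proposition \ref{lem2}) and the a priori bound \eqref{eq5}.
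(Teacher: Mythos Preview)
Your proof is correct and follows essentially the same strategy as the paper: argue by contradiction, invoke Theorem \ref{th5} to get smoothness of $\mu_*^s$ near $(x,t)$, then split the Gaussian integral defining $\mathcal{E}_\mu((x,t),\rho)$ into a near part (controlled by $\|\nabla\Phi_\dagger\|_{L^\infty}$ and vanishing like $\rho^2$) and a far part (handled via the monotonicity formula and the a priori bound \eqref{eq5}). The only cosmetic differences are the choice of cutoff radius ($r/4$ versus the paper's $r/8$) and the resulting numerical constants in the exponentials.
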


\begin{proof}

Indeed, assume by contradiction that
\[ r^{2-d} \mu_{*}^{t-r^2}(B(x,\lambda(t-r^2) r)) < \eta_2. \]

Then, by Theorem \ref{th5}, for every $\tau \in \left[ t-\frac{1}{16}r^2,t \right]$
\[ \mu_{*}^{\tau}= |\nabla \Phi_{\dagger}|^2\ dx \quad \text{on } B \left( x,\frac{r}{4} \right), \]

where $\Phi_{\dagger}$ is smooth. We are going to show that
\begin{equation}
\label{equa6.1}
s^{2-d} \int_{\m{R}^d}\exp \left( -\frac{|x-y|^2}{4s^2} \right) \ d\mu_{*}^{t-s^2} \rightarrow 0 \quad \text{as } s \rightarrow 0. 
\end{equation}
Indeed, we write and compute
\begin{align}
s^{2-d} \int_{B(x,\frac{r}{8})}\exp \left( -\frac{|x-y|^2}{4s^2} \right) d\mu_{*}^{t-s^2} &\le s^{2-d} \|\nabla \Phi_{\dagger} \|_{L^{\infty}(B(x,\frac{1}{8}r))} \int_{\m{R}^d} \exp \left( -\frac{|x-y|^2}{4s^2} \right) dx \nonumber \\
&\le K \|\nabla \Phi_{\dagger}\|_{L^{\infty}(B(x,\frac{1}{8}r))} s^2 \rightarrow 0 \quad \text{as } s \rightarrow 0. \label{equa6.2}
\end{align}
On the other hand,
\begin{equation}
\label{equa6.3}
s^{2-d} \int_{\m{R}^d \setminus B(x,\frac{r}{8})} \exp \left( -\frac{|x-y|^2}{4s^2} \right) d\mu_{*}^{t-s^2} \rightarrow 0 \quad \text{as } s \rightarrow 0.
\end{equation}

Indeed,
\begin{align*}
\MoveEqLeft s^{2-d} \int_{\m{R}^d \setminus B(x,\frac{r}{8})} \exp \left(-\frac{|x-y|^2}{4s^2} \right) d\mu_{*}^{t-s^2}  = s^{2-d} \int_{\m{R}^d \setminus B(x,\frac{r}{8})} \left( \exp \left(-\frac{|x-y|^2}{8s^2} \right) \right)^2 d\mu_{*}^{t-s^2}  \\
&\le  s^{2-d}\int_{\m{R}^d \setminus B(x,\frac{r}{8})} \exp \left( -\frac{r^2}{8 \times 64s^2} \right) \exp \left(-\frac{|x-y|^2}{8s^2} \right) d\mu_{*}^{t-s^2} \\
&\le  s^{2-d} \exp \left( -\frac{r^2}{512s^2} \right) \int_{\m{R}^d} \exp \left( -\frac{|x-y|^2}{8s^2} \right)  d\mu_{*}^{t-s^2} \\
&\le  \left( \frac{2}{t+s^2} \right)^{\frac{d-2}{2}} \exp \left( -\frac{r^2}{512s^2} \right) \int_{\m{R}^d} \exp \left( -\frac{|x-y|^2}{4(t+s^2)} \right) d\mu_{*}^{0} 
\end{align*} 
by monotonicity formula at $(x,t+s^2)$ between $\sqrt{2}s$ and $\sqrt{t+s^2}$ (we can assume $s<\sqrt{t}$). 

Since 
\[ \int_{\m{R}^d}\exp \left( -\frac{|x-y|^2}{4(t+s^2)} \right) d\mu_{*}^{0} \le  M_0 \]
by inequality (\ref{eq5}), we have
\[ s^{2-d} \int_{\m{R}^d \setminus B(x,\frac{r}{8})} \exp \left( -\frac{|x-y|^2}{4s^2} \right) d\mu_{*}^{t-s^2}  \le  \left( \frac{2}{t} \right)^{\frac{d-2}{2}} M_0 \exp \left( -\frac{r^2}{512s^2} \right) \rightarrow 0 \quad \text{as } s \rightarrow 0.\]

Combining (\ref{equa6.2}) and (\ref{equa6.3}), (\ref{equa6.1}) follows and hence $\Theta_{d-2}^P(\mu_{*},(x,t))=0$, i.e. $(x,t) \notin \Sigma_\mu$, which is a contradiction.
\end{proof}

\begin{lem}
\label{lem6.2}
The function $(x,t) \rightarrow \Theta_{d-2}^P(\mu_{*},(x,t))$ is upper semi-continuous on the set
$\m{R}^d \times (0,+\infty)$.
\end{lem}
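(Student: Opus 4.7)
The key observation is that, by the monotonicity formula (Proposition \ref{lem2}), for every $(x,t) \in \mathbb R^d \times (0,+\infty)$ the function $r \mapsto \mathcal E_\mu((x,t),r)$ is non-decreasing on $(0,\sqrt t\,]$, so that
\[
\Theta_{d-2}^P(\mu_*,(x,t)) \;=\; \inf_{0<r<\sqrt{t}}\, \mathcal E_\mu((x,t),r).
\]
The plan is therefore to show that for each fixed choice of radius the quantity $\mathcal E_\mu((x,t),r)$ is well approximated as $(x,t)$ varies, and then pass to the infimum. A direct attempt to fix $r$ and vary $(x,t)$ runs into the fact that $s \mapsto \mu_*^s$ has no evident continuity property; I would sidestep this by a small change of radius that keeps the underlying time slice constant.

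\textbf{Main step.} Fix $(x_0,t_0) \in \mathbb R^d \times (0,+\infty)$ and a sequence $(x_n,t_n) \to (x_0,t_0)$. Given $\varepsilon>0$, choose $r_0 \in (0,\sqrt{t_0})$ such that
\[
\mathcal E_\mu((x_0,t_0),r_0) \;\le\; \Theta_{d-2}^P(\mu_*,(x_0,t_0)) + \varepsilon.
\]
For all $n$ large enough, $t_n - t_0 + r_0^2 > 0$ and one can set
\[
s_n := \sqrt{\, r_0^2 + t_n - t_0\,}, \qquad \text{so that } \; t_n - s_n^2 = t_0 - r_0^2 \; \text{ and } \; s_n \le \sqrt{t_n}.
\]
Note that $s_n \to r_0$ as $n \to \infty$. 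The crucial point of this choice is that the time slice of $\mu_*$ appearing in $\mathcal E_\mu((x_n,t_n),s_n)$ is the \emph{same} measure $\mu_*^{t_0-r_0^2}$ for every $n$:
\[
\mathcal E_\mu((x_n,t_n),s_n) \;=\; \frac{1}{s_n^{d-2}} \int_{\mathbb R^d} \exp\!\left(-\frac{|x_n-y|^2}{4s_n^2}\right) d\mu_*^{t_0-r_0^2}(y).
\]

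\textbf{Passing to the limit.} The integrand converges pointwise in $y$ to $r_0^{-(d-2)}\exp(-|x_0-y|^2/4r_0^2)$. For $n$ large, $s_n \in [r_0/2, 2r_0]$ and $|x_n| \le |x_0|+1$, so for $|y| \ge 2(|x_0|+1)$ we have $|x_n - y| \ge |y|/2$ and thus the integrand is dominated by a fixed Gaussian $C\exp(-|y|^2/C')$. By \eqref{eq5} this dominating function is $\mu_*^{t_0-r_0^2}$-integrable, so Lebesgue's dominated convergence theorem yields
\[
\mathcal E_\mu((x_n,t_n),s_n) \;\longrightarrow\; \mathcal E_\mu((x_0,t_0),r_0) \qquad \text{as } n \to \infty.
\]
Combining with the monotonicity inequality $\Theta_{d-2}^P(\mu_*,(x_n,t_n)) \le \mathcal E_\mu((x_n,t_n),s_n)$,
\[
\limsup_{n\to\infty}\; \Theta_{d-2}^P(\mu_*,(x_n,t_n)) \;\le\; \mathcal E_\mu((x_0,t_0),r_0) \;\le\; \Theta_{d-2}^P(\mu_*,(x_0,t_0)) + \varepsilon.
\]
Letting $\varepsilon \to 0$ gives the desired upper semi-continuity.

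\textbf{Main obstacle.} The only delicate point is that $s \mapsto \mu_*^s$ is a priori only a measurable family of measures, with no a priori continuity; this is why a naive argument fixing $r$ and varying $(x,t)$ fails. The entire proof hinges on the algebraic identity $t_n - s_n^2 = t_0 - r_0^2$, which reduces everything to dominated convergence against the single measure $\mu_*^{t_0-r_0^2}$, with the Gaussian weight from \eqref{eq5} providing the necessary integrable majorant.
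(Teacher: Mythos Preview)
Your proof is correct and follows essentially the same approach as the paper: both exploit the monotonicity of $r\mapsto \mathcal E_\mu((x,t),r)$ and the key algebraic trick $s_n^2 = r_0^2 + t_n - t_0$ to freeze the time slice at $t_0-r_0^2$, then pass to the limit by dominated convergence using a Gaussian majorant justified by \eqref{eq5}. The only cosmetic difference is that the paper fixes $r$, takes the $\limsup$ in $n$, and then lets $r\to 0$, whereas you introduce an auxiliary $\varepsilon$ first; the two are logically equivalent.
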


\begin{proof}
Let $(x,t) \in \m{R}^d \times (0,+\infty)$, and let $(x_m,t_m)_{m \in \m{N}}$ be a sequence such that $(x_m,t_m) \rightarrow (x,t)$. We are going to show that

\begin{equation}
\label{equa6.4}
\limsup_{m \rightarrow +\infty} \Theta_{d-2}^P(\mu_{*},(x_m,t_m)) \le \Theta_{d-2}^P(\mu_{*},(x,t)). 
\end{equation}
Let $0<r<\frac{1}{2}\sqrt{t}$ be fixed for the moment. For $m$ sufficiently large, let $r_m=\sqrt{r^2+t_m-t}$, so that $t-r^2=t_m-r_m^2$. By the monotonicity formula, we have
\begin{align*}
\Theta_{d-2}^P(\mu_{*},(x_m,t_m)) &\le \frac{1}{r_m^{d-2}} \int_{\m{R}^d} \exp(-\frac{|y-x_m|^2}{4r^2})\ d\mu_{*}^{t_m-r_m^2}(y) \\
& \le  \frac{1}{r_m^{d-2}} \int_{\m{R}^d} \exp \left( -\frac{|y-x_m|^2}{4r^2} \right)\ d\mu_{*}^{t-r^2}(y).
\end{align*}
We assert that, as $m \to +\infty$,
\[  \frac{1}{r_m^{d-2}} \int_{\m{R}^d} \exp \left(-\frac{|y-x_m|^2}{4r^2} \right) d\mu_{*}^{t-r^2}(y) \rightarrow  \frac{1}{r^{d-2}}\int_{\m{R}^m} \exp \left( -\frac{|y-x|^2}{4r^2} \right) d\mu_{*}^{t-r^2}(y). \]
Indeed, since the sequence $(x_m)_m$ is bounded by a constant $M>0$, we write 
\begin{align*}
\frac{1}{r_m^{d-2}} \int_{\m{R}^d} \exp \left( -\frac{|y-x_m|^2}{4r^2} \right) d\mu_{*}^{t-r^2}(y) & = \frac{1}{r_m^{d-2}} \int_{B(0,2M)} \exp \left( -\frac{|y-x_m|^2}{4r^2} \right) d\mu_{*}^{t-r^2}(y) \\
& \quad + \frac{1}{r_m^{d-2}} \int_{B(0,2M)^c} \exp \left( -\frac{|y-x_m|^2}{4r^2} \right) d\mu_{*}^{t-r^2}(y). 
\end{align*}
Passing to the limit in the integral on $B(0,2M)$ does not raise any difficulty. For the second integral, we have
\[ |y-x_m| \ge |y|-|x_m| \ge |y|-M \ge \frac{1}{2}|y|, \]
so, if $y \in \m{R}^d \setminus B(0,2M)$,
 \[ \exp \left( -\frac{|y-x_m|^2}{4r^2} \right) d\mu_{*}^{t-r^2}(y) \le \exp \left( -\frac{|y|^2}{16 r^2} \right) d\mu_{*}^{t-r^2}(y), \]
and by the monotonicity formula at point $(0,t+3r^2)$ between $r_1=2r$ and $r_2=\sqrt{t+3r^2}$,
\[ \int_{\m{R}^d} \exp \left( -\frac{|y|^2}{16 r^2} \right) d\mu_{*}^{t-r^2}(y) \le \left( \frac{2r}{\sqrt{t+3r^2}} \right)^{d-2} \int_{\m{R}^d} \exp \left( -\frac{|y|^2}{4(t+3r^2)} \right) d\mu_{*}^{0}(y) \le C M_0, \]
by \eqref{eq5}. We then get the result as a consequence of Lebesgue's dominated convergence Theorem.
So if we let $m \to +\infty$, we obtain
\[ \limsup_{m \rightarrow +\infty} \Theta_{d-2}^P(\mu_{*},(x_m,t_m)) \le \int_{\m{R}^d} \exp \left( -\frac{|y-x|^2}{4r^2} \right) d\mu_{*}^{t-r^2}(y). \]
Now, let $r \rightarrow 0$, and (\ref{equa6.4}) follows.
\end{proof}

\begin{prop} \label{prop:Theta_P}
There exists an explicit constant $K$ such that for all $x \in \m R^d$ and $t >0$, 
\begin{equation}
\label{Eq3}
\Theta_{d-2}^P(\mu_{*},(x,t)) \ge K \Theta_{*,d-2}(\mu_{*}^t,x).
\end{equation}
In particular,
\[ \Theta_{*,d-2}(\mu_{*}^t,x) \equiv 0 \quad \text{ on } \m{R}^d \setminus \Sigma_{\mu}^t. \]
\end{prop}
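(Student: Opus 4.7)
My plan is to obtain the inequality via the clearing-out result of Theorem \ref{th5}, combined with the uniform upper density bound of Lemma \ref{lem3}. The key reformulation is contrapositive: I shall prove there exists a threshold $c_0(t) > 0$ such that whenever $\Theta^P_{d-2}(\mu_*,(x,t)) < c_0(t)$, the Euclidean density $\Theta_{*,d-2}(\mu_*^t,x)$ vanishes; the quantitative inequality then follows from dividing by the absolute upper bound on $\Theta_{*,d-2}$.

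First, I restrict the Gaussian integral defining $\mathcal{E}_\mu((x,t),r)$ to $B(x,\lambda(t-r^2) r)$, where $\lambda$ is the positive continuous function from Theorem \ref{th5}. On this ball the weight $\exp(-|x-y|^2/(4r^2))$ is at least $e^{-\lambda(t-r^2)^2/4}$, so
\[ \mathcal{E}_\mu((x,t),r) \ge \frac{e^{-\lambda(t-r^2)^2/4}}{r^{d-2}} \, \mu_*^{t-r^2}\big( B(x,\lambda(t-r^2) r) \big). \]
By continuity of $\lambda$, the quantity $\lambda^*(t) := \sup_{\tau \in [t/2,t]} \lambda(\tau)$ is finite. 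Setting $c_0(t) := \eta_2 \, e^{-\lambda^*(t)^2/4}$, the estimate above shows that $\mathcal{E}_\mu((x,t),r) < c_0(t)$ with $r \in (0,\sqrt{t/2}]$ implies the hypothesis $\mu_*^{t-r^2}(B(x,\lambda(t-r^2) r)) < \eta_2 r^{d-2}$ of Theorem \ref{th5} applied at the shifted point $(x,t-r^2)$ with radius $r$.

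Next, assuming $\Theta^P_{d-2}(\mu_*,(x,t)) < c_0(t)$, the monotonicity of $\mathcal{E}_\mu((x,t),\cdot)$ from Proposition \ref{lem2} gives $\mathcal{E}_\mu((x,t),r) < c_0(t)$ for all small $r$. The conclusion of Theorem \ref{th5} is then that $\mu_*^s = |\nabla \Phi_\dagger|^2 \, dx$ on $B(x,r/4)$ for every $s \in [t - r^2 + \tfrac{15}{16}r^2, \, t] = [t - r^2/16, \, t]$; in particular $\mu_*^t$ is absolutely continuous on the neighborhood $B(x,r/4)$ of $x$, with bounded density $|\nabla \Phi_\dagger|^2$ (smoothness coming from $\Phi_\dagger$ solving the heat equation). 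For $d \ge 3$ this gives $\mu_*^t(B(x,\rho))/(\omega_{d-2}\rho^{d-2}) \le C \rho^2 \to 0$ as $\rho \to 0$, while for $d = 2$ absolute continuity rules out any atom at $x$; either way $\Theta_{*,d-2}(\mu_*^t,x) = 0$.

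Taking the contrapositive, $\Theta_{*,d-2}(\mu_*^t,x) > 0$ forces $\Theta^P_{d-2}(\mu_*,(x,t)) \ge c_0(t)$. Combining with Lemma \ref{lem3}, which yields $\Theta_{*,d-2}(\mu_*^t,x) \le C_0(t) := C(d) M_0 (1+t)^d / t^{(d-2)/2}$, we conclude
\[ \Theta^P_{d-2}(\mu_*,(x,t)) \ge c_0(t) \ge \frac{c_0(t)}{C_0(t)} \, \Theta_{*,d-2}(\mu_*^t,x), \]
so $K := c_0(t)/C_0(t) > 0$ works (the case $\Theta_{*,d-2} = 0$ being trivial). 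The main technical point is correctly aligning the time shift: Theorem \ref{th5} applied at $(x,t-r^2)$ produces absolute continuity precisely on the interval $[t-r^2/16, t]$, whose right endpoint is the slice $s = t$ we need; and the continuity of $\lambda$ is what lets us absorb the $r$-dependent factor $\lambda(t-r^2)$ into a single $r$-independent threshold $c_0(t)$.
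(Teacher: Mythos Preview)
Your argument is internally sound, but it does not prove \eqref{Eq3} as stated: your constant $K = c_0(t)/C_0(t)$ depends on $t$ (and on $M_0$), whereas the proposition asserts a single constant valid for all $(x,t)$. This is unavoidable along your route. By the construction in Proposition~\ref{prop2}, $\lambda(\tau) \asymp \sqrt{\ln(2+1/\tau)}$ and $\lambda$ is unbounded on $(0,\infty)$, so $c_0(t)=\eta_2 e^{-\lambda^*(t)^2/4}\to 0$ as $t\to 0$ and as $t\to\infty$; meanwhile $C_0(t)=C(d)M_0(1+t)^d/t^{(d-2)/2}$ is also unbounded. Hence $K(t)\to 0$ in both limits. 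Your weaker inequality still yields the ``In particular'' conclusion, which is what is actually used downstream, so the gap is not fatal for the applications --- but it is a gap relative to the statement.

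The paper's proof is entirely different and does give an absolute $K$. It bounds $r^{2-d}\mu_*^t(B(x,r))$ by the Gaussian integral at time $t$, then applies the monotonicity of $\mathcal{E}_\mu$ at the point $(x,t+r^2)$ between radii $r$ and $\sqrt{r^2+r}$ (that is, with the choice $s=\sqrt{r}$), transferring the integral to time $t-r$ at scale $\sqrt{r^2+r}\sim\sqrt{r}$. One then splits over $B(x,1)$ and its complement: on $B(x,1)$ the two Gaussians $\exp(-|y-x|^2/4(r^2+r))$ and $\exp(-|y-x|^2/4r)$ differ only by an absolute factor, while the tail is shown to vanish as $r\to 0$ via a further application of monotonicity and \eqref{eq5}. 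Letting $r\to 0$ gives $\Theta_{*,d-2}(\mu_*^t,x)\le K\,\Theta_{d-2}^P(\mu_*,(x,t))$ with $K$ universal. Your detour through clearing-out is conceptually clean (Theorem~\ref{th5} as a black box) but necessarily loses uniformity in $t$ because it passes through the function $\lambda$; the paper's direct comparison of Gaussians at two nearby scales avoids $\lambda$ altogether.
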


\begin{proof}
Let $(x,t) \in \m{R}^d \times (0,+\infty)$ be given. Let $0<r<t$ be fixed for the moment. We write, for every $0<s<\sqrt{t}$,
\begin{align*}
\frac{1}{r^{d-2}} \mu_{*}^t(B(x,r)) & \le  \exp \left( \frac{1}{4} \right)  \frac{1}{r^{d-2}}  \int_{\m{R}^d} \exp \left( -\frac{|y-x|^2}{4r^2} \right) d\mu_{*}^{t}(y) \\
&\le  \exp \left( \frac{1}{4} \right) \frac{1}{(r^2+s^2)^{\frac{d-2}{2}}} \int_{\m{R}^d} \exp \left( -\frac{|y-x|^2}{4(r^2+s^2)} \right) d\mu_{*}^{t-s^2}(y),
\end{align*}
where we have used the monotonicity formula at the point $(x,t+r^2)$ between $r$ and $\sqrt{r^2+s^2}$ for the last inequality. Next, we choose $s=\sqrt{r}$. This yields
\begin{equation}
\label{equa6.5}
 \frac{1}{r^{d-2}}  \mu_{*}^t(B(x,r)) \le \exp \left(\frac{1}{4} \right) \frac{1}{(r^2+r)^{\frac{d-2}{2}}} \int_{\m{R}^d} \exp \left(-\frac{|y-x|^2}{4(r^2+r)} \right) d\mu_{*}^{t-r}(y).
\end{equation}
In the last integral, we decompose
\[ \m{R}^d = B(x,1) \cup (\m{R}^d \setminus B(x,1)). \]
On $B(x,1)$, observe that
\[ \exp \left( -\frac{|y-x|^2}{4(r^2+r)} \right) \le K \exp \left(-\frac{|y-x|^2}{4r} \right), \]
for some absolute constant $K$. On the other hand, on $\m{R}^d \setminus B(x,1)$, we have
\begin{equation}
\label{zz51}
\frac{1}{(r^2+r)^{\frac{d-2}{2}}}  \int_{\m{R}^d \setminus B(x,1)} \exp \left(-\frac{|x-y|^2}{4(r^2+r)} \right) d\mu_{*}^{t-r} \le \left( \frac{2}{t} \right)^{\frac{d-2}{2}} M_0 \exp \left(-\frac{1}{8(r^2+r)} \right). 
\end{equation}
Indeed,
\begin{align}
\MoveEqLeft \frac{1}{(r^2+r)^{\frac{d-2}{2}}}  \int_{\m{R}^d \setminus B(x,1)} \exp \left(-\frac{|x-y|^2}{4(r^2+r)} \right) d\mu_{*}^{t-r} \\
& \le \exp \left(-\frac{1}{8(r^2+r)} \right) \frac{1}{(r^2+r)^{\frac{d-2}{2}}}  \int_{\m{R}^d} \exp \left(-\frac{|x-y|^2}{8(r^2+r)} \right) d\mu_{*}^{t-r^2}(y) \nonumber \\
\label{zz21}
& \le \exp \left(-\frac{1}{8(r^2+r)} \right) \left( \frac{2}{2r^2+r+t} \right)^{\frac{d-2}{2}}  \int_{\m{R}^d} \exp \left(-\frac{|x-y|^2}{4(2r^2+r+t)} \right) d\mu_{*}^{0}(y) \\
\label{ch3:zz22}
& \le \left( \frac{2}{t} \right)^{\frac{d-2}{2}} \exp \left(-\frac{1}{8(r^2+r)} \right) \int_{\m{R}^d} \exp \left(-\frac{|x-y|^2}{4(2r^2+r+t)} \right) d\mu_{*}^{0}(y) 
\end{align}
where we have used the monotonicity formula at point  $(x,2r^2+r+t)$ between $\sqrt{2(r^2+r)}$ and $\sqrt{2r^2+r+t}$ for inequality (\ref{zz21}).
Then, combining  (\ref{eq5}) and (\ref{ch3:zz22}) leads to inequality (\ref{zz51}). Going back to (\ref{equa6.5}), we infer
 \[ \frac{1}{r^{d-2}}  \mu_{*}^{t}(B(x,r)) \le \frac{K}{r^{\frac{d-2}{2}}} \int_{\m{R}^d} \exp \left( -\frac{|x-y|^2}{4r} \right)\ d\mu_{*}^{t-r} + \left( \frac{2}{t} \right)^{\frac{d-2}{2}} M_0 \exp \left( -\frac{1}{8(r^2+r)} \right). \]
Letting $r$ go to zero, the conclusion follows.
\end{proof}

We can now complete the proof of Theorem \ref{th6}.

\begin{proof}[Proof of Theorem \ref{th6}]
Let $(x,t) \in \m{R}^d \times (0,+\infty)$ and $0<r<\sqrt{t}$. We have

\begin{equation}
\label{equa6.6}
r^{2-d} \mu_{*}^{t-r^2}(B(x,\lambda(t-r^2) r)) \le \exp \left( \frac{\lambda^2(t-r^2)}{4} \right) \mathcal{E}_\mu((x,t),r).
\end{equation}
Consider therefore the function
\[ \eta_3(s)=\exp \left( -\frac{\lambda^2(s)}{4} \right) \eta_2, \]
and assume next that, for some $0<r<\sqrt{t}$,
\[ \mathcal{E}_\mu((x,t),r) \le \eta_3(t-r^2). \]
Then, by (\ref{equa6.6}), 
\[ r^{2-d} \mu_{*}^{t-r^2}(B(x,\lambda(t-r^2) r)) \le \eta_2 \]
and the conclusion follows by Lemma \ref{lem6.1}.
\end{proof}

\subsubsection{Consequences: regularity of $\Sigma_\mu$ and decomposition of $\mu_*$}

At this stage, we are in position to derive the following conclusions, without invoking any further property of the equation \eqref{pgl}.

\begin{prop}
\label{prop6}

\begin{enumerate}
\item  The set $\Sigma_\mu$ is closed in $\m{R}^d \times (0,+\infty)$. 

\item For any $t>0$ and $x \in \m R^d$,

\[ \mathcal{H}^{d-2} (B(x,1) \cap \Sigma_\mu^t) \le K(d) M_0 < +\infty.\] 

\item For any $t>0$, the measure $\mu_{*}^t$ can be decomposed as 
\[ \mu_{*}^t = g(x,t) \mathcal{H}^d \ +\ \Theta_{*}(x,t) \mathcal{H}^{d-2} \llcorner \Sigma_\mu^t, \]
where $g$ is some smooth function defined on $\m{R}^d \times (0,+\infty) \setminus \Sigma_\mu$ and $\Theta_{*}$ verifies the bound
\[ \Theta_{*}(x,t) \le K M_0 (1+t)^d t^{\frac{2-d}{2}}. \]
\end{enumerate}
\end{prop}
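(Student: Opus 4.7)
The plan is to establish the three assertions separately, using the clearing-out (Theorem \ref{th6}), its local version (Theorem \ref{th5} and Lemma \ref{lem6.1}), and the upper density bound of Lemma \ref{lem3}. For claim (1), I would first reformulate $\Sigma_\mu$ via Corollary \ref{cor1} as the sublevel-free set $\{(x,t):\Theta_{d-2}^P(\mu_*,(x,t)) \ge \eta_3(t)\}$, and then exploit the upper semi-continuity of the parabolic density given by Lemma \ref{lem6.2} together with the continuity of $\eta_3$ on $(0,+\infty)$: if $(x_n,t_n) \in \Sigma_\mu$ converges to $(x,t)$ with $t>0$, then
\[
\Theta^P_{d-2}(\mu_*,(x,t)) \;\ge\; \limsup_n \Theta^P_{d-2}(\mu_*,(x_n,t_n)) \;\ge\; \limsup_n \eta_3(t_n) \;=\; \eta_3(t) > 0,
\]
so $(x,t) \in \Sigma_\mu$ and the set is closed.

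For claim (2), I would fix $t>0$, $x_0 \in \m{R}^d$, and a small $r \in (0,\sqrt{t}/2)$, and apply Lemma \ref{lem6.1} at every $x \in \Sigma_\mu^t \cap B(x_0,1)$ to get the uniform mass estimate $\mu_*^{t-r^2}(B(x,\lambda(t-r^2)r)) \ge \eta_2 r^{d-2}$. Vitali's covering lemma extracts from $\{B(x,\lambda(t-r^2)r)\}$ a finite disjoint subfamily $\{B(x_i,\lambda(t-r^2)r)\}_{i=1}^N$ whose fivefold dilates still cover $\Sigma_\mu^t \cap B(x_0,1)$. Disjointness, together with the global upper bound \eqref{Eq1} on $\mu_*^{t-r^2}(B(x_0,1+\lambda(t-r^2)r))$, gives $N \le K(d,t) M_0 r^{2-d}$, whence
\[
\mathcal{H}^{d-2}_{10\lambda(t-r^2)r}(\Sigma_\mu^t \cap B(x_0,1)) \;\le\; \omega_{d-2}(5\lambda(t-r^2)r)^{d-2} N \;\le\; K'(d,t) M_0,
\]
and sending $r \to 0$ yields the bona fide $\mathcal{H}^{d-2}$-content bound.

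For claim (3), the smooth density $g$ on $\m{R}^d \times (0,+\infty) \setminus \Sigma_\mu$ is produced from Theorem \ref{th5}: every point in this open set has a parabolic neighborhood on which $\mu_*^t = |\nabla \Phi_\dagger|^2 \, dx$ with $\Phi_\dagger$ a smooth heat-equation solution, and since $|\nabla \Phi_\dagger|^2$ is insensitive to the additive-constant ambiguity of $\Phi_\dagger$, these local expressions patch into a single smooth $g(x,t)$. Extending $g$ by zero on $\Sigma_\mu$ (which has zero $\mathcal{H}^d$-measure by claim (2)), the residual measure $\mu_*^{t,s} := \mu_*^t - g(\cdot,t)\mathcal{H}^d$ is supported in $\Sigma_\mu^t$ and inherits the upper $(d-2)$-density bound from Lemma \ref{lem3}. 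The classical density-comparison theorem then gives $\mu_*^{t,s}(A) \le C\, \mathcal{H}^{d-2}(A)$ for every Borel $A$, hence $\mu_*^{t,s} \ll \mathcal{H}^{d-2} \llcorner \Sigma_\mu^t$, with Radon-Nikodym derivative $\Theta_*(x,t)$ bounded above by $K M_0 (1+t)^d t^{(2-d)/2}$.

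The main obstacle I anticipate is the time-shift built into claim (2): Lemma \ref{lem6.1} supplies its mass lower bound at time $t-r^2$, one slice behind the target $\Sigma_\mu^t$. I sidestep this by choosing a \emph{common} radius $r$ for every center, which lets the Vitali argument proceed inside a single past time slice; the desired estimate on the current slice is then recovered by sending $r \to 0$. A secondary technical point is verifying that the local solutions $\Phi_\dagger$ produced by Theorem \ref{th5} on overlapping neighborhoods agree (up to additive constants) so that the squared gradient glues into a \emph{globally} smooth $g$; this reduces to uniqueness of the heat-equation representation of $\mu_*^t$ on overlaps and is straightforward from the regularity afforded by Theorem \ref{th5}.
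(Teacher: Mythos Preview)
Your proof is correct and follows essentially the same strategy as the paper: closedness via Corollary~\ref{cor1} and the upper semi-continuity Lemma~\ref{lem6.2}; the Hausdorff bound via Lemma~\ref{lem6.1} combined with a covering argument and the mass bound~\eqref{Eq1}; and the decomposition via Theorem~\ref{th5} for the diffuse part together with Lemma~\ref{lem3} for the density bound on the singular part.

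The one substantive difference is in part~(2). The paper works with a fixed grid of balls $B(x_j,\lambda(t)\delta)$ with pairwise disjoint half-balls, then shifts each center to a nearby point of $\Sigma_\mu^t$, applies Lemma~\ref{lem6.1} there, and must then reconcile the two radii $\lambda(t)\delta$ and $\lambda(t-\delta^2)\delta$; this is where the paper explicitly invokes the slow-variation property $\lambda(\tau)\le C\lambda(t)$ for $\tau\in(t/2,2t)$ from Proposition~\ref{prop2}. Your Vitali argument, by contrast, places the balls directly at points of $\Sigma_\mu^t$ with the single radius $\lambda(t-r^2)r$, so the only fact you need about $\lambda$ is its continuity at $t$ (to ensure the factor $(5\lambda(t-r^2))^{d-2}$ in the content bound stays bounded as $r\to 0$). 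This is a mild but genuine simplification: it bypasses the slow-variation estimate that the paper flags as ``crucial''. Both routes are standard and yield the same $t$-dependent constant in the end.
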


The function $\Theta_{*}$ is the Radon-Nikodym derivative of $\mu_{*}^t \llcorner \Sigma_\mu^t$ with respect to $\mathcal{H}^{d-2}$; at this stage we may just assert that it lies between the lower and upper densities.

Concerning $g$, it can be locally defined as $|\nabla \Phi_{\dagger}|^2$ for some smooth $\Phi_{\dagger}$ verifying the heat equation. The function $\Phi_{\dagger}$ however is not yet globally defined.

\begin{proof}

(1) In view of Corollary \ref{cor1}, we have

\[ \Sigma_\mu= \left\{ (x,t) \in \m{R}^d \times (0,+\infty) \mid \Theta_{d-2}^P(\mu_{*},(x,t)) \ge \eta_3(t) \right\}. \]
Since $\eta_3(\cdot)$ is continuous and since $\Theta_{d-2}^P(\mu_{*},\cdot)$ is upper semi-continuous by Lemma \ref{lem6.2}, we deduce that $\Theta_{d-2}^P(\mu_{*},\cdot)-\eta_3(\cdot)$ is upper semi-continuous as well and the conclusion follows.

\bigskip

(2) Let us first observe that the proof of in \cite{Be1}, which uses a scaling argument, does not yield the result. Instead, we will crucially rely on the fact that the function $\lambda$ which appears in Lemma \ref{lem6.1} has actually slow variations. Let $\ds 0<\delta<\sqrt{\frac{t}{2}}$. Consider a standard covering of $\m{R}^d$ such that
\[ \m{R}^d \subset \bigcup_{j \in J} B(x_j, \lambda(t) \delta), \quad \text{ and } \quad B \left(x_i, \lambda(t)\frac{\delta}{2} \right) \cap B \left( x_j,\lambda(t) \frac{\delta}{2} \right)= \varnothing \quad \text{for } i \ne j. \]
Define
\[ I_\delta=\left\{ i : \  B(x_i, \lambda(t) \delta) \cap B(x,1) \cap \Sigma_\mu^1 \ne \varnothing \right\}. \]

For $i \in I_\delta$, there exists some $y_i \in \Sigma_\mu^t \cap B(x_i, \lambda(t) \delta)$. Hence, by Lemma \ref{lem6.1},
\[  \mu_{*}^{t-\delta^2}(B(y_i,\lambda(t-\delta^2) \delta))> \eta_2 \delta^{d-2}, \]
and in particular 
\[  \mu_{*}^{1-\delta^2}(B(x_i,(\lambda(t-\delta^2)+\lambda(t)) \delta))> \eta_2 \delta^{d-2}.\]
Recall that there exists an absolute constant $C$ (not depending on $t$ or $\delta$) such that
\[ \forall \tau \in \left( t/2,2t \right), \quad \lambda(\tau)  \le C \lambda(t). \]
(See Proposition \ref{prop2} where the function was first defined.) It follows that
\[ \lambda(t-\delta^2)+\lambda(t) \le (C+1) \lambda(t). \]
Since the balls $\ds B \left( x_i,  \lambda(t) \frac{\delta}{2} \right)$ are disjoint, the balls $B(x_i, (C+1)  \lambda(t) \delta)$ cover at most $K(d)$ times the ball $B(x,1)$ (where the constant $K(d)$ only depends on $d$).
Therefore, using inequality (\ref{Eq1}),
\[ \delta^{2-d} \sum_{i \in I_\delta } \mu_{*}^{t-\delta^2} \left( B(x_i,(C+1) \lambda(t) \delta) \right) \le K(d) \delta^{2-d} \mu_{*}^{t-\delta^2} \left( B(x,1) \right) \le C(d) K M_0 \delta^{2-d}. \]
This implies that
\[ \eta_2 \Card I_\delta <  K(d) M_0 \delta^{2-d}. \]
Now by definition, we have
\[ \mathcal{H}^{d-2}(B(x,1) \cap \Sigma_\mu^t) \le \limsup_{\delta \rightarrow 0} ( \Card I_\delta)\delta^{d-2} \le K(d) M_0 . \]


\bigskip

(3) As $\Sigma_\mu^t$ is closed, we have the decomposition
\[ \mu_*^t =  \mu_*^t \llcorner (\m R^d \setminus \Sigma_\mu^t) + \mu_*^t \llcorner \Sigma_\mu^t. \]
On the one hand, on $\Sigma_\mu^t$, and due to the bound (2), we can use the Radon-Nikodym theorem, which provides a function $\Theta_* \in L^1_{\mr{loc}}(\q H^{d-2}(\m R^d))$ such that
\[ \mu_*^t \llcorner \Sigma_\mu^t = \Theta_* \q H^{d-2} \llcorner \Sigma_\mu^t. \]
Due to Lemma \ref{lem3}, we also have the bound
\[ \Theta_*(x,t) \le C(d) M_0 \frac{(1+t)^d}{t^{\frac{d-2}{2}}}. \]
On the other hand, for $x_0 \notin \Sigma_\mu^{t_0}$, $\Theta_{d-2}^P(\mu_*(x,t)) \to 0$ as $(x_0,t_0) \to (x,t)$ by Lemma \ref{lem6.2}. From there, and using the bound \eqref{Eq3}, there exists $\delta>0$ small enough such that
\[ \mu_*^{t-\delta^2} (B(x, \lambda(t-\delta^2) \delta)) \le \eta_2 \delta^{d-2}. \]
Then Theorem \ref{th5} applies and shows that 
\[ \mu_*^s = |\nabla \Phi_*(x,s)|^2 dx \quad \text{on } B(x_0,\delta_0/4) \times [t - \delta^2/16, t], \]
where $\Phi_*$ solves the heat equation and is smooth: this allows to define $g$ locally.
\end{proof}

\subsubsection{Lower bounds for $\Theta_{*,d-2}(\mu_*^t)$}

As already mentioned, lower bounds for $\Theta_{*,d-2}$ will play an important role for regularity issues: however, up to now we have only lower bounds for $\Theta_{d-2}^P$ (see Corollary \ref{cor1}). The next result provides the reverse inequality to (\ref{Eq3}).

\begin{prop}
\label{prop7}
For almost every $t>0$, the following inequality holds
\begin{equation}
\label{eq6}
\Theta_{*,d-2}(\mu_{*}^{t},x) \ge K \Theta_{d-2}^{P}(\mu_{*},(x,t)) 
\end{equation}
for $\mathcal{H}^{d-2}$ almost every $x \in \m{R}^d$.
\end{prop}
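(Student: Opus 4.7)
The plan is to refine the one-sided bound of Proposition \ref{prop:Theta_P} by exploiting the time regularity of $s \mapsto \mu_*^s$ valid at generic $s$, followed by a spatial Gaussian-versus-ball comparison. First, I would establish that for every $\chi \in \q{D}(\m{R}^d)$ the map $s \mapsto \int \chi\, d\mu_\e^s$ has uniformly-in-$\e$ bounded total variation on compact intervals of $(0,+\infty)$: this combines Lemma \ref{ll2.3} with the local energy bound \eqref{zz25} and the $L^2$ space-time estimate \eqref{bd:dtu_L2}. A diagonal extraction over a countable dense family in $\q{C}_c^0(\m{R}^d)$, together with the mass bound \eqref{Eq1}, then yields a Lebesgue-negligible set $E \subset (0,+\infty)$ such that, for $t \notin E$, the map $s \mapsto \int \chi\, d\mu_*^s$ is continuous at $s = t$ for every $\chi \in \q{C}_c^0(\m{R}^d)$. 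Using \eqref{eq5} to handle the tail, this continuity extends to Gaussian weights $y \mapsto \exp(-|x-y|^2/(4\rho^2))$ at every fixed $x \in \m{R}^d$ and $\rho > 0$.

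Second, for $t \notin E$, the monotonicity formula applied at the shifted base point $(x, t+\rho^2)$ between scales $\rho$ and $\sqrt{t+\rho^2}$ gives
\begin{equation*}
\Theta^P_{d-2}(\mu_*, (x, t + \rho^2)) \le \mathcal{E}_\mu((x, t+\rho^2), \rho) = \frac{1}{\rho^{d-2}} \int \exp\!\left(-\frac{|x-y|^2}{4\rho^2}\right) d\mu_*^t(y),
\end{equation*}
so the parabolic density at $(x, t + \rho^2)$ is controlled by a purely spatial quantity at the fixed time $t$. Splitting this spatial integral on $B(x, K\rho)$ and its complement and controlling the tail by \eqref{eq5} yields, for any $K \ge 1$,
\begin{equation*}
\frac{1}{\rho^{d-2}} \int \exp\!\left(-\frac{|x-y|^2}{4\rho^2}\right) d\mu_*^t(y) \le K^{d-2}\, \frac{\mu_*^t(B(x, K\rho))}{(K\rho)^{d-2}} + C(t) M_0\, e^{-K^2/8}.
\end{equation*}
Taking $\rho = \sigma_n / K$ along a subsequence $\sigma_n \to 0$ realizing $\Theta_{*,d-2}(\mu_*^t,x)$, and invoking the time-continuity of $\Theta^P_{d-2}(\mu_*, (x, \cdot))$ at $s = t$ (see below), then produces
\begin{equation*}
\Theta^P_{d-2}(\mu_*, (x, t)) \le K^{d-2}\, \Theta_{*,d-2}(\mu_*^t, x) + C(t) M_0\, e^{-K^2/8}.
\end{equation*}
If $\Theta_{*,d-2}(\mu_*^t,x) = 0$, sending $K \to \infty$ gives $\Theta^P_{d-2}(\mu_*, (x,t)) = 0$; otherwise the a priori upper bound on $\Theta^P_{d-2}(\mu_*, (x,t))$ from Lemma \ref{lem3} allows one to choose $K = K(t, M_0)$ large enough to absorb the tail into, say, half of $\Theta^P$, leading to $\Theta^P_{d-2}(\mu_*, (x, t)) \le K'\, \Theta_{*,d-2}(\mu_*^t, x)$ with $K'$ depending only on $t$ and $M_0$, which is the desired inequality at $\mathcal{H}^{d-2}$-a.e.\ $x$.

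The main obstacle will be the time-continuity of $s \mapsto \Theta^P_{d-2}(\mu_*, (x, s))$ at $s = t$ for $t \notin E$: Lemma \ref{lem6.2} only provides upper semi-continuity, and the matching lower bound requires transferring the BV-in-time regularity of Step 1 through the monotonicity formula. Rewriting $\mathcal{E}_\mu((x, s), \rho)$ as a spatial Gaussian integral against $\mu_*^{s-\rho^2}$ reduces the problem to continuity in $s$ of such Gaussian-weighted integrals, which holds uniformly in $\rho$ by the diagonal extraction above (with the Gaussian tail controlled by \eqref{eq5} to handle the non-compact support of the test function).
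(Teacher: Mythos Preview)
Your approach differs from the paper's and has a genuine gap precisely where you flag the ``main obstacle''. The BV-in-time bound on $s \mapsto \int \chi\, d\mu_*^s$ does yield weak continuity of $\mu_*^s$ at a.e.\ $t$, but this is too weak to conclude continuity of $s \mapsto \Theta_{d-2}^P(\mu_*,(x,s))$. The parabolic density is a \emph{diagonal} limit,
\[
\Theta_{d-2}^P(\mu_*,(x,s)) = \lim_{r\to 0}\; \frac{1}{r^{d-2}}\int \exp\!\Big(-\frac{|x-y|^2}{4r^2}\Big)\, d\mu_*^{\,s-r^2}(y),
\]
in which the time slice and the test-function scale vary together. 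Your BV argument controls $\tau \mapsto \int \exp(-|x-y|^2/4r^2)\,d\mu_*^{\tau}$ as $\tau \to t$ at \emph{fixed} $r$, with no rate; when $r\to 0$ simultaneously the test function concentrates and weak continuity of the measures says nothing. The assertion that the continuity ``holds uniformly in $\rho$'' is exactly the missing ingredient, and it does not follow from a BV bound: an infimum over $r$ of functions continuous in $s$ is only upper semicontinuous, which is the wrong direction for your inequality.

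The paper supplies the missing rate by a different route. It introduces $\omega_* = \lim_\e |\partial_t u_\e|^2/|\ln \e|$ and, via a Besicovitch covering argument, shows that the set of \emph{space-time} points where $\limsup_{r\to 0} r^{-(d-3/2)} \omega_*(B(x,\ell r)\times[t-r^2,t]) \ge 1$ has finite $\mathcal{H}_P^{d-3/2}$ measure, hence $\mathcal{H}^{d-2}$-null slice for a.e.\ $t$. Off this set, the energy identity \eqref{equa2.1} and Cauchy--Schwarz give the quantitative comparison (Lemma~\ref{lem6.4}, Corollary~\ref{cor6.2})
\[
\frac{1}{r^{d-2}}\left(\int \exp\!\Big(-\frac{|x-y|^2}{4r^2}\Big)\, d\mu_*^{t} - \int \exp\!\Big(-\frac{|x-y|^2}{4r^2}\Big)\, d\mu_*^{t-r^2}\right) \longrightarrow 0,
\]
which is exactly what bridges the $r^2$ time shift and lets one replace the parabolic density by a spatial Gaussian limit at the fixed time $t$. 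Your final Gaussian-versus-ball step (choosing $\ell$ via the lower bound $\Theta^P\ge\eta_3$ on $\Sigma_\mu$) is then essentially the paper's inequality \eqref{Eq11}. Note also that the exceptional set necessarily depends on $x$ as well as $t$, since the relevant estimate localizes $\omega_*$ around $x$; a purely time-wise null set $E$ cannot absorb this $x$-dependence.
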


Combining Corollary \ref{cor1} and Proposition \ref{prop7}, we are led to

\begin{cor}
\label{cor2}
For almost every $t\geq0$, 
\[ \Theta_{*,d-2}(\mu_{*}^{t},x) \ge K \eta_3(t) \quad \text{ for } \mathcal{H}^{d-2} \text{ a.e.}  \ x \in \m{R}^d. \]
\end{cor}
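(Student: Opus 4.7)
The plan is to combine the two results immediately preceding the statement, with essentially no additional work. Fix a time $t > 0$ belonging to the full-measure set on which Proposition \ref{prop7} holds; for such $t$ we have
\[ \Theta_{*,d-2}(\mu_{*}^{t},x) \ge K\, \Theta_{d-2}^{P}(\mu_{*},(x,t)) \]
for $\mathcal{H}^{d-2}$-a.e.\ $x \in \m{R}^d$. It therefore suffices to bound $\Theta_{d-2}^{P}(\mu_{*},(x,t))$ from below on the relevant set of $x$, and this is exactly what Corollary \ref{cor1} provides on $\Sigma_\mu$.

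By the very definition \eqref{def:Sigma_mu}, the locus where $\Theta_{d-2}^{P}(\mu_{*},(x,t)) > 0$ is $\Sigma_\mu^t$; outside $\Sigma_\mu^t$ both sides of the target inequality vanish (in agreement with Proposition \ref{prop:Theta_P}), so the meaningful content of the corollary is the assertion on $\Sigma_\mu^t$ itself, consistent with item (5) of Theorem \ref{thA}. For $x \in \Sigma_\mu^t$, Corollary \ref{cor1} gives the pointwise lower bound $\Theta_{d-2}^{P}(\mu_{*},(x,t)) \ge \eta_3(t)$. Chaining this with Proposition \ref{prop7} yields $\Theta_{*,d-2}(\mu_{*}^{t},x) \ge K\eta_3(t)$ for $\mathcal{H}^{d-2}$-a.e.\ $x \in \Sigma_\mu^t$.

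Since the argument is a direct combination of two already-proved facts, there is no genuine obstacle. The only care required is organizational: one must intersect the full-measure set of ``good times'' furnished by Proposition \ref{prop7} with the $\mathcal{H}^{d-2}$-a.e.\ set of ``good $x$'' for each such $t$, and then apply Corollary \ref{cor1} pointwise on $\Sigma_\mu^t$. No further analytic input (monotonicity, clearing-out, or covering arguments) is needed at this stage, all of that work having been absorbed into Proposition \ref{prop7} and Corollary \ref{cor1}.
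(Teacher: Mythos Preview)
Your proposal is correct and matches the paper's approach exactly: the paper simply states that Corollary~\ref{cor2} follows by ``Combining Corollary~\ref{cor1} and Proposition~\ref{prop7}'', which is precisely what you spell out. Your remark that the meaningful content is on $\Sigma_\mu^t$ (consistent with Theorem~\ref{thA}(5) and Proposition~\ref{prop8}) is also appropriate, since the inequality as literally stated for $\mathcal{H}^{d-2}$-a.e.\ $x \in \m{R}^d$ is only nontrivial on $\Sigma_\mu^t$.
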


%
%

Our goal in this subsection is to prove Proposition \ref{prop7}, but we need some preliminary tools first. Our starting point is the estimate for the time derivative $\partial_t u_\e$ provided by Proposition \ref{x61}, namely
\[ \forall T>0,\forall R>0,\quad \frac{1}{|\ln \e|} \int_{B(0,R)\times [0,T]} |\partial_t u_{\e}|^2 \le C(T,R). \]
Therefore, by a diagonal extraction argument, we may assume that there exists some non negative Radon measure $\omega_{*}$ defined on $\m{R}^d \times [0,+\infty)$ such that
\[ \frac{1}{|\ln \e|} |\partial_t u_{\e}|^2 \rightharpoonup \omega_{*} \quad \text{as measures,} \]  
so that $\omega_{*}(B(0,R)\times [0,T]) \le C(T,R).$

In \cite{Be1}, it is known at this point that $\Sigma_{\mu} \subset \m{R}^d \times (0,T_f+1)$ for some constant $T_f$. In our case, we don't have the same result, and since we want to work on compact domains in $\m{R}^d \times [0,+\infty)$, we will fix $T>0$, and then intersect our sets of $\m{R}^d$ with balls $B(0,R)$.

\bigskip

\emph{Step 1.} We fix $T>0$.

We want to prove that for almost $t \in (0,T]$, inequality (\ref{eq6}) holds for every $x \in \m{R}^d$.

We define, for $\ell \in \m{N}_{*}$ and $q >0$ to be fixed later, the set
 \[A_\ell(\omega_{*})= \left\{ (x,t) \in \m{R}^d \times (0,T] : \ \limsup_{r \rightarrow 0} \ \frac{1}{r^q} \int_{B(x,\ell r) \times [t-r^2,t]} \ \omega_{*} \ge 1 \right\} \]
and its intersection with $B(0,R)$
\[ A_\ell(\omega_{*}^{R})= \left\{ (x,t) \in A_\ell(\omega_{*}):  |x| \le R \right\}. \]

The following shows that $A_\ell(\omega_{*})$ is small in some appropriate sense.

\begin{lem}
\label{lem6.3}
For each $\ell \in \m{N}^{*}$ and $R>0$,
\[ \mathcal{H}_{P}^{q}(A_\ell(\omega_{*}^{R})) < +\infty, \]
where $\mathcal{H}_{P}^{q}$ denotes the $q$-dimensional Hausdorff measure with respect to the parabolic distance $d_P((x,t),(x',t'))=\max (|x-x'|,|t-t'|^{\frac{1}{2}}).$
\end{lem}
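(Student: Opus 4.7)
The plan is to deploy a Vitali-type covering argument in the metric space $(\m{R}^{d+1}, d_P)$. The condition defining $A_\ell(\omega_*)$ is precisely a $q$-dimensional density lower bound at the point $(x,t)$ measured through parabolic cylinders, and such lower density conditions classically force finite $q$-dimensional Hausdorff measure provided $\omega_*$ is locally finite. The latter is guaranteed by $\omega_*(B(0,R') \times [0,T']) \le C(T',R')$, inherited from the $L^2$ bound \eqref{bd:dtu_L2} on $\partial_t u_\e$.

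First, I would fix $\delta \in (0,1)$ and, for each $(x,t) \in A_\ell(\omega_*^R)$, use the definition of $\limsup$ to select a radius $r = r(x,t) \in (0, \delta/(10\ell))$ with
\[ \omega_*\bigl(B(x,\ell r) \times [t-r^2, t]\bigr) \ge \tfrac{1}{2} r^q. \]
Since $\ell \ge 1$, this half-cylinder sits inside the parabolic ball
\[ B_P((x,t), \ell r) := B(x,\ell r) \times (t-(\ell r)^2, t+(\ell r)^2), \]
whose $d_P$-diameter equals $2\ell r \le \delta/5$. Applying the standard $5r$-covering lemma to the family $\{B_P((x,t), \ell r(x,t))\}_{(x,t) \in A_\ell(\omega_*^R)}$, which is legitimate since $d_P$ is a metric and the radii are uniformly bounded, I would extract a countable pairwise disjoint subfamily $\{B_P((x_i,t_i), \ell r_i)\}_i$ such that $A_\ell(\omega_*^R) \subset \bigcup_i B_P((x_i,t_i), 5\ell r_i)$, each covering ball having $d_P$-diameter $\le \delta$.

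Then, the disjointness of the selected balls $B_P((x_i,t_i),\ell r_i)$ forces the half-cylinders $C_i := B(x_i,\ell r_i) \times [t_i - r_i^2, t_i]$ they contain to be disjoint as well; for $\delta$ small they all lie inside $B(0, R+1) \times [0,T]$, so
\[ \sum_i r_i^q \le 2 \sum_i \omega_*(C_i) \le 2\,\omega_*\bigl(B(0,R+1) \times [0,T]\bigr) \le C(T,R,\ell). \]
By definition of the approximate parabolic Hausdorff measure this yields
\[ \mathcal{H}^q_{P,\delta}(A_\ell(\omega_*^R)) \le C(q)(10\ell)^q \sum_i r_i^q \le C(\ell,q,R,T), \]
uniformly in $\delta$, and letting $\delta \to 0$ gives the announced finiteness.

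The main obstacle is purely bookkeeping: checking that each half-cylinder $C_i$ really sits inside the corresponding $B_P((x_i,t_i),\ell r_i)$, which is exactly where $\ell \ge 1$ enters (it forces $[t_i-r_i^2, t_i] \subset (t_i-(\ell r_i)^2, t_i+(\ell r_i)^2)$), and that the harvested $\omega_*$-masses stay controlled by the a priori estimate coming from \eqref{bd:dtu_L2}. No specifically parabolic Vitali lemma is needed since one works directly in the metric space $(\m{R}^{d+1}, d_P)$.
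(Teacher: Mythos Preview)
Your argument is correct. The paper follows the same density-versus-Hausdorff-measure strategy but invokes the Besicovitch covering theorem rather than the Vitali $5r$-lemma: it covers $A_\ell(\omega_*^R)$ directly by the half-cylinders $\Gamma_\ell^P(x,t,r) = B(x,\ell r) \times [t-r^2,t]$, applies Besicovitch to extract $m(\ell,d)$ pairwise-disjoint subfamilies that together still cover, and then sums the $\omega_*$-mass over each subfamily separately. Your Vitali route is arguably more elementary, since it uses nothing beyond $d_P$ being a metric, whereas Besicovitch requires some structural input; the price is the factor-$5$ dilation and the extra bookkeeping step of embedding the half-cylinders into full parabolic balls. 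One minor point: for $\ell=1$ the inclusion $[t_i-r_i^2,t_i] \subset (t_i-(\ell r_i)^2, t_i+(\ell r_i)^2)$ fails at the left endpoint, but this is harmless if you run Vitali with closed parabolic balls.
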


\begin{proof}
Let $0<\delta \le 1$ be given, and fixed for the moment. For $(x,t) \in A_\ell(\omega_{*}^{R})$, there exists $r=r(x,t)<\delta$ such that 
\[ \int_{B(x,\ell r) \times [t-r^2,t]}  \omega_{*} \ge r^q. \]
If we denote
\[ \Gamma_{\ell}^{P}(x,t,r(x,t)) = B(x,\ell r(x,t)) \times [t-r(x,t)^2,t], \]
clearly, the union of parabolic balls $\bigcup_{(x,t)\in A_\ell(\omega_{*}^{R})} \Gamma_{\ell}^{P}(x,t,r(x,t))$ covers $A_\ell(\omega_{*}^{R})$. Notice that $\diam(\Gamma_{\ell}^{P}) \le 2 \ell r$.  Since $A_\ell(\omega_{*}^{R}) \subset B(0,R) \times [0,T]$, we may apply the Besicovitch covering theorem (see for example \cite{Fed}). There exists an integer $m(\ell,d)$ depending only on $d$ and $\ell$, and there exists a sub-covering of the form 
\[ A_\ell(\omega_{*}^{R}) \subset \bigcup_{i=1}^{m(\ell,d)}  \bigcup_{j \in J_{i}^{\delta}} \Gamma_{\ell}^{P}(x_j,t_j,r_j(x_j,t_j)), \]
where for fixed $i$, the sets $\Gamma_j= \Gamma_{\ell}^{P}(x_j,t_j,r_j(x_j,t_j))$ are disjoint. Consequently, it follows that for each $i \in \llbracket 1,m(\ell,n) \rrbracket$,
\[ \sum_{j \in J_i^{\delta}} r(x_j,t_j)^q \le \sum_{j \in J_i^{\delta}} \int_{\Gamma_j} \omega_{*} \le \int_{B(0,R+\ell) \times [0,T]} \omega_{*}
\le C(T,R,\ell). \]
Therefore,
\[\sum_{j=1}^{m(\ell,d)} \sum_{j \in J_i^{\delta}} \diam(\Gamma_j)^q \le m(\ell,d) \ell^q C(T,R,\ell). \]
Observe that the constant on the right hand side is independent of $\delta$. Hence, letting $\delta \rightarrow 0$, we obtain 
\[ \mathcal{H}_{P}^{q}(A_\ell(\omega_{*}^{R})) \le \limsup_{\delta \rightarrow 0} (\sum_{j=1}^{m(\ell,d)} \sum_{j \in J_i^{\delta}} \diam(\Gamma_j)^q) \le m(\ell) \ell^q C(T,R,\ell), \]
and the proof is complete.
\end{proof}

We fix $\ds q=d-\frac{3}{2}.$ This choice has no specific geometrical meaning, but is convenient as the following shows.

\begin{cor}
We have
\[ \mathcal{H}^{d-1} \left(\bigcup_{\ell \in \m{N}^{*}} A_\ell(\omega_{*}) \right) = 0.\]
Hence, for almost every $t>0$,
\[ \mathcal{H}^{d-2} \left(\bigcup_{\ell \in \m{N}^{*}} A_\ell^t(\omega_{*}) \right) = 0, \]
where $A_\ell^{t}(\omega_{*})=A_\ell(\omega_{*})\cap (\m{R}^d \times \{t\})$.
\end{cor}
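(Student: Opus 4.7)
The plan proceeds in two stages: first upgrade the finiteness of the parabolic Hausdorff measure from Lemma \ref{lem6.3} into the vanishing of the ordinary $(d-1)$-dimensional Hausdorff measure on $\m R^{d+1}$, and then transfer this to the time slices via a Fubini-type slicing inequality for Hausdorff measure.

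For the first stage, I would compare parabolic and Euclidean diameters. A parabolic ball
\[ \Gamma_P((x,t),r) := B(x,r)\times[t-r^2,t+r^2] \]
has Euclidean diameter in $\m R^{d+1}$ bounded by $\sqrt{(2r)^2+(2r^2)^2}\le 2\sqrt 2\, r$ whenever $r\le 1$. Hence any cover of a set $E$ by parabolic balls of parabolic diameter at most $\delta\le 1$ is also a Euclidean cover with diameter at most $2\sqrt 2\,\delta$, which gives the comparison $\mathcal{H}^s(E)\le C(s)\,\mathcal{H}^s_P(E)$ for every $s\ge 0$. Applied to $E=A_\ell(\omega_*^R)$ with $s=q=d-\tfrac{3}{2}$, Lemma \ref{lem6.3} yields $\mathcal{H}^q(A_\ell(\omega_*^R))<+\infty$; since $d-1>q$, this forces $\mathcal{H}^{d-1}(A_\ell(\omega_*^R))=0$. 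Taking countable unions over $R\in\m N^*$ (to recover $A_\ell(\omega_*)$) and then over $\ell\in\m N^*$ gives $\mathcal{H}^{d-1}\bigl(\bigcup_\ell A_\ell(\omega_*)\bigr)=0$, which is exactly the first claim of the corollary.

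For the second stage, I would apply the Eilenberg inequality (a standard slicing property of Hausdorff measure) to the $1$-Lipschitz projection $\pi:(x,t)\in\m R^{d+1}\mapsto t\in\m R$: for any set $E\subset\m R^{d+1}$,
\[ \int^{*}\mathcal{H}^{d-2}\bigl(E\cap\pi^{-1}(t)\bigr)\,dt\le C_d\,\mathcal{H}^{d-1}(E). \]
Applied to $E=\bigcup_{\ell\in\m N^*}A_\ell(\omega_*)$, the right-hand side vanishes, so the nonnegative integrand $\mathcal{H}^{d-2}\bigl(\bigcup_\ell A_\ell^t(\omega_*)\bigr)$ must be zero for Lebesgue-almost every $t>0$, yielding the second claim.

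The only delicate point is justifying the parabolic-to-Euclidean comparison; everything else is either a cardinality argument on countable unions or a direct invocation of Eilenberg's inequality. Since we only need the comparison for coverings of small radii (the defining inequality for $\mathcal{H}^s$ involves diameters tending to zero), the bound $r\le 1$ is enough and causes no trouble.
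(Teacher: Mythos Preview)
Your argument is correct and follows essentially the same route as the paper: use Lemma \ref{lem6.3} together with the parabolic-to-Euclidean diameter comparison to get $\mathcal{H}^{d-1}(A_\ell(\omega_*^R))=0$, then take countable unions; the only cosmetic difference is that the paper first passes from $\mathcal{H}_P^{d-3/2}<\infty$ to $\mathcal{H}_P^{d-1}=0$ and then compares with the Euclidean measure, whereas you swap the order of these two steps. Your second stage spells out the Eilenberg slicing inequality explicitly, which the paper leaves implicit in the word ``Hence''.
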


\begin{proof}
Since, by Lemma \ref{lem6.3}, $\mathcal{H}_{P}^{d-\frac{3}{2}}(A_\ell(\omega_{*}^{R})) <\infty$, it follows that 
\[\mathcal{H}_{P}^{d-1}(A_\ell(\omega_{*}^{R}))=0. \]
On the other hand, parabolic balls are smaller than euclidian balls of the same radius, so that the parabolic Hausdorff measure dominates the euclidian Hausdorff measure.
It follows that 
\[ \mathcal{H}^{d-1} \left( \bigcup_{\ell \in \m{N}^{*}} \bigcup_{R \in \m{N}^{*}} A_\ell^{R}(\omega_{*}) \right) = 0.\]
Since $\ds \bigcup_{R \in \m{N}^{*}} A_\ell^{R}(\omega_{*}))= A_l(\omega_{*})$, we obtain that
\[ \mathcal{H}^{d-1} \left( \bigcup_{\ell \in \m{N}^{*}} A_\ell(\omega_{*}) \right) = 0,\]
and the proof is complete.
\end{proof}

Next, we introduce the set
\[ \Omega_{\omega}= (\m{R}^d \times [0,T]) \setminus \bigcup_{\ell \in \m{N}^{*}} A_\ell(\omega_{*}). \]

\begin{lem}
\label{lem6.4}
Let $\chi \in \q C_{c}^{\infty}(\m{R}^d)$. Then, for $(x,t) \in \Omega_{\omega}$,
\[ \frac{1}{r^{d-2}} \lim_{r \rightarrow 0} \left( \int_{\m{R}^d} \chi \left(\frac{y-x}{r} \right) d\mu_{*}^{t}(y) - \frac{1}{r^{d-2}} \int_{\m{R}^d} \chi \left(\frac{y-x}{r} \right) d\mu_{*}^{t-r^2}(y) \right) = 0. \] 
\end{lem}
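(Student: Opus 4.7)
The natural starting point is Lemma \ref{lll2.1} applied to the rescaled test function $\chi_{x,r}(y) := \chi\bigl(\tfrac{y-x}{r}\bigr)$. Integrating the resulting identity over $s\in[t-r^2,t]$ yields, for each $\e>0$,
\[
\int \chi_{x,r}\,d\mu_\e^t - \int \chi_{x,r}\,d\mu_\e^{t-r^2}
= -\!\int_{t-r^2}^t\!\!\int \chi_{x,r}\frac{|\partial_s u_\e|^2}{|\ln\e|}\,dyds
+\!\int_{t-r^2}^t\!\!\int \nabla\chi_{x,r}\cdot \frac{-\partial_s u_\e\,\nabla u_\e}{|\ln\e|}\,dyds.
\]
Fix $R>0$ with $\Supp \chi\subset B(0,R)$, so that every integrand above is supported in the parabolic cylinder $\Lambda_r := B(x,Rr)\times[t-r^2,t]$. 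The plan is to divide by $r^{d-2}$, pass to the limit $\e\to 0$ (along the subsequence realizing both $\mu_\e\rightharpoonup \mu_*$ and $|\partial_t u_\e|^2/|\ln\e|\,dxdt \rightharpoonup \omega_*$, after one further diagonal extraction), and show that both right-hand terms vanish as $r\to 0$.

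For the first (dissipation) term, the crude bound $\chi_{x,r}\le\|\chi\|_\infty \mathbf{1}_{\Lambda_r}$ together with weak convergence on the closed cylinder gives
\[
\limsup_{\e\to 0}\,\int_{\Lambda_r} \chi_{x,r}\frac{|\partial_s u_\e|^2}{|\ln\e|} \le \|\chi\|_\infty\, \omega_*(\Lambda_r).
\]
Pick any integer $\ell\ge R$: then $\Lambda_r\subset B(x,\ell r)\times[t-r^2,t]$, and since $(x,t)\in \Omega_\omega$ lies outside $A_\ell(\omega_*)$, the choice $q=d-\tfrac{3}{2}$ made before Lemma \ref{lem6.3} forces $\omega_*(\Lambda_r)=o(r^{d-3/2})$ as $r\to 0$. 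Dividing by $r^{d-2}$ gives $o(r^{1/2})\to 0$.

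The second term is the main obstacle: the crude energy bound alone would leave it $O(1)$, so one must couple the two measures via Cauchy--Schwarz. Using $|\partial_s u_\e\cdot\nabla u_\e|\le |\partial_s u_\e||\nabla u_\e|$, $|\nabla u_\e|^2\le 2e_\e(u_\e)$, and $\|\nabla\chi_{x,r}\|_\infty\le \|\nabla\chi\|_\infty/r$,
\[
\left|\int_{\Lambda_r}\nabla\chi_{x,r}\cdot\frac{\partial_s u_\e\,\nabla u_\e}{|\ln\e|}\right|
\le \frac{C}{r}\left(\int_{\Lambda_r}\frac{|\partial_s u_\e|^2}{|\ln\e|}\right)^{1/2}\left(\int_{\Lambda_r}\frac{2e_\e(u_\e)}{|\ln\e|}\right)^{1/2}.
\]
Letting $\e\to 0$ produces the bound $\tfrac{C}{r}\,\omega_*(\Lambda_r)^{1/2}(2\mu_*(\Lambda_r))^{1/2}$. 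The slice-wise bound \eqref{Eq1} gives $\mu_*(\Lambda_r)=\int_{t-r^2}^t\mu_*^s(B(x,Rr))ds\le C(t)M_0\,r^d$, while $(x,t)\in\Omega_\omega$ again yields $\omega_*(\Lambda_r)=o(r^{d-3/2})$. Combining and dividing by $r^{d-2}$ produces
\[
\frac{1}{r^{d-2}}\cdot\frac{C}{r}\cdot o\!\left(r^{(d-3/2)/2}\right)\cdot O\!\left(r^{d/2}\right) = o\!\left(r^{1/4}\right)\xrightarrow[r\to 0]{}0.
\]
The delicate point is precisely that the exponent $q=d-\tfrac{3}{2}$ in the definition of $A_\ell(\omega_*)$ is sharp for this argument: a larger $q$ would invalidate Lemma \ref{lem6.3}, while a smaller $q$ would leave the $\omega_*$ factor too weak to beat the $r^{-1}$ loss from $\|\nabla\chi_{x,r}\|_\infty$ combined with the $r^{d/2}$ from the crude slice-wise energy bound.
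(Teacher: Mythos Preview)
Your overall strategy matches the paper's proof exactly: integrate Lemma \ref{lll2.1} over $[t-r^2,t]$, bound the dissipation term by $\omega_*(\Lambda_r)$, and control the cross term by Cauchy--Schwarz, coupling $\omega_*(\Lambda_r)^{1/2}$ with $\mu_*(\Lambda_r)^{1/2}$. The exponent arithmetic is also the same as the paper's.

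There is, however, a genuine gap in your justification of the energy bound. You claim that \eqref{Eq1} gives $\mu_*(\Lambda_r)\le C(t)M_0\,r^d$, but \eqref{Eq1} reads $\mu_*^s(B(x,Rr))\le C(d)(1+s)(1+Rr)^d M_0$, and for small $r$ the factor $(1+Rr)^d$ tends to $1$, not to $r^d$. So \eqref{Eq1} only yields $\mu_*(\Lambda_r)\le C(t)M_0\,r^2$. With that weaker bound your second term becomes $O(r^{5/4-d/2})$, which fails to vanish as soon as $d\ge 3$. What you actually need is the density-type bound $\mu_*^s(B(x,Rr))\le C(t)\,r^{d-2}$ uniformly for $s\in[t-r^2,t]$, and this comes from the \emph{monotonicity formula} (Proposition \ref{lem2}, or equivalently the argument in Lemma \ref{lem3}), not from \eqref{Eq1}. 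The paper carries this out explicitly: it introduces a cutoff $\psi$ supported in a slightly larger cylinder, bounds $r^{-d}\int\psi\,d\mu_\e$ at the level of $u_\e$ by inserting a Gaussian and applying the monotonicity formula between radii $2\ell r$ and $\sqrt{s+4\ell^2r^2}$, and then passes to the limit to obtain $r^{-d}\mu_*(\Lambda_r)\le C(\ell,t)M_0$. Once you replace the appeal to \eqref{Eq1} by this monotonicity argument, your proof is correct and coincides with the paper's.

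A minor side remark: membership in $\Omega_\omega$ only gives $\limsup_{r\to 0} r^{-(d-3/2)}\omega_*(\Lambda_r)<1$, hence $\omega_*(\Lambda_r)=O(r^{d-3/2})$ rather than $o(r^{d-3/2})$; this is harmless since $O$ is all the computation requires.
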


\begin{proof}
We need to go back to the level of the function $u_\e$.
For $0<r<\sqrt{t}$, by Lemma \ref{lll2.1}, we have
\begin{align}
\MoveEqLeft \int_{\m{R}^d \times\{t\}} \frac{e_{\e}(u_\e)}{|\ln \e|} \chi \left(\frac{y-x}{r} \right)  dx - \int_{\m{R}^d \times\{t-r^2\}} \frac{e_{\e}(u_\e)}{|\ln \e|} \chi \left(\frac{y-x}{r} \right) dx \nonumber \\
& = - \int_{\m{R}^d \times[t-r^2,t]} \frac{|\partial_{t} u_\e|^2}{|\ln \e|} \chi \left( \frac{y-x}{r} \right) dx dt \\
& \qquad  -\frac{1}{r|\ln \e|} \int_{\m{R}^d \times[t-r^2,t]} \partial_{t} u_\e \nabla u_\e \cdot \nabla \chi \left( \frac{y-x}{r} \right)  dx dt. \label{r5}
\end{align}

Let $\ell \in \m{N}^{*}$ such that $\Supp(\chi) \subset B(\ell).$ We set $\Lambda=B(x,\ell r) \times [t-r^2,t]$, and estimate the last term in the previous identity by the Cauchy-Schwarz inequality ,
\[ \frac{1}{r|\ln \e|} \left |\int_{\Lambda} \partial_{t} u_\e \nabla u_\e.  \nabla \chi \left( \frac{y-x}{r} \right) \right| \le \left( \int_{\Lambda} \frac{|\partial_{t} u_\e|^2}{|\ln \e|} \right)^{\frac{1}{2}} \left( \int_{\Lambda} \frac{|\nabla u_\e|^2}{r^2|\ln \e|} \right)^{\frac{1}{2}} \ \|\nabla \chi\|_{\infty}. \]
We now let $\e \rightarrow 0$ in (\ref{r5}), therefore obtaining the inequality for measures
\begin{multline*} \frac{1}{r^{d-2}} \left| \int_{\m{R}^d}
\chi \left( \frac{y-x}{r} \right) (d\mu_{*}^{t}-d\mu_{*}^{t-r^2})(y) \right| \\
\le \left( \frac{1}{r^{d-2}} \int_{\Lambda}\omega_{*} +
\left( \frac{1}{r^{d-2}} \int_{\Lambda}\omega_{*} \right)^{\frac{1}{2}} \left( \frac{1}{r^{d}}
\int_{\Lambda}d\mu_{*}\right)^{\frac{1}{2}} \right) \  \|\chi\|_{\q C^1}.
\end{multline*}
Let $\psi \in \q C_{c}^{\infty}(\m{R}^d \times [0,+\infty))$ such that $\psi=1$ in $\Lambda$, and $\psi =0$ out of $B(x,2\ell r)\times [t-2r^2,t].$ 
We have 
\begin{align*}
 \frac{1}{r^{d}} \int \psi d\mu_\e &\le  \frac{1}{r^{d-2}} \int_{t-2r^2}^{t} \int_{B(x,2\ell r)} \frac{e_\e(u_\e)}{|\ln \e|} \\
  &\le  \frac{1}{r^{d}} \int_{t-2r^2}^{t} e^{\frac{1}{4}} \int_{\m{R}^d} \frac{e_\e(u_\e)(y,s)}{| \ln \e|}e^{-\frac{|x-y|^2}{16 \ell^2r^2}} dyds \\
 &\le  \frac{1}{r^{d}} \int_{t-2r^2}^{t} e^{\frac{1}{4}} \left(\frac{2\ell r}{\sqrt{s+4\ell^2r^2}}\right)^{d-2} \int_{\m{R}^d} \frac{e_\e(u_\e^0)}{|\ln \e|} e^{-\frac{|x-y|^2}{4(s+4 \ell^2r^2)}} dyds 
\end{align*}
where we used the monotonicity formula applied at point $(x,s+4\ell^2r^2)$ between $2\ell r$ and $\sqrt{s+4 \ell^2 r^2}$. From there, we infer
\[\frac{1}{r^{d}} \int \psi d\mu_\e \le e^{\frac{1}{4}} \frac{(2 \ell)^{d-2}}{(t-2r^2)^{\frac{d-2}{2}}} \left( M_0 + \frac{C_1(\sqrt{t+4 \ell^2 r^2})}{|\ln \e|} \right).\]
Letting $\e \rightarrow 0$, we get
\begin{align*}
\frac{1}{r^{d}} \int \psi d\mu_{*} & \le M_0 e^{\frac{1}{4}} \frac{ (2\ell)^{d-2}}{(t-2r^2)^{\frac{d-2}{2}}}.
\end{align*}
Recall that 
\begin{equation}
\label{eq7}
\text{if} \quad (x,t) \in \Omega_\omega, \quad \text{then} \quad \limsup_{r \rightarrow 0} \frac{1}{r^{d-\frac{3}{2}}} \int_{B(x,\ell r) \times [t-r^2,t]} \omega_{*} \le 1.
\end{equation}
Now we have
\begin{align*}
\MoveEqLeft \frac{1}{r^{d-2}} \int_{\Lambda}\omega_{*} + \left( \frac{1}{r^{d-2}} \int_{\Lambda}\omega_{*} \right)^{\frac{1}{2}} \left(\frac{1}{r^d}
\int_{\Lambda}d\mu_{*}\right)^{\frac{1}{2}}  \\
& =  \left(\frac{1}{r^{d-\frac{3}{2}}} \int_{\Lambda}\omega_{*}\right) \ r^{\frac{1}{2}} +  r^{\frac{1}{4}}
\left( \frac{1} {r^{d-\frac{3}{2}}}  \int_{\Lambda}\omega_{*}\right)^{\frac{1}{2}}\ \left(\frac{1}{r^d} \int_{\Lambda} d\mu_{*} \right)^{\frac{1}{2}} \\
&\le  \left(\frac{1}{r^{d-\frac{3}{2}}} \int_{\Lambda}\omega_{*}\right) \ r^{\frac{1}{2}} +  r^{\frac{1}{4}} C(\ell) M_0(t-2r^2)^{\frac{2-d}{2}}
\left( \frac{1} {r^{d-\frac{3}{2}}}  \int_{\Lambda}\omega_{*}\right)^{\frac{1}{2}}.
\end{align*}
According to (\ref{eq7}), and letting $r \rightarrow 0$, we deduce that 
\[ \int_{\Lambda}\omega_{*} + \left(\int_{\Lambda}\omega_{*}\right)^{\frac{1}{2}} \left(\frac{1}{r^2} \int_{\Lambda}d\mu_{*}\right)^{\frac{1}{2}} \rightarrow 0, \]
and the proof is complete.
\end{proof}

In Lemma \ref{lem6.4}, we assumed that $\chi$ has compact support. The following shows that the result still holds for $\ds \chi=\exp \left(-\frac{|x|^2}{4} \right)$, which is of special interest in view of the monotonicity formula.

\begin{cor}
\label{cor6.2}
We have, for any $(x,t) \in \Omega_\omega$,
\[ \lim_{r \rightarrow 0} \left( \frac{1}{r^{d-2}} \int_{\m{R}^d} \exp \left(-\frac{|x-y|^2}{4r^2} \right) d\mu_{*}^{t}(y) - \frac{1}{r^{d-2}} \int_{\m{R}^d} \exp \left( -\frac{|x-y|^2}{4r^2} \right) d\mu_{*}^{t-r^2}(y) \right) =0. \]
In particular, for $(x,t) \in \Sigma_\mu \cup \Omega_\omega$, the following limit exists and verifies the inequality
\[ \lim_{r \rightarrow 0} \frac{1}{r^{d-2}} \int_{\m{R}^d} \exp \left(-\frac{|x-y|^2}{4r^2} \right) d\mu_{*}^{t}(y) \ge \eta_3. \]
\end{cor}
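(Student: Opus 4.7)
The plan is to deduce Corollary \ref{cor6.2} from Lemma \ref{lem6.4} by truncating the Gaussian $\varphi(z) := \exp(-|z|^2/4)$ to a compactly supported function, and then to control the remaining tail by the monotonicity formula (Proposition \ref{lem2}) combined with \eqref{eq5}. Fix $R>0$ large, pick a cutoff $\eta_R \in \q C_c^\infty(\m R^d)$ with $0 \le \eta_R \le 1$, $\eta_R \equiv 1$ on $B(0,R)$, and $\eta_R \equiv 0$ outside $B(0,2R)$, and write $\varphi = \chi_R + \psi_R$ with $\chi_R := \eta_R \varphi$ compactly supported and $\psi_R := (1-\eta_R)\varphi$ supported on $\{|z|\ge R\}$. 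On this latter set one has the elementary pointwise bound $\psi_R(z) \le e^{-R^2/8}\, e^{-|z|^2/8}$.

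For the main part, $\chi_R \in \q C_c^\infty(\m R^d)$, so Lemma \ref{lem6.4} applied with $\chi = \chi_R$ yields, for fixed $R$,
\[ \lim_{r \to 0} \frac{1}{r^{d-2}} \left( \int \chi_R\!\left( \tfrac{y-x}{r} \right) d\mu_*^t(y) - \int \chi_R\!\left( \tfrac{y-x}{r} \right) d\mu_*^{t-r^2}(y) \right) = 0. \]
For the tail, observe that $e^{-|y-x|^2/(8r^2)}$ is precisely the kernel of $\mathcal{E}_\mu$ at the scale $\sqrt{2}\,r$ and at the centre $(x, s+2r^2)$, so for $s \in \{t, t - r^2\}$,
\[ \frac{1}{r^{d-2}} \int \psi_R\!\left( \tfrac{y-x}{r} \right) d\mu_*^s(y) \le 2^{(d-2)/2}\, e^{-R^2/8}\, \mathcal{E}_\mu\bigl((x, s+2r^2),\, \sqrt{2}\, r\bigr). \]
Applying the monotonicity formula between the scales $\sqrt{2}\,r$ and $\sqrt{s+2r^2}$ and then invoking \eqref{eq5} gives $\mathcal{E}_\mu((x, s+2r^2), \sqrt{2}\,r) \le C(t)\, M_0$ uniformly in small $r$. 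Sending $r \to 0$ at fixed $R$ then bounds the $\limsup$ of the absolute difference in the corollary by $2^{d/2}\, e^{-R^2/8}\, C(t)\, M_0$, and letting $R \to \infty$ proves the first assertion.

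For the second claim, I would read $\Sigma_\mu \cup \Omega_\omega$ as $\Sigma_\mu \cap \Omega_\omega$, since one needs $(x,t) \in \Sigma_\mu$ for the lower bound by $\eta_3(t)$ and $(x,t) \in \Omega_\omega$ to transfer the first-part conclusion to the $\mu_*^t$-integral. On $\Sigma_\mu$, Proposition \ref{lem2} ensures that $r \mapsto \mathcal{E}_\mu((x,t),r)$ is non-decreasing on $(0,\sqrt{t}]$ and hence tends to $\Theta_{d-2}^P(\mu_*,(x,t))$ as $r \to 0$; Corollary \ref{cor1} bounds this limit below by $\eta_3(t)$. The first assertion of the present corollary then identifies it with $\lim_{r \to 0} \frac{1}{r^{d-2}} \int \exp(-|x-y|^2/(4r^2))\, d\mu_*^t(y)$. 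The main technical obstacle is the uniform-in-$r$ control of the Gaussian tail against $d\mu_*^s$: the monotonicity formula applied at a slightly posterior time reduces it to a Gaussian integral against $\mu_*^0$, which is controlled by the local energy bound through \eqref{eq5}. This is the substitute, in the infinite-energy setting \eqref{in}, for the global bound which makes the tail estimate immediate in \cite{Be1}.
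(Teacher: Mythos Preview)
Your proof is correct and follows essentially the same route as the paper: truncate the Gaussian by a smooth cutoff at scale $R$ (the paper uses $\ell$), apply Lemma~\ref{lem6.4} to the compactly supported piece, bound the tail via the pointwise estimate $e^{-|z|^2/4} \le e^{-R^2/8}\,e^{-|z|^2/8}$ together with the monotonicity formula and \eqref{eq5}, and let $R\to\infty$. Your reading of $\Sigma_\mu \cup \Omega_\omega$ as $\Sigma_\mu \cap \Omega_\omega$ is also correct---this is a typo in the statement, as the subsequent use in the proof of Proposition~\ref{prop7} confirms.
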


\begin{proof}

Let $\zeta$ be a smooth cut-off function such that $0 \le \zeta \le 1,\ \zeta=1$ on $B(1)$ and $\zeta=0$ outside $B(2).$ For $\ell \in \m{N}^{*}$, consider the function $\zeta_{\ell}$ defined by $\zeta_\ell (y)=\zeta(y/\ell)$, and denote $\chi_\ell$ the function defined on $\m R^d$ by
\[ \chi_\ell(y)=\exp \left( -\frac{|y|^2}{4} \right) \zeta_\ell(y). \]

We apply Lemma \ref{lem6.4} to $\chi_\ell$, so that
\begin{equation}
\label{equa6.18}
\lim_{r \rightarrow 0} \left( \frac{1}{r^{d-2}} \int_{\m{R}^d} \chi_\ell(\frac{y-x}{r}) d\mu_{*}^{t}(y) - \frac{1}{r^{d-2}} \int_{\m{R}^d} \chi_\ell(\frac{y-x}{r}) d\mu_{*}^{t-r^2}(y) \right)=0. 
\end{equation} 
On the other hand, we claim that, for every $s \in [t-r^2,t]$,
\begin{equation}
\label{equa6.19}
\frac{1}{r^{d-2}}\int_{\m{R}^d} \left( \exp \left(-\frac{|y-x|^2}{4r^2} \right)- \chi_\ell(\frac{y-x}{r}) \right)  d\mu_{*}^{s}(y) \le  K M_0 s^{\frac{2-d}{2}} \exp \left(-\frac{\ell^2}{8} \right). 
\end{equation}
Indeed, notice first that 
\begin{align*}
\exp \left( -\frac{|y-x|^2}{4r^2} \right)-\chi_\ell \left( \frac{y-x}{r} \right) &= \exp(-\frac{|y-x|^2}{4r^2}) \left(1-\zeta_\ell \left( \frac{y-x}{r} \right) \right) \\
& \le \exp \left( -\frac{|y-x|^2}{8r^2} \right) \exp \left( -\frac{\ell^2}{8} \right).
\end{align*}
Second, due to the monotonicity formula and to (\ref{eq3}), we have
\begin{align*}
\MoveEqLeft \frac{1}{(\sqrt{2}r)^{d-2}} \int_{\m{R}^d} \exp \left( -\frac{|y-x|^2}{8r^2} \right) d\mu_{*}^{s}(y)  \\
& \le \frac{1}{(s+2r^2)^{\frac{d-2}{2}}} \int_{\m{R}^d} \exp \left( -\frac{|y-x|^2}{4(s+2r^2)} \right) d\mu_{*}^{0} \le \frac{1}{(s+2r^2)^{\frac{d-2}{2}}} M_0,
\end{align*}
and the claim \eqref{equa6.19} follows. Observe that the right hand side of (\ref{equa6.19}) does not depend on $r$, for $r< \frac{1}{2}\sqrt{t}.$ Combining (\ref{equa6.18}) and (\ref{equa6.19}), we conclude that
\[ \limsup_{r \rightarrow 0} \left( \frac{1}{r^{d-2}} \int_{\m{R}^d} \exp \left(-\frac{|x-y|^2}{4r^2} \right) (d\mu_{*}^{t} - d\mu_{*}^{t-r^2})(y)\right) \le   K M_0  t^{\frac{2-d}{2}} \exp \left( -\frac{\ell^2}{8} \right). \]
Since $\ell$ was arbitrary, the conclusion follows. 
\end{proof}

We are now in position to prove Proposition \ref{prop7}.

\begin{proof}[Proof of Proposition \ref{prop7}]
For $(x,t) \in \Omega_\omega$, define
\[ \tilde{\Theta}_{d-2}(\mu_{*}^{t},x)= \lim_{r \rightarrow 0}  \frac{1}{r^{d-2}} \int_{\m{R}^d} \exp \left( -\frac{|x-y|^2}{4r^2} \right) d\mu_{*}^{t}(y)).\]
In view of Corollary \ref{cor6.2}, $\tilde{\Theta}_{d-2}(\mu_{*}^{t},x)$ exists on $\Omega_\omega$ and 
\begin{equation}
\label{equa6.20}
\tilde{\Theta}_{d-2}(\mu_{*}^{t},x)= \Theta_{d-2}^{P}(\mu_{*},(x,t)).
\end{equation}
If $(x,t) \notin \Sigma_\mu$, then $\Theta_{d-2}^{P}(\mu_{*},(x,t))=0$ so that (\ref{eq6}) is obviously verified. Therefore, we assume in the following that $(x,t) \in \Sigma_\mu \cup \Omega_\omega$. Arguing as for the claim in Corollary \ref{cor6.2}, we obtain
\[ \frac{1}{(\ell r)^{d-2}} \int_{B(x,\ell r)}d\mu_{*}^{t} \ge \frac{K}{\ell^{d-2}} \frac{1}{r^{d-2}} \int_{\m{R}^d} \exp \left(-\frac{|x-y|^2}{4r^2} \right) d\mu_{*}^{t}-K M_0 t^{\frac{2-d}{2}} \exp \left(-\frac{\ell^2}{8} \right). \]
Hence, letting $r \rightarrow 0$, and by (\ref{equa6.20}), 
\begin{equation}
\label{Eq11}
\Theta_{*,d-2}(\mu_{*}^{t},x) \ge \frac{K}{\ell^{d-2}} \left( \Theta_{d-2}^{P}(\mu_{*},(x,t))-K \ell^{d-2} \exp \left( -\frac{\ell^2}{8} \right) M_0 t^{\frac{2-d}{2}} \right).
\end{equation}
In order to obtain (\ref{eq6}), we invoke the fact that on $\Sigma_\mu$, $\Theta_{d-2}^P \ge \eta_3(t)$. We choose $\ell$ sufficiently large so that
\[ K \ell^{d-2} \exp \left( -\frac{\ell^2}{8} \right) M_0 t^{\frac{2-d}{2}} \le \frac{1}{2} \eta_3(t) \le \frac{1}{2} \Theta_{d-2}^{P}(\mu_{*},(x,t)). \]
Going back to (\ref{Eq11}), with this choice of $\ell$, we obtain
\[ \Theta_{*,d-2}(\mu_{*}^{t},x) \ge \frac{K}{2\ell^{d-2}} \Theta_{d-2}^{P}(\mu_{*},(x,t)) \]
so that inequality (\ref{eq6}) is true for almost $t \in (0,T]$.
%
%
\end{proof}

\subsubsection{$\Sigma_\mu^t$ is $(d-2)$-rectifiable}

Now, we have to deal with the diffuse part, and different kinds of arguments could then lead to regularity for $\Sigma_\mu^t$. One way is to prove the existence of the density $\Theta_{d-2}$ and then to invoke Preiss' regularity Theorem \cite{Pr87}. More precisely, we have:

\begin{prop}
\label{prop8}
For almost every $t>0$,
\[ \Theta_{*,d-2}(\mu_{*}^{t},x) = \Theta_{d-2}^{*}(\mu_{*}^t,x) \ge K \eta_3(t) \quad \mathcal{H}^{d-2}\text{-a.e } x \in \Sigma_\mu^t.  \]
As a consequence, for almost every $t>0$, the set $\Sigma_\mu^t$ is $(d-2)$-rectifiable.
\end{prop}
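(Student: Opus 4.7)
The strategy is in three pieces: (i) establish existence of the Euclidean $(d-2)$-density $\Theta^{d-2}(\mu_*^t,x)$ at $\mathcal{H}^{d-2}$-a.e.\ $x \in \Sigma_\mu^t$; (ii) combine with the lower bound of Corollary \ref{cor2} to conclude the density identity in the statement; (iii) apply Preiss' regularity theorem \cite{Pr87} to obtain $(d-2)$-rectifiability of $\Sigma_\mu^t$. The finiteness of the upper density is given by Lemma \ref{lem3}, the lower bound $\ge K\eta_3(t)$ by Corollary \ref{cor2}; the missing ingredient is therefore density existence.

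For (i), I would restrict to the full-measure set of $t > 0$ for which both Proposition \ref{prop7} and $\mathcal{H}^{d-2}(\bigcup_\ell A^t_\ell(\omega_*)) = 0$ hold. For such $t$, $\mathcal{H}^{d-2}$-a.e.\ $x \in \Sigma_\mu^t$ satisfies $(x,t) \in \Sigma_\mu \cap \Omega_\omega$, so Corollary \ref{cor6.2} yields
\[
\lim_{r \to 0} \tilde\Theta(x,t,r) = \Theta^P_{d-2}(\mu_*,(x,t)) > 0, \quad \tilde\Theta(x,t,r) := \frac{1}{r^{d-2}}\int_{\m{R}^d} e^{-|x-y|^2/(4r^2)}\, d\mu_*^t(y).
\]
The trivial comparison $e^{-|x-y|^2/(4r^2)} \ge e^{-1/4}$ on $B(x,r)$ gives $\omega_{d-2}\Theta^{*,d-2}(\mu_*^t,x) \le e^{1/4}\, \Theta^P_{d-2}(\mu_*,(x,t))$, and the argument in the proof of Proposition \ref{prop7} (with parameter $\ell$) gives a matching lower bound up to a factor $\ell^{2-d}$. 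To close the gap and conclude that the spatial density actually exists, I would pass to blow-ups $\mu^{(r)}_x := r^{2-d}\mu_*^t(x + r \cdot)$ and analyse any weak-$*$ subsequential limit $\tau$. By the monotonicity formula (Proposition \ref{lem2}) applied to $\mu_*$ at $(x,t)$, the Gaussian averages of $\tau$ are scale-invariant, so $\tau$ is $(d-2)$-homogeneous; the density bounds just recalled make $\tau$ non-trivial with controlled mass, and Preiss' classification of uniform measures \cite{Pr87} then identifies $\tau = \theta\, \mathcal{H}^{d-2}\llcorner V$ for some affine $(d-2)$-plane $V$ and weight $\theta$ uniquely determined by $\Theta^P_{d-2}(\mu_*,(x,t))$. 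Independence of $\theta$ from the chosen subsequence produces $\Theta^{d-2}(\mu_*^t,x)$.

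Step (ii) is immediate from (i) and Corollary \ref{cor2}. For step (iii), set $\nu_*^t = \Theta_*(\cdot,t)\mathcal{H}^{d-2}\llcorner\Sigma_\mu^t$ as in \eqref{def:nu}. By Theorem \ref{thA}(2), $\Sigma_\mu^t$ is locally $\mathcal{H}^{d-2}$-finite, hence $\mathcal{H}^d$-null, so the two parts of the decomposition \eqref{x60} are mutually singular; Besicovitch differentiation, combined with the upper-density bound, shows that the diffuse part $|\nabla \Phi_*|^2 \mathcal{H}^d$ contributes $0$ to the $(d-2)$-density at $\mathcal{H}^{d-2}$-a.e.\ $x \in \Sigma_\mu^t$. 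Consequently the $(d-2)$-density of $\nu_*^t$ exists, is positive and finite $\nu_*^t$-a.e., and $\nu_*^t \ll \mathcal{H}^{d-2}$. Preiss' theorem then yields $(d-2)$-rectifiability of $\nu_*^t$, and hence of $\Sigma_\mu^t$ (which coincides with $\{\Theta_*(\cdot,t) > 0\}$ modulo an $\mathcal{H}^{d-2}$-null set).

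The main obstacle is the tangent measure rigidity in step (i): the individual Gaussian comparisons available directly produce mismatched constants between $\Theta_{*,d-2}$ and $\Theta^{*,d-2}$, so existence of the density cannot be read off from the monotonicity formula alone — one genuinely needs Preiss' classification of uniform measures. The local-energy hypothesis \eqref{in} adds no difficulty to this step, since the monotonicity formula (Proposition \ref{lem2}), Corollary \ref{cor6.2}, Lemma \ref{lem3} and Corollary \ref{cor2} have all been obtained in this generality in the preceding subsections.
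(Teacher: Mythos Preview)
Your overall architecture --- reduce to existence of the Euclidean $(d-2)$-density via Corollary \ref{cor6.2}, then invoke Preiss --- matches the paper's. The difference, and the gap, is in how you pass from ``the Gaussian-weighted density $\tilde\Theta(x,t,r)$ converges'' to ``the ball density $r^{2-d}\mu_*^t(B(x,r))$ converges''. You blow up to a tangent measure $\tau$, observe that its Gaussian averages $s^{2-d}\int e^{-|y|^2/4s^2}\,d\tau$ are constant in $s$, and then assert that $\tau$ is $(d-2)$-homogeneous and hence (by Preiss' classification of uniform measures) a flat plane. Neither implication holds: scale-invariance of the Gaussian average \emph{at the origin} does not make $\tau$ a cone, and even a cone need not be $(d-2)$-uniform (constant density at \emph{every} point of its support), which is what Preiss' classification requires. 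So the tangent-measure route, as written, does not close.

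What actually suffices --- and what the paper does --- is the much weaker statement that the single number $\tau(B(0,1))$ is determined by $L = \lim_r \tilde\Theta(x,t,r)$. The paper obtains this directly, without blow-ups, by a function-space argument: for $g\in L^\infty_c([0,\infty))$ set $I_r(g)=r^{2-d}\int g(|y-x|/r)\,d\mu_*^t(y)$ and let $F$ be the set of $g$ for which $\lim_{r\to 0}I_r(g)$ exists. Corollary \ref{cor6.2} says $e^{-\alpha(\cdot)^2}\in F$ for every $\alpha>0$; differentiating the scaling relation $I(g_\alpha)=\alpha^{(d-2)/2}I(g)$ in $\alpha$ puts all $y\mapsto y^{2k}e^{-y^2}$ in $F$; Hermite approximation then gives every $g\in \q C^2_c$ with $g'(0)=0$, and a final monotone approximation yields $\m 1_{[0,1]}\in F$. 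This is elementary (no tangent measures, no uniform-measure classification) and is exactly the ``inversion'' your argument is missing between the Gaussian and the ball profiles. Once density existence is in hand, your steps (ii) and (iii) are fine and coincide with the paper's.
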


\begin{proof}
The proof is as in \cite[Proof of Proposition 8, p.155]{Be1}. For the convenience of the reader, we provide a brief sketch.

Fix $(x,t) \in \Omega_\omega$. Denote $L^\infty_c([0,+\infty),\m R)$ the set of bounded, real valued functions with compact support; for $r >0$ and $g \in L^\infty_c([0,+\infty),\m R)$, we define
\[ I_r(g) = \frac{1}{r^{d-2}} \int g \left( \frac{|y-x|}{r} \right) d\mu_*^t(y). \]
Consider the set
\[ F = \left\{ g \in  L^\infty_c([0,+\infty),\m R) : I(g) : = \lim_{r \to 0} I_r(g) \text{ exists and is finite} \right\}. \]
Then $F$ is a vector space, and the equality of upper and lower $(d-2)$-dimensional densities is equivalent to $\m 1_{[0,1]} \in F$.

Observe that $F$ is stable by scaling $g \mapsto g_\alpha$ where $g_\alpha(x) = g(\alpha x)$ for $\alpha >0$ and that 
\begin{equation} \label{I:scaling} I(g_\alpha) = I(g) \alpha^{\frac{d-2}{2}}.
\end{equation}
The starting point is Corollary \ref{cor6.2}. It shows that $x \mapsto e^{-\alpha x^2}$ belongs to $F$. By repetitively differentiating \eqref{I:scaling} with respect to $\alpha$, one can then infer that for any $k \in \m N$, $\psi_k: x \mapsto x^{2k} e^{-x^2}$ belongs to $F$. Denote $V_m = \Span( \psi_k, k=0, \dots, m)$, and $P_m$ the $L^2$ projection on $V_m$.

Using the Hermite polynomials, one can prove that for $f \in \q C^2_c(\m R)$,
\[ \| f - P_m(f) \|_{H^1} \le \frac{C}{\sqrt{m}} (\| f'' \|_{L^2} + \| x^2 f \|_{L^2}). \]
From there, if $g \in \q C^2_c([0,+\infty))$ satisfies $g'(0) =0$, we can symmetrize it into $\tilde g \in  \q C^2_c(\m R)$, and an approximation argument shows that $g \in F$.

Then one approximates $\m 1_{[0,1]}$ by the sequence of functions $g_n \in \q C^2_c([0,+\infty))$ defined as follows. Let $\chi \in \q C^\infty(\m R)$ be non increasing, and such that $\chi(x) = 1$ for $x \le - 1/2$ and $\chi(x) =0$ for $x \ge 0$; we set for $x \ge 0$, $g_n(x) = \chi (n(x-1))$. This allows to prove that $\m 1_{[0,1]} \in F$, as desired.

Finally, we invoke Corollary \ref{cor2} for the lower bound.
\end{proof}

\subsection{Convergence in modulus outside $\Sigma_\mu$}

\begin{proof}[End of the proof of Theorem \ref{thA}]
We can now complete the proof of Theorem \ref{thA}. We gather the results obtained in Propositions \ref{prop6} and \ref{prop8}. Only two points are left to clarify. 

\medskip

The first point is that the function $\Phi_{*}$ which appears in Theorem \ref{thA} is global, whereas up to now the function $\Phi_{\dagger}$ constructed in Theorem \ref{th5} was only locally defined. Let us sketch how to overcome this problem, we refer to \cite[Part II, Section 4, p. 131]{Be1} for more details.

Fix $(x_0,t_0) \notin \Sigma_\mu$. Recall that $\Phi_\dagger$ is defined on a neighborhood $\ds \Lambda(x_0,t_0)_{\frac{1}{2}}$ of $(x_0,t_0)$ as the limit of $\ds \frac{\Phi_\e}{\sqrt{|\ln \e|}}$, where $\Phi_\e$ is the phase provided by Theorem \ref{th2}.

Now denote $\tilde \Phi_{\e,m}$ the phase provided be Theorem \ref{th3} on the compact $K_m = B(0,m) \times [1/m,m]$. Up to changing $\tilde \Phi_{\e,m}$ by a constant, we can also assume that
\begin{equation} \label{eq:Phi_x} 
\forall \e>0, \ \forall m, \quad \tilde \Phi_{\e,m} (x_0,t_0) = \Phi_\e(x_0,t_0).
\end{equation}
The bounds given in Theorem \ref{th3} (and parabolic regularization) show the existence of $\Phi_m$ such that
\[ \frac{\tilde \Phi_{\e,m}}{\sqrt{|\ln \e|}} \to \Phi_m \quad \text{in } K_{m-1}, \]
so that $\Phi_m$ also satisfies the heat equation on $K_{m-1}$. 

On the other hand,  using the bounds in Theorem \ref{th2} (and also in the remark that follows) \emph{on the gradient}, one can prove that
\[ \left\| \frac{\nabla \Phi_{\e,m}}{\sqrt{|\ln \e|}}  - \frac{\nabla \tilde \Phi_\e}{\sqrt{|\ln \e|}} \right\|_{L^p(\Lambda(x_0,t_0)_{\frac{1}{2}})} \to 0 \quad \text{as } \e \to 0. \]
Therefore, for $m$ so large that $\Lambda(x_0,t_0)_{\frac{1}{2}} \subset K_{m-1}$,
\[ \nabla \Phi_m = \nabla \Phi_\dagger \quad \text{on } \Lambda(x_0,t_0)_{\frac{1}{2}}. \]
It follows from there and \eqref{eq:Phi_x} that for $m$ large enough, all the $\Phi_m$ coincide on $\Lambda(x_0,t_0)_{\frac{1}{2}}$. By analyticity, for all $m_0$ large enough, the $(\Phi_m)_{m \ge m_0}$ coincide on $K_{m_0-1}$. Letting $m_0 \to +\infty$, we denote $\Phi^*$ their common value. By construction, it satisfies all the requirements of Theorem \ref{thA}.

\bigskip

The second and last remaining point is to show that $|u_\e(x,t)| \rightarrow 1$ uniformly on every compact subset $K \subset \m{R}^d \times (0,+\infty) \setminus \Sigma_\mu$. As $\Sigma_\mu$ is closed, up to doing finite reunions, it suffices to prove it for a family of cylinders (which cover $\m{R}^d \times (0,+\infty) \setminus \Sigma_\mu$). Therefore, we can furthermore assume that there exist $\delta >0$, $K_1\subset \m{R}^d$ convex and $K_2 \subset (0,+\infty)$ a compact interval such that
\[ \{ (x,t) : d((x,t), K) \le \delta \} \subset K_1 \times K_2 \subset  \m{R}^d \times (0,+\infty) \setminus \Sigma_\mu. \]
We have, for every $t \in K_2$, 
\[ \int_{K_1} \frac{e_\e(u_\e)(x,t)}{|\ln \e|}dx= \int_{K_1} |\nabla \Phi_{*}|^2 d\mathcal{H}^d \le C(K). \]
Hence
\[ \int_{K_1} (1-|u_\e|^2)^2(x,t)dx \le C(\q K) \e^2 |\ln \e|, \]
and 
\[\int_{K_1} |\nabla u_\e|^2(x,t)dx \le C(\q K)|\ln \e|. \] 
Let 
\[ A= \left\{(x,t) \in \q K: \forall \e >0, \ \ 1-|u_\e(x,t)|^2 \ge \kappa(\e) \right\}, \]
where $\kappa (\e) >0$ will be determined later.

We want to show that $A$ is empty, and argue by contradiction.
If not, let $(x,t) \in A$. We try to find $\gamma(\e)$ such that 
\[ \forall y \in B(x,\gamma(\e)),\quad |1-|u_\e(y)|^2| \ge \frac{\kappa(\e)}{2}. \]
We have 
\begin{align*}
|u_\e(x,t)-u_\e(y,t)| &\le \int_x^y |\nabla u_\e(t)| \le \|\nabla u_\e(t)\|_{L^2} \sqrt{|y-x|} \le C(\q K) \sqrt{|\ln \e|} \sqrt{\gamma(\e)},
\end{align*}
and
\begin{align*}
|u_\e(y,t)|^2-1 &= (|u_\e(y,t)|-1)(|u_\e(y,t)|+1) \le 2 (|u_\e(y,t)|-1) \\
&\le 2 \left( |u_\e(x,t)-u_\e(y,t))|+|u_\e(x,t)|-1\right) \\
& \le 2 \left(C(\q K) \sqrt{|\ln \e|} \sqrt{\gamma}+|u_\e(x,t)|-1 \right). 
\end{align*}
Since
\[ 1-|u_\e(x,t)| \ge \frac{\kappa(\e)}{1+|u_\e(x,t)|} \ge \frac{1}{2} \kappa (\e), \]
we deduce that
\[ |u_\e(y,t)|^2-1 \le 2C(\q K) \sqrt{|\ln \e|} \sqrt{\gamma}- \kappa (\e). \]
So we choose $\ds \gamma(\e)= \left(\frac{\kappa(\e)}{4C(K)}\right)^2 \frac{1}{|\ln \e|}$. Then, assuming that $B(x,\gamma) \subset K_1$, we have
\begin{align*}
\int_{B(x,\gamma)} (1-|u_\e(y,t)|^2)^2 dy & \ge \int_{B(x,\gamma)} \left(\frac{\kappa(\e)}{2}\right)^2 \ge \kappa(\e)^2 \frac{\omega_d}{4} \gamma(\e)^d \\
& \ge \kappa(\e)^{2d+2} \frac{1}{|\ln \e|^d} \frac{\omega_d}{4^{2d+2}\ C(\q K)^d}. 
\end{align*}
Thus,
\begin{equation} \label{eq:K_g}
C(K) \e^2 |\ln \e| \ge \kappa(\e)^{2d+2} \frac{1}{|\ln \e|^d} \frac{\omega_d}{4^{2d+2}\ C(\q K)^d}.
\end{equation}
This inequality leads us to define 
\[ \kappa(\e)=2 \left(\frac{4^{2d+2}\ C(\q K)^{d+1}}{\omega_d}  \e^2 |\ln \e|^{d+1} \right)^{\frac{1}{2d+2}}. \]
Observe that in this case, $\gamma(\e) \to 0$ as $\e \to 0$.

Let $\e_0>0$ be so small that for $\e \in (0,\e_0)$, $\gamma(\e) < \delta$. Then $B(x,\gamma(\e)) \subset K_1$, so that the computation \eqref{eq:K_g} above holds: this is a contradiction. This proves that for $\e<\e_0$, the set $A$ is empty. Arguing similarly, we obtain that the set
\[ B= \left\{(x,t) \in K: \forall \e >0, \ \ 1-|u_\e(x,t)|^2 \le - \kappa(\e) \right\} \]
is also empty. Therefore, for $\e < \e_0$,
\[ \|1-|u_\e|^2\|_{L^{\infty}(K)} \le \kappa(\e) \le C(K) \e \sqrt{|\ln e|}, \]
which means that $|u_\e(x,t)| \rightarrow 1$ uniformly on $K$.

\bigskip

The proof of Theorem \ref{thA} is complete.
\end{proof}

\section{Evolution of the limiting density}

Our goal in this section is to provide a proof for Theorem \ref{thB}.

\subsection{Mean curvature flows} \label{sec:mean_curvature}

Let us start by recalling some definitions on the mean curvature flow.

\subsubsection{The classical notion}

Let $\Sigma$ be a smooth compact manifold of dimension $k$, and $\gamma_0: \Sigma \rightarrow \m{R}^d$ ($n\ge k$) be a smooth embedding, so that $\Sigma^0=\gamma_0(\Sigma)$ is a smooth $k$-dimensional submanifold of $\m{R}^d$. The mean curvature vector at the point $x$ of $\Sigma^0$ is the vector of the orthogonal space $(T_x\Sigma^0)^\bot$ given by

\begin{equation}
\label{x20}
\vec{H}_{\Sigma^0}(x)=-\sum_{\alpha=1}^{d-k}\left(\sum_{j=1}^{k}(\tau_j \cdot \frac{\partial \nu^{\alpha}}{\partial \tau_j})\nu^{\alpha}\right)= -\sum_{\alpha=1}^{d-k}(\Div_{T_x\Sigma^0}\nu^{\alpha})\nu^{\alpha},
\end{equation}

where $(\tau_1,\dots,\tau_k)$ is an orthonormal moving frame on $T_x\Sigma^0$, $(\nu_1,\dots,\nu^{n-k})$ is an orthonormal moving frame on  $(T_x\Sigma^0)^\bot$, and $\Div_{T_x\Sigma^0}$ denotes the tangential divergence at point $x$. The integral formulation of (\ref{x20}) is given by
\begin{equation}
\label{x21}
\int_{\Sigma^0} \Div_{T_x\Sigma^0} \vec{X} d\mathcal{H}^k = - \int_{\Sigma^0} \vec{H}_{\Sigma^0} \cdot \vec{X} d\mathcal{H}^k, 
\end{equation}

for all $\vec{X} \in \q D(\m{R}^d,\m{R}^d)$.

Next, we introduce a time dependance, and consider a smooth family $\{\gamma_t\}_{t \in I}$ of smooth embeddings of $\Sigma$ in $\m{R}^d$, where $I$ denotes some open interval containing $0$. We set $\Sigma^t=\gamma_t(\Sigma)$. If $\chi$ is a smooth compactly supported function on $\m{R}^d$, a standard computation shows that

\begin{equation}
\label{x22}
\frac{d}{dt} \int_{\Sigma^t} \chi(x) d\mathcal{H}^k =  \int_{\Sigma^t} \left(-\chi(x)  \vec{H}_{\Sigma^t}(x) + P(\nabla \chi(x))\right) \cdot \vec{Y}(x) d\mathcal{H}^k, 
\end{equation}

where $\ds \vec{Y}(x)=\frac{d}{ds}\gamma_s(\gamma_t^{-1}(x))$ is the velocity vector at point $x$, and $P$ denotes the orthogonal projection on $(T_x\Sigma^t)^\bot$.

The family $(\Sigma^t)_{t \in I}$ is moved by mean curvature in the classical sense if and only if,  for all $m \in \Sigma$ and $t \in I$,
\begin{equation}
\label{x23}
\frac{d}{dt} \gamma_t(m) =  \vec{H}_{\Sigma^t}(\gamma_t(m)).
\end{equation}
In particular, if $(\Sigma^t)_{t \in I}$ is moved by mean curvature, (\ref{x22}) becomes
\begin{equation}
\label{x24}
\frac{d}{dt} \int_{\Sigma^t} \chi(x) d\mathcal{H}^k =  -\int_{\Sigma^t} \chi(x)  |\vec{H}_{\Sigma^t}(x)|^2 d\mathcal{H}^k + \int_{\Sigma^t} \nabla \chi(x) \cdot \vec{H}_{\Sigma^t}(x) d\mathcal{H}^k.
\end{equation}
Now, $\chi$ can be chosen arbitrarily, so that (\ref{x24}) is actually equivalent to (\ref{x23}).

\subsubsection{Brakke's flows}

In the attempt to extend (\ref{x23}) or (\ref{x24}) to a larger class of (less regular) objects, and in particular to extend the flow past singularities, Brakke \cite{Br} relaxed equality in (\ref{x24}), and considered instead sub-solutions, i.e. verifying the \emph{ine\-qua\-li\-ty}

\begin{equation}
\label{x25}
\frac{d}{dt} \int_{\Sigma^t} \chi(x) d\mathcal{H}^k \le  -\int_{\Sigma^t} \chi(x)  |\vec{H}_{\Sigma^t}(x)|^2 d\mathcal{H}^k + \int_{\Sigma^t} \nabla \chi(x) \cdot \vec{H}_{\Sigma^t}(x) d\mathcal{H}^k, 
\end{equation}

for all non-negative $\chi \in \q D(\m{R}^d)$. Following Brakke \cite{Br}, we are thus going to extend (\ref{x25}) to less regular objects than smooth embedded manifolds. Actually, we adopt the point of view of Ilmanen \cite{Il}, which is slightly different from Brakke's original one.

Recall that a Radon measure $\nu$ on $\m{R}^d$ is said to be $k$-rectifiable if there exists a $k$-rectifiable set $\Sigma$, and a density function $\Theta \in L^1_{\text{loc}}(\mathcal{H}^k \llcorner\Sigma)$ such that
\[ \nu=\Theta  \mathcal{H}^k \llcorner\Sigma.\]

Since $\Sigma$ is rectifiable, for $\mathcal{H}^k$-a.e. $x \in \Sigma$, there exist a unique tangent space $T_x\Sigma$ belonging to the Grassmanian $G_{n,k}$. The distributional first variation of $\nu$ is the vector-valued distribution $\delta\nu$ defined by

\begin{equation}
\label{x26}
\delta\nu(\vec{X})= \int_{\Sigma} \Div_{T_x\Sigma} \vec{X} d\nu \quad \text{ for all } \vec{X} \in \q \q D(\m{R}^d,\m{R}^d).
\end{equation}

In the case when the measure $|\delta \nu|$ is absolutely continuous with respect to $\nu$, we say that $\nu$ has a first variation and we may write
\[ \delta \nu=\vec{H}\nu,\]
where $\vec{H}$ is the Radon-Nikodym derivative of $\delta \nu$ with respect to $\nu$. In this case, formula (\ref{x26}) becomes 

\begin{equation}
\label{x27}
\int_{\Sigma^0} \Div_{T_x\Sigma} \vec{X} d\nu= - \int_{\Sigma} \vec{H} \cdot \vec{X} d\nu. 
\end{equation}

We are now in position to give the precise definition of a Brakke flow. Let $(\nu_t)_{t \ge 0}$ be a family of Radon measures on $\m{R}^d$. For $\chi \in \q C_c^2(\m{R}^d,[0,+\infty))$, we define
\begin{equation}
\label{x1}
\bar{D}_t \nu^{t_0}(\chi)=\limsup_{t \rightarrow t_0} \frac{\nu^t(\chi)-\nu^{t_0}(\chi)}{t-t_0}.
\end{equation}

If $\nu^t  \llcorner\{\chi>0\}$ is a $k$-rectifiable measure which has a first variation verifying $\chi |\vec{H}|^2 \in L^1(\nu^t)$, then we set
\[ \mathcal{B}(\nu^t,\chi) = - \int \chi |\vec{H}|^2 d\nu^t + \int \nabla \chi \cdot P(\vec{H}) d\nu^t. \]
(Here $P$ denotes $\mathcal{H}^k$-a.e. the orthogonal projection onto the tangent space to $\nu^t$.)

Otherwise we set
\[  \mathcal{B}(\nu^t,\chi)=-\infty.\]

\begin{defi}{(Brakke flow)}
Let $(\nu_t)_{t \ge 0}$ be a family of $k$-rectifiable Radon measures on $\m{R}^d$. We say that $(\nu_t)_{t \ge 0}$ is a $k$-dimensional Brakke flow if and only if
\begin{equation}
\label{x2}
\bar{D}_t \nu^t(\chi) \le \mathcal{B}(\nu^t,\chi),
\end{equation}

for every $\chi \in \q D(\m{R}^d,[0,+\infty))$ and for all $t \ge 0$.
\end{defi}

\subsection{Mean curvature flow in Brakke's formulation for $\nu_*^t$}

\subsubsection{Relating \eqref{pgl} to mean curvature flow}

The starting point of the analysis is the formal analogy of equality (\ref{x24}), namely
\[ \frac{d}{dt} \int_{\Sigma^t} \chi(x) d\mathcal{H}^k =  -\int_{\Sigma^t} \chi(x)  |\vec{H}_{\Sigma^t}(x)|^2 d\mathcal{H}^k + \int_{\Sigma^t} \nabla \chi(x) \cdot \vec{H}_{\Sigma^t}(x) d\mathcal{H}^k, \]

with the evolution of local energies for \eqref{pgl} (see equation \eqref{x30})

\begin{equation}
\label{x28}
\frac{d}{dt} \int_{\m{R}^d} \chi(x) d\mu_\e^t = - \int_{\m{R}^d \times \{t\}} \chi(x) \frac{|\partial_t u_\e|^2}{|\ln \e|} \ dx + \int_{\m{R}^d \times \{t\}} \nabla \chi(x) \frac{-\partial_t u_\e \cdot \nabla u_\e}{|\ln \e|} \ dx. 
\end{equation} 

We already know that as $\e \to 0$, $\mu_\e^t \to \mu_*^t$. Therefore, the comparison of the two formulas suggests, at least formally, that in the limit
 
\begin{equation}
\label{x3}
\omega_\e^t := \frac{|\partial_t u_\e|^2}{|\ln \e|}(x) dx \to |\vec{H}|^2 d\mu_*^t,  
\end{equation}

and

\begin{equation}
\label{x4}
\sigma_\e^t := \frac{-\partial_t u_\e.\nabla u_\e}{|\ln \e|}(x) dx \to \vec{H} d\mu_*^t.  
\end{equation}

Actually, this is a little over optimistic for two reasons. First, we have to deal with the diffuse part of the energy (this will be handled thanks to Theorem \ref{tteo1}). Second, since (\ref{x3}) involves the quadratic term $|\vec{H}|^2$, only lower semi-continuity can be expected at first sight.

\subsubsection{Convergence of \texorpdfstring{$\sigma_\e^t$}{sigma epsilon t} and decomposition of the limit}

In this section, we use the following estimate of the time derivative $\partial_t u_{\e}$, which requires some calculations, whereas it was straightforward in \cite{Be1}.

\begin{prop}
\label{x61}
For all $T>0$ and $R>0$, there holds
\[  \frac{1}{|\ln \e|} \int_{B(0,R)\times [0,T]} |\partial_t u_{\e}|^2 \le C(T,R). \]
\end{prop}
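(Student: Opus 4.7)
The plan is essentially to recognize that this proposition is a repackaging of the space--time estimate \eqref{bd:dtu_L2} already established in Section~2 via the identity \eqref{lem2.2}. If I had to rederive it here rather than simply cite it, I would proceed in three steps.

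\textbf{Step 1: reduction to a model cylinder.} By covering $B(0,R) \times [0,T]$ with finitely many translates of a single cylinder of the form $B(y,\sqrt{S}) \times [0,S]$ with $S := \max(T, R^2, 1)$, and using translation invariance, it suffices to bound
\[ \frac{1}{|\ln\e|}\int_{0}^{S}\int_{B(0,\sqrt{S})} |\partial_t u_\e(x,t)|^2 \, dx\,dt. \]

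\textbf{Step 2: application of the monotonicity identity.} Set $z_* = (0, 2S)$, i.e.\ $x_* = 0$ and $t_* = 2S$. For $(x,t)$ in the target cylinder one has $t_* - t \ge S$ and $|x - x_*|^2 \le S$, so
\[ G(x-x_*, t-t_*) \ge c(d,S) > 0. \]
Expanding the square in the definition \eqref{def:Xi} of $\Xi$ and using $2ab \le a^2 + b^2$ yields the pointwise bound
\[ |\partial_t u_\e(x,t)|^2 \le \frac{1}{S}\, \Xi(u_\e, z_*)(x,t) + |\nabla u_\e(x,t)|^2. \]
Integrating and using $V_\e(u_\e) \ge 0$ together with the identity \eqref{lem2.2},
\[ \int_{\m R^d \times [0,2S]} \bigl(V_\e(u_\e) + \Xi(u_\e,z_*)\bigr)(x,t)\, G(x-x_*, t-t_*)\, dx\,dt = E_w(z_*, \sqrt{2S}), \]
and the right-hand side is bounded by $C(d,S)\, M_0 |\ln\e|$ thanks to Lemma \ref{H1:H0}.

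\textbf{Step 3: the gradient term.} The remaining $|\nabla u_\e|^2$ contribution is absorbed by Corollary \ref{Prop1}, which gives
\[ \int_{B(0,\sqrt{S}) \times [0,S]} |\nabla u_\e(x,t)|^2\, dx\,dt \le C(d,S)\, M_0 |\ln\e|. \]
Combining the two bounds produces the claimed estimate with constant $C(T,R) = C(d, T, R)\, M_0$.

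\textbf{Main obstacle.} Honestly, there is no real obstacle: the work was already done when proving \eqref{bd:dtu_L2}. The only subtle choice is that of the base point $z_*$. One must place $t_*$ strictly past the top of the target cylinder so that $G(x-x_*, t-t_*)$ is uniformly bounded below (otherwise the Gaussian weight would vanish where we need integrability), and $x_*$ in the center of the spatial ball so that the $(x-x_*)\cdot\nabla u_\e$ cross-term in $\Xi$ can be controlled. Once this geometric choice is made, the identity \eqref{lem2.2} does all the work.
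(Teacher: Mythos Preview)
Your proposal is correct: you rightly observe that Proposition~\ref{x61} is nothing more than the estimate \eqref{bd:dtu_L2} already established in Section~\ref{sec:cl-out}, and your re-derivation via the identity \eqref{lem2.2} with the $\Xi$ multiplier faithfully reproduces that earlier argument.

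The paper, however, gives a \emph{different} proof at this point in the text. Rather than returning to the monotonicity identity \eqref{lem2.2}, it uses the local energy evolution formula of Lemma~\ref{ll2.3}: choosing a smooth cutoff $\chi$ with $\chi \equiv 1$ on $B(0,R)$, $\Supp \chi \subset B(0,R+1)$, and $\|D^2\chi\|_\infty \le 1$, one integrates
\[
\frac{d}{dt}\int_{\m R^d} \chi\, d\mu_\e^t = -\int \chi \frac{|\partial_t u_\e|^2}{|\ln\e|} + \int \frac{D^2\chi\,\nabla u_\e \cdot \nabla u_\e - \Delta\chi\, e_\e(u_\e)}{|\ln\e|}
\]
over $[0,T]$. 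The stress-energy term is bounded by $C\int_{B(0,R+1)\times[0,T]} e_\e(u_\e)/|\ln\e| \le C(T,R)$ via \eqref{equaH0}; the boundary term at time $0$ is controlled by \eqref{in}, and the one at time $T$ is nonnegative and dropped. This yields the result directly.

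Both routes are short and valid. Your approach is the more economical one in context, since the work was genuinely already done; the paper's alternative via Lemma~\ref{ll2.3} is more self-contained within Section~4 and avoids revisiting the weighted Gaussian machinery, relying only on the basic energy identity and the local energy bound.
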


\begin{proof}
By Lemma \ref{ll2.3}, we have for all $\chi \in C_{c}^{\infty}(\m{R}^d)$, 
\[ \frac{\partial}{\partial t} \int_{\m{R}^d} \chi(x) d\mu_{\e}^{t} = -\int_{\m{R}^d \times \{t\}} \chi(x) \frac{|\partial_t u_{\e}|^2}{|\ln \e|}\ dx +  \int_{\m{R}^d \times \{t\}} \frac{D^2 \chi \nabla u_\e \cdot \nabla u_\e - \Delta \chi e_{\e}(u_\e)}{|\ln \e|} \ dx. \] 

Integrating the last equality between $0$ and $T$, we get
\begin{multline}
\label{ee17}
\int_{\m{R}^d} \chi(x) d\mu_{\e}^{T}-\int_{\m{R}^d} \chi(x) d\mu_{\e}^{0} \\
= -\int_{\m{R}^d \times [0,T]} \chi(x) \frac{|\partial_t u_{\e}|^2}{|\ln \e|}\ dx+\int_{\m{R}^d \times [0,T]} \frac{D^2 \chi \nabla u_\e \cdot \nabla u_\e - \Delta \chi e_{\e}(u_\e)}{|\ln \e|} \ dx.
\end{multline}
We choose $\chi$ such that $\chi(x)=1$ if $|x| \le R$, $\chi \ge 0$, $\Supp(\chi) \subset B(0,R+1)$ and $\|D^2 \chi\|_{\infty} \le 1$.

Then 
\begin{align}
\int_{\m{R}^d \times [0,T]} \frac{D^2 \chi \nabla u_\e \cdot \nabla u_\e - \Delta \chi e_{\e}(u_\e)}{|\ln \e|} \ dxdt
& \le C \int_{B(0,R+1)\times [0,T]} \frac{e_{\e}(u_{\e})}{|\ln \e|} \nonumber \\
& \le C(T,R), \label{ee18}
\end{align}
due to inequality \eqref{equaH0}. Plugging \eqref{ee18} into \eqref{ee17}, we get
\[ \int_{\m{R}^d \times [0,T]} \chi(x) \frac{|\partial_t u_{\e}|^2}{|\ln \e|}\ dx \le C(T,R) + \int_{\m{R}^d} \chi(x) d\mu_{\e}^{0}-\int_{\m{R}^d} \chi(x) d\mu_{\e}^{T}. \]

Let's deal with the two last terms.
Due to $H_1(M_0)$, we have 
 \[ \int_{\m{R}^d} \chi(x) d\mu_{\e}^{0} \le \int_{B(0,R+1)} d\mu_{\e}^{0} \le C(R).\]
Finally, 
\[\int_{\m{R}^d} \chi(x) d\mu_{\e}^{T} \ge 0.\]
So 
\[ \frac{1}{|\ln \e|} \int_{B(0,R)\times [0,T]} |\partial_t u_{\e}|^2 \le \int_{\m{R}^d \times [0,T]} \chi(x) \frac{|\partial_t u_{\e}|^2}{|\ln \e|}\ dx \le C(T,R). \qedhere \]
\end{proof}

\bigskip

Consider the measure $\sigma_\e=\sigma_\e^t dt$ defined on $\m{R}^d\times [0,+\infty)$. By Cauchy-Schwarz inequality,  for every $T>0$, $\sigma_\e$ is locally bounded on $\m{R}^d\times [0,T]$ uniformly in $\e>0$. Hence, passing to a further subsequence, we may assume that $\sigma_\e \rightharpoonup \sigma_*$ as measures. The Radon-Nikodym derivative of $\sigma_\e$ with respect to $\mu_\e$ verifies
\[ \frac{d|\sigma_\e|}{d\mu_\e} \le \frac{|\partial_t u_\e|}{\sqrt{e_\e(u_\e)}}.\]

Now, let $T>0$ and $R>0$ be fixed. We have
\begin{equation}
\label{x5}
\left\| \frac{|\partial_t u_\e|}{\sqrt{e_\e(u_\e)}} \right\|_{L^2(B(0,R) \times [0,T],d\mu_\e)} = \int_{B(0,R)\times[0,T]} \frac{|\partial_t u_\e|^2}{|\ln \e|} \le C(T,R),
\end{equation}
so that $\ds \frac{d|\sigma_\e|}{d\mu_\e}$ is bounded in $L^2(B(0,R)\times[0,T],d\mu_\e)$ uniformly in $\e>0$. Since it is true for all $T>0$ and $R>0$, it follows that $\sigma_*$ is absolutely continuous with respect to $\mu_*$ (see \cite[Remark 2.2]{Am} for further details). Therefore, we may write
\[ \sigma_*=\vec{h} \mu_*^t dt,\]

where $\vec{h} \in L^2_{\loc}(\m{R}^d\times[0,T], \mu_*^t dt)$ and satisfies the bound 
\[ \|\vec{h}\|_{L^2(B(0,R) \times [0,T],\mu_*^t dt)} \le C(T,R). \]
Arguing as in Theorem \ref{tteo1} and its proof, we infer

\begin{lem}
\label{x29}
The measure $\sigma_*$ can be decomposed as $\sigma_*=\sigma_*^tdt$, where for a.e. $t \ge 0$,
\[ \sigma_*^t =-\partial _t \Phi_* \cdot \nabla \Phi_* dx + \vec{h} \nu_*^t.\]
\end{lem}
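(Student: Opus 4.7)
The plan has three stages: (1)~establish the disintegration $\sigma_* = \sigma_*^t dt$ with well-defined slices for all $t \ge 0$; (2)~identify $\sigma_*$ with $-\partial_t \Phi_* \cdot \nabla \Phi_* \, dx dt$ on the open set $\m R^d \times (0,+\infty) \setminus \Sigma_\mu$; (3)~piece things together using the decomposition of $\mu_*^t$ from Theorem \ref{thA}(4).

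For (1), I would repeat almost verbatim the Helly-selection argument used in the proof of Theorem \ref{te1}. Let $(\chi_i)_{i \in I}$ be a countable dense family in $\q C^0_c(\m R^d)$ and consider the real-valued functions $f^i_m(s) = \int \chi_i \cdot d\sigma^s_{\e_m}$ (treating each coordinate separately). By Cauchy--Schwarz, the bound \eqref{x5} on $|\partial_t u_\e|^2/|\ln\e|$ and the energy bound \eqref{zz25} give an $\e$-uniform $L^\infty$ bound on $s \mapsto f^i_m(s)$. For the semi-decreasing property \eqref{e1}, integration by parts in time against $\chi_i$ expresses $\partial_s f_m^i$ as the sum of a spatial divergence term and a product of $\partial_t u_\e$ with $\nabla u_\e$; both can be controlled via Cauchy--Schwarz using the two $L^2$-type bounds above. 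Lemma \ref{le2} and a diagonal extraction then produce slices $\sigma_*^t$ for every $t \ge 0$ with $\sigma_* = \sigma_*^t dt$.

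For (2), fix a compact cylinder $\Lambda \subset \m R^d \times (0,+\infty) \setminus \Sigma_\mu$. By Theorem \ref{thA}(1), $|u_\e| \to 1$ uniformly on $\Lambda$, so Theorem \ref{th2} applies on a slightly larger cylinder and yields the polar decomposition $u_\e = \rho_\e e^{i\varphi_\e}$ together with a phase $\Phi_\e$ solving the linear heat equation and satisfying
\[ \| \nabla \varphi_\e - \nabla \Phi_\e \|_{L^\infty(\Lambda)} \le C \e^\beta, \quad \| \nabla \Phi_\e \|_{L^\infty(\Lambda)} \le C|\ln \e|, \quad |\nabla \rho_\e| + V_\e(u_\e) \le C\e^\alpha. \]
Parabolic regularity applied to $\Phi_\e$ (already used in the construction of the global $\Phi_*$ at the end of the proof of Theorem \ref{thA}) gives $\Phi_\e / \sqrt{|\ln\e|} \to \Phi_*$ in $\q C^k_{\loc}(\Lambda)$. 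A direct computation with $u_\e = \rho_\e e^{i \varphi_\e}$ yields
\[ -\partial_t u_\e \cdot \nabla u_\e = -\partial_t \rho_\e \, \nabla \rho_\e - \rho_\e^2 \, \partial_t \varphi_\e \, \nabla \varphi_\e. \]
The first summand, after division by $|\ln \e|$, tends to zero in $L^\infty(\Lambda)$ thanks to the smallness of $\nabla \rho_\e$ combined with the bound on $\partial_t \rho_\e$ read off the parabolic equation satisfied by $\rho_\e$. For the second, the phase equation $\partial_t \varphi_\e = \Delta \varphi_\e + 2\rho_\e^{-1} \nabla \rho_\e \cdot \nabla \varphi_\e$ together with $\partial_t \Phi_\e = \Delta \Phi_\e$ and the spatial estimates above gives $\partial_t \varphi_\e = \partial_t \Phi_\e + o(\sqrt{|\ln\e|})$ uniformly on $\Lambda$. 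Hence
\[ \frac{-\partial_t u_\e \cdot \nabla u_\e}{|\ln \e|} \longrightarrow -\partial_t \Phi_* \cdot \nabla \Phi_* \quad \text{uniformly on } \Lambda, \]
so that $\sigma_* = -\partial_t \Phi_* \cdot \nabla \Phi_* \, dx dt$ on $\m R^d \times (0,+\infty) \setminus \Sigma_\mu$.

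For (3), since $\sigma_* = \vec h \, \mu_*$ and, by Theorem \ref{thA}(4), the diffuse part $|\nabla \Phi_*|^2 \, \q H^d$ of $\mu_*^t$ gives no mass to the $(d-2)$-rectifiable set $\Sigma_\mu^t$, the restriction of $\sigma_*^t$ to $\Sigma_\mu^t$ equals $\vec h \, \mu_*^t \llcorner \Sigma_\mu^t = \vec h \, \nu_*^t$. Combining this with the identification on the complement of $\Sigma_\mu^t$ obtained in (2) proves the decomposition for a.e.\ $t \ge 0$. The main obstacle is the upgrade from the purely spatial estimate $\nabla \varphi_\e - \nabla \Phi_\e = O(\e^\beta)$ of Theorem \ref{th2} to the time-derivative estimate on $\partial_t \varphi_\e - \partial_t \Phi_\e$ needed in (2); this requires exploiting the phase equation together with the heat equation for $\Phi_\e$ and parabolic regularity on a slightly smaller sub-cylinder.
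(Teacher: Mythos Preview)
Steps (2) and (3) are correct and match the paper's approach (which just says ``arguing as in Theorem \ref{thA} and its proof''). You correctly flag the time-derivative estimate $\partial_t\varphi_\e - \partial_t\Phi_\e = O(\e^\beta)$ as the key technical upgrade over Theorem \ref{th2}; the paper supplies exactly this in Theorem \ref{theo2.1} (see \eqref{equat2.26}), and your phase-equation route is how one obtains it.

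Step (1) is both unnecessary and incorrectly justified. It is unnecessary because the disintegration $\sigma_* = \sigma_*^t\,dt$ with $\sigma_*^t = \vec h\,\mu_*^t$ is already established in the paragraph immediately preceding the lemma: from $\sigma_* = \vec h\,\mu_*$ with $\vec h \in L^2_{\loc}(\mu_*)$ and $\mu_* = \mu_*^t\,dt$, the slices exist for a.e.\ $t$, which is all the lemma claims. Your justification also has two gaps. First, the $L^\infty$-in-$s$ bound on $f_m^i(s) = \int \chi_i\,d\sigma_{\e_m}^s$ via Cauchy--Schwarz would need a \emph{fixed-time} bound on $\int |\partial_t u_\e(\cdot,s)|^2/|\ln\e|\,dx$, but \eqref{x5} and \eqref{bd:dtu_L2} only give space-\emph{time} integral bounds; there is no uniform control of the time slices of $|\partial_t u_\e|^2/|\ln\e|$. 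Second, the semi-decreasing property would require controlling $\partial_s(\partial_t u_\e\cdot\nabla u_\e)$, hence $\partial_{tt} u_\e$, which is not available; ``integration by parts in time against $\chi_i$'' makes no sense since $\chi_i$ is purely spatial. Simply drop step (1), start from $\sigma_*^t = \vec h\,\mu_*^t$, and then apply the decomposition of $\mu_*^t$ from Theorem \ref{thA}(4) together with your step (2).
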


\subsubsection{Mean curvature of $\nu_*^t$}

The next step will be to identify the restriction of $\vec{h}$ on $\Sigma_\mu^t$ with the mean curvature defined by (\ref{x27}), that is:

\begin{prop}
\label{x41}
For $t\ge 0$ a.e., $\nu_*^t$ has a first variation and
\[\delta \nu_*^t=\vec{h} \nu_*^t,\]
i.e. $\vec{h}$ is the mean curvature of $\nu_*^t$.
\end{prop}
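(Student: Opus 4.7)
The plan is to pass to the limit $\e\to 0$ in the stress-energy identity \eqref{x30}. Integrating it against a non-negative time cutoff $\chi\in\q C_c^\infty((0,+\infty))$ (to match the space-time notion of convergence of $\sigma_\e$) gives
\[
 \int_0^\infty \chi(t) \int \frac{e_\e(u_\e)\delta_{ij} - \partial_i u_\e\cdot\partial_j u_\e}{|\ln\e|}\,\partial_j X_i\, dxdt = -\int_0^\infty \chi(t) \int \vec X\cdot d\sigma_\e^t\, dt.
\]
By Proposition \ref{x61} and Cauchy-Schwarz, the matrix valued Radon measures $A_\e:=(e_\e\delta_{ij}-\partial_i u_\e\cdot\partial_j u_\e)/|\ln\e|\,dxdt$ are locally bounded (their total variation is dominated by $d\,\mu_\e$), so up to extraction $A_\e\rightharpoonup A_*$ in the sense of matrix valued Radon measures, with $|A_*|\le d\,\mu_*$ and a disintegration $A_*=A_*^t\,dt$ for a.e.\ $t$. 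On the right hand side, $\sigma_\e\rightharpoonup\sigma_*$ by construction, and Lemma \ref{x29} splits the limit as a diffuse part $-\partial_t\Phi_*\nabla\Phi_*\,dx$ plus a singular part $\vec h\,\nu_*^t$.

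The first step is to identify $A_*^t$ on the complement of $\Sigma_\mu^t$. On a compact subset of $\m R^d\times(0,+\infty)\setminus\Sigma_\mu$, Theorems \ref{th2} and \ref{th3} provide the splitting $u_\e = w_\e e^{i\Phi_\e}$ with $\Phi_\e/\sqrt{|\ln\e|}\to\Phi_*$ smoothly ($\Phi_*$ solving the heat equation) and with $w_\e$ contributions that vanish in the relevant $L^p_{\mr{loc}}$ norm. Substituting this splitting into $A_\e$, the cross terms $\nabla w_\e\otimes\nabla\Phi_\e/|\ln\e|$ and the pure-$w$ terms tend to zero, leaving on the diffuse part
\[
 A_*^t\llcorner(\m R^d\setminus\Sigma_\mu^t) = \left(\tfrac{1}{2}|\nabla\Phi_*|^2\delta_{ij} - \partial_i\Phi_*\partial_j\Phi_*\right)dx
\]
(up to the overall factor fixed by the convention in Theorem \ref{thA}(4)). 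An integration by parts using the heat equation $\partial_t\Phi_*=\Delta\Phi_*$ shows that this diffuse contribution yields exactly $\int \partial_t\Phi_*\nabla\Phi_*\cdot\vec X\,dx$, i.e.\ matches the diffuse part of $-\int \vec X\cdot d\sigma_*^t$. These diffuse terms therefore cancel on both sides.

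The remaining task, which I expect to be the main obstacle, is to identify the concentrated part
\[
 A_*^t\llcorner\Sigma_\mu^t = P_{T_x\Sigma_\mu^t}\,d\nu_*^t,
\]
where $P_{T_x\Sigma_\mu^t}$ denotes the orthogonal projection onto the tangent $(d-2)$-plane (which exists $\q H^{d-2}$-a.e.\ on $\Sigma_\mu^t$ by the rectifiability in Theorem \ref{thA}(5)). The natural strategy is a tangent measure and blow-up analysis at a generic $x_0\in\Sigma_\mu^t$ with tangent plane $T_{x_0}$: parabolic rescaling of $u_\e$ together with the monotonicity formula \eqref{MF} yields a blow-up profile which is a $T_{x_0}$-invariant planar vortex in the two-dimensional normal plane $T_{x_0}^\perp$, of multiplicity $\Theta_*(x_0,t)$. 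For such a profile the off-tangential entries of $A_\e$ involving $(\partial_\alpha u_\e)^2-(\partial_\beta u_\e)^2$ or $\partial_\alpha u_\e\cdot\partial_\beta u_\e$ (with $\alpha\ne\beta$ in $T_{x_0}^\perp$) average out by the rotational symmetry of the planar vortex, while the tangential diagonal entries concentrate the full density $\Theta_*$. This is essentially the Ambrosio-Soner varifold convergence theorem; since every ingredient used in its implementation in \cite{Be1} (the energy bound \eqref{zz25}, the $L^2$ estimate \eqref{bd:dtu_L2}, the monotonicity formula, and the splitting of Theorem \ref{th3}) is local in space-time, the argument transfers under hypothesis \eqref{in}. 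Granted this identification, the arbitrariness of $\chi\ge 0$ and of $\vec X$ produces, for a.e.\ $t>0$,
\[
 \int \Div_{T_x\Sigma_\mu^t}\vec X\, d\nu_*^t = -\int \vec h\cdot\vec X\, d\nu_*^t,
\]
which is precisely $\delta\nu_*^t = \vec h\,\nu_*^t$.
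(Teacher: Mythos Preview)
Your overall architecture matches the paper's: integrate the stress-energy identity \eqref{x30}, take weak limits of $A_\e$ and $\sigma_\e$, use the heat equation for $\Phi_*$ to cancel the diffuse parts on both sides, and then identify the concentrated part of the limiting stress-energy tensor with the orthogonal projection $P$ onto $T_x\Sigma_\mu^t$. The divergence between your proposal and the paper lies entirely in this last identification, and your version has a genuine gap.

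You write that parabolic blow-up together with the monotonicity formula ``yields a blow-up profile which is a $T_{x_0}$-invariant planar vortex in the two-dimensional normal plane $T_{x_0}^\perp$'', and then appeal to the rotational symmetry of that profile to kill the off-diagonal entries. This is not justified: the monotonicity formula gives existence and structure of \emph{tangent measures} (they are dilation-invariant, hence supported on a $(d-2)$-plane), but it does not hand you a blow-up of the \emph{map} $u_\e$, let alone classify that blow-up as a radial planar vortex. Obtaining such a profile would require compactness of rescalings of $u_\e$ and a Liouville/classification theorem for entire solutions, neither of which is available here. Invoking ``the Ambrosio--Soner varifold convergence theorem'' does not rescue this, because their argument (and the one in \cite{Be1}) does not proceed by identifying a vortex profile.

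The paper's route is much softer and avoids blow-up of the map altogether. Writing the limit as $A\,d\mu_*^t$ with $A$ symmetric, one has automatically $A\le \Id$ (since $\nabla u_\e\otimes\nabla u_\e\ge 0$) and, by taking traces, $\Tr A=(d-2)+d\,\dfrac{dV_*}{d\mu_*}\ge d-2$. Separately, a first-variation/rectifiability argument (Lemma~\ref{x38}, borrowed from \cite[Lemma~6]{Be1}) shows that $(T_x\Sigma_\mu^t)^\perp\subseteq\ker A(x)$ for $\mathcal H^{d-2}$-a.e.\ $x$. Elementary linear algebra then forces $A$ to be exactly the orthogonal projection onto $T_x\Sigma_\mu^t$ (and, as a by-product, $dV_*/d\nu_*=0$). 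This is the step you should replace your blow-up sketch with; once $A=P$ is established, your final displayed identity follows exactly as you wrote.
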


\begin{proof}
Notice that we already know by Theorem \ref{tteo1} that $\nu_*^t$ is $(d-2)$-rectifiable for a.e. $t \ge 0$.

The starting point is formula (\ref{x30}). Indeed, let $\vec{X} \in \q D(\m{R}^d,\m{R}^d)$. Then for all $t \ge 0$,
\begin{equation}
\label{x31}
\frac{1}{|\ln \e|} \int_{\m{R}^d \times \{t\}} \left( e_\e(u_\e)\delta_{ij} -\frac{\partial u_\e}{\partial x_i}\frac{\partial u_\e}{\partial x_j}\right) \frac{\partial X_i}{\partial x_j} \ dx  = -\int_{\m{R}^d \times \{t\}}\vec{X} \cdot \sigma_\e^t. 
\end{equation}

Formula (\ref{x31}) is already very close to (\ref{x27}), in particular the right hand side. In order to handle the diffuse energy, as well as to interpret the left-hand side as a tangential divergence, we need to analyse the weak-limit of the stress-energy density tensor
\[ \alpha_\e^t := A_\e dx =  \left( \operatorname{Id}-\frac{\nabla u_\e \otimes \nabla u_\e}{e_\e(u_\e)}\right) d\mu_\e^t.\]
(Recall that the stress-energy matrix $A_\e$ was defined in \eqref{def:stress-energy}.)
Clearly, $|\alpha_\e^t| \le Kd\mu_\e^t$, and we may assume that
\[ \alpha_\e^t \rightharpoonup \alpha_*^t =: A  d\mu_*^t,\]

where $A$ is a $d \times d$ symmetric matrix. Since the symmetric matrix $\nabla u_\e \otimes \nabla u_\e$ is non-negative, we have
\[ A \le \Id.\]
On the other hand,
\[ \Tr(e_\e(u_\e) \Id-\nabla u_\e \otimes \nabla u_\e)=(d-2)e_\e(u_\e)+ dV_\e(u_\e).\]
Therefore, as the trace is a linear operation, we may take the limit $\e \to 0$ and obtain
\begin{equation}
\label{x32}
\Tr(A)=(d-2)+d\frac{dV_*}{d\mu_*}, 
\end{equation}
where the non-negative measure $V_*$ is the limit (up to a possibly further subsequence) of $V_\e(u_\e)/|\ln \e|$.

Going to the limit in (\ref{x31}), and using the decomposition in Theorem \ref{tteo1}, we obtain for $t\ge 0$ a.e.,
\begin{align}
\MoveEqLeft \int_{\m{R}^d}A^{ij} \frac{\partial X_i}{\partial x_j} d\nu_*^t + \int_{\m{R}^d} \left( \frac{|\nabla \Phi_*|^2}{2} \delta_{ij} -\frac{\partial\Phi_*}{\partial x_i}\frac{\partial\Phi_*}{\partial x_j}\right) \frac{\partial X_i}{\partial x_j} dx \nonumber \\
& = -\int_{\m{R}^d }\vec{X} \cdot  \vec{h}d\nu_*^t-\int_{\m{R}^d }\vec{X} \cdot \nabla \Phi_* \partial_t \Phi_* dx. \label{x33}
\end{align}

On the other hand, $\Phi_*$ verifies the heat equation

\begin{equation}
\label{x34}
\frac{\partial \Phi_*}{\partial t}-\Delta \Phi_*=0.
\end{equation}

Multiplying (\ref{x34}) by $\vec{X}.\nabla \Phi_*$, we obtain

\begin{equation}
\label{x35}
\int_{\m{R}^d} \left( \frac{|\nabla \Phi_*|^2}{2} \delta_{ij} -\frac{\partial\Phi_*}{\partial x_i}\frac{\partial\Phi_*}{\partial x_j}\right) \frac{\partial X_i}{\partial x_j} \ dx  = -\int_{\m{R}^d }\vec{X} \cdot \nabla \Phi_* \partial_t \Phi_* \ dx. 
\end{equation}

Combining (\ref{x33}) and (\ref{x35}) we have therefore proved

\begin{lem}
\label{x36}
For $t\ge 0$ a.e., and for every $\vec{X} \in \q D(\m{R}^d,\m{R}^d)$,
\begin{equation}
\label{x37}
\int_{\m{R}^d}A^{ij} \frac{\partial X_i}{\partial x_j} d\nu_*^t  = -\int_{\m{R}^d }\vec{X} \cdot \vec{h}d\nu_*^t. 
\end{equation}
\end{lem}

Recall that we already know that $\Sigma_\mu^t$ is rectifiable for a.e. $t\geq0$. Comparing (\ref{x37}) with (\ref{x27}) in order to identify $\vec{h}$ with the mean curvature of $\nu^t$, we merely have to prove that the matrix $A$ corresponds to the orthogonal projection $P$ onto the tangent space $T_x\Sigma_\mu^t$. We first have

\begin{lem}[{\cite[Lemma 6]{Be1}}]
\label{x38}
For $t\ge 0$ a.e.,
\begin{equation}
\label{x39}
A(x) \left( \int_{T_x\Sigma_\mu^t} \nabla \chi (y) d\mathcal{H}^{d-2}(y)\right)=0 \quad \text{ for } \mathcal{H}^{d-2}\text{-a.e. }  x \in \Sigma_\mu^t \ , 
\end{equation}
and for all $\vec{X} \in \q D(\m{R}^d,\m{R})$.
\end{lem}

A straightforward consequence is 
\begin{cor}
\label{x42}
For $t$ and $x$ as in Lemma \ref{x38},
\[ \left(T_x\Sigma_\mu^t\right)^\bot \subseteq \ker A(x).\]
\end{cor}

With a little more elementary linear algebra, we further deduce

\begin{cor}[{\cite[Corollary 4]{Be1}}]
\label{x40}
For $t$ and $x$ as in Lemma \ref{x38}, $A=P$ is the orthogonal projection onto the tangent space $T_x\Sigma_\mu^t$.
\end{cor}

Gathering \eqref{x37} and Corollary \eqref{x40} proves the proposition.
\end{proof}

\begin{nb}
\label{x50}
Corollaries \ref{x42} and \ref{x40} have many important consequences.
\begin{enumerate}
\item Using (\ref{x32}), we deduce that $\ds \frac{d V_*}{d\nu_*}=0$, i.e. there is only kinetic energy in the limit.
\item Let $(\tau_1,\dots,\tau_n)$ be an orthonormal frame such that $T_x\Sigma_\mu^t =\Span (\tau_3,\dots,\tau_n)$. In view of the expression of the stress-energy tensor in these coordinates, we infer that the energy concentrates in the $(\tau_1,\tau_2)$ plane (i.e. $(T_x\Sigma_\mu^t)^{\bot}$) and uniformly with respect to the direction. In particular, since $\sigma_\e^t$ is colinear to $\nabla u_\e$, this suggests strongly that $\vec{h}$ is perpendicular to $T_x\Sigma_\mu^t$. Such an argument is made rigorous in \cite[Proposition 6.2]{Am}. 
\end{enumerate}
\end{nb}

\subsubsection{Semi-continuity of $\omega_\e^t$.}

It solely remains to show \eqref{x2} to complete the proof of Theorem \ref{thB}.
We now prove that for a.e. $t\ge 0$,
\[ \liminf_{\e \to 0} \int_{\m{R}^d \times \{t\}} \chi \frac{|\partial_t u_\e|^2}{|\ln \e|} \ge \int_{\m{R}^d \times \{t\}} \chi |\vec{h}|^2 d\nu_*^t + \int_{\m{R}^d \times \{t\}} \chi |\partial_t \Phi_*|^2\ dx.\]
We recast the problem in the framework of Young measures, which turns out to be an appropriate concept to analyse the energies of the oscillations. In this direction, denote $\ds p_\e=-\frac{\nabla u_\e}{|\nabla u_\e|}$, and consider the measure (defined on $\m{R}^d \times \m{R}^{2d}$)
\[ \tilde{\omega}_\e^t=\delta_{p_\e(x)}\frac{|\partial_t u_\e.p_\e|^2}{|\ln \e|} \ dx.\]

Extracting possibly a further subsequence, we may assume that $\tilde{\omega}_\e^t dt\to \tilde{\omega}_*$ as measures. Arguing as in Theorem \ref{thA} and its proof once more,

\begin{lem}
\label{x43}
The measure $\tilde{\omega}_*$ decomposes as $\tilde{\omega}_*=\tilde{\omega}_*^tdt$, and for $t\ge 0$ a.e.
\[ \tilde{\omega}_*^t=\Pi_{*,x}^t(p)|\partial_t \Phi_*|^2\ dx + \mathcal{M}_*^t,\]
where $ \Pi_{*,x}^t(p)$ is a measure on $\m{R}^{2d}$ (with support on the unit ball) and $\mathcal{M}_*^t=\tilde{\omega}_*^t\llcorner \Sigma_\mu^t$. Moreover, $\Pi_{*,x}^t(p)(\m{R}^{2d})=1$.
\end{lem}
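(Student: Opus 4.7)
The plan is to adapt to this Young-measure setting the compactness and slicing argument used in the proof of Theorem \ref{te1}, and then to identify the diffuse part of the limit by combining Theorem \ref{th3} with the Fundamental Theorem on Young measures.

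\textbf{Uniform bounds and time slicing.} Since $|p_\e|\le 1$ almost everywhere, the measures $\tilde\omega_\e^t$ are supported in $\m R^d\times \overline B(0,1)\subset \m R^d\times\m R^{2d}$. Cauchy--Schwarz gives $|\partial_t u_\e\cdot p_\e|^2\le |\partial_t u_\e|^2$, so Proposition \ref{x61} implies that $\tilde\omega_\e$ is locally bounded, uniformly in $\e$, as a Radon measure on $\m R^d\times \m R^{2d}\times [0,+\infty)$. Up to a subsequence, $\tilde\omega_\e\rightharpoonup \tilde\omega_*$. To slice in time, I would repeat verbatim the proof of Claim \ref{pr}: for a countable dense family $(\chi_i)\subset \q C_c^0(\m R^d\times \m R^{2d})$, set $f_i^m(s)=\int\chi_i\,d\tilde\omega_{\e_m}^s$, check the semi-decreasing condition \eqref{e1} by bounding the derivative of $f_i^m$ through the integral of $|\partial_t u_{\e_m}|^2/|\ln \e_m|$ and invoking Proposition \ref{x61}, and then apply Lemma \ref{le2} together with a diagonal extraction. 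This yields a Radon measure $\tilde\omega_*^t$ on $\m R^d\times\m R^{2d}$ for every $t\ge 0$ and the decomposition $\tilde\omega_*=\tilde\omega_*^t dt$.

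\textbf{Young measure of $p_\e$.} Since $(p_\e)$ is bounded in $L^\infty$ by $1$, the Fundamental Theorem of Young measures yields, after a further subsequence, a family of probability measures $(\Pi_{*,x}^t)_{x,t}$ on $\overline B(0,1)\subset \m R^{2d}$ such that for every $\phi\in \q C_b(\m R^{2d})$,
\[ \phi(p_\e(\cdot,\cdot))\rightharpoonup \int_{\m R^{2d}} \phi(p)\,d\Pi_{*,x}^t(p) \]
weakly-$\ast$ in $L^\infty(\m R^d\times[0,T])$. In particular $\Pi_{*,x}^t(\m R^{2d})=1$ and $\Pi_{*,x}^t$ is supported in the unit ball.

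\textbf{Identification on the diffuse part.} Fix a compact $K\subset \m R^d\times(0,+\infty)\setminus\Sigma_\mu$. Combining Theorem \ref{thA}(3) with Theorem \ref{th3}, on a neighbourhood of $K$ we can write $u_\e=w_\e\,e^{i\Phi_\e}$ where $\Phi_\e/\sqrt{|\ln\e|}\to \Phi_*$ in $\q C^k$ by parabolic regularity, $\|\nabla w_\e\|_{L^p(K)}\le C(p,K)$, and $|w_\e|\to 1$ uniformly. Expanding $\partial_t u_\e$ and $\nabla u_\e$ accordingly and computing the $\m R^2$-inner products $\partial_t u_\e\cdot p_\e^j$ componentwise (with $p_\e=(p_\e^1,\dots,p_\e^d)$), the identity $\sum_j(\partial_j\Phi_\e)^2/|\nabla\Phi_\e|^2=1$ makes the angular dependence collapse and yields
\[ \frac{|\partial_t u_\e \cdot p_\e|^2}{|\ln \e|} = \frac{|\partial_t \Phi_\e|^2}{|\ln\e|} + o_{L^1(K)}(1) \longrightarrow |\partial_t \Phi_*|^2 \]
strongly in $L^1_{\mr{loc}}(K)$, where the error is controlled through \eqref{eq:th2:3} and the $L^p$ bound (iv) of Theorem \ref{th3}. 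With the scalar factor converging strongly and $\delta_{p_\e}$ converging in the Young-measure sense, for any test function $\chi\in \q C_c^0(K\times \m R^{2d})$,
\[ \int\chi(x,p)\,d\tilde\omega_\e^t \longrightarrow \int_K\!\int_{\m R^{2d}}\chi(x,p)\,d\Pi_{*,x}^t(p)\,|\partial_t\Phi_*|^2\,dx. \]
Setting $\q M_*^t:=\tilde\omega_*^t-\Pi_{*,x}^t(p)|\partial_t\Phi_*|^2\,dx$ then defines a non-negative measure supported in $\Sigma_\mu^t\times \m R^{2d}$, whence $\q M_*^t=\tilde\omega_*^t\llcorner\Sigma_\mu^t$.

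\textbf{Main obstacle.} The key technical difficulty lies in the last step: both $\delta_{p_\e}$ (in the Young-measure sense) and $|\partial_t u_\e\cdot p_\e|^2/|\ln\e|$ (as a Radon measure) converge only weakly, so a priori no product limit is available. The decomposition $u_\e=w_\e e^{i\Phi_\e}$ of Theorem \ref{th3} is essential: it trivialises the directional dependence of the scalar factor in the limit on the diffuse region, turning it into a strongly converging scalar that can be paired with the Young-measure limit of $\delta_{p_\e}$. Controlling the resulting error terms from $\nabla w_\e$, $\partial_t w_\e$ and the discrepancy between $\varphi_\e$ and $\Phi_\e$ is what makes this step delicate.
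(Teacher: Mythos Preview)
There is a genuine gap in your time-slicing step. The Helly-type selection of Theorem~\ref{te1} rested on two properties of the slices $\mu_\e^t$: for each fixed $t$ the quantities $\int\chi_i\,d\mu_\e^t$ are equibounded in $\e$, and they satisfy the semi-decreasing condition~\eqref{e1}. Both were consequences of the evolution identity in Lemma~\ref{ll2.3}, which bounds $\frac{d}{dt}\int\chi\,d\mu_\e^t$ from above by a multiple of $\mu_\e^t(\Supp\chi)$. No analogue of that identity exists for $|\partial_t u_\e\cdot p_\e|^2/|\ln\e|$: Proposition~\ref{x61} is only a space-time $L^1$ bound and says nothing about the size of $\int\chi_i\,d\tilde\omega_{\e_m}^s$ at a single time $s$, nor about how it varies with $s$. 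The family $(f_i^m)_m$ is therefore neither equibounded nor semi-decreasing in the sense required by Lemma~\ref{le2}, and that lemma does not apply.

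The paper takes a different route: it does not slice $\tilde\omega_\e$ in time at all, but passes to the limit for the space-time measure $\tilde\omega_\e\rightharpoonup\tilde\omega_*$ and then invokes the structure already built in Theorem~\ref{thA} (not Theorem~\ref{te1}). On the open set $(\m R^d\times(0,+\infty))\setminus\Sigma_\mu$ the uniform convergence from Theorem~\ref{th2} and Theorem~\ref{th5} identifies $\tilde\omega_*$ there with $\Pi_{*,x}^t(p)\,|\partial_t\Phi_*|^2\,dx\,dt$, which is manifestly of the form $(\cdots)\,dt$; one then sets $\mathcal M_*:=\tilde\omega_*\llcorner\Sigma_\mu$, and the a.e.-$t$ statement follows by disintegration of the space-time measure together with the regularity of $\Sigma_\mu^t$ established in Theorem~\ref{thA}, exactly in the spirit of the paragraph preceding Lemma~\ref{x29}. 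Your Steps~2 and~3 on the diffuse region are essentially correct, although Theorem~\ref{th2} already gives the needed $L^\infty$ convergence directly and is simpler than routing through Theorem~\ref{th3}.
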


The main ingredient that we will borrow directly from the analysis by Ambrosio and Soner \cite{Am} can be formulated as follows.

\begin{prop}[{\cite[Section 6]{Am}}]
\label{x44}
For $t\ge 0$ a.e., and every $\chi \in \q D(\m{R}^d)$,
\[ \int_{\m{R}^d\times\m{R}^{2n}} \chi(x) \mathcal{M}_*^t(x,p) \ge \int_{\m{R}^d} \chi |\vec{h}|^2 d\nu_*^t.\]
\end{prop}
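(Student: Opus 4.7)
The plan is to adapt the Ambrosio--Soner lower semi-continuity argument (\cite{Am}, Section~6) to our local-energy framework. The key observation is that, writing $p_\e = -\nabla u_\e/|\nabla u_\e|$, one has the pointwise factorisation
\[ -\partial_t u_\e \cdot \nabla u_\e = (\partial_t u_\e \cdot p_\e)\,|\nabla u_\e|, \]
so that the vector measure $\sigma_\e^t$ is the product of a ``kinematic'' factor (measured by $\tilde\omega_\e^t$) and a ``gradient norm'' factor (measured, up to $V_\e$, by $\mu_\e^t$). A Cauchy--Schwarz inequality in the $(x,p)$ phase space, combined with the Young measure decomposition of Lemma~\ref{x43} and the decomposition of $\sigma_*^t$ in Lemma~\ref{x29}, should yield the claim.

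Concretely, first I would fix $\chi \in \q D(\m{R}^d,[0,+\infty))$ and a test vector field $\vec X \in \q D(\m{R}^d,\m{R}^d)$. Consider on $\m{R}^d \times \m{R}^{2d}$ the test function $\Psi(x,p) := \sqrt{\chi(x)}\,(\vec X(x)\cdot p)$ and pair it with the ``lifted'' flux $\delta_{p_\e(x)} (\partial_t u_\e \cdot p_\e)\,|\nabla u_\e|/|\ln\e|\,dx$. By Cauchy--Schwarz at fixed $\e$,
\[
\left| \int \Psi(x,p_\e)\,\frac{(\partial_t u_\e \cdot p_\e)\,|\nabla u_\e|}{|\ln\e|}\,dx \right|^2
\le \left(\int |\Psi(x,p_\e)|^2\,\frac{|\partial_t u_\e \cdot p_\e|^2}{|\ln\e|}\,dx\right)\left(\int \chi\,\frac{|\nabla u_\e|^2}{|\ln\e|}\,dx\right).
\]
The first factor passes to $\int |\Psi|^2\,d\tilde\omega_*^t$, the second to $\int \chi\,d\mu_*^t$ (the $V_\e$ contribution is non-negative, hence absorbed), and the left-hand side, after extracting the diffuse contribution coming from $\Phi_*$ via Lemma~\ref{x29} and the harmonicity of $\Phi_*$, leaves the pairing $\int \chi^{1/2}\,\vec X(x)\cdot \vec h(x)\,d\nu_*^t$ on $\Sigma_\mu^t$.

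The next step is to isolate the $\Sigma_\mu^t$ contribution. Corollary~\ref{x40} and Remark~\ref{x50} show that on $\nu_*^t$ the energy has no potential part ($dV_*/d\nu_*^t = 0$) and concentrates in the normal plane $(T_x\Sigma_\mu^t)^\perp$; together with the Young measure structure $\Pi_{*,x}^t$, this allows to reduce the ``gradient norm'' factor $\int \chi\,d\mu_*^t$ on the concentrated set to exactly $\int \chi\,d\nu_*^t$, while forcing the support of $\Pi_{*,x}^t|_{\Sigma_\mu^t}$ to sit in unit vectors of $(T_x\Sigma_\mu^t)^\perp$. Plugging back and localising the inequality to $\Sigma_\mu^t$ through a partition of $\chi$ (using the closedness of $\Sigma_\mu$ and the absolute continuity of the diffuse part with respect to $\q H^d$), one obtains
\[
\left| \int_{\Sigma_\mu^t} \chi^{1/2}\,\vec X(x)\cdot \vec h\,d\nu_*^t \right|^2 \le \left(\int \chi(x)\,d\mathcal{M}_*^t(x,p)\right)\left(\int_{\Sigma_\mu^t} \chi\,d\nu_*^t\right).
\]
Finally, by a density/duality argument, one chooses a sequence $\vec X_k \in \q D$ approximating $\vec h/\|\vec h\|_{L^2(\chi\,d\nu_*^t)}$ in $L^2(\chi\,d\nu_*^t;\m{R}^d)$ to upgrade this to $\int \chi |\vec h|^2\,d\nu_*^t \le \int \chi\,d\mathcal{M}_*^t$.

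The main obstacle I expect is the clean \emph{subtraction} of the diffuse part: the Young measure $\Pi_{*,x}^t$ has, on the Lebesgue-a.e.~part, a deterministic form tied to $\nabla \Phi_*$, and one must verify (using the heat equation for $\Phi_*$ as in \eqref{x34}--\eqref{x35}) that the diffuse contributions cancel exactly on both sides of the Cauchy--Schwarz inequality, leaving only the singular pairing. A secondary difficulty is the density argument for $\vec h$ (which is only $L^2_{\loc}(\nu_*^t)$): one must approximate within $\q D$ while preserving the normality constraint $\vec h \in (T_x\Sigma_\mu^t)^\perp$, which follows by convolution along charts of the $(d-2)$-rectifiable set $\Sigma_\mu^t$. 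Both ingredients are available in \cite{Am}, and the adaptation to $H_1(M_0)$ is straightforward once the bounds of Section~\ref{sec:cl-out} and Proposition~\ref{x61} are in place.
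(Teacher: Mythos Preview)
The paper does not provide its own proof of this proposition: it is stated with attribution to \cite[Section~6]{Am} and explicitly announced as ``the main ingredient that we will borrow directly from the analysis by Ambrosio and Soner''. There is therefore no proof in the paper to compare against; the authors treat this as a black box imported from \cite{Am}.

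Your sketch is broadly in the spirit of the Ambrosio--Soner argument (lift to the $(x,p)$ phase space, apply Cauchy--Schwarz between the kinematic and gradient factors, pass to the limit using the Young measure, and recover $\vec h$ by duality). Two remarks on the write-up itself: first, your Cauchy--Schwarz display is inconsistent --- with $\Psi(x,p)=\sqrt{\chi(x)}(\vec X\cdot p)$, the product of the two right-hand factors is $\chi(\vec X\cdot p_\e)^2\,|\partial_t u_\e\cdot p_\e|^2|\nabla u_\e|^2/|\ln\e|^2$, so the left-hand integrand should carry a full $\chi$, not $\sqrt{\chi}$; either drop the $\sqrt{\chi}$ from $\Psi$ or adjust the left-hand side. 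Second, the step where you ``reduce the gradient norm factor $\int\chi\,d\mu_*^t$ on the concentrated set to exactly $\int\chi\,d\nu_*^t$'' is the delicate part of \cite{Am}: one does not literally subtract and cancel the diffuse pieces on both sides of a Cauchy--Schwarz inequality (inequalities do not subtract), but rather localises $\chi$ to shrinking neighbourhoods of $\Sigma_\mu^t$ so that the diffuse contributions vanish in the limit while the concentrated ones survive by rectifiability and the density lower bound. Your outline gestures at this (``partition of $\chi$''), but as written the cancellation claim is not quite right.
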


At this stage, we are in position to complete the proof of Theorem \ref{thB}.

\begin{proof}[Proof of Theorem \ref{thB}.]
In view of Theorem 4.4 in \cite{Am}, it suffices to establish the integral version of (\ref{x2}). Let $0<T_0<T_1$. We integrate (\ref{x28}) on $[T_0,T_1]$, and let $\e$ go to zero. Combining the results of Lemma \ref{x29}, Proposition \ref{x41}, Lemma \ref{x43}, Proposition  \ref{x44}, Theorem \ref{tteo1} and the Remark, we obtain
\begin{align}
\MoveEqLeft \nu_*^{T_1}(\chi)-\nu_*^{T_0}(\chi) + \int_{\m{R}^d\times\{T_1\}} \chi |\nabla \Phi_*|^2\ dx-\int_{\m{R}^d\times\{T_0\}} \chi |\nabla \Phi_*|^2\ dx \nonumber \\
& \le -\int_{\m{R}^d\times[T_0,T_1]} \chi |\vec{h}|^2 d\nu_* + \int_{\m{R}^d\times[T_0,T_1]} \nabla \chi P(\vec{h}) d\nu_* \nonumber \\
& \quad - \int_{\m{R}^d\times[T_0,T_1]} \chi |\partial_t \Phi_*|^2\ dxdt + \int_{\m{R}^d\times[T_0,T_1]} \nabla \chi \nabla \Phi_* \partial_t \Phi_*. \label{x45}
\end{align}

Since $\Phi_*$ verifies the heat equation, we have the identity
\begin{multline}
\int_{\m{R}^d\times\{T_1\}} \chi |\nabla \Phi_*|^2\ dx - \int_{\m{R}^d\times\{T_0\}} \chi |\nabla \Phi_*|^2\ dx \\
= \int_{\m{R}^d\times[T_0,T_1]} \chi |\partial_t \Phi_*|^2\ dxdt + \int_{\m{R}^d\times[T_0,T_1]} \nabla \chi \nabla \Phi_* \partial_t \Phi_*. \label{x51}
\end{multline}

Combining (\ref{x45}) and (\ref{x51}) we obtain
\[ \nu_*^{T_1}(\chi)-\nu_*^{T_0}(\chi) \le -\int_{\m{R}^d\times[T_0,T_1]} \chi |\vec{h}|^2 d\nu_* + \int_{\m{R}^d\times[T_0,T_1]} \nabla \chi P(\vec{h}) d\nu_*.\]

As mentioned above, this integral formulation implies (\ref{x2}), under suitable assumptions which are fulfilled here, namely rectifiability of $\Sigma_\mu^t$, lower bounds on the density $\Theta_*$, and orthogonality of the mean curvature $\vec{h}$ with $(T_x\Sigma_\mu^t)^{\bot}$.
The proof of Theorem \ref{thB} is complete.
\end{proof}

\section{Point vortices in two space dimensions}

Our goal in this last section is to prove Theorem \ref{theo3.1}. We start with the derivation of some pointwise estimates, which actually hold in any dimension $d \ge 2$, and refine the estimates of Theorem \ref{th2}. Indeed, we need a careful analysis on the set where $|u_\e|$ is far from zero. For this purpose, we consider, for $T>0,\ \Delta T>0,\ R>0$ given, the cylinder

 \[ \Lambda=B(x_0,R) \times [T,T+\Delta T] \subset \m{R}^2 \times [0,+\infty), \]
 
and we assume that for some constant $0<\sigma<\frac{1}{2}$,
\begin{equation}
\label{equat2.22}
|u_\e| \ge 1-\sigma \quad \text{on } \Lambda. 
\end{equation}
In particular, we may write 
\[ u_\e=\rho_\e \exp(i \phi_\e) \quad \text{on } \Lambda, \]
where $\rho_\e=|u_\e|$ and where $\phi_\e$ is a smooth real-valued map on $\Lambda$. Set
\[ \Lambda_{\alpha}= B(x_0,\alpha R) \times [T+(1-\alpha^2)\Delta T,T+\Delta T].\]
The following higher-order regularity for $u_\e$ holds.

\begin{thm}
\label{theo2.1}

Assume that (\ref{equat2.22}) holds. There exist constants $\ds 0<\sigma_0 \le \frac{1}{2}$ and $0<\alpha,\beta<1$ depending only on the dimension $d$, such that if $\sigma<\sigma_0$, then
\begin{gather}
\label{equat2.23}
\|\nabla \phi_\e\|_{L^{\infty}(\Lambda_{\frac{3}{4}})} \le C(\Lambda) \sqrt{M_0|\ln \e|}, \\
\label{equat2.24}
\|1-\rho_\e\|_{L^{\infty}(\Lambda_{\frac{1}{2}})} \le C(\Lambda) \e^2 \left( 1+\|\nabla \phi_\e\|_{L^{\infty}(\Lambda_{\frac{3}{4}})}^2 \right), \\
\label{equat2.25}
\| \partial_t \rho_\e\|_{\q C^{0,\alpha}(\Lambda_{\frac{1}{2}})} + \| \nabla \rho_\e\|_{\q C^{0,\alpha}(\Lambda_{\frac{1}{2}})} \le C(\Lambda)M_0 \e^{\beta}.
\end{gather}
In addition, there exists a real-valued function $\Phi_\e$ defined on $\Lambda_{\frac{1}{2}}$, and satisfying the heat equation, such that
\begin{equation}
\label{equat2.26}
\| \partial_t \phi_\e - \partial_t \Phi_\e\|_{\q C^{0,\alpha}(\Lambda_{\frac{1}{2}})} + \| \nabla \phi_\e - \nabla \Phi_\e \|_{\q C^{0,\alpha}(\Lambda_{\frac{1}{2}})} \le C(\Lambda) M_0 \e^{\beta}.
\end{equation}

\end{thm}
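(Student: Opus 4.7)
The plan is to follow the structure of Theorem \ref{th2}, which provides the same decomposition but with weaker regularity statements (only $L^\infty$ bounds on $\nabla \Phi_\e$ and $\nabla \varphi_\e - \nabla \Phi_\e$); here we upgrade the estimates to Hölder norms on $\rho_\e$ and $\phi_\e-\Phi_\e$. Since $|u_\e|\ge 1/2$ on $\Lambda$, we may write $u_\e=\rho_\e e^{i\phi_\e}$ smoothly, and a direct computation turns \eqref{pgl} into the coupled system
\begin{gather}
\partial_t \rho_\e - \Delta \rho_\e + \rho_\e|\nabla \phi_\e|^2 = \frac{\rho_\e(1-\rho_\e^2)}{\e^2}, \qquad \rho_\e^2\,\partial_t \phi_\e - \Div(\rho_\e^2 \nabla \phi_\e) = 0. \nonumber
\end{gather}
The second is \emph{uniformly} parabolic on $\Lambda$ because $1/2\le \rho_\e\le 2$.

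First I would prove \eqref{equat2.23}. Introduce a smooth space-time cutoff $\chi$ adapted to the nested cylinders $\Lambda_{3/4}\Subset \Lambda_{7/8}\Subset \Lambda$, and split $\chi \phi_\e = \phi_{\e,0}+\phi_{\e,1}$, where $\phi_{\e,0}$ solves the \emph{linear} heat equation with boundary data $\chi\phi_\e$ on $\partial_P \Lambda_{7/8}$ and $\phi_{\e,1}$ vanishes on the parabolic boundary and absorbs the nonlinear term $2\nabla\rho_\e\cdot\nabla\phi_\e/\rho_\e$ together with commutator terms involving $\nabla\chi,\Delta\chi$. Linear parabolic regularity yields $\|\nabla\phi_{\e,0}\|_{L^\infty(\Lambda_{3/4})}^2 + \|\nabla\phi_{\e,0}\|_{L^2L^{2^*}(\Lambda_{3/4})}^2 \le C(\Lambda)\int_\Lambda |\nabla \phi_\e|^2$, and the energy bound $\int_\Lambda e_\e(u_\e)\le C(\Lambda) M_0|\ln \e|$ from \eqref{zz25} controls the right-hand side by $C(\Lambda) M_0 |\ln \e|$, giving the desired $\sqrt{M_0|\ln\e|}$ bound. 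The correction $\phi_{\e,1}$ is handled as in \cite{Be1} by a contraction argument in a suitable $L^2L^q$ space, using $\nabla\rho_\e\in L^\infty$ (from Lemma~\ref{lem1.1}, which requires \eqref{equat2.24} in a slightly larger cylinder, so one runs the argument on a nested sequence $\Lambda\supset\Lambda_{7/8}\supset\Lambda_{3/4}$).

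Next, \eqref{equat2.24} follows from Lemma~\ref{lem1.1}: once $\|\nabla\phi_\e\|_{L^\infty(\Lambda_{3/4})}$ is controlled, the lemma gives $1-|u_\e|\le C(\Lambda)\e^2(\|\nabla\phi_\e\|_{L^\infty}+|\ln\e|)$ on $\Lambda_{1/2}$; since the first factor dominates $|\ln\e|$ unless the bound is trivial, we can absorb it into $(1+\|\nabla\phi_\e\|_{L^\infty}^2)$. Inserting this pointwise smallness of $1-\rho_\e^2$ back into the equation for $\rho_\e$, the reaction term becomes of size $O(\e^\beta)$ in $L^\infty(\Lambda_{1/2})$ for some $\beta>0$, so the parabolic equation for $\rho_\e$ has an $\e^\beta$ source and bounded coefficients. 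Standard parabolic Schauder estimates then yield the Hölder bound \eqref{equat2.25} on slightly shrunk cylinders (again via nested domains, each step costing a fixed geometric factor).

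Finally, to obtain $\Phi_\e$ and \eqref{equat2.26}, I define $\Phi_\e$ as the solution of the linear heat equation on $\Lambda_{1/2}$ with boundary data given by $\phi_\e$ on the parabolic boundary $\partial_P \Lambda_{1/2}$. The difference $\psi_\e:=\phi_\e-\Phi_\e$ vanishes on $\partial_P \Lambda_{1/2}$ and satisfies
\begin{equation}
\partial_t \psi_\e - \Delta \psi_\e = 2\,\frac{\nabla \rho_\e \cdot \nabla \phi_\e}{\rho_\e}. \nonumber
\end{equation}
By \eqref{equat2.25} the right-hand side is $O(\e^\beta)$ in $\q C^{0,\alpha}(\Lambda_{1/2})$ times $\|\nabla\phi_\e\|_{L^\infty}\le C\sqrt{M_0|\ln\e|}$, which is absorbed up to decreasing $\beta$. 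Parabolic Schauder estimates (zero boundary data, Hölder source of size $\e^\beta$) then give $\|\psi_\e\|_{\q C^{1,\alpha}_{\mathrm{par}}(\Lambda_{1/2})}\le C(\Lambda)M_0\e^\beta$, which is \eqref{equat2.26}.

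The main obstacle is the coupling: to obtain the sharp $L^\infty$ bound \eqref{equat2.23} one already needs control of $\nabla \rho_\e$ (via Lemma~\ref{lem1.1}), which in turn requires a preliminary bound on $\nabla \phi_\e$. I would resolve this by the usual nested-cylinder bootstrap: start with the crude $L^\infty$ bound on $\nabla u_\e$ from Proposition~\ref{prop1.1} (which gives $\|\nabla \phi_\e\|_{L^\infty}\le K_0/\e$), use it in Lemma~\ref{lem1.1} to get a first rough estimate on $1-\rho_\e$ and thence on $\nabla\rho_\e$ in a slightly smaller cylinder, then refine $\phi_\e$ via the cutoff/splitting argument above, and iterate finitely many times until the Hölder bound on $\rho_\e$ stabilizes. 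All computations are made on compact subsets of $\m R^d\times(0,+\infty)$, so the bounds \eqref{zz25}, \eqref{bd:dtu_L2} and \eqref{zz28} suffice.
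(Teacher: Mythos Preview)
Your proposal is correct and follows essentially the same approach as the paper, which simply refers to \cite[Theorem 2.1]{Be2} after noting that the local energy bound $\int_\Lambda e_\e(u_\e)\le C(\Lambda)M_0|\ln\e|$ from \eqref{zz25} is the only input needed, and that the argument builds on (and refines) Theorem~\ref{th2}. One small remark: for \eqref{equat2.24} the quadratic dependence on $\|\nabla\phi_\e\|_{L^\infty}$ comes directly from the maximum principle applied to the equation for $1-\rho_\e^2$ (whose source is $2\rho_\e^2|\nabla\phi_\e|^2+2|\nabla\rho_\e|^2$), rather than from Lemma~\ref{lem1.1} as stated, which only gives a linear bound; this is a cosmetic point since both yield $O(\e^2|\ln\e|)$ once \eqref{equat2.23} is in hand.
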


\begin{proof}
The proof is similar to the one given in \cite[Theorem 2.1]{Be2} since we have on $\Lambda$ the same bounds on the energy  $\ds \int_{\Lambda} e_\e(u_\e) \le  C(\Lambda) M_0 |\ln \e|$. It relies in particular (and improves) on Theorem \ref{th2}.
\end{proof}

From now on and throughout the rest of this section, we work in dimension $d=2$. 
In order to prove Theorem \ref{theo3.1}, let us first recall Theorem \ref{thA}. In dimension $2$, it asserts that $\Sigma_\mu^t$ has locally finite $\q H^0$ measure, i.e. $\Sigma_\mu^t$ is a discrete set. In particular, we can write
\[ \Sigma_\mu^t =  \{ b_i(t) : i \in I \}, \quad \text{where } I \text{ is finite or } I = \m N. \]
The bound on $\q H^0(B(x,1) \Sigma_\mu^t)$ translates to $\Card(B(x,1) \cap \{ b_i(t) :  i \in I\} \le K M_0$.

Also,  $\Sigma_\mu = \bigcup_{t>0} \Sigma_\mu^t$ is a closed set in $\m{R}^2\times (0,+\infty)$ and $|u_\e| \rightarrow 1$ locally uniformly on $\m{R}^2\times (0,+\infty) \setminus \Sigma_\mu$. Moreover, a.e. $t \ge 0$,
\[ \mu_*^t=\frac{|\nabla \Phi_*|^2}{2}(\cdot,t)dx +\nu_*^t,\quad \text{where} \quad  \nu_*^t =\sum_{i \in I} \sigma_i(t) \delta_{b_i(t)}, \]
where the function $\Phi_*$ satisfies the heat equation on $\m{R}^2\times (0,+\infty)$ and we have the dichotomy:
\begin{equation}
\label{zz27}
\forall i \in I, \forall t \ge 0, \quad \text{either} \quad \sigma_i(t) \ge \eta_1 \quad \text{or} \quad \sigma_i(t)=0.
\end{equation}

\bigskip

Off the singular set $\Sigma_\mu^t$, the main contribution to the time derivative $\partial_t u_\e$ stems from the phase $\Phi_\e$. In this direction, the following proposition is motivated by Lemma \ref{lll2.1}.

\begin{prop}
\label{pp3.1}
We have, as $\e \rightarrow 0$,
\[ \frac{|\partial_t u_\e|^2}{|\ln \e|} \rightarrow |\partial_t \Phi_{*}|^2 \quad \text{in } \q C_{\loc}^0(\m{R}^2 \times (0,+\infty) \setminus \Sigma_\mu), \]

\[ \frac{\partial_t u_\e.\nabla u_\e}{|\ln \e|} \rightarrow \partial_t \Phi_{*}.\nabla \Phi_{*} \quad \text{in } \q C_{\loc}^0(\m{R}^2 \times (0,+\infty) \setminus \Sigma_\mu). \]
\end{prop}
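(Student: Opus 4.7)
The plan is to reduce to a local analysis by writing $u_\e$ in polar coordinates on the complement of the vorticity set. Fix a compact $K \subset \m R^2 \times (0,+\infty) \setminus \Sigma_\mu$. Since $\Sigma_\mu$ is closed (Theorem \ref{thA}(1)), we can cover $K$ by finitely many cylinders of the form $\Lambda_{\frac{1}{2}}$ with $\Lambda \subset \m R^2 \times (0,+\infty) \setminus \Sigma_\mu$ as in Theorem \ref{theo2.1}. Theorem \ref{thA}(1) gives $|u_\e| \to 1$ uniformly on each such $\Lambda$, so for $\e$ small the assumption $|u_\e| \ge 1 - \sigma_0$ of Theorem \ref{theo2.1} is met, and we may write $u_\e = \rho_\e e^{i \phi_\e}$ on $\Lambda$ with $\rho_\e = |u_\e|$ and $\phi_\e$ smooth.

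Applying Theorem \ref{theo2.1} on each such cylinder yields, on $\Lambda_{\frac{1}{2}}$, the bounds
\[
\|\partial_t \rho_\e\|_{L^\infty} + \|\nabla \rho_\e\|_{L^\infty} \le C(\Lambda)\, \e^\beta, \qquad \|\nabla \phi_\e\|_{L^\infty} \le C(\Lambda) \sqrt{|\ln \e|},
\]
together with a local phase $\Phi_\e$ solving the heat equation such that
\[
\|\partial_t \phi_\e - \partial_t \Phi_\e\|_{L^\infty(\Lambda_{\frac{1}{2}})} + \|\nabla \phi_\e - \nabla \Phi_\e\|_{L^\infty(\Lambda_{\frac{1}{2}})} \le C(\Lambda)\, \e^\beta.
\]
Setting $\tilde \Phi_\e := \Phi_\e / \sqrt{|\ln \e|}$, we obtain a family of solutions to the heat equation with uniformly bounded gradient. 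Parabolic regularity then provides uniform $\q C^k$ bounds on any slightly smaller cylinder, so up to a subsequence $\tilde \Phi_\e \to \Phi_\dagger$ in $\q C^k_{\loc}$, and in particular $\partial_t \Phi_\e / \sqrt{|\ln \e|} \to \partial_t \Phi_\dagger$, $\nabla \Phi_\e / \sqrt{|\ln \e|} \to \nabla \Phi_\dagger$ uniformly on each $\Lambda_{\frac{1}{2}}$. By the gluing procedure at the end of the proof of Theorem \ref{thA}, all these local limits have the same gradient and time derivative as the global $\Phi_*$ of Theorem \ref{thA}.

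It remains to combine these inputs algebraically. The polar decomposition yields
\[
|\partial_t u_\e|^2 = (\partial_t \rho_\e)^2 + \rho_\e^2 (\partial_t \phi_\e)^2, \qquad \partial_t u_\e \cdot \nabla u_\e = \partial_t \rho_\e\, \nabla \rho_\e + \rho_\e^2\, \partial_t \phi_\e\, \nabla \phi_\e.
\]
The contributions of $\rho_\e$-derivatives are $O(\e^{2\beta})$, hence after division by $|\ln \e|$ they tend to $0$ uniformly on $K$. For the phase terms, write $\partial_t \phi_\e = \partial_t \Phi_\e + O(\e^\beta)$ and $\nabla \phi_\e = \nabla \Phi_\e + O(\e^\beta)$; using that $|\partial_t \Phi_\e| + |\nabla \Phi_\e| \le C \sqrt{|\ln \e|}$, the cross error terms are $O(\e^\beta / \sqrt{|\ln \e|})$ after division by $|\ln \e|$. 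Finally, invoking $\rho_\e \to 1$ uniformly and the uniform convergence of $\partial_t \Phi_\e/\sqrt{|\ln \e|}$ and $\nabla \Phi_\e/\sqrt{|\ln \e|}$ to $\partial_t \Phi_*$ and $\nabla \Phi_*$, one concludes
\[
\frac{|\partial_t u_\e|^2}{|\ln \e|} \to |\partial_t \Phi_*|^2, \qquad \frac{\partial_t u_\e \cdot \nabla u_\e}{|\ln \e|} \to \partial_t \Phi_*\, \nabla \Phi_*,
\]
uniformly on $K$. The only nontrivial step, and what one might call the main obstacle, is the identification of the locally defined limits of $\Phi_\e / \sqrt{|\ln \e|}$ with the global $\Phi_*$; but this compatibility is exactly the one already established during the gluing argument at the end of the proof of Theorem \ref{thA}.
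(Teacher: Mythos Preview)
Your proof is correct and follows essentially the same route as the paper: localize to small cylinders away from $\Sigma_\mu$, invoke Theorem \ref{theo2.1} to write $\partial_t u_\e = i e^{i\phi_\e}\partial_t \Phi_\e + O(\e^\beta)$ and $\nabla u_\e = i e^{i\phi_\e}\nabla \Phi_\e + O(\e^\beta)$, and conclude by the known convergence $\Phi_\e/\sqrt{|\ln\e|}\to \Phi_*$. Your write-up is simply a more explicit unpacking of the polar-coordinate algebra and of the identification of the local caloric phase with the global $\Phi_*$, which the paper's two-line proof leaves implicit.
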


\begin{proof}
As $\Sigma_\mu$ is open, it suffices to show uniform convergence on small cylinders (which cover $\m{R}^2 \times (0,+\infty) \setminus \Sigma_\mu$). This is then an immediate consequence of Theorem \ref{theo2.1}, which shows that
\[ \partial_t u_\e = i \exp(i \phi_\e) \partial_t \Phi_\e + O(\e^\beta), \quad \text{and} \quad \nabla u_\e = i \exp(i \phi_\e) \nabla \Phi_\e + O(\e^\beta). \qedhere \]

\end{proof}

We now need to establish some asymptotics for the measures
\[ \frac{|\partial_t u_\e|^2}{|\ln \e|}\ dxdt  \quad \text{and} \quad \frac{\partial_t u_\e.\nabla u_\e}{|\ln \e|}\ dxdt .\]

For the first one, it suffices to have the inequality
\begin{equation}
\label{eee3.6}
\liminf_{\e \rightarrow 0} \int_{\m{R}^2 \times [0,+\infty)} \frac{|\partial_t u_\e|^2}{|\ln \e|} \chi(x)\ dxdt \ge \int_{\m{R}^2 \times [0,+\infty)} |\partial_t \Phi_{*}|^2 \chi(x)\ dxdt, 
\end{equation}

which is a straightforward consequence of Proposition \ref{pp3.1}. For the second one, we need to care a little bit more. We have

\begin{lem}
\label{lemm3.1}
Extracting possibly a further subsequence, 

\begin{equation}
\label{equat3.7}
\sigma_\e := - \frac{\partial_t u_\e.\nabla u_\e}{|\ln \e|}\ dxdt \rightharpoonup \sigma_{*} =: - \partial_t \Phi_{*}.\nabla \Phi_{*}dxdt +h\nu_{*},
\end{equation}

weakly as measures on $\m{R}^2 \times [0,+\infty)$, where $\nu_{*}=\nu_{*}^t dt= \mu_{*} \llcorner \Sigma_\mu$ and  $h \in L^2(\nu_{*})$.
\end{lem}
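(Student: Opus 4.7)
The plan is to follow the strategy of Lemma \ref{x29} (now specialized to $d=2$), exploiting the fact that in the plane $\Sigma_\mu^t$ is discrete and that $\nu_*^t = \mu_*^t \llcorner \Sigma_\mu^t$. First I would show that $\sigma_\e$ is locally uniformly bounded in total variation. By Cauchy--Schwarz and Young's inequality, for any compact $K \subset \m{R}^2 \times [0,+\infty)$,
\[ |\sigma_\e|(K) \le \left( \int_K \frac{|\partial_t u_\e|^2}{|\ln \e|} \, dxdt \right)^{\! 1/2} \left( \int_K \frac{|\nabla u_\e|^2}{|\ln \e|} \, dxdt \right)^{\! 1/2} \le C(K), \]
where the two factors are controlled by Proposition \ref{x61} and Corollary \ref{Prop1}. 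A diagonal extraction then yields a subsequence and a signed Radon (vector-valued) measure $\sigma_*$ with $\sigma_\e \rightharpoonup \sigma_*$.

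Second, I would establish absolute continuity of $\sigma_*$ with respect to $\mu_*$. The Radon--Nikodym density of $\sigma_\e$ with respect to $\mu_\e$ satisfies
\[ \frac{d|\sigma_\e|}{d\mu_\e} \le \frac{|\partial_t u_\e|}{\sqrt{e_\e(u_\e)}}, \quad \text{so} \quad \left\| \frac{d|\sigma_\e|}{d\mu_\e} \right\|_{L^2(K, d\mu_\e)}^2 \le \int_K \frac{|\partial_t u_\e|^2}{|\ln \e|} dxdt \le C(K). \]
Passing to the limit (as in \cite[Remark 2.2]{Am} and the argument preceding Lemma \ref{x29}), we deduce $\sigma_* = \vec{g}\, \mu_*$ for some $\vec{g} \in L^2_{\loc}(\m{R}^2 \times [0,+\infty), d\mu_*)$. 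Disintegrating $\mu_* = \mu_*^t dt$ and invoking the decomposition from Theorem \ref{thA} in the form $\mu_*^t = \frac{1}{2}|\nabla \Phi_*|^2(\cdot,t) dx + \nu_*^t$, I then write
\[ \sigma_* = \sigma_*^t dt, \quad \sigma_*^t = \tfrac{1}{2}\vec{g}(\cdot,t) |\nabla \Phi_*|^2(\cdot,t)\, dx + \vec{g}(\cdot,t)\, \nu_*^t. \]

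Third, I identify the diffuse part. By Proposition \ref{pp3.1}, on every compact subset of $\m{R}^2 \times (0,+\infty) \setminus \Sigma_\mu$ the densities $\partial_t u_\e \cdot \nabla u_\e / |\ln \e|$ converge uniformly to $\partial_t \Phi_* \cdot \nabla \Phi_*$. Hence
\[ \sigma_* = -\partial_t \Phi_* \cdot \nabla \Phi_*\, dxdt \quad \text{on } \m{R}^2 \times (0,+\infty) \setminus \Sigma_\mu. \]
Since in dimension $d=2$, $\Sigma_\mu^t$ is discrete for every $t > 0$ (Theorem \ref{thA}), it has zero two-dimensional Lebesgue measure, so the above identifies the absolutely-continuous (in $x$) part of $\sigma_*^t$ with $-\partial_t \Phi_* \cdot \nabla \Phi_* \, dx$.

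Finally, the remainder $\sigma_* + \partial_t \Phi_* \cdot \nabla \Phi_* \, dxdt$ is concentrated on $\Sigma_\mu$, and by the absolute continuity with respect to $\mu_*$ together with the identity $\nu_* = \mu_* \llcorner \Sigma_\mu$, it has the form $h \nu_*$ with $h := \vec{g}|_{\Sigma_\mu}$. The $L^2(\nu_*)$ bound for $h$ is inherited from the $L^2_{\loc}(d\mu_*)$ bound for $\vec{g}$. The main obstacle is the second step: one must justify that the uniform $L^2(d\mu_\e)$ bound on the densities $d|\sigma_\e|/d\mu_\e$ survives the joint weak limit of numerator and denominator (both of which concentrate on $\Sigma_\mu$); this is exactly the lower-semicontinuity point handled in \cite[Remark 2.2]{Am}, and it is what forces us to work with the quadratic space-time estimate \eqref{bd:dtu_L2} rather than simply the energy bound used in \cite{Be1,Be2}.
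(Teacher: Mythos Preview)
Your proposal is correct and follows essentially the same route as the paper: Cauchy--Schwarz for local boundedness, diagonal extraction, the uniform $L^2(d\mu_\e)$ bound on $d|\sigma_\e|/d\mu_\e$ to get absolute continuity of $\sigma_*$ with respect to $\mu_*$, and Proposition \ref{pp3.1} to identify the diffuse part. The only cosmetic difference is that the paper invokes Reshetnyak \cite{Re} (as in \cite{Be2}) for the passage from the uniform $L^2$ density bound to absolute continuity of the limit, whereas you cite \cite[Remark 2.2]{Am}; these are the same lower-semicontinuity principle.
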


\begin{proof}
Let $R>0$, and  $T>0$. We have
\begin{equation}
\label{Equat1}
\int_{B(0,R) \times [0,T]} d\sigma_\e \le \left( \int_{B(0,R) \times [0,T]} \frac{|\partial_t u_\e|^2}{|\ln \e|}\right)^{\frac{1}{2}}
\left( \int_{B(0,R) \times [0,T]} \frac{|\nabla u_\e|^2}{|\ln \e|}\right)^{\frac{1}{2}} \le C(T,R). 
\end{equation}

In view of (\ref{Equat1}), and by an argument of diagonal extraction, we see that there exists a Radon measure, defined on $\m{R}^2 \times [0,+\infty)$, bounded on compacts, such that, up to a subsequence,
\[ \sigma_\e \rightharpoonup \sigma_{*} \text{ as measures in } \m{R}^2 \times [0,+\infty). \]
We claim that $\sigma_{*}$ is absolutely continuous with respect to $\mu_{*}$. In order to prove this, we compute the Radon-Nikodym derivative of $\sigma_\e$ with respect to $\mu_\e$, obtaining

\begin{equation}
\label{equat3.8}
\left|\frac{d\sigma_\e}{d\mu_\e}\right| \le \frac{|\partial_t u_\e|.|\nabla u_\e|}{e_\e(u_\e)},
\end{equation}
and therefore for all $T>0$ and $R>0$,
\begin{equation}
\label{equat3.9}
\int_{B(0,R) \times [0,T]}
\left|\frac{d\sigma_\e}{d\mu_\e}\right|^2 \le
\int_{B(0,R) \times [0,T]} \frac{|\partial_t u_\e|^2}{|\ln
  \e|}dxdt \le C(T,R).
\end{equation}
Invoking a result of Reshetnyak \cite{Re} as in \cite{Be2}, the claim is proved.

It follows from Proposition \ref{pp3.1} that on $\m{R}^2 \times [0,+\infty) \setminus \Sigma_\mu$, $\sigma_{*}=- \partial_t \Phi_{*}.\nabla \Phi_{*} dxdt$ and the conclusion follows.
\end{proof}

In the same spirit, we have

\begin{lem} 
\label{lemm3.2}
Extracting possibly a further subsequence, 

\begin{equation}
\label{equat3.10}
\frac{A_\e}{|\ln \e|}\ dxdt \rightharpoonup A_{*} =: T(\Phi_{*})dxdt +B\nu_{*},
\end{equation}

weakly as measures on $\m{R}^2 \times [0,+\infty)$, where $T$ is defined in \eqref{equa2.4} and $B \in L^{\infty}(\nu_{*})$.
\end{lem}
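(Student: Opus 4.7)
The plan is to follow the architecture of the proof of Lemma \ref{lemm3.1}, with the crucial simplification that the Radon--Nikodym derivatives one has to control are uniformly bounded in $L^\infty$ rather than merely in $L^2$. The starting observation is the pointwise bound
\[ |A_\e(u_\e)| \le C_d\, e_\e(u_\e), \]
immediate from $A_\e = e_\e(u_\e)\Id - \nabla u_\e \otimes \nabla u_\e$. Entry by entry, this dominates $A_\e/|\ln \e|\,dxdt$ by $C\mu_\e$, so by \eqref{zz25} the matrix-valued Radon measures $A_\e/|\ln\e|\,dxdt$ are uniformly locally bounded. A componentwise Banach--Alaoglu argument together with a diagonal extraction in $(R,T)$ will produce a subsequence along which $A_\e/|\ln \e|\,dxdt \rightharpoonup A_*$ as matrix-valued Radon measures on $\m{R}^2 \times [0,+\infty)$.

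The second step is to show that $A_*$ is absolutely continuous with respect to $\mu_*$, with a matrix-valued density $B \in L^\infty(\mu_*)$. Since each $dA_\e^{ij}/(|\ln \e|\, d\mu_\e)$ is bounded pointwise by $C_d$ --- even stronger than the $L^2$-control used in Lemma \ref{lemm3.1} --- invoking Reshetnyak's theorem \cite{Re} entry by entry, as in the proof of Lemma \ref{lemm3.1}, yields both the absolute continuity and the $L^\infty$-bound on each entry of $B$.

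The most delicate step will be the identification of $A_*$ on the diffuse part $\m{R}^2 \times (0,+\infty) \setminus \Sigma_\mu$. On any compact subset $K$ of this open set, Theorem \ref{theo2.1} provides the polar decomposition $u_\e = \rho_\e e^{i\phi_\e}$ with the estimates \eqref{equat2.24}--\eqref{equat2.26}. Combined with the locally uniform convergence $\Phi_\e/\sqrt{|\ln \e|} \to \Phi_*$ --- established during the globalization of $\Phi_*$ at the end of Section 3 and upgraded to $\q C^k$-convergence by parabolic regularity --- these estimates give
\[ \frac{V_\e(u_\e)}{|\ln \e|} \to 0, \qquad \frac{\nabla u_\e \otimes \nabla u_\e}{|\ln \e|} \longrightarrow \nabla \Phi_* \otimes \nabla \Phi_*, \qquad \frac{|\nabla u_\e|^2}{|\ln \e|} \longrightarrow |\nabla \Phi_*|^2, \]
locally uniformly on $K$ (consistently with the diffuse part of $\mu_*^t$ given by Theorem \ref{thA}). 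Hence $A_\e/|\ln \e| \to T(\Phi_*)$ locally uniformly off $\Sigma_\mu$, which pins down the absolutely continuous part of $A_*$ with respect to Lebesgue measure.

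Combining the three steps with the decomposition $\mu_* = |\nabla \Phi_*|^2\,dxdt + \nu_*$ from Theorem \ref{thA}, one obtains $A_* = T(\Phi_*)\,dxdt + B\nu_*$ with $B \in L^\infty(\nu_*)$, as announced. The main (mild) obstacle is tracking the full matrix structure in step 3: Theorem \ref{th5} and Proposition \ref{pp3.1} only yield scalar convergences for $\mu_\e^t$ and $\sigma_\e^t$, whereas here one genuinely needs the convergence of the tensor $\nabla u_\e \otimes \nabla u_\e$, for which the improved pointwise phase estimate \eqref{equat2.26} in $\q C^{0,\alpha}$ is essential.
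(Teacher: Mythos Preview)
Your proposal is correct and follows exactly the approach the paper intends: the paper's own proof is the single line ``The proof is identical to the proof of Lemma \ref{lemm3.1}'', and you have faithfully unpacked that template with the natural simplification (pointwise $L^\infty$ bound on $dA_\e/d\mu_\e$ instead of $L^2$). Your observation that the identification off $\Sigma_\mu$ requires the full tensor convergence $\nabla u_\e \otimes \nabla u_\e/|\ln\e| \to \nabla\Phi_*\otimes\nabla\Phi_*$, and that this is supplied by \eqref{equat2.26} via the expansion $\nabla u_\e = i e^{i\phi_\e}\nabla\Phi_\e + O(\e^\beta)$ (already used in the proof of Proposition \ref{pp3.1}), is exactly the right point.
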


The proof is identical to the proof of Lemma \ref{lemm3.1}. The next result expresses the fact that the points have "zero mean curvature".

\begin{prop}
\label{pp3.2}
The vector $h$ and the matrix $B$ given above are identically equal to zero.
\end{prop}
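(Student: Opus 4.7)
The plan is to identify the objects $h$ and $B$ appearing in Lemmas \ref{lemm3.1} and \ref{lemm3.2} with the corresponding objects constructed in Section 4 during the proof of Theorem \ref{thB}, and then to exploit the fact that in dimension $d=2$ the vorticity set $\Sigma_\mu^t$ is $0$-dimensional, so that its tangent space at every point is trivial.

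First I would verify that the decompositions in Lemmas \ref{lemm3.1} and \ref{lemm3.2} coincide with those produced in the proof of Theorem \ref{thB}: namely, that our $h$ is exactly the $\vec h$ appearing in Lemma \ref{x29}, and that our $B$ coincides with the restriction to $\Sigma_\mu^t$ of the matrix $A$ introduced just before Lemma \ref{x36}. This is essentially bookkeeping, since in both cases they arise as Radon-Nikodym derivatives (with respect to $\nu_*$) of the same weak limits of $\sigma_\e$ and $A_\e/|\ln\e|$, and the identification does not depend on which section they are read from.

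For $B$: Corollary \ref{x40} asserts that, for a.e.\ $t>0$, the matrix $A(x)$ equals the orthogonal projection $P(x)$ onto the tangent space $T_x\Sigma_\mu^t$ for $\mathcal H^{d-2}$-a.e.\ $x\in\Sigma_\mu^t$. In $d=2$, Theorem \ref{thA} tells us that $\Sigma_\mu^t$ is a (countable) discrete set of points, hence a $0$-rectifiable set, so $T_x\Sigma_\mu^t=\{0\}$ at each $b_i(t)$ and $P(x)=0$. Consequently $B\equiv 0$ $\nu_*^t$-a.e. For $h$, plugging $A\equiv 0$ into the identity provided by Lemma \ref{x36} yields
\begin{gather*}
\int_{\m R^2} \vec X \cdot \vec h \, d\nu_*^t = 0
\end{gather*}
for every $\vec X\in \q D(\m R^2,\m R^2)$, which forces $\vec h=0$ $\nu_*^t$-a.e. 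Equivalently, one may observe directly that $\nu_*^t=\sum_i\sigma_i(t)\delta_{b_i(t)}$ is $0$-rectifiable, and the tangential divergence $\Div_{T_x\Sigma_\mu^t}\vec X$ is an empty sum for $\nu_*^t$-a.e.\ $x$; hence $\delta\nu_*^t=0$ and its Radon-Nikodym derivative $\vec h$ vanishes.

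I do not anticipate a real obstacle here: all the analytic work has been carried out in Section 4 and in the dimensional part of Theorem \ref{thA}. The only point requiring a little care is the identification of the Section 4 and Section 5 limits; once that is made, the conclusion is an immediate consequence of the $0$-dimensionality of $\Sigma_\mu^t$.
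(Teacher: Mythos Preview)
Your proposal is correct and leads to the same conclusion, but the route is genuinely different from the paper's.

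The paper does \emph{not} appeal back to the general machinery of Section~4. Instead it stays in Section~5 and argues directly: starting from the stress-energy identity \eqref{x30}, integrating in time and passing to the limit (using the decompositions of Lemmas \ref{lemm3.1} and \ref{lemm3.2} together with the heat equation for $\Phi_*$) yields, for a.e.\ $t$,
\[
\int_{\m R^2} B_{ij}\,\partial_j X_i \, d\nu_*^t \;=\; \int_{\m R^2} \vec X\cdot h \, d\nu_*^t
\qquad \text{for all } \vec X\in \q D(\m R^2,\m R^2).
\]
Since $\nu_*^t=\sum_i \sigma_i(t)\delta_{b_i(t)}$ has discrete support, one may test with vector fields localized near a single $b_i$ and with independently prescribed value and gradient at $b_i$: this forces first $h(b_i)=0$ and then $B(b_i)=0$.

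Your approach instead identifies $h$ and $B$ with the Section~4 objects $\vec h$ and $A|_{\Sigma_\mu}$, and then specializes Corollary~\ref{x40} (or Corollary~\ref{x42}) to $d=2$, where $T_x\Sigma_\mu^t=\{0\}$ makes $P=0$, hence $A=0$, and Lemma~\ref{x36} then kills $\vec h$. This is conceptually cleaner --- it is literally the $d=2$ instance of the general theory --- but it costs you the bookkeeping of matching the two sets of limits (same subsequence, same Radon--Nikodym derivatives). The paper's argument avoids that identification entirely and is more elementary: it only uses that $\nu_*^t$ is a sum of Diracs, and never needs rectifiability, tangent spaces, or the $A=P$ result.
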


\begin{proof}
We rely again on equation \eqref{x30}: given a smooth (time independent) vector field $\vec X \in \q D(\m R^2, \m R^2)$, integrate \eqref{x30} on the time interval $[T_1,T_2]$ for $0 < T_1 < T_2$. Taking the limit $\e \to 0$, we get
\[ \int_{\m R^2 \times [T_1,T_2]} (A_*)_{ij} \partial_{j} X_i  = \int_{\m R^2 \times [T_1,T_2]} \vec X \cdot \sigma_*. \]
From \eqref{equat3.10}, and as $\Phi_*$ verifies the heat equation, we infer
\[ \int_{\m R^2 \times [T_1,T_2]} B_{ij} \partial_{j} X_i  d\nu_* = \int_{\m R^2 \times [T_1,T_2]} \vec X \cdot h d\nu_*. \]
Therefore letting $T_1, T_2 \to t$, we infer that a.e $t \ge 0$,
\[  \int_{\m R^2} B_{ij}(t) \partial_{j} X_i dx = \int_{\m R^2} \vec X \cdot h d\nu_*^t. \]
Now the support of $\nu_*^t$ is a discrete set: by using cut-off vector fields, we see that $h=0$, and from there, $B=0$.
\end{proof}

\begin{proof}[Proof of Theorem \ref{theo3.1}]

We claim that for any function $\chi \ge 0$ compactly supported on $\m{R}^2$, we have for a.e. $t>0$,
\begin{equation}
\label{equat3.15}
\frac{d}{dt}\int_{\m{R}^2 \times\{t\}} \chi d\nu_{*}^t \le 0.
\end{equation}

Indeed, passing to the limit in (\ref{equa2.1}) and using (\ref{eee3.6}), Lemma \ref{lemm3.1}, Lemma \ref{lemm3.2} and Proposition \ref{pp3.2}, we obtain
\begin{equation}
\label{equat3.16}
\frac{d}{dt}\int_{\m{R}^2 \times\{t\}} \frac{|\nabla \Phi_{*}|^2}{2} \chi(x) dx + \frac{d}{dt}\int_{\m{R}^2 \times\{t\}} \chi d\nu_{*}^t \le - \int_{\m{R}^2 \times\{t\}}|\partial_t \Phi_{*}|^2 \chi - \partial_t \Phi_{*}.\nabla \Phi_{*}.\nabla \chi dx. 
\end{equation}
On the other hand, since $\Phi_{*}$ solves the heat equation, we have
\begin{align*}
\MoveEqLeft \frac{d}{dt}\int_{\m{R}^2 \times\{t\}} \frac{|\nabla \Phi_{*}|^2}{2} \chi(x) dx = \int_{\m{R}^2 \times\{t\}} \nabla(\partial_t \Phi_{*}).\nabla \Phi_{*} \chi \\
& =- \int_{\m{R}^2 \times\{t\}} (\partial_t \Phi_{*}.\Delta \Phi_{*} \chi-\partial_t \Phi_{*}.\nabla \Phi_{*}.\nabla \chi) = - \int_{\m{R}^2 \times\{t\}} (|\partial_t \Phi_{*}|^2 \chi-\partial_t \Phi_{*}.\nabla \Phi_{*}.\nabla \chi),
\end{align*}
so that (\ref{equat3.15}) follows. We deduce that
\begin{gather} \label{equat3.17}
\nu_{*}^{t_1} \le \nu_{*}^{t_0} \text{ for any } 0<t_0\le t_1.
\end{gather}
It then follows as an easy consequence that: first, the $b_i$ do not move; and second, the $\sigma_i(t)$ are non increasing.
\end{proof}

\bigskip

\small

\textsc{Delphine Côte} \\
Laboratoire Jacques-Louis Lions \\
Université Pierre et Marie Curie \\
4 place Jussieu, 75005 Paris, France

\bigskip

\textsc{Raphaël Côte} \\
Centre de Mathématiques Laurent Schwartz\\
CNRS et École polytechnique \\
91128 Palaiseau cedex, France

\end{document}